\theoremstyle{plain}
\newtheorem{theorem}{Theorem}[section]
\newtheorem{lemma}[theorem]{Lemma}
\newtheorem{proposition}[theorem]{Proposition}
\newtheorem{corollary}[theorem]{Corollary}
\newtheorem*{conjecture}{Conjecture}
\newtheorem{definition}[theorem]{Definition}
\newtheorem{remark}[theorem]{Remark}
\newtheorem*{conv}{Convention}
\newtheorem*{thmnonum}{Theorem}
\newcommand{\nc}{\newcommand}
\newcommand{\rnc}{\renewcommand}
\newcommand{\splin}{$\text{SL}_2(\mathbb{R})$}
\nc\bA{\mathbb{A}}
\nc\bB{\mathbb{B}}
\nc\bC{\mathbb{C}}
\nc\bD{\mathbb{D}}
\nc\bE{\mathbb{E}}
\nc\bF{\mathbb{F}}
\nc\bG{\mathbb{G}}
\nc\bH{\mathbb{H}}
\nc\bI{\mathbb{I}}
\nc{\bJ}{\mathbb{J}}
\nc\bK{\mathbb{K}}
\nc\bL{\mathbb{L}}
\nc\bM{\mathbb{M}}
\nc\bN{\mathbb{N}}
\nc\bO{\mathbb{O}}
\nc\bP{\mathbb{P}}
\nc\bQ{\mathbb{Q}}
\nc\bR{\mathbb{R}}
\nc\bS{\mathbb{S}}
\nc\bT{\mathbb{T}}
\nc\bU{\mathbb{U}}
\nc\bV{\mathbb{V}}
\nc\bW{\mathbb{W}}
\nc\bY{\mathbb{Y}}
\nc\bX{\mathbb{X}}
\nc\bZ{\mathbb{Z}}
\nc\cA{\mathcal{A}}
\nc\cB{\mathcal{B}}
\nc\cC{\mathcal{C}}
\nc\cD{\mathcal{D}}
\nc\cE{\mathcal{E}}
\nc\cF{\mathcal{F}}
\nc\cG{\mathcal{G}}
\nc\cH{\mathcal{H}}
\nc\cI{\mathcal{I}}
\nc{\cJ}{\mathcal{J}}
\nc\cK{\mathcal{K}}
\nc\cM{\mathcal{M}}
\nc\cN{\mathcal{N}}
\nc\cO{\mathcal{O}}
\nc\cP{\mathcal{P}}
\nc\cQ{\mathcal{Q}}
\nc\cS{\mathcal{S}}
\nc\cT{\mathcal{T}}
\nc\cU{\mathcal{U}}
\nc\cV{\mathcal{V}}
\nc\cW{\mathcal{W}}
\nc\cY{\mathcal{Y}}
\nc\cX{\mathcal{X}}
\nc\cZ{\mathcal{Z}}
\nc{\dmo}{\DeclareMathOperator}
\dmo{\Tw}{{\rm Twist}}
\dmo{\CP}{\rm Pres}
\rnc{\Re}{\operatorname{Re}}
\rnc{\Im}{\operatorname{Im}}
\rnc{\span}{\operatorname{span}}
\dmo{\rank}{rank}
\dmo{\End}{End}
\dmo{\Jac}{Jac}
\dmo{\Id}{Id}
\dmo{\lcm}{lcm}
\dmo{\Area}{Area}
\nc{\Tm}{Teichm\"uller\xspace}
\nc{\odd}{\cH^{\text{odd}}(4)}
\nc{\hyp}{\cH^{\text{hyp}}(4)}
\nc{\prym}{\widetilde{\mathcal{Q}}(3,-1^3)}
\nc{\G}{\mathrm{GL}^+(2,\bR)}
\nc{\prymodd}{\widetilde{\cQ}(4,-1^4)}
\nc{\coversodd}{\widetilde{\cH}_{(2,2)}^{\rm odd}(2)}
\nc{\covershyp}{\widetilde{\cH}_{(2,2)}^{\rm hyp}(2)}
\nc{\Odd}{\cH^{\rm odd}(2,2)}
\nc{\Hyp}{\cH^{\rm hyp}(2,2)}
\nc{\wth}{{\rm w}}
\nc{\eps}{\varepsilon}
\nc{\Twi}{\zeta}
\nc{\SL}{\mathrm{SL}}
\nc{\s}{\sigma}
\nc{\twist}{\mathrm{t}}
\title{Rank Two Affine Submanifolds in $\cH(2,2)$ and $\cH(3,1)$}
\author{David Aulicino\thanks{This material is based upon work supported by the ERC Starting Grant ``Quasiperiodic'' of Artur Avila, and later by the National Science Foundation under Award No. DMS - 1204414.} $\,$ and Duc-Manh Nguyen}
\date{}
\begin{document}

\maketitle 

\begin{abstract}
We classify all rank two affine manifolds in strata in genus three with two zeros.  This confirms a conjecture of Maryam Mirzakhani in these cases.  Several technical results are proven for all strata in genus three, with the hope that they may shed light on a complete classification of rank two manifolds in genus three.
\end{abstract}

\tableofcontents

\section{Introduction}

The action of $\G$ on the moduli space of translation surfaces has been studied extensively from both dynamical and geometric perspectives.  However, until recently, the nature of $\G$-orbit closures was very mysterious.  The recent breakthrough of \cite{EskinMirzakhaniInvariantMeas, EskinMirzakhaniMohammadiOrbitClosures} proved that all $\G$-orbit closures are affine invariant submanifolds, thereby demonstrating that these objects possess a very nice structure.  This result tremendously changed the nature of the field and facilitated several more breakthroughs on their properties \cite{AvilaEskinMollerForniBundle, Filip1, Filip2, WrightFieldofDef, WrightCylDef}.

In particular, \cite{WrightCylDef} introduced the (cylinder) rank of an affine manifold, defined as half the dimension of its tangent space projected to absolute cohomology.  Furthermore, \cite{WrightFieldofDef} introduced the field of affine definition of an affine manifold, which corresponds to to the smallest (real) field in which the coefficients of the local systems of linear equations defining the affine manifold lie.  If this field is $\bQ$, then the affine manifold is said to be \emph{arithmetic}.

It was well-known that the translation surfaces in an arithmetic rank one manifold are always given by branched coverings of tori (a formal proof of this can be found in \cite{AulicinoCompDegKZAIS}), so that the rank one manifold itself is a covering of a stratum of marked tori.  Maryam Mirzakhani conjectured that a similar result should hold for higher rank affine manifolds.

\begin{conjecture}[Mirzakhani]
Let $\cM$ be an affine manifold in a stratum of translation surfaces.  If $\text{rank}(\cM) \geq 2$ and $k(\cM) = \mathbb{Q}$, then $\cM$ is either the entire stratum or a covering of a stratum of Abelian or quadratic differentials.
\end{conjecture}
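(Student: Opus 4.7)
The plan is to combine three ingredients: Wright's cylinder deformation theorem together with the Mirzakhani--Wright boundary theory, which allow one to move inside $\cM$ and to degenerate to affine manifolds in strata of strictly smaller complexity; Filip's theorem that an affine invariant submanifold is algebraic and carries a polarized variation of Hodge structure whose sub-factors are defined over the trace field $k(\cM)$; and an induction on dimension (or on genus plus number of zeros) of the ambient stratum. The two hypotheses $\rank(\cM) \geq 2$ and $k(\cM)=\bQ$ should combine to produce a $\bQ$-rational geometric quotient of a generic surface $(X,\omega) \in \cM$, realizing $\cM$ as a cover of a stratum on the quotient.

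The first step is to extract a rational Hodge factor. The projection of $T_{(X,\omega)}\cM$ to $H^1_{\mathrm{abs}}(X;\bR)$ cuts out a $2\rank(\cM)$-dimensional $\bQ$-subspace, which by Filip's theorem underlies a sub-variation of Hodge structure defined over $\bQ$, and so corresponds to an isogeny factor $A$ of $\Jac(X)$, defined over $\bQ$, varying equivariantly in $(X,\omega)$, and containing $\omega$ as a section. The second step is to geometrize: promote $A$ to the Jacobian or Prym variety of a Riemann surface $Y$ together with a holomorphic map $\pi \colon X \to Y$ and an abelian (or quadratic) differential $\eta$ on $Y$ with $\pi^{\ast}\eta = \omega$. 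The third step is to show that as $(X,\omega)$ ranges over $\cM$, the pair $(Y,\eta)$ fills out a full stratum on $Y$, by comparing $\dim \cM = 2\rank(\cM) + (\text{relative dimension})$ with the dimension of the target stratum and ruling out proper affine submanifolds using the cylinder flexibility inherited by $(Y,\eta)$.

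When the geometrization step resists a direct attack, the idea is to shrink cylinders via Wright and degenerate to the boundary via Mirzakhani--Wright. One must verify that generic boundary components still satisfy $\rank \geq 2$ and $k = \bQ$; granting this, the inductive hypothesis yields covers at the boundary, and the remaining task is to glue the resulting local covers into a global holomorphic $\pi$ on $\cM$, using the flat Gauss--Manin connection and the fact that cylinder surgeries act transitively on a natural neighborhood in $\cM$ to propagate the covering data.

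The principal obstacle is the geometrization step itself: a sub-Hodge structure defined over $\bQ$ need not a priori come from a quotient curve, and one must use the specific information that $\omega$ is a holomorphic section of $A$, together with the closure of $\cM$ under cylinder surgeries, to exclude non-geometric factors. A secondary difficulty is distinguishing covers of strata of abelian differentials from covers of strata of quadratic differentials, which amounts to detecting a hidden holomorphic involution purely from the rank and rationality data; this is exactly the mechanism driving the Prym loci that already appear in the low-genus classification pursued in this paper, and any general argument will have to handle it systematically.
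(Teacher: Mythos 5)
You have set out to prove a statement that the paper does not prove: Mirzakhani's conjecture is stated in the paper as a \emph{conjecture}, and the authors only confirm it for rank two affine manifolds in genus three strata with at most two zeros (Theorem \ref{MainThmRank2Hnm}), building on the previously known cases of genus two and $\cH(4)$. Their method is entirely flat-geometric and combinatorial: density of square-tiled surfaces in an arithmetic $\cM$ (Proposition \ref{prop:SimpCylPres}), Wright's cylinder deformation and twist/preserving spaces, cylinder proportions, an enumeration of $3$- and $4$-cylinder diagrams via dual graphs, and degeneration to $\cH(4)$ (Proposition \ref{prop:RankkImpRankkBd}) where the classification is already known. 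Nowhere do they attempt the Hodge-theoretic geometrization you propose, and the covering structure is exhibited at the end by locating an explicit involution on an explicit cylinder diagram, not by descending a rational Hodge factor.

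The central gap in your proposal is the one you flag yourself, and it is fatal rather than technical: a $\bQ$-rational sub-variation of Hodge structure of rank $2\,\mathrm{rank}(\cM)$ containing $\omega$ need not be induced by a holomorphic map to a lower-genus curve or half-translation quotient. Indeed, the conjecture as stated is now known to be false in general: the gothic locus of Eskin--McMullen--Mukamel--Wright is a rank two, $\bQ$-defined, primitive affine invariant submanifold in genus four that is neither a full stratum nor a covering of a stratum of Abelian or quadratic differentials. So no argument of the generality you propose can close the geometrization step; any correct proof must use low-genus or low-complexity hypotheses in an essential way, which is exactly what the paper's case-by-case cylinder analysis does. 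Secondary issues compound this: your inductive step assumes that boundary degenerations preserve both the rank and the rationality hypotheses and that local covering maps can be glued over $\cM$, but the paper's Proposition \ref{prop:RankkImpRankkBd} only controls the rank, under restrictive hypotheses on the collapsed equivalence class, and the base cases of any such induction are precisely the hard theorems being proved.
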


A rank two affine invariant submanifold in genus three is always arithmetic by \cite[Thm. 1.5]{WrightFieldofDef}.  

This conjecture was already known to be true in genus two due to the work of \cite{McMullenGenus2}.  The first results in higher genus were due to \cite{NguyenWright} and \cite{AulicinoNguyenWright}, and they affirmed this conjecture in genus three for the two connected components of the stratum $\cH(4)$: $\hyp$ and $\odd$, respectively.  We use these results as a starting point toward the goal of classifying all higher rank affine manifolds in genus three.  In this paper, we classify all rank two affine manifolds in strata in genus three with two zeros, and our results affirm Mirzakhani's conjecture.  Before stating our results, we recall the classification of connected components of strata in genus three with at most two zeros.

\begin{thmnonum}[\cite{KontsevichZorichConnComps}, Theorem 2]
\label{KZConnCompsThmHnm}
The connected components of strata in genus three with at most two zeros are given in the following diagram, where a line indicates that one connected component lies in the boundary of another.
\begin{figure}[htb]
 \centering
\begin{tikzpicture}[scale=0.50]
\draw(-2,2) node[above] {$\mathcal{H}^{hyp}(4)$};
\draw(2,2) node[above] {$\mathcal{H}^{odd}(4)$};
\draw(-4,0) node[below] {$\mathcal{H}^{hyp}(2,2)$};
\draw(0,0) node[below] {$\mathcal{H}(3,1)$};
\draw(4,0) node[below] {$\mathcal{H}^{odd}(2,2)$};
\draw (-3.5,.5)--(-2.5,1.5);
\draw (-.5,.5)--(-1.5,1.5);
\draw (.5,.5)--(1.5,1.5);
\draw (3.5,.5)--(2.5,1.5);
\end{tikzpicture}
\end{figure}
\end{thmnonum}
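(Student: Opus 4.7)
The plan is to verify the classification by producing, for each stratum in question, a complete list of connected components distinguished by two invariants that are constant on connected components: hyperellipticity and the parity of the spin structure (the Arf invariant of the $\bZ/2$--valued quadratic form on $H_1$ associated to an abelian differential whose zeros all have even order). First I would check that these are well-defined invariants: hyperellipticity is preserved under continuous deformation because the hyperelliptic involution varies holomorphically with the complex structure, and the spin parity is a discrete invariant, hence locally constant.

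Next I would apply the invariants stratum by stratum. For $\cH(4)$ and $\cH(2,2)$ every zero has even order, so the spin parity is defined; moreover each stratum admits a hyperelliptic sublocus (the surface is a double cover of $\bP^1$ with the involution either fixing the single zero in $\cH(4)$ or permuting the two zeros in $\cH(2,2)$). A direct computation on a hyperelliptic representative shows that the hyperelliptic locus has even spin parity, so a priori we get at most three components in each of these strata (hyperelliptic, non-hyperelliptic even, non-hyperelliptic odd). For $\cH(3,1)$ the orders are odd, so the spin invariant is undefined; also no hyperelliptic component can exist because a hyperelliptic involution in genus three forces the zeros of any invariant holomorphic $1$-form to occur either at Weierstrass points (even order) or in symmetric pairs, which is incompatible with orders $(3,1)$. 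Hence only one potential component is allowed there.

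The delicate step, and the main obstacle, is to prove that the above list is exhaustive and that the coincidence $\cH^{\mathrm{even, non-hyp}} = \cH^{\mathrm{hyp}}$ holds in both $\cH(4)$ and $\cH(2,2)$, so that the non-hyperelliptic even component is empty. I would follow the strategy of Kontsevich--Zorich: model surfaces in each stratum as suspensions over interval exchange transformations, and use Rauzy--Veech induction together with an explicit surgery (``bubbling a handle'' or ``breaking up a zero'') to connect any two surfaces in a given stratum with the same invariants. The connectedness of each invariant class is reduced to a finite combinatorial check on Rauzy classes of admissible permutations, combined with the verification that the surgery preserves both invariants. The absence of a separate non-hyperelliptic even component in genus three is a low-genus coincidence that is forced by these combinatorial computations.

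Finally I would establish the adjacency diagram by exhibiting explicit degenerations in which two zeros collide. Colliding the zeros of orders $3$ and $1$ yields a zero of order $4$, producing paths from surfaces in $\cH(3,1)$ to surfaces of both spin parities in $\cH(4)$, which accounts for the two upper edges from $\cH(3,1)$. Similarly, colliding the two zeros of order $2$ in each component of $\cH(2,2)$ produces a surface in $\cH(4)$, and one checks that the spin parity is preserved under this local surgery, giving the edge $\cH^{\mathrm{hyp}}(2,2)\!\to\!\cH^{\mathrm{hyp}}(4)$ and $\cH^{\mathrm{odd}}(2,2)\!\to\!\cH^{\mathrm{odd}}(4)$ depicted in the diagram.
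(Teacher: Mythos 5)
This statement is not proved in the paper at all: it is quoted as Theorem~2 of \cite{KontsevichZorichConnComps}, so there is no internal argument to compare against. Your sketch is a faithful outline of the strategy of the cited source --- separating components by hyperellipticity and spin parity, proving exhaustiveness and connectedness via Rauzy--Veech induction and the handle/zero surgeries, and reading off the adjacencies from the local ``breaking up a zero'' construction, which preserves parity and hyperellipticity and therefore forces $\cH^{\rm hyp}(2,2)$ to degenerate only to $\cH^{\rm hyp}(4)$ and $\cH^{\rm odd}(2,2)$ only to $\cH^{\rm odd}(4)$, while $\cH(3,1)$, on which no parity is defined, reaches both.

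Two caveats. First, your justification that hyperellipticity ``is preserved under continuous deformation'' is false as a general principle: the hyperelliptic locus is closed but usually not open in a stratum (in $\cH(1,1,1,1)$ in genus three, for instance, it is a proper subvariety rather than a component). What makes $\cH^{\rm hyp}(4)$ and $\cH^{\rm hyp}(2,2)$ genuine connected components is a dimension count: each is the orienting double cover of a stratum of quadratic differentials on $\bP^1$ whose dimension equals that of the ambient stratum, so the locus is open as well as closed. Second, the real content of the theorem --- that each class cut out by the invariants is nonempty and connected, and that the even non-hyperelliptic class is empty in genus three --- is delegated in your write-up to ``a finite combinatorial check on Rauzy classes'' that is never carried out; this is the bulk of the Kontsevich--Zorich argument. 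As a roadmap to the cited result your proposal is accurate; as a self-contained proof it is not.
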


In this paper, we prove

\begin{theorem}
\label{MainThmRank2Hnm}
Let $\cM$ be a rank two affine manifold in genus three.
\begin{itemize}
\item If $\cM \subset \Hyp$, then $\cM = \widetilde{\mathcal{Q}}(1^2,-1^2)=\covershyp$.
\item If $\cM \subset \Odd$, then $\cM = \prymodd$ or $\cM = \coversodd \subset \prymodd$.
\end{itemize}
Here $\covershyp$ and $\coversodd$ are the loci of unramified double covers of Abelian differentials in $\cH(2)$ in the components $\Hyp$ and $\Odd$, respectively.  They are both $4$-complex dimensional. The loci $\widetilde{\mathcal{Q}}(1^2,-1^2)$ and $\prymodd$ consist respectively of the standard orienting double covers of quadratic differentials in the strata $\mathcal{Q}(1^2,-1^2)$ and $\mathcal{Q}(4,-1^4)$.  They have complex dimensions four and five, respectively. It turns out that $\widetilde{\cQ}(1^2,-1^2)$ and $\covershyp$ coincide.
\medskip

\noindent There are no rank two affine manifolds in $\cH(3,1)$.
\end{theorem}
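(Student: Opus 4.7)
The plan is to follow the template developed for $\cH(4)$ in \cite{NguyenWright, AulicinoNguyenWright}: locate horizontally periodic surfaces inside $\cM$ (they are dense by a standard application of unipotent dynamics), enumerate their cylinder diagrams, apply Wright's cylinder deformation theorem \cite{WrightCylDef} to partition cylinders into $\cM$-parallel classes, and produce from this data a translation involution that globalizes so as to identify $\cM$ with a Prym or double-cover locus. Throughout, the arithmeticity of $\cM$ from \cite[Thm.~1.5]{WrightFieldofDef} allows a normalization in which all cylinder moduli are rational.

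First we would verify directly that each candidate --- $\covershyp$, $\coversodd$, $\prymodd$ and $\widetilde{\cQ}(1^2,-1^2)$ --- is an affine invariant rank two submanifold of the stated complex dimension, and that $\widetilde{\cQ}(1^2,-1^2)=\covershyp$. These are computations with the canonical Prym decomposition of cohomology and with the explicit branched cover construction.

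Next, for an arbitrary rank two $\cM$ in $\Hyp$, $\Odd$, or $\cH(3,1)$, we would fix a horizontally periodic $(X,\omega)\in\cM$ and tabulate the finitely many possible cylinder diagrams on such a surface. Because $\cM$ has rank two, a dimension count bounds the number of $\cM$-parallel classes; in each combinatorial configuration one isolates a candidate translation involution swapping $\cM$-equivalent cylinders, verifies via cylinder deformation that it extends to an involution on an open neighborhood of $(X,\omega)$ in $\cM$, and concludes by analyticity that $\cM$ itself lies in the fixed locus of a global involution. A crucial auxiliary step is to collapse a saddle connection in $(X,\omega)$ so as to degenerate into $\hyp$ or $\odd$: the classifications of \cite{NguyenWright, AulicinoNguyenWright} identify the resulting surface with a known Prym locus, and the Prym involution there can be lifted back through the degeneration to produce the required symmetry on $\cM$.

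For $\cH(3,1)$ the two zeros have different orders, so no translation involution can permute them, which obstructs the cylinder-involution mechanism from the outset and should force any putative rank two $\cM\subset\cH(3,1)$ into a contradiction. The main obstacle is precisely this case analysis in $\cH(3,1)$: in every admissible cylinder diagram one must argue that the $\cM$-parallel structure either fails to generate enough deformations to realize rank two, or produces a degeneration to $\cH(4)$ incompatible with \cite{NguyenWright, AulicinoNguyenWright}. The $\cH(2,2)$ branches are comparatively lighter because the candidate Prym involutions already exist on the target loci and need only to be matched to $\cM$, but the same combinatorial bookkeeping is required to match $\cM$ to the correct candidate and to exclude all others.
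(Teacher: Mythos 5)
Your outline captures the right general template (horizontally periodic surfaces, $\cM$-parallel classes, involutions, degeneration to $\cH(4)$), but it has two genuine gaps. The first is the enumeration step: you propose to ``fix a horizontally periodic $(X,\omega)\in\cM$ and tabulate the finitely many possible cylinder diagrams.'' In genus three there are over nine pages of cylinder diagrams with two or more cylinders, so a case analysis over all of them is not feasible; the actual proof first forces the existence of a horizontally periodic surface with \emph{four} cylinders (the maximum), after which only eleven diagrams remain. That reduction is the bulk of the work (all of Sections 3--5), and it cannot be run on Smillie--Weiss density alone, which is what ``a standard application of unipotent dynamics'' gives you: passing to a minimal set for the horocycle flow destroys the transverse saddle connections and cylinders you need to keep track of. The mechanism that makes the reduction work is the density of square-tiled surfaces in the arithmetic manifold $\cM$ (Proposition~\ref{prop:SimpCylPres}), used to perturb in an arbitrarily small neighborhood so that prescribed saddle connections stay horizontal while the perturbed surface becomes a Veech surface, hence periodic in every saddle-connection direction. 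Your invocation of arithmeticity only to ``normalize cylinder moduli to be rational'' misses this, and without it the passage from two or three cylinders to four (Lemma~\ref{3PlusCyls}, Theorem~\ref{thm:C3I:imply:4cyl}, Proposition~\ref{prop:3III:imply:4cyls}) does not go through.

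The second gap is the $\cH(3,1)$ case. Observing that no translation involution can exchange zeros of orders $3$ and $1$ does not yield a contradiction, because one may not assume a priori that a rank two manifold carries an involution --- that rank two manifolds are Prym or double-cover loci is the conclusion of the theorem, not an input. The actual proof of Theorem~\ref{H31Rank2Class} runs through the four admissible $4$-cylinder diagrams in $\cH(3,1)$ and, in each, either exhibits a free cylinder whose collapse (via Proposition~\ref{prop:RankkImpRankkBd}) produces a surface in $\cH(4)$ that would have to lie in $\prym$ but whose cylinder diagram is incompatible with the Prym involution there, or derives a numerical contradiction from the cylinder proportion identities $P(D,\cC)=P(D',\cC)$ of Proposition~\ref{CylinderPropProp}. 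You correctly flag this case analysis as the main obstacle, but the heuristic you offer in its place does not substitute for it. The remaining ingredients you list --- the degeneration to $\hyp$ and $\odd$, the lifting of the Prym involution back through the degeneration, and the closing tangent-space dimension count identifying $\cM$ with the candidate locus --- do match the paper's argument.
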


\begin{remark}
In the above theorem, if $p$ denotes the canonical projection from relative to absolute cohomology, then the kernel of $p$ is $0$-dimensional for all of the $4$-dimensional loci above, and it is $1$-dimensional for all of the $5$-dimensional loci.  The dimension of the kernel of $p$ is commonly known as the ``dimension of REL.''
\end{remark}

Theorem \ref{MainThmRank2Hnm} summarizes Theorems \ref{H31Rank2Class}, \ref{thm:H22hypClass}, Lemma \ref{lm:H2:in:Hyp22}, and Theorem \ref{thm:H22:odd:4cyl}.

\begin{remark}\hfill
\begin{itemize}
\item[$\bullet$] It follows from \cite{LanneauComponents} that $\widetilde{\cQ}(1^2,-1^2)$ and $\prymodd$ are connected. Theorem~\ref{MainThmRank2Hnm} implies in particular that  $\coversodd$ is connected.

\item[$\bullet$] Let $S$ be a (compact) Riemann surface of genus two. An unramified double cover of $S$ corresponds to a (normal) subgroup of index two of $\pi_1(S)$, which can be viewed as the kernel of a non-zero morphism from $\pi_1(S)$ to $\bZ/2\bZ$. Let $x_0$ be a Weierstrass point of $S$, and  ${\rm Mod}(S,x_0)$ denote the group of isotopy classes of homeomorphisms of $S$ fixing $x_0$. The existence of two components of unramified double covers of surfaces in $\cH(2)$ is in accordance with the fact that the action of ${\rm Mod}(S,x_0)$ on $H^1(S,\bZ/2\bZ)\setminus \{0\}$ has two orbits (see~\cite[Lem. 3.3]{NguyenTopH2}).
\end{itemize}
\end{remark}

We recall the main results of \cite{NguyenWright, AulicinoNguyenWright}.

\begin{theorem}\cite{NguyenWright, AulicinoNguyenWright}
\label{NWANWThm}
If $\cM$ is a rank two affine manifold in $\cH(4)$, then $\cM = \prym \subset \odd$. Here $\prym$ is the set of standard double covers of quadratic differentials in $\cQ(3, -1^3)$.
\end{theorem}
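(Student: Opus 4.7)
The plan is to treat the two components $\hyp$ and $\odd$ separately. In both cases, the absence of relative cohomology in $\cH(4)$ (there is only one zero, so REL has complex dimension zero) implies that a rank two $\cM$ has complex dimension exactly four, while each component has complex dimension six; thus $\cM$ would be a codimension-two $\G$-invariant locus. The strategy is to find a completely horizontally periodic surface in $\cM$ (which exists by Smillie--Weiss type results on the density of such surfaces) and then apply Wright's cylinder deformation theorem: in a rank $r$ manifold, the horizontal cylinders of such a surface partition into at most $r$ $\cM$-equivalence classes. Setting $r = 2$ yields a severe combinatorial constraint.

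For $\hyp$, the idea is to exploit that every surface carries a hyperelliptic involution $\tau$ acting as $-\Id$ on $H^1(X;\bR)$, and that any affine sub-manifold must be $\tau$-invariant. First I would enumerate the possible cylinder diagrams of completely periodic surfaces in $\hyp$ (these are heavily restricted by the symmetry). For each diagram, $\tau$ partitions cylinders into $\tau$-orbits and pairs moduli; combining this with the $\leq 2$ $\cM$-equivalence class constraint, I would argue that the only compatible outcomes have rank one or rank three, forcing either $\cM \subsetneq \hyp$ of rank one or $\cM = \hyp$ (which is rank three), contradiction in the rank two case.

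For $\odd$ the strategy is the same, but the combinatorics is richer. Here I would show that any completely periodic surface $(X,\omega) \in \cM$ admits an involution $\sigma$ that respects the cylinder decomposition and matches $\cM$-equivalent cylinders in a Prym-type pattern. To promote this combinatorial $\sigma$ to a holomorphic involution, I would use Filip's algebraicity of affine invariant sub-manifolds together with $\G$-invariance of $\cM$ to conclude that $\sigma$ acts by an automorphism of $(X,\omega)$ (up to sign). The quotient $(X,\omega)/\sigma$ then lands in a quadratic differential stratum, and a Riemann--Hurwitz/dimension count, together with the zero order $4$ on the double cover, forces this stratum to be $\cQ(3,-1^3)$. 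Hence $\cM \subseteq \prym$, and dimension matching ($\dim_\bC \prym = 4$) gives equality; the parity of the spin structure on surfaces in $\prym$ puts them in $\odd$ rather than $\hyp$.

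The main obstacle, carried out in \cite{NguyenWright, AulicinoNguyenWright}, is the combinatorial classification: enumerating cylinder diagrams in each component of $\cH(4)$, pinning down which diagrams admit a two-class $\cM$-equivalence partition, and, in the odd case, verifying that such a partition always comes from a Prym involution rather than from some exotic rank two locus. A secondary obstacle is the passage from the topological/combinatorial involution $\sigma$ to a genuine holomorphic one; this step needs the full strength of the algebraicity and field-of-definition results to avoid producing a phantom sub-manifold that is not cut out by a holomorphic symmetry.
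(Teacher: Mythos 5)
This statement is not proved in the paper at all: Theorem \ref{NWANWThm} is imported verbatim from \cite{NguyenWright} and \cite{AulicinoNguyenWright}, so there is no internal proof to compare against. Judged against the strategy of those references (whose toolkit is exactly what Section 2 of this paper recalls), your sketch has the right skeleton --- Smillie--Weiss to get horizontally periodic surfaces, Wright's cylinder deformation theorem to constrain equivalence classes, enumeration of cylinder diagrams component by component, and cylinder-proportion computations to force a symmetry in the odd case --- but two of your key steps are off.

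First, you misquote Wright's theorem: a rank $r$ manifold contains a horizontally periodic surface whose cylinders fall into \emph{at least} $r$ equivalence classes, with the core curves spanning a subspace of dimension exactly $r$ (Theorem \ref{thm:RankkImpkCyl}); there is no a priori upper bound of $r$ on the number of classes. In $\cH(4)$ one instead uses that there are at most three horizontal cylinders, so rank two forces either exactly two classes, or three free cylinders whose core curves would span a $3$-dimensional isotropic subspace of $p(T_M^{\bR}\cM)$ --- impossible since that space is symplectic of dimension four (the argument of Lemma \ref{lm:3:f:cyl}). Your ``$\leq 2$ classes'' constraint is the right conclusion in this stratum but for the wrong reason, and the reasoning as stated would not survive outside $\cH(4)$.

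Second, the appeal to Filip's algebraicity to ``promote'' a combinatorial involution to a holomorphic one is not needed and is not how the argument goes. In \cite{AulicinoNguyenWright} the involution is produced by elementary flat geometry: the cylinder proportion identity $P(X,\cC)=P(Y,\cC)$ for $\cM$-parallel cylinders (Proposition \ref{CylinderPropProp}), combined with the slit-torus comparison lemma (\cite[Lem.\ 8.1]{AulicinoNguyenWright}, used repeatedly in the present paper, e.g.\ in Lemma \ref{lm:C3I:2SimCyl:C1C2isom} and in Claim 3 of Lemma \ref{lm:Odd:4IIOB}), shows that the two $\cM$-parallel cylinders are \emph{isometric} and symmetrically glued. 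The involution is then written down explicitly as an affine automorphism of the flat structure with derivative $-\Id$, which is automatically holomorphic; no algebraicity or field-of-definition input is required at that step. The quotient and dimension count you describe afterwards ($\cQ(3,-1^3)$, $\dim_\bC\prym=4$, equality) is correct and matches how the present paper reuses the result.
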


Following the diagram of the strata of Abelian differentials given in the theorem by Kontsevich-Zorich, we summarize all of the known classifications of rank two affine manifolds in genus three in Figure \ref{Rank2HnmFig}.

\begin{figure}[htb]
 \centering
\begin{tikzpicture}[scale=0.50]
\draw(-2,2) node[above] {-};
\draw(2,2) node[above] {$\tilde{\mathcal{Q}}(3,-1^3)$};
\draw(-5,0) node[below] {$\covershyp = \tilde{\mathcal{Q}}(1^2,-1^2)$};
\draw(0,0) node[below] {-};
\draw(5,0) node[below] {$\tilde{\mathcal{Q}}(4,-1^4) \supset \coversodd$};
\draw (-3.5,.5)--(-2.5,1.5);
\draw (-.5,.5)--(-1.5,1.5);
\draw (.5,.5)--(1.5,1.5);
\draw (3.5,.5)--(2.5,1.5);
\end{tikzpicture}
 \caption{Rank Two Affine Manifolds in Genus Three}
\label{Rank2HnmFig}
\end{figure}
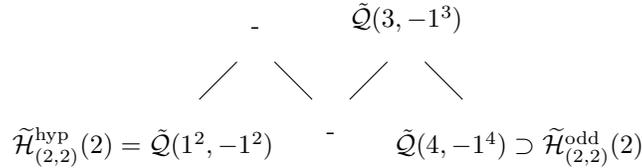

Though the primary goal of this paper is to prove a theorem about strata of translation surfaces in genus three with exactly two cone points, various results in this paper are established in various degrees of generality with the hope that they will lead to a classification of all higher rank affine manifolds in genus three.  There are several key techniques that are new to this paper that have not been used in the previous classifications of affine manifolds mentioned above.  We highlight the two general techniques concerning affine invariant submanifolds, and then we highlight the strategy to approach the overwhelming combinatorics of the problem.

\

\noindent \textbf{Affine Manifold Techniques:} First, both \cite{NguyenWright} and \cite{AulicinoNguyenWright} heavily relied on the use of \cite{SmillieWeissMinSets} to find a horizontally periodic translation surface in every orbit closure.  However, this was not sufficient for our purposes in this paper because when we find a translation surface with a ``partially periodic'' foliation (in the sense that there are horizontal cylinders, but they do not fill the surface) and some desirable properties, then we would like to preserve these properties while moving to a translation surface that is horizontally periodic.  Unfortunately, \cite{SmillieWeissMinSets} does not directly facilitate this because it requires flowing by the horocycle flow for an unspecified amount of time resulting in a loss of certain crucial information.

To remedy this problem, we use the fact that the affine invariant submanifold $\cM$ is arithmetic, which implies that square-tiled surfaces are dense in $\cM$.  This allows us to perturb in an arbitrarily small neighborhood in order to find a square-tiled surface with the desired properties (cf. Proposition \ref{prop:SimpCylPres}). This provides an additional benefit because square-tiled surfaces are Veech surfaces.  In particular, this implies that every direction that admits a saddle connection is periodic.  This tool will be used throughout the paper, cf. Lemma \ref{3PlusCyls}, Proposition \ref{prop:3III:imply:4cyls}, Proposition \ref{prop:C3I:loop:in:c1:c2}.\footnote{This list is not exhaustive.}

The second general technique is to use degenerations of translation surfaces.  Degenerating a translation surface in $\cM$ by collapsing an equivalence class of cylinders so that exactly two zeros collide, but no closed curve pinches, results in a translation surface that lies in an affine manifold of equal rank in a lower dimensional stratum, cf. Proposition \ref{prop:RankkImpRankkBd}.  This allows for a significant departure from the techniques of \cite{NguyenWright} and \cite{AulicinoNguyenWright}, which did not at all rely on any classification result in lower genus.  On the other hand, the present work is built upon the results in $\cH(4)$ by degenerating to $\cH(4)$ and drawing conclusions from this degeneration, cf. Theorem \ref{H31Rank2Class}, Lemmas \ref{Hhyp22UniqueDiagRed}, \ref{lm:Odd:4IOA} etc.

\

\noindent \textbf{Combinatorial Strategy:} The general philosophy behind this work falls perfectly in line with that of \cite{NguyenWright} and \cite{AulicinoNguyenWright}.  In both of those works, all of the cylinder diagrams were enumerated and studied in depth until the final result emerged.  In each of those works, the combinatorics were manageable in the sense that there were very few cylinder diagrams, they could be enumerated by hand, and the desired conclusions could be derived with reasonable effort.  On the other hand, a brute force approach to the classification problem in genus three by studying individual cylinder diagrams is hopeless.  There are over nine pages of cylinder diagrams in genus three with two or more cylinders!  While a general argument for the $2$-cylinder diagrams is relatively easy, the situation is far more complicated for $3$-cylinder diagrams.

After separating the cylinder diagrams by the number of cylinders, we make an additional separation of $n$-cylinder diagrams by the topology of the degenerate surfaces that are seen by pinching the core curves of every cylinder in the $n$-cylinder diagram, cf. Lemmas \ref{3CylDeg} and \ref{lm:4CylDeg}.

For $3$-cylinder diagrams, we have three cases (see Lemma~\ref{3CylDeg}).  Two of the cases are relatively easy to deal with in order to obtain surfaces with more horizontal cylinders.  The most complicated case of $3$-cylinder diagrams, cf. Case 3.I), is when the three core curves of the cylinders span a Lagrangian subspace of homology. Most of the $3$-cylinder diagrams fall into this case.  Of the numerous cylinder diagrams, we single out two properties that categorize all except one of the $3$-cylinder diagrams in this case.  Then we prove that from these properties, we can always produce a $4$-cylinder diagram, and this can also be accomplished in the exceptional case, cf. Section \ref{ExceptionalCaseSect}.

Finally, we enumerate all of the $4$-cylinder diagrams, of which there are not many, and prove the desired result.  We remark that the combinatorics in this paper can be done entirely without the use of a computer program such as Sage.  It suffices to study the dual graph of a periodic translation surface to enumerate all of the desired cylinder diagrams, cf. Section~\ref{sec:intro:DG} and Section~\ref{sec:list:4cyl:diag}.

\bigskip

\noindent {\bf Acknowledgments:} The authors warmly thank Alex Wright for his interest and for sharing with them numerous enlightening ideas and stimulating discussions, from which some of the lemmas in this paper were drawn.   They are also grateful to him for his helpful comments on an earlier version of this paper.  The authors are grateful to Mathematisches Forschungsinstitut Oberwolfach and the Universit\'e de Bordeaux whose hospitality has enabled the collaboration that led to this work.  Both authors thank Erwan Lanneau for helpful discussions, and Vincent Delecroix for providing the list of cylinder diagrams, which inspired the strategy of the paper. They thank the referee for helpful advice that improved the exposition of this paper.


\section{Preliminaries and General Results}

\subsection{Preliminaries}

\noindent \textbf{Strata of Translation Surfaces}: A \emph{translation surface} $M = (X, \omega)$ is a Riemann surface $X$ of genus $g \geq 2$ carrying an Abelian differential $\omega$.  The Abelian differential imposes a flat geometry on the Riemann surface up to a finite number of points $\Sigma \subset X$ corresponding to zeros of $\omega$.  Let each zero in $\Sigma$ have order $k_i$, so that $\kappa = (k_1, \ldots, k_n)$ is a partition of $2g-2$, the total order of the zeros of an Abelian differential.  Let $\cH(\kappa)$ denote a \emph{stratum of translation surfaces} of genus $g$ translation surfaces carrying Abelian differentials with zeros of orders specified by $\kappa$.

\

\noindent \textbf{$\G$ Action}: There is an action by $\G$ on strata of Abelian differentials given by embedding a translation surface in the plane as a collection of polygons and acting on the plane by $\G$.

\

\noindent \textbf{Period Coordinates}: Strata of translation surfaces admit a natural local coordinate system given as follows: Let $\{\gamma_i\} \subset H_1(X, \Sigma, \mathbb{Z})$ be a basis for relative homology.  Consider the map
$$\Phi: (X, \omega) \mapsto \left( \int_{\gamma_1}\omega, \ldots, \int_{\gamma_d} \omega \right).$$
This map is called the {\em period mapping}, it identifies a neighborhood of $(X, \omega)$ in its stratum with an open subset of $H^1(X, \Sigma, \mathbb{C})$. Consequently, $\Phi$ defines a coordinate system of $\cH(\kappa)$, the coordinate changes of this system are given by matrices in $\mathrm{GL}(d,\bZ)$, where $d=\dim H^1(X,\Sigma,\bC)$. In particular, $\cH(\kappa)$ admits a complex affine structure.

\

\noindent \textbf{Affine Manifolds} (see \cite{EskinMirzakhaniInvariantMeas}): An \emph{affine invariant submanifold} of $\cH(\kappa)$ is an immersed submanifold $\cM$ that is invariant under the action by $\G$, and admits a locally linear structure in period coordinates.  Throughout this paper, we abbreviate it to \emph{affine manifold}.  By \cite{EskinMirzakhaniInvariantMeas, EskinMirzakhaniMohammadiOrbitClosures}, every  $\G$-orbit closure is an affine manifold.\footnote{In fact, $\G$~orbit closures are not manifolds, but after passing to a finite cover, which we do throughout this paper, they become immersed submanifolds.  In particular, they may have finitely many self-intersections.  See \cite[Def. 1.1]{EskinMirzakhaniMohammadiOrbitClosures} for details.}  Moreover, if $\cM$ is an affine submanifold, and $\cM_{(1)}$ is the subset of $\cM$ consisting of surfaces of area one, then $\cM_{(1)}$ carries a finite ergodic measure for the action of \splin which is induced by the  Lebesgue measure on $\cM$.  
Since the work of \cite{EskinMirzakhaniInvariantMeas, EskinMirzakhaniMohammadiOrbitClosures}, affine manifolds have been shown to have a lot of additional structure.  It was shown in \cite{Filip2} that $\cM$ is a quasi-projective algebraic subvariety of $\cH(\kappa)$.

Let $T_M(\cM) \subset H^1(X, \Sigma, \bC)$ denote the tangent space to $\cM$ at $M$.  The tangent space behaves well under the tensor product with $\bC$.  Precisely speaking, there is a decomposition of the tangent space such that
$$T_M^{\bR}(\cM) \otimes \bC \subset H^1(X, \Sigma, \bR) \otimes \bC,$$
so that $T_M^{\bR}(\cM) \subset H^1(X, \Sigma, \bR)$.

\

\noindent \textbf{Field of Affine Definition}: Let $k(\cM)$ be the smallest field in which the coefficients of the linear equations defining $\cM$ lie.  It was proven in \cite{WrightFieldofDef}, that $k(\cM) \subset \mathbb{R}$ is a number field of degree at most $g$ over the rationals.  We call an affine manifold $\cM$ \emph{arithmetic} if $k(\cM) = \bQ$.

\

\noindent \textbf{Rank of an Affine Manifold}: Let $p: H^1(X, \Sigma,\bR) \rightarrow H^1(X,\bR)$ be the canonical projection from relative to absolute cohomology.  It was proven in \cite{AvilaEskinMollerForniBundle} that $p(T_M^{\mathbb{R}}(\cM))$ is symplectic, so in particular it is even dimensional.  In \cite{WrightCylDef}, the \emph{cylinder rank}, or \emph{rank} for short, of an affine manifold was defined to be

$$\text{rank}(\cM) = \frac{1}{2}\dim_{\mathbb{R}}p(T_M^{\mathbb{R}}(\cM)).$$

\

\noindent \textbf{Flat Structure}:  A trajectory beginning and ending at a cone point of $M$ is called a \emph{saddle connection}. A periodic (closed) trajectory on a translation surface $M$ that does not pass through the cone points of $M$ determines a cylinder $C$ of parallel periodic trajectories on $M$.  The \emph{height} $h$ of $C$ is the length of the line segment orthogonal to the core curve of the cylinder.  The \emph{width} $w$ is defined to be the circumference of $C$, and the modulus of $C$ is the quantity $h/w$.  By definition $C$ has two boundary components each of which consists of one or more saddle connections and freely homotopic to the core curves. We will say that $C$ is a {\em simple cylinder} if each of its boundary component consists of a single saddle connection.

Another important geometric parameter of a cylinder is its {\em twist}, which is somewhat more delicate to define.  Let $C$ be a horizontal cylinder.  To define the twist of $C$ we pick a saddle connection $a$ in the bottom boundary of $C$ and a saddle connection $b$ in the top boundary of $C$. Consider a saddle connection $s$ in $C$ joining the left endpoint of $a$ and the left endpoint of $b$.  Let $x+\imath y$ be the period of $s$. Note that we must have $y=h$. The twist of $C$ is then defined to be $x \mod w\bZ$.  Remark that there is a unique saddle connection $s_0$ in $C$ having the same endpoints as $s$ such that the real part $x_0$ of the period of $s_0$ satisfies $x_0 \in [0,w)$. By a slight abuse of notations, we sometimes say that the twist of $C$ is $x_0$.

A translation surface is said to be \emph{horizontally periodic} if every trajectory on $M$ in a fixed horizontal direction is closed. It is a theorem of \cite{SmillieWeissMinSets} that every affine manifold contains a horizontally periodic translation surface.  A \emph{cylinder diagram} is a representation of a translation surface $M$ expressing $M$ as a union of cylinders with marked saddle connections on the boundaries to indicate identifications.  Cylinder diagrams do not record the individual lengths of the cylinders or the saddle connections in their boundaries, so the equivalence is much weaker than it is for translation surfaces, i.e. two different translation surfaces can have the same cylinder diagram.  An $n$-cylinder diagram is a cylinder diagram with $n$ cylinders.

\

\noindent \textbf{$\cM$-parallelism and free cylinders}: Two cylinders $C_1$ and $C_2$ (resp. two saddle connections $\sigma_1$ and $\sigma_2$) are said to be \emph{$\cM$-parallel} if their core curves are parallel (resp. $\sigma_1$ and $\sigma_2$ are parallel), and they remain parallel under all local deformations in $\cM$. This notion is introduced in \cite{WrightCylDef}. The property of being $\cM$-parallel is an equivalence relation, so it makes sense to discuss an equivalence class of $\cM$-parallel cylinders (resp. saddle connections). The following two definitions can be found in \cite{WrightCylDef}.

\begin{definition}\label{def:free:cyl}
 A cylinder $C$ on $M$ is said to be {\em free} if it is $\cM$-parallel to no other cylinders on $M$. Equivalently, $C$ is free if its equivalence class is $\{C\}$.
\end{definition}

\begin{definition}
Let $a_s = \left( \begin{smallmatrix} 1 & 0 \\ 0 & e^s \end{smallmatrix} \right)$, and $u_t = \left( \begin{smallmatrix} 1 & t \\ 0 & 1 \end{smallmatrix} \right)$.  Let $\cC$ be an equivalence class of $\cM$-parallel cylinders.  Define the \emph{cylinder twist} $u_t^{\cC}$ (resp. \emph{cylinder stretch} $a_s^{\cC}$) to be the action of $u_t$ (resp. $a_s$) restricted to $\cC$ and leaving the complement of $\cC$ fixed.
\end{definition}

\begin{theorem}[\cite{WrightCylDef}, Thm. 5.1]
\label{thm:Wright:Cyl:Def}
Let $\cM$ be an affine manifold.  If $\cC$ is an equivalence class of $\cM$-parallel horizontal cylinders on $M \in \cM$, then for all $s,t \in \mathbb{R}$, $a_s^{\cC}(u_t^{\cC}(M)) \in \cM$.
\end{theorem}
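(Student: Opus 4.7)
My plan is to show, by a cohomological computation, that the infinitesimal generators of the flows $u_t^\cC$ and $a_s^\cC$ at $M$ lie in $T_M^\bR\cM$. Since $\cM$ is a linear subspace in period coordinates and the deformation in question moves $M$ along a complex affine line in $H^1(X,\Sigma;\bC)$, once the tangent direction is known to lie in $T_M\cM$, the entire two-parameter family $a_s^\cC(u_t^\cC(M))$ remains in $\cM$.

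\textbf{Cohomological description of the deformations.} I would first identify the relevant tangent vectors. The global horocycle $u_t$ sends $[\omega] = [\Re\omega] + i[\Im\omega]$ to $[\Re\omega] + t[\Im\omega] + i[\Im\omega]$, i.e.\ adds $t\cdot\sigma_{\mathrm{tot}}$ to the period coordinates, where $\sigma_{\mathrm{tot}}\in H^1(X,\Sigma;\bR)$ is the class with $\sigma_{\mathrm{tot}}(\eta) = \Im\int_\eta\omega$. The cylinder twist $u_t^\cC$ does the same, but is supported on $\cC$: it yields a class $\sigma_\cC$ satisfying $\sigma_\cC(\eta) = \Im\int_{\eta\cap\cC}\omega$, and one obtains $\sigma_{\mathrm{tot}} = \sum_{\cC'}\sigma_{\cC'}$, summed over all equivalence classes of $\cM$-parallel horizontal cylinders. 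Likewise, $a_s^\cC$ corresponds, to first order, to adding $i\sigma_\cC$ after identifying $H^1(X,\Sigma;\bC) = H^1(X,\Sigma;\bR)\otimes\bC$. Thus the theorem reduces to the single claim $\sigma_\cC \in T_M^\bR\cM$.

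\textbf{Using $\cM$-parallelism.} The $\G$-invariance of $\cM$ already gives $\sigma_{\mathrm{tot}}\in T_M^\bR\cM$. The remaining task is to split this sum across the equivalence classes. Here the hypothesis of $\cM$-parallelism is essential: by definition, the ratios of core-curve holonomies within one class are locally constant on $\cM$, while there is no such constraint between different classes. Translating this into the linear forms cutting out $T_M^\bR\cM$ in period coordinates, any such defining form either pairs only with cycles avoiding $\cC$ (on which $\sigma_\cC$ vanishes) or imposes a proportionality among core curves inside a single class (which $\sigma_\cC$ respects, since its values on these cycles are by construction proportional to the heights, which are the imaginary parts of the holonomies of transverse saddle connections). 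Evaluating each defining form on $\sigma_\cC$ then produces zero, so $\sigma_\cC \in T_M^\bR\cM$, and integrating gives the theorem.

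\textbf{Main obstacle.} The hard point is the rigorous translation of the $\cM$-parallel hypothesis — parallelism under \emph{all} local deformations in $\cM$ — into the explicit statement that each linear form defining $T_M^\bR\cM$ annihilates $\sigma_\cC$. A clean way to carry this out is to identify, for each class $\cC$, a ``twist subspace'' of $T_M^\bR\cM$ consisting of deformations supported on $\cC$, of dimension equal to $|\cC|$, and show this subspace must contain $\sigma_\cC$. Alternatively, one could circumvent the direct linear-algebra check by invoking the Eskin--Mirzakhani--Mohammadi classification, applied to the orbit closure of $M$ under the restricted flows $\{u_t^\cC, a_s^\cC\}$, and using $\cM$-parallelism to match the resulting affine submanifold locally with $\cM$.
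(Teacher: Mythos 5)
First, note that the paper does not prove this statement at all: it is quoted verbatim from Wright's cylinder deformation paper (\cite{WrightCylDef}, Thm.~5.1) and used as a black box, so there is no in-paper proof to compare against. Measured against the actual proof in that reference, your proposal has a genuine gap at its central step. The reduction of the theorem to the claim $\sigma_\cC\in T_M^\bR\cM$ is fine (modulo the fact that your decomposition $\sigma_{\mathrm{tot}}=\sum_{\cC'}\sigma_{\cC'}$ only holds when $M$ is horizontally periodic; in general the horizontal foliation has minimal components that also contribute to $\sigma_{\mathrm{tot}}$, and Wright needs a separate limiting argument via Smillie--Weiss to handle this). But the paragraph ``Using $\cM$-parallelism'' asserts that every linear form cutting out $T_M^\bR\cM$ either pairs only with cycles avoiding $\cC$ or is a proportionality relation among core curves within a single equivalence class. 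That is not justified and is false in general: $\cM$-parallelism only constrains how $T_M\cM$ evaluates on the core curves $\gamma_C$, whereas the defining equations of $T_M\cM$ are arbitrary linear relations among periods and may, for instance, couple a relative period of a saddle connection crossing a cylinder of $\cC$ to periods elsewhere on the surface. Nothing in the hypothesis forces such a form to annihilate $\sigma_\cC$, so the conclusion $\sigma_\cC\in T_M^\bR\cM$ does not follow from the computation you describe.

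Your two suggested repairs do not close the gap. A ``twist subspace supported on $\cC$ of dimension $|\cC|$'' would mean each cylinder in the class can be twisted independently inside $\cM$; this is exactly what is \emph{not} true in general --- the theorem only delivers the one-dimensional combination $\sum_{C\in\cC}h_C\gamma_C^\ast$. And invoking Eskin--Mirzakhani--Mohammadi for the orbit closure under $\{u_t^\cC,a_s^\cC\}$ is circular, since those maps are not yet known to preserve $\cM$ (that is the statement being proved) and do not come from an $\SL(2,\bR)$-action on the stratum to which the classification applies. The missing idea in Wright's actual argument is dynamical rather than linear-algebraic: for a horizontally periodic surface the twists live on a torus $\prod_C \bR/(w_C/h_C)\bZ$, the closure of the horocycle line in this torus is a subtorus contained in $\cM$, and after perturbing $M$ inside $\cM$ so that the moduli of cylinders in \emph{different} equivalence classes become rationally independent (possible precisely because non-$\cM$-parallel cylinders deform independently), the tangent space of that subtorus splits off the individual class twists $\sigma_\cC$. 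Without some substitute for that recurrence-plus-perturbation argument, the proof is incomplete.
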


\noindent \textbf{Twist and Preserving Space}: Let $M$ be a horizontally periodic translation surface in an affine manifold $\cM$.  The \emph{twist space} of $\cM$ at $M$, denoted $\Tw(M,\cM)$ is the largest subspace of $T_M^{\bR}\cM$ whose elements evaluate to zero on all horizontal saddle connections of $M$.  The \emph{cylinder preserving space} $\CP(M, \cM)$ is the largest subspace of $T_M^{\bR}\cM$ whose elements evaluate to zero on all core curves of horizontal cylinders of $M$.  Clearly, $\Tw(M, \cM) \subseteq \CP(M, \cM)$.

\begin{lemma}[\cite{WrightCylDef}, Lem. 8.6]
\label{lm:WrightTwistPresLem}
Let $M$ be a horizontally periodic translation surface in an affine manifold $\cM$.  If $\Tw(M, \cM) \not = \CP(M, \cM)$, then there exists a horizontally periodic translation surface in $\cM$ in a small neighborhood of $M$ with more horizontal cylinders than $M$.
\end{lemma}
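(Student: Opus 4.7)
The plan is to exhibit a tangent direction that, when used to deform $M$, preserves every existing horizontal cylinder while introducing an additional one. Since by hypothesis $\Tw(M,\cM) \subsetneq \CP(M,\cM)$, I would first pick $v \in \CP(M,\cM) \setminus \Tw(M,\cM)$. By definition $v$ vanishes on every horizontal core curve of $M$, yet there exists at least one horizontal saddle connection $\sigma_0$ with $v(\sigma_0) \neq 0$.

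Next, I would consider the deformation $M_t = M + itv$ for small $t \in \bR$. Since $\cM$ is a complex affine submanifold of the stratum, its complex tangent space equals $T_M^{\bR}\cM \otimes \bC$, so $iv \in T_M\cM$ and hence $M_t \in \cM$. Evaluating periods on $M_t$, each horizontal core curve $c$ has period $\int_c \omega + itv(c) = \int_c \omega \in \bR$, so $c$ remains a horizontal closed trajectory of the same length. Consequently every horizontal cylinder of $M$ persists on $M_t$ as a horizontal cylinder of the same width, and the surface $M_t$ stays horizontally periodic in that region.

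On the other hand, the saddle connection $\sigma_0$ acquires on $M_t$ the nonzero imaginary period $t\,v(\sigma_0)$, so it is no longer a horizontal saddle connection. Geometrically, the horizontal leaf issuing from an endpoint of $\sigma_0$ on $M_t$ no longer terminates at the other endpoint along the old path of $\sigma_0$: it continues past into the adjacent cylinder. Following this trajectory until it closes up produces a horizontal core curve on $M_t$ whose homology class was not realized by any horizontal core curve of $M$. Thus $M_t$ carries strictly more horizontal cylinders than $M$, and for $t$ small enough $M_t$ lies in an arbitrarily small neighborhood of $M$.

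The main obstacle is the final geometric step: justifying that the new horizontal trajectory on $M_t$ actually closes up into a cylinder, rather than becoming a dense or accumulating leaf. The leverage comes precisely from $v \in \CP(M,\cM)$: since every old cylinder persists on $M_t$ with the same width, the horizontal foliation of $M_t$ cannot become minimal on any subsurface; it must be fully periodic. A careful local analysis at $\sigma_0$, combined with an Euler characteristic count relating the number of horizontal cylinders to the number of horizontal saddle connections, then confirms that removing $\sigma_0$ from the horizontal direction forces the emergence of at least one new cylinder.
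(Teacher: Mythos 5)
Your opening moves are the standard ones and are essentially correct: you take $v \in \CP(M,\cM)\setminus\Tw(M,\cM)$, deform along $M_t = M + itv$ (legitimate because $T_M\cM = T_M^{\bR}\cM\otimes\bC$), observe that every core curve keeps its purely real period, so every horizontal cylinder of $M$ persists on $M_t$ as a horizontal cylinder of the same circumference, while some horizontal saddle connection $\sigma_0$ acquires a nonzero imaginary period, so the persistent cylinders no longer fill $M_t$. This is exactly how the paper's own square-tiled variant of this statement, Lemma~\ref{lm:sim:cyl:2:cl}, begins.

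The gap is in your closing step. The claim that ``since every old cylinder persists on $M_t$ with the same width, the horizontal foliation of $M_t$ cannot become minimal on any subsurface; it must be fully periodic'' is not true. The complement of the persistent cylinders is a subsurface with horizontal boundary, and the horizontal foliation restricted to that complement can perfectly well have a minimal component; the presence of cylinders elsewhere on the surface imposes no constraint on the dynamics of the foliation there. An Euler characteristic count relating singularities, saddle connections and cylinders is only available once you already know the direction is completely periodic, so it cannot supply the missing periodicity. This is precisely the hard point of the lemma, and it must be closed by an external input: Wright's proof invokes the Smillie--Weiss theorem \cite{SmillieWeissMinSets} (a horizontally periodic surface exists in the horocycle-orbit closure of $M_t$, onto which the persistent cylinders and their area deficit are carried, forcing extra cylinders there, with additional care needed to keep the resulting surface near $M$); alternatively, when $k(\cM)=\bQ$ one chooses $v$ with rational coordinates so that $M_t$ is square-tiled, hence a Veech surface, hence automatically horizontally periodic --- which is exactly the route the present paper takes in Lemma~\ref{lm:sim:cyl:2:cl} via Proposition~\ref{prop:SimpCylPres}. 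Without one of these inputs, your $M_t$ need not be horizontally periodic and the conclusion does not follow. A minor further slip: a horizontal separatrix emanating from a singularity never ``closes up into a core curve''; once periodicity is secured, the clean way to see that the cylinder count strictly increases is the strict area deficit of the persistent cylinders, not a new homology class.
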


Though it is not explicitly stated in the way it is phrased below, the following theorem follows from the proof of \cite[Thm. 1.10]{WrightCylDef} as well as \cite[Cor. 8.12]{WrightCylDef}.

\begin{theorem}[\cite{WrightCylDef}]
\label{thm:RankkImpkCyl}
Given an affine manifold $\cM$ of rank $k$, there exists a horizontally periodic translation surface $M \in \cM$ such that $\Tw(M, \cM) = \CP(M, \cM)$, in which case the cylinders on $M$ are split into at least $k$ equivalence classes of $\cM$-parallel cylinders, and the horizontal core curves of the cylinders on $M$ span a subspace of $T_M(\cM)^*$ of dimension $k$.

No set of core curves of parallel cylinders on a translation surface $M \in \cM$ may span a subspace of $T_M(\cM)^*$ of dimension greater than $k$.
\end{theorem}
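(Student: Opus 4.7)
The plan is to first use Smillie--Weiss together with iterative application of Lemma~\ref{lm:WrightTwistPresLem} to produce a horizontally periodic surface $M \in \cM$ satisfying $\Tw(M,\cM)=\CP(M,\cM)$, and then to analyze such an $M$ via Poincar\'e duality and the symplectic structure on $p(T_M^{\bR} \cM)$.

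For the existence half, I would start from any horizontally periodic $M_0 \in \cM$ guaranteed by Smillie--Weiss. If $\Tw \neq \CP$ for the current surface, Lemma~\ref{lm:WrightTwistPresLem} produces a nearby horizontally periodic surface in $\cM$ with strictly more horizontal cylinders. Iterating and observing that the number of horizontal cylinders on any surface in $\cH(\kappa)$ is bounded above by a topological constant depending only on $\kappa$, the process must terminate at a surface $M$ with $\Tw(M,\cM)=\CP(M,\cM)$.

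For such an $M$, I would consider the evaluation map $\Phi \colon T_M^{\bR}\cM \to \bR^N$, $v \mapsto (v(\gamma_j))_j$, where $\gamma_1,\dots,\gamma_N$ are the horizontal core curves. By definition $\ker\Phi = \CP(M,\cM)$, so the span of the $\gamma_j$'s in $(T_M^{\bR}\cM)^{*}$ has dimension equal to the codimension of $\CP$. Because each $\gamma_j$ is an absolute cycle, this span factors through $p(T_M^{\bR}\cM)^{*}$. Under Poincar\'e duality the classes $[\gamma_j]\in H^1(X,\bR)$ are pairwise orthogonal for the cup product (parallel horizontal cycles have vanishing intersection numbers), so the span is \emph{isotropic} in the $2k$-dimensional symplectic subspace $p(T_M^{\bR}\cM) \subset H^1(X,\bR)$ and therefore has dimension at most $k$. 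For the matching lower bound, note that $p(\Tw(M,\cM))$ lies in the same isotropic subspace (this is where the hypothesis $\Tw = \CP$ enters decisively); a computation extracted from the proofs of \cite[Thm.~1.10]{WrightCylDef} and \cite[Cor.~8.12]{WrightCylDef} upgrades $p(\Tw)$ to a full Lagrangian, whose symplectic annihilator is itself of dimension $k$. This forces the span of the $\gamma_j$ to have dimension exactly $k$.

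Finally, the number of $\cM$-parallel equivalence classes equals the number of distinct proportionality classes among the $\gamma_j$ as functionals on $T_M^{\bR}\cM$, and this is at least the dimension of their span, hence at least $k$. The last assertion of the theorem follows from exactly the same isotropic-subspace argument applied to an arbitrary $N \in \cM$: any family of parallel horizontal cycles spans an isotropic subspace of the $2k$-dimensional symplectic space $p(T_N^{\bR}\cM)$, so the core-curve span in $T_N(\cM)^{*}$ has dimension at most $k$. The delicate point, and the one I expect to be the main obstacle, is the Lagrangian identification of $p(\Tw(M,\cM))$ when $\Tw = \CP$, which requires careful bookkeeping of the relative-versus-absolute distinction and is the substantive content being imported from \cite{WrightCylDef}.
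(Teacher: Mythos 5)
Your proposal is correct and follows essentially the same route as the paper, which offers no proof of its own here but explicitly imports the statement from the proof of \cite[Thm.~1.10]{WrightCylDef} and \cite[Cor.~8.12]{WrightCylDef}: Smillie--Weiss plus iteration of Lemma~\ref{lm:WrightTwistPresLem} (bounded by $3g-3$ cylinders) to reach $\Tw=\CP$, then the isotropy of $p(\Tw(M,\cM))$ combined with $\Tw=\CP$ to pin the core-curve span at exactly $k$, and the isotropic-subspace bound for the final assertion. The one remark worth making is that the Lagrangian property of $p(\Tw)$ you flag as the delicate imported step actually falls out of the dimension count itself once you know $p(\Tw)$ is isotropic (\cite[Lem.~8.10]{WrightCylDef}, which this paper cites elsewhere): writing $d$ for the dimension of the core-curve span, $p(\CP)$ is the annihilator of that span in $p(T_M^{\bR}\cM)$ and has dimension $2k-d$, so $\Tw=\CP$ and isotropy give $2k-d\leq k$, i.e.\ $d\geq k$, with the Lagrangian conclusion a corollary rather than an input.
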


\noindent \textbf{Cylinder Proportions}:  Let $\cC$ be an equivalence class of $\cM$-parallel cylinders on a translation surface $M \in \cM$.  Let $X$ be a cylinder on $M$ (whose core curve is not necessarily parallel to those of the cylinders in $\cC$).  Define the \emph{cylinder proportion} of $X$ in $\cC$ to be
$$P(X, \cC) = \frac{\text{Area}(X \cap (\cup_{C \in \cC} C))}{\text{Area}(X)}.$$

\begin{proposition}[\cite{NguyenWright}]
\label{CylinderPropProp}
Let $X$ and $Y$ be $\cM$-parallel cylinders on a translation surface $M \in \cM$.  Let $\cC$ be an equivalence class of $\cM$-parallel cylinders on $M$.  Then $P(X, \cC) = P(Y, \cC)$.
\end{proposition}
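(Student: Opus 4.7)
The plan is to apply the cylinder twist $u_t^{\cC}$ granted by Theorem~\ref{thm:Wright:Cyl:Def} and, at the infinitesimal level, combine a direct computation of the period variation with the $\cM$-parallel hypothesis.

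First I would dispose of the degenerate cases. If the common direction of the core curves of $X$ and $Y$ coincides with the direction of $\cC$, then either $X, Y$ both belong to $\cC$ (so $P(X,\cC) = P(Y,\cC) = 1$), or they are distinct from and disjoint from every cylinder of $\cC$ (so $P(X,\cC) = P(Y,\cC) = 0$). Assume henceforth that $\cC$ is horizontal and that the core curves of $X$ and $Y$ make an angle $\phi \in (0,\pi)$ with the horizontal.

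Let $v \in T_M^{\bR}\cM$ be the velocity of the curve $t \mapsto u_t^{\cC}(M)$ at $t = 0$. The definition of the cylinder twist implies that, for any absolute homology class $c$, the number $v(c)$ equals the signed vertical extent (transverse to the direction of $\cC$) of $c$ inside $\bigcup_{C \in \cC} C$. In particular $v(c_X) = y^X_{in}$ and $v(c_Y) = y^Y_{in}$, where $y^X_{in}$ denotes the total signed height swept by the core curve $c_X$ as it traverses $\cC$, and similarly for $Y$. The $\cM$-parallel hypothesis supplies a constant $\lambda \in \bR$ with $\int_{c_X}\eta = \lambda \int_{c_Y}\eta$ for every $\eta \in T_M^{\bR}\cM \otimes \bC$. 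Setting $\eta = \omega$ gives $\lambda = w_X/w_Y$, where $w_X$ and $w_Y$ denote the circumferences of $X$ and $Y$; setting $\eta = v$ then yields the identity $y^X_{in}/w_X = y^Y_{in}/w_Y$.

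It remains to recognise this common ratio as the cylinder proportion (up to the factor $|\sin\phi|$). A standard flat-geometry computation decomposes $X \cap C$, for each $C \in \cC$, into $|\iota(c_X, c_C)|$ pairwise disjoint embedded parallelograms, each of area $h_X h_C / |\sin\phi|$, where $\iota(\cdot,\cdot)$ denotes the geometric intersection number of core curves. Summing over $C \in \cC$ and using that every transverse crossing of $c_X$ with a core curve of $\cC$ contributes $\pm h_C$ to $y^X_{in}$ with a sign common to all crossings (since they take place between two fixed oriented directions in the plane), one obtains $\mathrm{Area}(X \cap \cC) = h_X |y^X_{in}|/|\sin\phi|$, whence $P(X,\cC) = |y^X_{in}|/(w_X |\sin\phi|)$. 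The same formula for $Y$, combined with the equation $y^X_{in}/w_X = y^Y_{in}/w_Y$ from the previous paragraph, gives $P(X,\cC) = P(Y,\cC)$.

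The main technical obstacle I foresee is the rigorous verification of the parallelogram decomposition of $X \cap C$: one must confirm that each component is an embedded parallelogram of the claimed area and that distinct crossings of the core curves yield disjoint parallelograms. This is a local computation within each cylinder and ultimately follows from the embeddedness of $X$ and $C$ in $M$, but the bookkeeping requires some care.
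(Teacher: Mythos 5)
The paper does not prove this proposition; it is imported from \cite{NguyenWright}, and your argument is correct and is essentially the proof given there: differentiate the cylinder twist $u_t^{\cC}$ (legitimate by Theorem~\ref{thm:Wright:Cyl:Def}), evaluate the resulting tangent vector and $\omega$ itself on $c_X$ and $c_Y$, and convert the proportionality forced by $\cM$-parallelism into the equality of area proportions via the parallelogram decomposition of $X\cap C$. The only ingredient you use beyond the paper's stated definition is Wright's characterization of $\cM$-parallelism by a single real constant $\lambda$ with $\eta(c_X)=\lambda\,\eta(c_Y)$ for all $\eta\in T_M\cM$, which is a standard lemma from \cite{WrightCylDef} and is fair to invoke here.
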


\subsection{Dual Graphs}\label{sec:intro:DG}

To each cylinder decomposition we can associate an undirected graph as follows: the set of vertices  of the graph is in one-to-one correspondence  with the set of cylinders, each edge of the graph corresponds to a horizontal saddle connection, and the vertices that are connected by an edge correspond to cylinders that contain the corresponding saddle connection in their boundary. In particular, if there is a saddle connection that is contained in both the top and bottom boundary of a cylinder, then there is a loop at the corresponding vertex in the dual graph.

\medskip

We can now list some elementary properties of dual graphs
\begin{itemize}
\item[(1)] Dual graphs are connected.

\item[(2)] The number of edges of a dual graph depends only on the stratum, this is because the maximal number of saddle connections in a fixed direction is completely determined by the total angles at the singularities.  For $\cH(4)$ this number is five, and for $\cH(m,n), \, m+n=4$, this number is six.

\item[(3)] The valency of each vertex is at least two.

\item[(4)] In general, one cannot retrieve the cylinder diagram from the dual graph alone. To do this one needs to specify

\begin{itemize}
\item[$\bullet$] an orientation for each edge, the origin of an oriented edge  represents the cylinder that contains the corresponding saddle connection in the bottom border, and the head represents the cylinder that contains the corresponding saddle connection in the top border, and

\item[$\bullet$] a cyclic ordering on the set of outgoing (resp. incoming) rays at each vertex.
\end{itemize}
In what follows, we will call a dual graph with an orientation for each edge a {\em directed dual graph}, and  a dual graph with orientation for each edge, and cyclic orderings for the rays at each vertex a {\em complete dual graph}. It is worth noticing that graphs with cyclic order of edges at every vertex are also called {\em fat graphs} or {\em ribbon graphs} in the literature (see {\em e.g.} \cite[$\S$ 2]{Kontsevich92}).

\item[(5)] One can put a positive weight on each edge of a directed dual graph such that the following condition is satisfied: at each vertex, the sum of the weights of the incoming rays equals the sum of weights of the outgoing rays.

\item[(6)] The following configurations are forbidden in a dual graph.

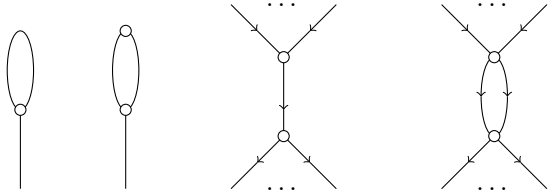
\begin{figure}[htb]
\centering
\begin{minipage}[t]{0.6\linewidth}
\centering
\begin{tikzpicture}[scale=0.7]
\draw (0,1.5) arc (-90:270: 0.25 and 0.75);
\draw (2,1.5) arc (-90:270: 0.25 and 0.75);

\draw (0,1.5) -- (0,0) (2,1.5) -- (2,0);

 \foreach \x in {(4,3.5), (5,1), (8,3.5), (9,1)} \draw[thin, ->]  \x -- +(0.5,-0.5);
 \foreach \x in {(4.5,3), (5.5,0.5), (8.5,3), (9.5,0.5)} \draw[thin] \x -- +(0.5,-0.5);

 \foreach \x in {(5,1), (6,3.5), (9,1), (10,3.5)} \draw[thin, ->]  \x -- +(-0.5,-0.5);
 \foreach \x in {(4,0), (5,2.5), (8,0), (9,2.5)} \draw[thin] \x -- +(0.5,0.5);

 \draw[thin, ->] (5,2.5) -- (5,1.5); \draw[thin] (5,1.5) -- (5,1);
 \draw[thin, <-] (9.25,1.75) arc (0:90: 0.25 and 0.75); \draw[thin] (9,1) arc (-90:0: 0.25 and 0.75);
 \draw[thin, ->] (9,2.5) arc (90:180:0.25 and 0.75); \draw[thin] (8.75,1.75) arc (180:270: 0.25 and 0.75);

 \draw (5,3.5) node{$\dots$} (5,0)  node {$\dots$} (9,3.5) node {$\dots$} (9,0) node {$\dots$};

 \foreach \x in {(0,1.5), (2,1.5), (2,3), (5,1), (5,2.5), (9,1), (9,2.5)} \filldraw[fill=white] \x circle (3pt);

\end{tikzpicture}
\end{minipage}
\caption{Forbidden configurations in a dual graph.}
\label{fig:forbid:config}
\end{figure}
\end{itemize}

The first two configurations (on the left) cannot occur in a dual graph since it is impossible to weight the (oriented) edges so that every vertex satisfies the condition that the sum of weights on the outgoing rays equals the sum of weights on the incoming rays. The last two configurations (on the right) correspond to two cylinders $C_1,C_2$ such that  the bottom side of $C_1$  equals the top side of $C_2$. If the bottom side of $C_1$ consists of one or two saddle connections, then this would imply that $C_1$ and $C_2$ are actually included in a larger cylinder.

\subsection{General Results on Arithmetic Affine Submanifolds}

The following lemma is well known to experts, and the interested reader can find a formal proof in \cite{AulicinoCompDegKZAIS}.

\begin{lemma}
\label{SqTiledDense}
Let $\cM$ be an affine manifold with rational affine field of definition.  Then $\cM$ contains a dense set of square-tiled surfaces.
\end{lemma}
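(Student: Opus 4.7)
\textbf{Proof plan for Lemma \ref{SqTiledDense}.}

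The plan is to combine two rational structures: the $\bQ$-rationality of the tangent space $T_M\cM$ coming from arithmeticity, and the cone structure of $\cM$ coming from $\G$-invariance. Fix a point $M=(X,\omega)\in\cM$, choose an integer basis $\gamma_1,\dots,\gamma_d$ of $H_1(X,\Sigma,\bZ)$, and use it to identify a neighborhood of $M$ in $\cH(\kappa)$ with a neighborhood of the period vector $(\int_{\gamma_i}\omega)_i\in\bC^d$. A translation surface in this chart whose period vector lies in $(\bQ+i\bQ)^d$ is a scaling of a square-tiled surface, so it suffices to produce elements of $\cM\cap(\bQ+i\bQ)^d$ arbitrarily close to~$M$.

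The first key observation is that, since $\cM$ is $\G$-invariant and $\G$ contains the positive-scalar subgroup and $SO(2)$, the submanifold $\cM$ is invariant under scaling by any $c\in\bC^*$. Differentiating the curve $c\mapsto c\cdot M$ at $c=1$ produces the vector $M$ itself, so the period vector of $M$ lies in $T_M\cM$. Consequently, on a small neighborhood of $M$ the submanifold $\cM$ coincides with the complex linear subspace $T_M\cM\subset\bC^d$, not merely with an affine translate of it.

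The second key observation is that arithmeticity, $k(\cM)=\bQ$, means that $T_M\cM$ is cut out by linear equations with rational coefficients in the integer basis above. Equivalently, $T_M\cM\cap\bQ^d$ is a $\bQ$-subspace of $\bQ^d$ whose $\bQ$-dimension equals $\dim_\bC T_M\cM$, so it admits a basis $v_1,\dots,v_e\in\bQ^d$ which is simultaneously a $\bC$-basis of $T_M\cM$. Using the first observation we may write $M=\sum_j c_j^{\,0}\,v_j$ with $c_j^{\,0}\in\bC$.

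Given $\eps>0$, approximate each coefficient $c_j^{\,0}$ by some $c_j\in\bQ+i\bQ$ (possible since $\bQ+i\bQ$ is dense in $\bC$) and set $M_\eps:=\sum_j c_j v_j$. Then $M_\eps\in T_M\cM$ and, because the $v_j$ have rational integer coordinates, every coordinate of $M_\eps$ lies in $\bQ+i\bQ$. For $\eps$ small enough, $M_\eps$ lies in the neighborhood of $M$ on which $\cM=T_M\cM$, so $M_\eps\in\cM$ is a square-tiled surface within $\eps$ of $M$. Since $M\in\cM$ was arbitrary, this proves density. The only conceptual step that is not entirely routine is the use of $\bC^*$-invariance to guarantee $M\in T_M\cM$: without it, an affine subspace of the form $M+T_M\cM$ with $T_M\cM$ only $\bQ$-rational need not contain any $\bQ$-rational points at all (as one sees on a generic line through a $\bQ$-rational direction in~$\bC^2$), and the density conclusion would fail.
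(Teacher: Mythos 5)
Your proof is correct. The paper does not actually prove Lemma \ref{SqTiledDense} -- it defers to the reference \cite{AulicinoCompDegKZAIS} -- but the argument you give is precisely the one the authors use implicitly in the proof of Proposition \ref{prop:SimpCylPres}: the tangent space is cut out by rational linear equations, the local linear structure is homogeneous (so the period point of $M$ itself lies in $T_M\cM$), and hence points with coordinates in $\bQ+\imath\bQ$ are dense in a chart around $M$. The one genuinely useful refinement you add is the explicit justification, via invariance under the $\bC^*\subset\G$ action, that the local model is a linear rather than merely affine subspace -- a point the paper takes for granted but which, as your counterexample of a rational-direction line through an irrational point shows, is exactly what makes the density of rational points non-vacuous. (As in the paper, ``square-tiled'' here must be read up to scaling, since a surface with periods in $(\bQ+\imath\bQ)^d$ is only an integer multiple away from having integral periods; this is consistent with how the authors use the lemma.)
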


\begin{definition}
A cylinder is called \emph{simple} if each boundary consists of exactly one saddle connection.
\end{definition}

\begin{proposition}
\label{prop:SimpCylPres}
Let $\cM$ be an affine manifold with rational affine field of definition.  Let $M$ be a surface in  $\cM$, $\{\sigma_i | i \in I\}$ be the set of  horizontal saddle connections, and $\{C_j | j \in J\}$  be the set of horizontal cylinders of $M$. Then there exists a square-tiled surface $M' \in \cM$ close to $M$  such that all of the saddle connections $\{\sigma_i | i \in I\}$  persist in $M'$ and  are also horizontal saddle connections in $M'$.

It follows that  $\{C_j | j \in J\}$ are also horizontal cylinders in $M'$. In particular, if $C_j$ is a simple cylinder in $M'$, it is also a simple cylinder in $M'$.
\end{proposition}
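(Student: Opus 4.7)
The plan is to work in period coordinates centered near a rational base point and combine the density of square-tiled surfaces in $\cM$ (Lemma~\ref{SqTiledDense}) with the elementary observation that asking a horizontal relative cycle $\sigma_i\in H_1(X,\Sigma,\bZ)$ to remain horizontal is one \emph{rational} linear condition on the period coordinates, namely $\Im(\omega(\sigma_i))=0$.

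Concretely, first I would invoke Lemma~\ref{SqTiledDense} to choose a square-tiled surface $N\in\cM$ extremely close to $M$, so $\Phi(N)\in H^1(X,\Sigma,\bQ[\imath])$. In period coordinates around $N$, $\cM$ is the slice $\Phi(N)+T_N^{\bR}\cM\otimes\bC$, and since $k(\cM)=\bQ$, the real tangent space $T_N^{\bR}\cM$ is a $\bQ$-rational subspace of $H^1(X,\Sigma,\bR)$. Writing $\omega=\Phi(N)+v$ with $v\in T_N^{\bR}\cM\otimes\bC$, the constraints $\Im(\omega(\sigma_i))=0$ become the inhomogeneous system $\Im(v(\sigma_i))=-\Im(\Phi(N)(\sigma_i))$, whose left-hand side is $\bZ$-linear in $v$ and whose right-hand side is rational because $\Phi(N)$ is rational. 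Intersected with the slice, this carves out a $\bQ$-rational affine subspace $A'$.

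Next I would verify that $A'$ is non-empty and apply density of rationals. The point $\Phi(M)$ lies in $A'$, because $M\in\cM$ (so $\Phi(M)$ is in the slice if $N$ was chosen close enough) and each $\sigma_i$ is already horizontal on $M$ by hypothesis. A non-empty affine subspace of $H^1(X,\Sigma,\bC)\cong\bR^{2d}$ defined over $\bQ$ contains $\bQ[\imath]$-points densely in the real topology; hence I can find $\omega'\in A'$ with $\bQ[\imath]$-periods arbitrarily close to $\Phi(M)$. After clearing denominators, $\omega'$ is the period vector of a square-tiled surface $M'\in\cM$ arbitrarily close to $M$, and by construction $\omega'(\sigma_i)\in\bR$ for every $i$.

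The final step is the geometric persistence argument, which is routine but worth spelling out. Because $M'$ is close to $M$ inside the stratum, each relative cycle $\sigma_i$ has a unique nearby representative saddle connection on $M'$, with holonomy $\omega'(\sigma_i)\in\bR$; this deformed saddle connection is therefore horizontal on $M'$. A horizontal cylinder $C_j$ has boundary built from these $\sigma_i$'s and interior free of cone points, so $C_j$ likewise persists on $M'$ as a horizontal cylinder of the same combinatorial type, in particular staying simple if it was simple on $M$. The only mildly subtle point is that $\Phi(M)$ itself need not be rational, which is exactly why the rational basepoint $N$ is introduced: it is needed to make the inhomogeneous constants $-\Im(\Phi(N)(\sigma_i))$ rational so that $A'$ inherits a $\bQ$-structure.
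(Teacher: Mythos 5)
Your proof is correct and follows essentially the same route as the paper: horizontality of each $\sigma_i$ is a rational linear condition on the period coordinates, so intersecting it with the rational tangent space of $\cM$ yields a rational subspace containing $\Phi(M)$, in which rational points are dense, and the persistence of the $\sigma_i$ and $C_j$ then follows as you say. The only difference is your detour through the auxiliary square-tiled base point $N$, which is superfluous: since each $\sigma_i$ is an integral relative homology class, the condition $\Im(\omega(\sigma_i))=0$ is already a \emph{homogeneous} rational linear equation, so the paper simply intersects $T_M\cM$ with $\bigcap_{i\in I}\{y_i=0\}$ and picks a rational point of the resulting rational linear subspace near $\Phi(M)$.
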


\begin{proof}
For all $\varepsilon > 0$, consider the set  $\tilde{U}_{\varepsilon} \subset \cH(\kappa)$ of radius $\varepsilon$ about $M$, where distance is measured in period coordinates.  We can choose $\varepsilon$ small enough so that all of the saddle connections $\{\sigma_i | i \in I\}$ persist on all surfaces $M' \in \tilde{U}_\varepsilon$. Set $U_\eps:=\tilde{U}_\eps \cap \cM$. We identify  $\tilde{U}_\eps$ with an open subset of $H^1(M,\Sigma,\bR+\imath\bR)\simeq \bR^{2d}$, and $U_\eps$ with an open subset of the linear subspace $T:=T_M\cM$. By assumption, $T$ is defined by linear equations with rational coefficients.

Let $z_i=x_i+\imath y_i$ be the period of $\sigma_i$. Since $\sigma_i$ is horizontal, we have $y_i = 0, \, \forall i\in I$. Define the linear subspace
$$
T':=(\cap_{i\in I}\{y_i=0\})\cap T \subset H^1(M,\Sigma,\bR+\imath\bR).
$$
Consider the set $V_\varepsilon:=\tilde{U}_\varepsilon\cap T'$. By definition, $V_\varepsilon$ is an open subset of the linear subspace $T'$, which is defined only by (linear) equations with rational coefficients. It may happen that $T'=\{0\}$, but by assumption, $M \in V_\varepsilon$, thus $\dim_\bR T' >0$, and  $V_\varepsilon \neq \varnothing$.  Therefore, one can find a point in $V_\varepsilon$ with rational coordinates. This point represents a square-tiled surface $M'$ for which $\{\sigma_i | i \in I\}$ are horizontal saddle connections.

Since the core curves of $\{C_j | j \in J \}$ are freely homotopic to a concatenation of some saddle connections in $\{\sigma_i | i \in I\}$, it follows that for all $j \in J$, $C_j$ persists in $M'$ and is also horizontal. In particular, if $C_j$ is a simple cylinder in $M$, then it is also a simple cylinder in $M'$.
\end{proof}


The following lemma is a special case of \cite[Lem. 6.1]{AulicinoNguyenWright}, we give the proof of this special case here for the convenience of the reader.

\begin{lemma}\label{lm:f:sim:cyl}
Let $C_1$ and $C_2$ be horizontal cylinders on a translation surface $M$.  Assume that $C_1$ is a simple cylinder that is only adjacent to $C_2$. If $C_1$ is free, then so is $C_2$.
\end{lemma}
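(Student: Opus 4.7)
The plan is to argue by contradiction. Suppose $C_2$ is not free, so there exists a horizontal cylinder $C_3$ with $C_3 \neq C_2$ that is $\cM$-parallel to $C_2$. Freeness of $C_1$ forces $C_3 \neq C_1$. Let $\cC$ denote the $\cM$-parallel equivalence class containing $C_2$ (hence also $C_3$), so that $|\cC|\ge 2$ and $C_1 \notin \cC$. The goal is to exhibit a cylinder $D \neq C_1$ on $M$ (or on a nearby $M' \in \cM$) that is $\cM$-parallel to $C_1$, contradicting the hypothesis.

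The first step is to record the homological consequences of the combinatorial hypothesis. Since $C_1$ is simple and only adjacent to $C_2$, its top and bottom boundaries each consist of a single saddle connection, call them $s_T$ and $s_B$, both of length $w_1$, with $s_T$ lying on the bottom boundary of $C_2$ and $s_B$ on the top boundary. Writing $s_T,\sigma_1,\ldots,\sigma_k$ for the saddle connections forming the bottom boundary of $C_2$, we obtain in $H_1(M\setminus\Sigma)$ the identity $[\alpha_2] = [\alpha_1] + \sum_{i=1}^{k}[\sigma_i]$. Denoting by $\phi_\gamma$ the period functional $v \mapsto \langle v, \gamma\rangle$ on $T_M\cM$, the $\cM$-parallelism gives $\phi_{\alpha_2} = \lambda\,\phi_{\alpha_3}$ for some $\lambda \in \bR$, and substituting yields
\[
\phi_{\alpha_1} \;=\; \lambda\,\phi_{\alpha_3} \;-\; \sum_{i=1}^{k} \phi_{\sigma_i} \qquad \text{on } T_M\cM.
\]

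The crux is to exploit this identity to locate a cylinder $D$ with $\phi_{\alpha_D}$ proportional to $\phi_{\alpha_1}$. The natural candidate is a simple cylinder attached to $C_3$ playing the role $C_1$ plays for $C_2$. To produce $D$, one applies Theorem~\ref{thm:Wright:Cyl:Def} to the class $\cC$: this gives cylinder-stretch and cylinder-twist deformations in $\cM$ that modify $C_2, C_3, \ldots$ while leaving $C_1$ (and the functionals $\phi_{\sigma_i}$, being horizontal) unchanged. Tracking the boundary structure of $C_3$ under these deformations and matching period functionals via the displayed identity should identify a simple cylinder adjacent only to $C_3$, which is then $\cM$-parallel to $C_1$ and distinct from it.

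The principal obstacle is the explicit identification of $D$. A priori, no simple cylinder attached solely to $C_3$ need be visible on the original $M$: the full $\cM$-parallel structure of $\cC$ may only make $D$ apparent after a suitable cylinder deformation, so we may need either to pass to a nearby surface $M' \in \cM$ produced by $a_s^{\cC} \circ u_t^{\cC}$ where the desired $D$ becomes manifest, or to argue indirectly from the period-functional identity above (combined with the fact that $\phi_{\alpha_1}$ is, by hypothesis, not proportional to the period functional of any other horizontal cylinder on $M$) to force a specific combination of $\phi_{\alpha_3}$ and the $\phi_{\sigma_i}$ to agree with the period functional of some horizontal cylinder, yielding the sought contradiction.
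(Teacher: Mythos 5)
Your plan has a genuine gap at exactly the step you flag as the ``principal obstacle'': the contradiction is never actually produced. The homological identity $\phi_{\alpha_1}=\lambda\,\phi_{\alpha_3}-\sum_i\phi_{\sigma_i}$ on $T_M\cM$ is correct but inert --- a linear combination of period functionals does not, by itself, correspond to any cylinder on $M$, and there is no reason a simple cylinder attached only to $C_3$ should exist on $M$ or on any surface $a_s^{\cC}\circ u_t^{\cC}(M)$: the boundary combinatorics of $C_3$ can be completely different from those of $C_2$ (e.g.\ $C_3$ adjacent to several cylinders, with no saddle connection repeated on its top and bottom), and cylinder stretches and twists of $\cC$ do not change that combinatorics. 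So the candidate $D$ you hope to ``identify'' need not exist, and the indirect alternative you sketch (forcing some combination of $\phi_{\alpha_3}$ and the $\phi_{\sigma_i}$ to match the functional of a horizontal cylinder) is not an argument.

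The missing idea is to work transversally and use cylinder proportions (Proposition~\ref{CylinderPropProp}), which your proposal never invokes. Since $C_1$ is simple and only adjacent to $C_2$, after twisting $C_1$ and the equivalence class of $C_2$ there is a vertical cylinder $D$ with $C_1\subset\overline{D}$ meeting only $C_1$ and $C_2$ (the closure of $D$ is a slit torus). Any $D'$ that is $\cM$-parallel to $D$ would satisfy $P(D',\{C_1\})=P(D,\{C_1\})>0$, hence would have to enter $C_1$; but $C_1$ is entirely contained in $D$, so $D'=D$ and $D$ is free. Then for any $C'$ $\cM$-parallel to $C_2$ one has $P(C',\{D\})=P(C_2,\{D\})>0$, so $C'$ meets $D$; since $D$ meets only $C_1$ and $C_2$, and $C_1$ is free hence not $\cM$-parallel to $C_2$, it follows that $C'=C_2$. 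This is the mechanism that makes the lemma work, and it is where your proposal stalls.
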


\begin{proof}
After appropriately twisting $C_1$ and the equivalence class of cylinders $\cM$-parallel to $C_2$, there is a vertical cylinder $D$ intersecting only $C_1$ and $C_2$, such that $C_1 \subset \overline{D}$. We claim that $D$ is free. Indeed, let $D'$ be another cylinder $\cM$-parallel to $D$. It follows from Proposition \ref{CylinderPropProp} that
$$
P(D',C_1)=P(D,C_1) >0,
$$
\noindent which implies that $D'\cap C_1\neq \varnothing$. But by definition, $C_1$ is entirely contained in $D$, thus we have a contradiction, which means that $D$ is alone in its equivalence class. Applying again Proposition~\ref{CylinderPropProp} to the intersection of $D$ with  the equivalence class of $C_2$, we see that  any cylinder $\cM$-parallel to $C_2$ must intersect $D$. But $D$ intersects only two horizontal cylinders, namely $C_1$ and $C_2$. Since $C_1$ is not $\cM$-parallel to $C_2$, we conclude that $C_2$ is also free.
\end{proof}

\begin{lemma}\label{lm:n:f:sim:cyl}
Let $\cM$ be an affine manifold defined over $\bQ$ with rank at least two.  Let $M \in \cM$ admit a horizontal cylinder decomposition $C_1, \ldots, C_k$.  Assume that the core curves of $C_1,\dots,C_k$ span a subspace of dimension at least two in $(T_M^{\bR}\cM)^\ast$. If $C_1$ is a  simple cylinder and $C_1$ is only adjacent to $C_2$, then $C_1$ and $C_2$ cannot belong to the same equivalence class.
\end{lemma}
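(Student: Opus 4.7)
The plan is to argue by contradiction: suppose $C_1$ and $C_2$ belong to a common $\cM$-equivalence class~$\cC$, and derive a contradiction by showing that $\bigcup\cC$ must then be all of $M$, incompatible with the hypothesis on the span of horizontal core curves.

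First, I would produce, following the construction in the proof of Lemma~\ref{lm:f:sim:cyl}, a vertical cylinder $D$ meeting only $C_1$ and $C_2$ with $C_1\subset\overline{D}$. The new wrinkle is that $C_1$ and $C_2$ can no longer be twisted independently; however, the cylinder twist $u_t^{\cC}$ from Theorem~\ref{thm:Wright:Cyl:Def} shifts the twists of $C_1$ and $C_2$ by $th_1$ and $th_2$ respectively, so the condition that a vertical trajectory starting on $s_b$ returns to $s_b$ after crossing $C_1$, then $s_t$, then $C_2$, is a single scalar linear equation in $t$ with nonzero coefficient $h_1+h_2$ (modulo $w_2$), and is therefore solvable. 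If convenient one may first invoke Proposition~\ref{prop:SimpCylPres} to pass to a nearby square-tiled representative so that all quantities involved are rational. I expect this construction of $D$ with only a single twist parameter available to be the main technical obstacle; everything afterward is a clean application of the proportion lemma.

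Next, I would apply Proposition~\ref{CylinderPropProp} twice to force a coincidence of unions. Because $D\subset C_1\cup C_2\subset\bigcup\cC$ we have $P(D,\cC)=1$; applying the proposition to pairs in $\cC_D$ then yields $P(D',\cC)=1$ for every $D'\in\cC_D$, so $\bigcup\cC_D\subseteq\bigcup\cC$. Conversely, $C_1\subset D$ gives $P(C_1,\cC_D)=1$; applying the proposition to pairs in $\cC$ yields $P(C_i,\cC_D)=1$ for every $C_i\in\cC$, so $\bigcup\cC\subseteq\bigcup\cC_D$. Setting $S:=\bigcup\cC=\bigcup\cC_D$, the same closed subset of $M$ is now exhibited both as a union of horizontal cylinders (those in $\cC$) and as a union of vertical cylinders (those in $\cC_D$).

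The topological heart of the argument is then that the boundary $\partial S$ in $M$ admits two incompatible descriptions: from the $\cC$-decomposition it is a union of horizontal saddle connections (those separating $\cC$ from the complementary horizontal cylinders), while from the $\cC_D$-decomposition it is a union of vertical saddle connections. Since a horizontal and a vertical saddle connection can meet only at cone points, $\partial S$ is contained in the finite set of singularities, which on a $2$-manifold forces $\partial S=\varnothing$ as a $1$-dimensional locus. By connectedness of $M$ this yields $S=M$. But the span hypothesis, combined with the fact that the cylinders in $\cC$ have pairwise proportional core curves and thus span at most a $1$-dimensional subspace of $(T_M^{\bR}\cM)^\ast$, implies that there exists $C_j\notin\cC$; then $C_j\subset M\setminus S$ and $S\neq M$, the desired contradiction. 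Hence $C_1\not\sim C_2$.
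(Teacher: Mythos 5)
Your proof is correct and follows essentially the same route as the paper's: you construct a vertical cylinder $D$ with $C_1\subset\overline{D}$ meeting only $C_1$ and $C_2$ (with the square-tiled perturbation of Proposition~\ref{prop:SimpCylPres} supplying the rationality needed for $D$ to fill the slit torus), then apply Proposition~\ref{CylinderPropProp} in both directions to show that the union of the cylinders in $\cC$ is saturated under the vertical decomposition, forcing it to be all of $M$ and contradicting the hypothesis that the core curves span a subspace of dimension at least two. The only cosmetic difference is the final step, where the paper derives the contradiction from adjacency plus connectedness rather than from your observation that $\partial S$ would have to be simultaneously horizontal and vertical.
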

\begin{proof}
By contradiction, suppose that $C_1$ and $C_2$ are $\cM$-parallel.  After twisting $C_2$ by applying the horocycle flow to $M$, there is a pair of vertical saddle connections in $C_2$ that cut out a rectangle $R$ whose horizontal sides form the boundary of $C_1$. Since $\cM$ is defined over $\bQ$, we can suppose that all of the periods of $M$ are rational by Proposition~\ref{prop:SimpCylPres}. In particular, we can suppose that the ratio of the twist of $C_1$ and its circumference is rational. It follows that there exists a vertical cylinder $D$  such that $\overline{D}=\overline{R}\cup \overline{C_1}$. We see that there is a single cylinder $D$ that fills $C_1$ because $\overline{R}\cup\overline{C_1}$ can be regarded as a slit torus as in \cite{AulicinoNguyenWright}. Since the slit is vertical in this case, there is only one vertical cylinder if the vertical flow is periodic.

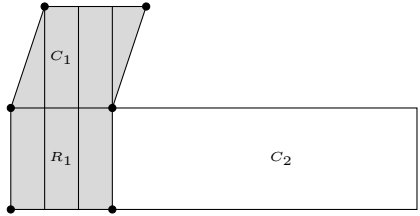
\begin{figure}[htb]
\centering
\begin{tikzpicture}[scale=0.45]
\fill[gray!30] (0,0) -- (3,0) -- (3,3) -- (4,6) -- (1,6) -- (0,3) -- cycle;

\draw[thin] (0,0) -- (0,3) -- (1,6) --  (4,6) -- (3,3) -- (3,0);
\draw[thin] (0,0) -- (12,0) -- (12,3) -- (0,3);
\draw[thin] (1,6) -- (1,0) (2,6) -- (2,0) (3,6) -- (3,3);
\foreach \x in {(0,0), (3,0), (3,3), (4,6), (1,6), (0,3)} \filldraw[fill=black] \x circle (3pt);

\draw (1.5,4.5) node {\tiny $C_1$} (1.5,1.5) node {\tiny $R_1$} (8,1.5) node {\tiny $C_2$};
\end{tikzpicture}

\caption{The shaded region realizes a slit torus containing $C_1$. When the ratio of the twist and the circumference of $C_1$ is rational, the vertical flow in this slit torus is periodic. We then have a single vertical cylinder $D$ that fills out this region.}
\label{fig:s:cyl:adj:one:other:cyl}
\end{figure}

 Let $\cC$ be the equivalence class of $C_1$ and $\cD$ the equivalence class of $D$. We have
 $$
 P(C_1,\cD)= 1 \Rightarrow P(C_i,\cD)=1, \quad \forall C_i \in \cC,
 $$
 \noindent which means that $C_i$ is filled by  vertical cylinders $\cM$-parallel to $D$.  On the other hand, we also have $P(D,\cC)=1$ since $D \subset \overline{C_1}\cup \overline{C_2}$. Therefore, any vertical cylinder in $\cD$ intersects only cylinders in $\cC$. It follows that a cylinder in $\cC$ can only be adjacent to cylinders in $\cC$. By assumption, we know that there are at least two equivalence classes of horizontal cylinders. Since the surface is connected, there must be a cylinder in $\cC$ which is adjacent to a cylinder in another equivalence class, and we get a contradiction.
\end{proof}

\begin{lemma}\label{lm:loop}
Let $\cM$ be an affine manifold with rank at least two.  Assume that $M \in \cM$ admits a cylinder decomposition in the horizontal direction such that there are  at least two equivalence classes of $\cM$-parallel cylinders. Let $C$ and $C'$ be two horizontal cylinders in $M$. Suppose that the top boundary of $C$ is contained in the bottom of cylinders that are not $\cM$-parallel to $C$, and  there is a saddle connection which is contained in both the top and bottom borders of $C'$. Then $C$ and $C'$ cannot belong to the same equivalence class.
\end{lemma}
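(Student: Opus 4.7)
The proof is by contradiction: assume $C$ and $C'$ lie in the same equivalence class $\cC$. The strategy, in the spirit of Lemma \ref{lm:f:sim:cyl} and Lemma \ref{lm:n:f:sim:cyl}, is to use the saddle connection $s$ to construct a vertical cylinder $D\subset\overline{C'}$, and then apply the cylinder proportion identity of Proposition \ref{CylinderPropProp} twice to derive a contradiction with the hypothesis that the top of $C$ is glued only to non-$\cC$ cylinders.

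Step 1 (construction of $D$). Parametrize $C'$ as a rectangle of circumference $w$ and height $h$; the saddle connection $s$ appears once on the bottom at $[a, a+\ell_s]\times\{0\}$ and once on the top at $[b, b+\ell_s]\times\{h\}$, and both sides of $s$ border $C'$. Apply the cylinder twist $u_t^{\cC}$ from Theorem \ref{thm:Wright:Cyl:Def} to $M$: this shears $C'$, moving the top copy of $s$ to $[b+th, b+th+\ell_s]\times\{h\}$ while leaving the bottom copy fixed. Choose $t$ so that $b+th\equiv a\pmod{w}$, which aligns the two copies of $s$ vertically. Then for each $x\in (a,a+\ell_s)$ the vertical segment $\{x\}\times[0,h]$ in $C'$ closes up after one crossing of $s$ into a vertical loop of length $h$, and these loops sweep out a vertical cylinder $D\subset \overline{C'}$ in $u_t^{\cC}(M)\in\cM$. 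Replacing $M$ by $u_t^{\cC}(M)$ (which preserves both $\cM$-parallelism of cylinders and the hypothesis on the top of $C$), we may assume $D$ exists on $M$ itself.

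Step 2 (cylinder proportions). Let $\cD$ be the equivalence class of $\cM$-parallel cylinders containing $D$. Since $\overline{D}\subset\overline{C'}$ and $C'\in\cC$, we have $P(D,\cC)=1$, hence by Proposition \ref{CylinderPropProp}, $P(D',\cC)=1$ for every $D'\in\cD$. Because each $D'$ is an open set and the complement of $\bigcup_{X\in\cC}\overline{X}$ in $M$ meets only horizontal cylinders that are not in $\cC$, this forces every cylinder in $\cD$ to be disjoint from every horizontal cylinder outside $\cC$. Conversely, $P(C',\cD)\geq \Area(D)/\Area(C')>0$, and Proposition \ref{CylinderPropProp} gives $P(C,\cD)=P(C',\cD)>0$, so some $D''\in\cD$ satisfies $D''\cap C\neq\varnothing$.

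Step 3 (contradiction). Choose a generic point $p\in D''\cap C$ so that the vertical trajectory of $D''$ through $p$ does not hit a singularity as it crosses $C$. This trajectory exits $C$ through its top boundary into the horizontal cylinder $C'''$ directly above. By hypothesis, $C'''$ is not $\cM$-parallel to $C$, so $C'''\notin\cC$; hence $D''\cap C'''\neq\varnothing$, contradicting Step 2. Therefore $C$ and $C'$ cannot lie in the same equivalence class. The main obstacle is Step 1, namely verifying that the aligned configuration really produces a vertical cylinder whose closure lies in $\overline{C'}$; this is an elementary calculation in the rectangle coordinates of $C'$ once one uses that both sides of $s$ border $C'$.
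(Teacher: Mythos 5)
Your proof is correct and follows essentially the same route as the paper: twist so that the repeated saddle connection of $C'$ produces a vertical simple cylinder contained in $\overline{C'}$, use Proposition \ref{CylinderPropProp} to find an $\cM$-parallel vertical cylinder crossing $C$, and contradict $P(\cdot,\cC)=1$ using the hypothesis that the top of $C$ borders only cylinders outside $\cC$. The paper's version is terser (it compares $P(D,\cC)<1$ with $P(D',\cC)=1$ directly), but the argument is the same.
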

\begin{proof}
Assume by contradiction that $C$ and $C'$ are $\cM$-parallel. Let us denote the equivalence class of $C$ by $\cC$. By assumption, there is a transverse cylinder $D'$ contained in $C'$ (we can suppose that $D'$ is vertical). Since $C$ is $\cM$-parallel to $C'$, there must be a vertical cylinder $D$ in the equivalence class of $D'$ that crosses $C$.  But by assumption,  any vertical cylinder crossing $C$ must cross cylinders not in $\cC$. Thus we have $P(D,\cC) < 1$, while $P(D',\cC)=1$, which is a contradiction.
\end{proof}

\begin{remark}
 Lemma~\ref{lm:loop} already appeared in \cite[Lem. 6.3]{AulicinoNguyenWright}.
\end{remark}

\begin{lemma}\label{lm:sim:cyl:2:cl}
Let $\cM$ be an affine manifold defined over $\bQ$ with rank at least two, and $M$ a translation surface in $\cM$.  Let $\cS:=\{C_i | i=1,\dots,s \}$ denote the set of simple horizontal cylinders of $M$. Then there exists a surface $M'\in \cM$, which is also decomposed into cylinders in the horizontal direction with at least $s$ simple cylinders and at least two equivalence classes.
\end{lemma}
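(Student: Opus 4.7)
The plan is to iteratively invoke Lemma \ref{lm:WrightTwistPresLem} to produce nearby horizontally periodic surfaces in $\cM$ with more and more horizontal cylinders, while ensuring that the $s$ original simple cylinders $C_1,\dots,C_s$ survive at every step. First, if $M$ already admits at least two equivalence classes of $\cM$-parallel horizontal cylinders, then we simply take $M'=M$. Otherwise, all horizontal cylinders of $M$ lie in a single equivalence class, and since $\text{rank}(\cM)\geq 2$, Theorem \ref{thm:RankkImpkCyl} (applied contrapositively: an equality of twist and cylinder preserving spaces would force at least $\text{rank}(\cM)$ equivalence classes) implies $\Tw(M,\cM)\neq \CP(M,\cM)$. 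Hence Lemma \ref{lm:WrightTwistPresLem} yields a horizontally periodic surface $M_1\in\cM$, in an arbitrarily small period-coordinate neighborhood of $M$, with strictly more horizontal cylinders than $M$.

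The critical step is to show that each simple cylinder $C_i$ of $M$ persists as a simple horizontal cylinder of $M_1$. The perturbation underlying Lemma \ref{lm:WrightTwistPresLem} corresponds to a cohomology class in $\CP(M,\cM)$, so it vanishes on all horizontal core curves. The top and bottom boundaries of the simple cylinder $C_i$ each consist of a single saddle connection; being closed loops, they represent the same absolute homology class as the core curve of $C_i$. Consequently the perturbing class also vanishes on these boundary saddle connections, so they persist with unchanged periods in $M_1$ as horizontal saddle connections. Moreover, the interior of $C_i$ contains no cone points of the Abelian differential, so no new horizontal saddle connection can materialize inside $C_i$ under a small perturbation. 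Therefore $C_i$ remains a simple horizontal cylinder of $M_1$.

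We then iterate: if $M_1$ does not already have at least two equivalence classes, the same argument applied with $M_1$ in place of $M$ yields $M_2\in\cM$ horizontally periodic, with strictly more horizontal cylinders than $M_1$ and with all $s$ simple cylinders $C_i$ still intact (using the induction that each $C_i$ is simple in $M_k$, so the cohomological persistence argument applies anew). Since the number of horizontal cylinders on any surface in a fixed stratum $\cH(\kappa)$ is bounded above by a constant depending only on $\kappa$, this process must terminate after finitely many steps at some $M_k$ which either already possesses at least two equivalence classes, or satisfies $\Tw(M_k,\cM)=\CP(M_k,\cM)$; in the latter case Theorem \ref{thm:RankkImpkCyl} forces at least $\text{rank}(\cM)\geq 2$ equivalence classes. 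Setting $M':=M_k$ then yields the desired surface. The main obstacle is the simple-cylinder persistence argument under iterated perturbations, handled by the cohomological reasoning above combined with the topological observation that cylinder interiors are cone-point free.
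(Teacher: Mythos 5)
Your overall strategy --- iterate a twist-space versus cylinder-preserving-space argument, check that the simple cylinders survive each perturbation, and terminate using the bound on the number of horizontal cylinders --- is the same skeleton as the paper's proof, and your justification that the boundary saddle connection of a simple cylinder is homologous to its core curve (so any class in $\CP(M,\cM)$ vanishes on it) is correct and even more explicit than the paper's. But there is a genuine gap: you never use the hypothesis that $\cM$ is defined over $\bQ$, and that hypothesis is exactly what is needed to make your iteration run. Two problems arise. First, the lemma does not assume $M$ is horizontally periodic (and it is applied in the paper to surfaces that are not known to be periodic in the relevant direction), yet $\Tw(M,\cM)$, $\CP(M,\cM)$ and Lemma \ref{lm:WrightTwistPresLem} are only defined/stated for horizontally periodic surfaces; your first step is therefore not available. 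Second, and more seriously, you conflate the black-box conclusion of Lemma \ref{lm:WrightTwistPresLem} (which outputs a horizontally periodic surface but gives you no handle on which saddle connections persist) with the explicit deformation $M+\imath t\eta$, $\eta\in\CP\setminus\Tw$, that underlies its proof. The explicit deformation does preserve the simple cylinders, as you argue, but the resulting surface need not be horizontally periodic: tilting a saddle connection on the boundary of a non-simple cylinder can create horizontal leaves that wander between the old cylinder regions. So after one step you may no longer be able to speak of $\Tw(M_1,\cM)$ and $\CP(M_1,\cM)$ at all, and the conclusion that $M'$ is ``decomposed into cylinders in the horizontal direction'' is not established. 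This is precisely the difficulty the paper flags in its introduction (invoking Smillie--Weiss to restore periodicity loses the crucial information).

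The paper repairs both defects with the arithmeticity hypothesis: Proposition \ref{prop:SimpCylPres} first replaces $M$ by a nearby \emph{square-tiled} surface on which the given horizontal saddle connections (hence the simple cylinders) persist, so the surface is horizontally periodic from the outset; then at each step the class $\eta\in\CP\setminus\Tw$ is chosen with \emph{rational} coordinates, so that $M+\imath\eta$ is again square-tiled, hence again horizontally periodic with the simple cylinders intact, and the iteration can continue. To close the gap in your argument you would need to insert these two rationality steps; without them the induction does not get off the ground and does not propagate.
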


\begin{proof}
We follow the argument of \cite[Lem. 8.6]{WrightCylDef} combined with the hypothesis that $\cM$ is defined over $\bQ$. Let ${\rm Pres}(M,\cM) \subset T^{\bR}_{M}\cM$ and ${\rm Twist}(M,\cM)\subset T^{\bR}_{M}\cM$ be as defined above.  Using Proposition~\ref{prop:SimpCylPres}, we can assume that $M$ is square-tiled.

Obviously we only have to consider the case where all of the horizontal cylinders of $M$ belong to the same equivalence class, which means that the core curves $c_1,\dots,c_k$ of these cylinders span a subspace of dimension one in $(T_M^{\bR}\cM)^\ast$. By definition we have
$$
{\rm Pres}(M,\cM) = \ker(c_1) \cap \dots \cap \ker(c_k)=\ker(c_1) \subset T^{\bR}_M\cM.
$$
Therefore, ${\rm Pres}(M,\cM)$ has codimension one. We know that $p({\rm Twist}(M,\cM))$ is an isotropic subspace of $p(T^{\bR}_M\cM)$ by \cite[Lem. 8.10]{WrightCylDef}, thus $p({\rm Twist}(X,\cM))$ has codimension at least two in $p(T^{\bR}_M\cM)$, which implies that ${\rm Twist}(M,\cM)$ has codimension at least two in $T^{\bR}_M\cM$. It follows that ${\rm Twist}(M,\cM) \subsetneq {\rm Pres}(M,\cM)$.

Let $\eta \in {\rm Pres}(M,\cM)\setminus {\rm Twist}(M,\cM)$ with all of its coordinates in $\bQ$, and small Euclidean norm. If $||\eta||$ is small enough, then $M'=M+\imath\eta$ is a well-defined surface in $\cM$ and all of the horizontal simple cylinders in $M$ remain simple in $M'$. Observe that all of the coordinates of $M'$ are rational.  Thus, $M'$ is a square tiled surface admitting a cylinder decomposition in the horizontal direction with more cylinders than $M$. 

 If there are two equivalence classes of horizontal cylinders on $M'$, then we are done.  Otherwise we can repeat the deformations along vectors in ${\rm Pres}(M',\cM)\setminus{\rm Twist}(M',\cM)$ to get more horizontal cylinders. However, a cylinder decomposition of any surface in genus $g$ cannot have more than $3g-3$ cylinders. Therefore, after finitely many steps we get a horizontally periodic surface in $\cM$ with th maximal number of horizontal cylinders and at least $s$ simple cylinders. If there is only one equivalence class, then the argument above shows that this procedure can be repeated to produce a surface with more horizontal cylinders, which is impossible. Therefore, we can conclude that there are at least two equivalence classes of horizontal cylinders.
\end{proof}

\begin{lemma}\label{lm:3:f:cyl}
Let $\cM$ be a rank two affine manifold in genus three.  Suppose that $M \in \cM$ is decomposed into three horizontal cylinders $C_1, C_2, C_3$, with core curves $c_1,c_2,c_3$, respectively.  If $\{c_1,c_2,c_3\}$ span a Lagrangian subspace of $H_1(M,\bZ)$, then $C_1,C_2,C_3$ cannot all be free.
\end{lemma}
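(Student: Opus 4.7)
My plan is to argue by contradiction by exhibiting a three-dimensional isotropic subspace inside $p(T_M^\bR\cM)$ and invoking the fact, due to \cite{AvilaEskinMollerForniBundle}, that this is a symplectic vector space of dimension $2\cdot\text{rank}(\cM)=4$, whose maximal isotropic subspaces therefore have dimension at most two.

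Assuming for contradiction that $C_1, C_2, C_3$ are all free, the equivalence class of each $C_i$ is simply $\{C_i\}$, so Theorem~\ref{thm:Wright:Cyl:Def} ensures that the cylinder twist $u_t^{C_i}$ preserves $\cM$ for every $t$. Let $\tau_i \in T_M^\bR\cM$ denote the derivative of $u_t^{C_i}$ at $t=0$. The action of $u_t^{C_i}$ on period coordinates (shearing $(x,y)\mapsto(x+ty,y)$ inside $C_i$, trivial on the complement) yields, for every absolute cycle $\gamma \in H_1(M,\bZ)$,
$$
\tau_i(\gamma) \;=\; h_i \cdot (\gamma \cdot c_i),
$$
where $h_i>0$ is the height of $C_i$ and $\gamma \cdot c_i$ is the algebraic intersection number. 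Equivalently, $p(\tau_i) = h_i\cdot\mathrm{PD}(c_i)$ in $H^1(M,\bR)$, where $\mathrm{PD}\colon H_1(M,\bR)\to H^1(M,\bR)$ denotes the Poincar\'e duality isomorphism.

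The Lagrangian hypothesis states that $c_1, c_2, c_3$ are linearly independent in $H_1(M,\bR)$, so $\mathrm{PD}(c_1), \mathrm{PD}(c_2), \mathrm{PD}(c_3)$ are linearly independent in $H^1(M,\bR)$, and hence so are $p(\tau_1), p(\tau_2), p(\tau_3)$. Moreover, since the $c_i$ are pairwise disjoint simple closed curves, $c_i\cdot c_j=0$ for all $i,j$, and therefore the Poincar\'e duals $\mathrm{PD}(c_i)$ pair to zero under the cup product. Thus $\span\{p(\tau_1), p(\tau_2), p(\tau_3)\}$ is a three-dimensional isotropic subspace of $p(T_M^\bR\cM)$, yielding the desired contradiction.

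The argument reduces to purely linear algebra once the identification $p(\tau_i) = h_i\cdot\mathrm{PD}(c_i)$ is in place, and the latter is a direct unpacking of how cylinder twists act on periods. I do not anticipate any serious obstacle; the key conceptual point is simply that the three free cylinders produce three independent twist deformations, while the Lagrangian condition forces the resulting three cohomology classes in $p(T_M^\bR\cM)$ to violate the symplectic bound on isotropic subspaces.
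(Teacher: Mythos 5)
Your proof is correct and follows essentially the same route as the paper: both arguments use Theorem~\ref{thm:Wright:Cyl:Def} to produce three independent twist vectors whose projections to absolute cohomology are (multiples of) the Poincar\'e duals of $c_1,c_2,c_3$, and then derive a contradiction from the existence of a three-dimensional isotropic subspace in the four-dimensional symplectic space $p(T_M^{\bR}\cM)$. The only cosmetic difference is that you make the identification of $p(\tau_i)$ with $h_i\cdot\mathrm{PD}(c_i)$ explicit via intersection numbers, whereas the paper phrases it through the identification of $H^1(M,\Sigma,\bZ)$ with $H_1(M\setminus\Sigma,\bZ)$.
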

\begin{proof}
This lemma is a consequence of Theorem~\ref{thm:Wright:Cyl:Def} and the fact that the projection of $T_M^{\bR}\cM$ in $H^1(M,\bR)$ is symplectic. The shearing of $C_i$ corresponds to a vector $\xi_i \in T^{\bR}_M\cM \subset H^1(M,\Sigma,\bZ)$ which maps to $c_i$ via the identification of $H^1(X,\Sigma, \bZ)$ with $H_1(M\setminus\Sigma, \bZ)$ (see \cite[Lemma 2.4, Remark 2.5]{WrightCylDef}). Since the projection from $H^1(M,\Sigma,\bZ)$ to $H^1(M,\bZ)$ can be also identified with the projection from $H_1(M\setminus\Sigma,\bZ)$ to $H_1(M,\bZ)$, the images of $\xi_1,\xi_2,\xi_3$ in $H^1(M,\bZ)$ span a $3$-dimensional isotropic space. But this is impossible since $p(T^{\bR}_M\cM)$ is a symplectic subspace of dimension four in $H^1(M,\bR)$.
\end{proof}

%

\subsection{Cylinder Collapsing}

Let $\cH_g$ denote  the  vector bundle of Abelian differentials over the moduli space $\cM_g$. Let $\cM$ be an affine submanifold of $\cH(\kappa) \subset \cH_g$.  Let $M$ be a surface in $\cM$ and $\cC=\{C_0,C_1,\dots,C_k\}$ be an equivalence class of $\cM$-parallel horizontal cylinders on $M$. We do not assume here that $M$ is horizontally periodic. The  deformations $\{a^{\cC}_t(M), t \in \bR\}$ of $M$ by stretching simultaneously the cylinders in $\cC$ define a path in $\cH(\kappa)$. If $a_t^{\cC}(M)$ admits a limit $M'$ in $\cH_g$ as $t \rightarrow -\infty$, then we will say that $M'$ is obtained from $M$ by {\em collapsing the equivalence class $\cC$}. In Figure~\ref{fig:ex:cyl:collapse}, we represent some limits of  cylinder collapsing deformations in $\cH_2=\cH(2)\sqcup\cH(1,1)$.

\begin{figure}[htb]
\begin{minipage}[t]{0.4\linewidth}
\centering
\begin{tikzpicture}[scale=0.3, inner sep=0.1mm, vertex/.style={circle, draw=black, fill=white, minimum size=1mm}]
\fill[green!30] (-12,5) -- (-10,3) -- (-1,3) -- (-3,5) -- cycle;
\draw (-12,8) -- (-12,5) -- (-10,3) -- (-5,3) -- (-5,0) -- (-1,0) -- (-1,3) -- (-3,5) -- (-7,5) --( -7,8)  -- cycle;
\draw (-10,3) -- (-7,5) -- (-5,3) -- (-3,5);
\draw (-12,5) -- (-7,5) (-5,3) -- (-1,3);

\node at (-10,4) [vertex] {\tiny $1$};
\node at (-7,4) [vertex] {\tiny $2$};
\node at (-5,4) [vertex] {\tiny $3$};
\node at (-3,4) [vertex] {\tiny $4$};

\draw (0.5,4) node {$\simeq$};

\fill[green!30] (4,8) -- (9,8) -- (7,10) -- cycle;
\fill[green!30] (2,3) -- (6,3) -- (9,5) -- (4,5) -- cycle;
\fill[green!30] (2,0) -- (4,-2) -- (6,0) -- cycle;

\draw (2,3) -- (2,0) -- (4,-2) -- (6,0) -- (6,3) -- (9,5) -- (9,8) -- (7,10) -- (4,8) -- (4,5) -- cycle;
\draw (4,8) -- (9,8) (9,5) -- (4,5) -- (6,3) -- (2,3) (2,0) -- (6,0);

\node at (4,4) [vertex] {\tiny $4$};
\node at (4,-1) [vertex] {\tiny $3$};
\node at (6,4) [vertex] {\tiny $1$};
\node at (7,9) [vertex] {\tiny $2$};

\foreach \x in {(-12,8), (-10,3), (-7,8), (-5,3), (-1,3), (2,3), (4,8),(4,-2), (6,3), (9,8)} \filldraw[fill=white] \x circle (4pt);
\foreach \x in {(-12,5), (-7,5) , (-5,0) ,( -3,5), (-1,0), (2,0), ( 4,5), (6,0), (7,10), (9,5)} \filldraw[fill=black] \x circle (4pt);

\foreach \x in {(3,4),(5,-1)} \draw \x +(0,-0.2) -- +(0,0.2);
\foreach \x in {(3,-1), (8,9)} \draw \x +(-0.1,-0.1) -- +(-0.1,0.3) +(0.1,-0.3) -- +(0.1,0.1);
\foreach \x in {(5.5,9),(7.5,4)} \draw \x +(-0.3,-0.4) -- +(-0.3,0) +(0,-0.2) -- +(0,0.2) +(0.3,0) -- +(0.3,0.4);

\draw[very thick, ->, >=stealth] (1,-2) -- (1,-5);

\draw (-3,-9) -- (-3,-12) -- (1,-12) -- (1,-9) -- (4,-9) -- (4,-6) -- (-1,-6) -- (-1,-9) -- cycle;
\draw (-1,-9) -- (1,-9);

\foreach \x in {(-3,-9), (-1,-6), (-1,-12), (1,-9), (4,-6)} \filldraw[fill=white] \x circle (4pt);
\foreach \x in {(-3,-12), (-1,-9), (1,-12), (2,-6), (4,-9)} \filldraw[fill=black] \x circle (4pt);

\foreach \x in {(-2,-9), (0,-12)} \draw \x +(0,-0.2) -- +(0,0.2);
\foreach \x in {(-2,-12), (3,-6)} \draw \x +(-0.1,-0.2) -- +(-0.1,0.2) +(0.1,-0.2) -- +(0.1,0.2);
\foreach \x in {(0.5,-6), (2.5,-9)} \draw \x +(-0.2,-0.2) -- +(-0.2,0.2) +(0,-0.2) -- +(0,0.2) +(0.2,-0.2) -- +(0.2,0.2);
\end{tikzpicture}

\end{minipage}
\begin{minipage}[t]{0.5\linewidth}
\flushright
\begin{tikzpicture}[scale=0.3]
\fill[green!30] (2,9) -- (2,7) -- (6,7) -- (6,9) -- cycle;
\draw (0,3) -- (4,3) --(4,0) -- (7,0) -- (7,3) -- (9,7) -- (6,7) -- (6,9) -- (2,9) -- (2,7) -- cycle;
\draw (2,7) -- (6,7) (4,3) -- (7,3);
\foreach \x in {(0,3), (2,9), (4,3), (6,9), (7,3)} \filldraw[fill=white] \x circle (4pt);
\foreach \x in {(2,7), (4,0), (6,7), (7,0), (9,7)} \filldraw[fill=black] \x circle (4pt);

\foreach \x in {(4,9),(2,3)} \draw \x +(0,-0.2) -- +(0,0.2);
\foreach \x in {(7.5,7),(5.5,0)} \draw \x +(-0.1,-0.2) -- +(-0.1,0.2) +(0.1,-0.2) -- +(0.1,0.2);

\draw[black, very thick, ->, >=stealth] (3,0) -- (3,-3);

\draw (0,-8) -- (4,-8) -- (4,-11) -- (7,-11) -- (7,-8) -- (9,-4) -- (2,-4) -- cycle;
\draw (4,-8) -- (7,-8);

\foreach \x in {(0,-8), (2,-4), (4,-8), (4,-11), (6,-4), (7,-8), (7,-11), (9,-4)} \filldraw[fill=black] \x circle (4pt);

\foreach \x in {(4,-4),(2,-8)} \draw \x +(0,-0.2) -- +(0,0.2);
\foreach \x in {(7.5,-4),(5.5,-11)} \draw \x +(-0.1,-0.2) -- +(-0.1,0.2) +(0.1,-0.2) -- +(0.1,0.2);

\end{tikzpicture}
\end{minipage}

 \caption{Collapsing $\cM$-parallel cylinders of a surface in $\cH(1,1)$, the collapsed cylinder is shaded. On the left, the limit remains in $\cH(1,1)$, on the right the limit belongs to $\cH(2)$.}
 \label{fig:ex:cyl:collapse}
\end{figure}
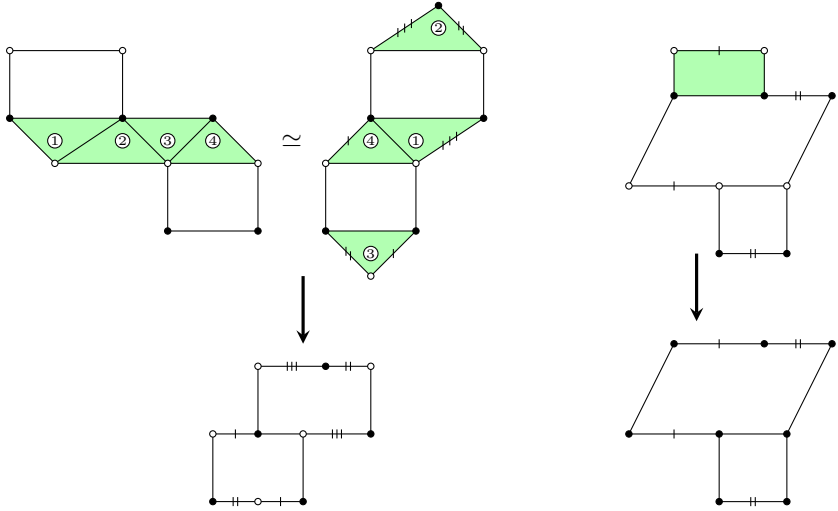

The following proposition is an important tool for our proof of the main theorem.

\begin{proposition}
\label{prop:RankkImpRankkBd}
Assume that
\begin{itemize}
 \item[(a)] The family $\cC$ does not fill $M$.

 \item[(b)] If $C$ and $C'$ are two horizontal cylinders ($C$ and $C'$ may be the same) such that there is a saddle connection that is contained in the bottom boundary of $C$ and in the top boundary of $C'$, then either $C \not\in \cC$ or  $C'\not\in  \cC$.

 \item[(c)] There exists a vertical saddle connection $\s$ in $C_0$ joining two distinct singularities, and any other saddle connection contained in one of the cylinders $C_i, \, i=0,\dots,k$, has non-zero real part.
\end{itemize}

Then as $t\rightarrow -\infty$, $a^{\cC}_t(M)$ converges to a surface $M'$ which belongs to another stratum $\cH(\kappa') \subset \cH_g$ with $|\kappa'| =|\kappa|-1$. Moreover, $M'$ is contained  in an affine invariant submanifold $\cM'$ of $\cH(\kappa')$ which has the same rank as $\cM$.
\end{proposition}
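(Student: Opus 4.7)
First, I would describe the limit $M' = \lim_{t\to-\infty} a_t^{\cC}(M)$ in period coordinates. The cylinder stretch $a_t^{\cC}$ scales by $e^t$ only the imaginary parts of periods of arcs crossing $\cC$ and fixes everything else; as $t\to-\infty$, the heights of all cylinders in $\cC$ shrink to zero. In particular $\sigma$, being vertical in $C_0$ of length $h_0$, collapses to a point, whereas any other saddle connection inside $\cup_i C_i$---having non-zero real part by hypothesis (c)---converges to a non-degenerate (horizontal) saddle connection. Assumption (a) guarantees that the complement of $\cC$ contributes positive area, so the candidate limit $M'$ is a genuine flat surface rather than a lower-genus or nodal degeneration.

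Second, I would verify that $M'\in\cH(\kappa')$ with $|\kappa'|=|\kappa|-1$. Topologically, collapsing a cylinder amounts to identifying its top and bottom boundary via the vertical projection. Assumption (b) is precisely what makes this a clean operation: no saddle connection appears simultaneously in the top of some $\cC$-cylinder and in the bottom of some (other or the same) $\cC$-cylinder, so no arc is folded back onto itself and no closed curve gets pinched; in particular the genus of $M'$ equals that of $M$. Since a vertical segment inside $\cup_i C_i$ is a saddle connection exactly when it joins two cone points, assumption (c) forces $\sigma$ to be the unique such vertical saddle connection. Consequently the collapse identifies exactly the two distinct endpoints of $\sigma$ into a single zero; no other singularities merge, and the total order of zeros is preserved.

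Third, I would obtain the affine submanifold $\cM'$ and verify rank equality. Since $a_t^{\cC}(M)\in\cM$ for every $t\in\bR$ by Theorem~\ref{thm:Wright:Cyl:Def}, the limit $M'$ lies in the boundary of $\cM$ in the partial compactification of $\cH_g$ by collision-of-zeros. The boundary theory of Mirzakhani--Wright then asserts that this boundary, intersected with the adjacent stratum $\cH(\kappa')$, is a union of affine invariant submanifolds; let $\cM'$ be the component containing $M'$. For the rank, I would argue that the tangent direction along the family $a_t^{\cC}$ equals, up to an overall scalar absorbing the proportional heights of the other cylinders of $\cC$, the purely imaginary class $\imath[\sigma]^\ast$. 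Since $\sigma$ is an arc joining two \emph{distinct} zeros, $[\sigma]^\ast$ lies in the kernel of the projection $p\colon H^1(X,\Sigma)\to H^1(X)$. Hence the only tangent direction that disappears in passing from $T_M\cM$ to $T_{M'}\cM'$ is purely relative, and under the natural identification of absolute cohomologies (both surfaces have the same underlying topological surface) one obtains $p(T_M^{\bR}\cM)=p(T_{M'}^{\bR}\cM')$, giving $\mathrm{rank}(\cM')=\mathrm{rank}(\cM)$.

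I expect the main obstacle to be invoking the boundary-theoretic machinery needed to promote the topological limit $M'$ to a point of a genuine affine invariant submanifold of $\cH(\kappa')$, and carefully tracking the tangent-space specialization to conclude that precisely one relative direction is lost (as opposed to a rank drop). Hypotheses (a), (b), and (c) are calibrated for exactly this purpose: they rule out, respectively, the bubbling-off of an entire component, the pinching of a closed curve, and the collapse of any relative period other than the length of $\sigma$.
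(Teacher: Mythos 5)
Your first two steps (existence of the limit, and that it lands in $\cH(\kappa')$ with exactly the two endpoints of $\s$ colliding) follow the paper's proof closely, and your appeal to boundary theory to realize $M'$ inside an affine invariant submanifold is a legitimate shortcut: the paper instead argues self-containedly that $\overline{\cM}\cap\cH(\kappa')$ is closed, $\G$-invariant, and of strictly smaller dimension by Filip's algebraicity, and then runs an explicit basis argument (a basis $\cB$ of $H_1(M,\Sigma;\bZ)$ containing $\s$, with $\cB\setminus\{\s\}$ descending to a basis of $H_1(M',\Sigma';\bZ)$) to produce a component $\cM'_0$ with $T^{\bR}_{M'}\cM'_0\cong V':=\{v\in T^{\bR}_M\cM \mid v(\s)=0\}$ of codimension exactly one.

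The genuine gap is in your rank argument. The tangent direction to the family $a_t^{\cC}$ is \emph{not} a multiple of $\imath[\s]^\ast$, and it does \emph{not} lie in $\ker p$: it evaluates to $\imath h_i$ on every arc crossing a cylinder $C_i\in\cC$, and by the identification used in Lemma~\ref{lm:3:f:cyl} (following Wright), the shear/stretch of $\cC$ projects under $p$ to the Poincar\'e dual of $\sum_i h_i c_i$, which is non-zero since the core curves are non-separating. So the one direction that is lost in passing from $V$ to $V'$ is not ``purely relative,'' and the asserted equality $p(V')=p(V)$ does not follow from your reasoning. The correct argument, which is the one the paper gives, is a parity count: since $\dim_\bR V'=\dim_\bR V-1$ one automatically has $\dim p(V')\geq \dim p(V)-1$, while both $p(V')$ and $p(V)$ are symplectic (hence even-dimensional) by Avila--Eskin--M\"oller applied to $\cM'_0$ and to $\cM$ respectively; an even-dimensional subspace of codimension at most one in an even-dimensional symplectic space must be the whole space, whence $\mathrm{rank}(\cM'_0)=\mathrm{rank}(\cM)$. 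Your proof needs this (or an equivalent) replacement for the final step.
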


\begin{remark}\hfill
\begin{itemize}
\item[$\bullet$] There is an implicit assumption in Proposition \ref{prop:RankkImpRankkBd} that is $M$ has at least two singularities, or equivalently $|\kappa| \geq 2$.

\item[$\bullet$] The condition (b) on $\cC$ implies in particular that no cylinder in the family $\cC$ has a saddle connection contained in both of its top and bottom boundary.
\end{itemize}
\end{remark}

\begin{proof}
We first show that $a_t^{\cC}(M)$ admits a limit in $\cH(\kappa')$, with $|\kappa'| =|\kappa|-1$,  as $t \rightarrow -\infty$.  Let $\s'\neq \s$ be a saddle connection of $M$. There exists $\eps_1>0$ such that the length of any subsegment of $\s'$ outside of the union $\cup_{i=0}^k C_i$ is at least $\eps_1$. Condition (b) implies that if $\s'$ is contained in $\cup_{i=0}^k C_i$, then $\s'$ is actually contained in one of the cylinders $C_0,\dots,C_k$ because Condition (b) excludes the existence of adjacent cylinders in $\cC$.  By condition (c), there exists $\eps_2 >0$ such that if $\s'$ is contained in one of the cylinders $C_0,\dots,C_k$ and $\s'\neq \s$, then the real part ${\rm Re}(\s')$ of $\s'$ satisfies $|{\rm Re}(\s')| > \eps_2$. From these observations, we see that if $t <0$ and $|t|$ is large enough then any saddle connection $\s' \neq \s$ has length bounded away from zero in $a_t^{\cC}(M)$, where the bound depends on $M$ and not $t$.

\medskip

We can now apply the {\em collapsing a pair of zeros} procedure described in \cite[Sect. 8.2]{EskinMasurZorich} to $\s$ to get a surface $M'_t \in \cH(\kappa')$ with  $|\kappa'| =|\kappa|-1$. It is not difficult to check that as $t \rightarrow -\infty$, $M'_t$ converges to a surface $M' \in \cH(\kappa')$. It is a well known fact that the intersection of a neighborhood of $M'$ in $\cH_g$ with $\cH(\kappa)$ consists of surfaces obtained by a {\em breaking up a zero} construction applied to surfaces in a neighborhood of $M'$ in $\cH(\kappa')$ (see \cite[Sect. 5]{KontsevichZorichConnComps}).  Therefore, given any neighborhood $\cU$ of $M'$, we have $a_t^{\cC}(M) \in \cU$ for $t<0$ and $|t|$ large enough. We can then conclude that $M'$ is the limit in $\cH_g$ of the path $a_t^{\cC}(M)$ as $t \rightarrow -\infty$.

\medskip

Let $\overline{\cM}$ be the closure of $\cM$ in $\cH_g$. Let $\cM'$ be the intersection $\overline{\cM}\cap \cH(\kappa')$. By \cite{Filip2}, we know that $\cM$ is an algebraic subvariety of $\cH_g$, therefore $\overline{\cM}\setminus \cM$ has dimension strictly smaller than $\cM$.   In particular, we have $\dim_\bC\cM'< \dim_\bC\cM$. Note that $\cM'$ may have several components since there may be more than one way to degenerate 
surfaces in $\cH(\kappa)$ to surfaces in $\cH(\kappa')$.

\medskip

By definition, $\cM'$ is a closed $\mathrm{GL}^+(2,\bR)$-invariant subset of $\cH(\kappa')$. Thus each component of $\cM'$ must be an affine submanifold of $\cH(\kappa')$, and therefore  is  locally defined by linear equations with real coefficients. We can consider the tangent space of $\cM'$ at $M'$, denoted by $T_{M'}\cM'$, as the union of the tangent spaces of its irreducible components at $M'$. In particular, we have
$$
T_{M'}\cM'=T_{M'}^\bR\cM'\otimes \bC,
$$
\noindent where $T_{M'}^\bR\cM'$ is the union of finitely many linear subspaces of $H^1(M',\Sigma',\bR)$ with $\Sigma'$ being the set of singularities of $M'$.  We will show that there exists an irreducible component $\cM'_0$ of $\cM'$, containing $M'$, such that  $\dim_\bC \cM'_0=\dim_\bC\cM -1$, and ${\rm rank }\, \cM'_0={\rm rank }\, \cM$.

\medskip

Pick a basis $\cB$ of $H_1(M,\Sigma,\bZ)$ where $\sigma$ is an element of  $\cB$, and any other element of $\cB$ is either a saddle connection in the complement of $\cup_{0=1}^kC_i$, or a saddle connection in the closure of one of the cylinders $C_i, \, i=0,\dots,k$. A saddle connection in the latter case either crosses each core curve of $C_i$ once or is contained in the boundary of $C_i$.  Using $\cB$, we can identify $H^1(M,\Sigma;\bR)$ with  $(\bR)^{\cB}$. Set $V:=T^{\bR}_M\cM \subset H^1(M,\Sigma;\bR)$, and denote by $p: H^1(M,\Sigma;\bR) \rightarrow H^1(M,\bR)$ the canonical projection. Recall that we have $\dim_\bR p(V)=2\ell$, where $\ell$ is the rank of $\cM$.

\medskip

From Theorem~\ref{thm:Wright:Cyl:Def}, we can freely stretch and shear $\cC$ while keeping the rest of the surface unchanged, the new surfaces always belong to $\cM$. Let  $\xi:=(\xi(s))_{s \in \cB}$ be the tangent vector to the path in $\cM$ corresponding to the shearing operation of $\cC$.  We have $\xi(\sigma)=1$, and $\xi(s)=0$ for any $s$ which does not cross any cylinder in the family $\cC$.

\medskip

Using the period mapping, we can identify a neighborhood of $M$ with an open subset $U \subset V\otimes_\bR\bC$. Let $X$ be the image of $M$ by this identification. Recall that ${\rm Re}(X(\s))=0$, and for any other saddle connection $s$ contained in $\overline{C}_i$, we must have $|{\rm Re}(X(s))| > \eps_2$.

\medskip

Set $V':=\{v \in V | \ v(\sigma)=0\} \subset V$. Since $V'$ is a subspace of $V$ which is defined by a unique linear equation, $V'$ has codimension at most one.  Observe that $\xi \not\in V'$ since $\xi(\s)=1$, from which we derive that $V'$ is a proper linear subspace of codimension one in $V$. For any fixed $v \in V'$, and $\varepsilon >0$ small enough, the vector $X_{\eps v}:=X+\eps v$ belongs to $U$,  hence corresponds to a surface in $\cM$ close to $M$.  Since $v \in H^1(M,\Sigma;\bR)$,  the cylinders in the family $\cC$ persist in the surface $X_{\eps v}$.  Moreover, if $\eps$ is small enough, then for any saddle connection $s \neq \sigma$ which is contained in one of the cylinders $C_i$, we have $\mathrm{Re} (X_{\eps v}(s)) \neq 0$. It follows that the cylinder collapsing operation applied to the surface represented by $X_{\eps v}$ also yields a surface in $\cH(\kappa')$.

\medskip

Set $\cB':=\cB\setminus\{\sigma\}$. Recall that by the choice of $\cB$, any element of $\cB$ is either a saddle connection contained in the closure of one of the cylinders $C_1,\dots,C_k$, or disjoint from those cylinders. Let  $s$ be a saddle connection  in $\cB'$. If $s$ is disjoint from  $\cup_{i=0}^k \overline{C}_i$, then clearly $s$ persists in $M'$. If $s$ is contained in the closure of a cylinder $C_i$, as $t \rightarrow -\infty$, $s$ degenerates to the union of one or more  horizontal saddle connections in $M'$. Nevertheless, we claim that $\cB'$ still represents  a basis of $H_1(M',\Sigma';\bZ)$, where  $\Sigma'$ is the set of singularities of $M'$. To see this we first notice that the pairs $(X,\Sigma\cup \sigma)$ and $(X',\Sigma')$ are homotopy equivalent, thus $H_1(M',\Sigma';\bZ)$ can be identified with $H_1(M,\Sigma\cup\s; \bZ)$.  The natural projection $\rho: H_1(M,\Sigma;\bZ) \rightarrow H_1(M,\Sigma\cup\s; \bZ)$ is surjective. Since $\rho(\s)=0 \in H_1(M,\Sigma\cup \s; \bZ)$, $\cB'$ must be 
mapped to a basis of $H_1(M,\Sigma\cup\s;\bZ)$.

\medskip

Since a vector in $V'$ is uniquely determined by its evaluations on $\cB'$, the collapsing of the cylinders in $\cC$ provides us with an embedding of $V'$ into $T^{\bR}_{M'}\cM'\subset H^1(M',\Sigma';\bR)$.  Since $T^\bR_{M'}\cM'$ is the union of finitely many subspaces of $H^1(M',\Sigma',\bR)$, there must exist an irreducible component $\cM'_0$ of $\cM'$  containing $M'$ such that $T^\bR_{M'}\cM'_0$ contains $V'$. In particular, we must have $\dim_\bR T^\bR_{M'}\cM'_0 \geq \dim_\bR V' =\dim_\bR V -1$. It follows that  $\dim_\bC\cM'_0 \geq \dim_\bC \cM -1$. Since $\dim_\bC\cM'_0\leq \dim_\bC\cM-1$, we can conclude that $\dim_\bC \cM'_0=\dim_\bC\cM-1$, and $V'$ is isomorphic to $T^{\bR}_{M'}\cM'_0$.

\medskip

It remains to show that $\dim p(V')=\dim p(V)$. Note that in this case, we can identify $H^1(M',\bR)$ with $H^1(M,\bR)$, and $p(V')$ with $p(T^\bR_{M'}\cM'_0)$. Assume that $\dim p(V') < \dim p(V)$. Since $\cM'_0$ is an affine submanifold of $\cH(\kappa')$, $p(V')$ is a symplectic subspace of $H^1(M,\bR)$ by \cite{AvilaEskinMollerForniBundle}, therefore $\dim p(V') \leq \dim p(V)-2$. But we have $\dim V'=\dim V-1$, therefore $\dim p(V') \geq \dim p(V) -1$, and we get a contradiction. Thus, $\dim p(V')=\dim p(V)$, which means that $\cM'_0$ and $\cM$ have the same rank. The proof of the proposition is now complete.
\end{proof}

\begin{remark}
On first glance, Proposition \ref{prop:RankkImpRankkBd} may seem troubling because if there are two equivalence classes, we may only see one equivalence class of cylinders in the limit, which gives the impression of degenerating from rank two to rank one.  Indeed such degenerations will arise in Lemma \ref{H22hypDoubleCov}.  However, this apparent contradiction is only an illusion because the surface can be deformed prelimit before collapsing cylinders to produce a family of translation surfaces that cannot lie in a rank one orbit closure.  In the aforementioned lemma, this is especially clear because all rank one orbit closures in $\cH(4)$ are Teichm\"uller curves, so any deformation that is not by \splin ~implies that the degenerate surfaces  must lie in an affine submanifold of higher rank.

\medskip

Furthermore, Proposition \ref{prop:RankkImpRankkBd} does not claim that in the presence of several zeros, it is always possible to collapse cylinders to get a rank $k$ manifold in the same genus.  For example, consider the Prym locus in $\cH(2,2)^{\rm hyp}$.  It is $4$-dimensional and has rank two.  Indeed, the intersection of the boundary of the affine manifold with all lower strata in the same genus is the empty set.
\end{remark}

\begin{remark}
 A more detailed account and general results on the closure of $\cM$ in a partial compactification of $\cH_g$ are given in a new preprint by Mirzakhani-Wright~\cite{MirzWrigBoundary}.
\end{remark}

\section{Two Cylinders}

The following lemmas are valid in any stratum $\cH(\kappa)$ of any genus $g \geq 3$.

\begin{lemma}
\label{SC2Cyls}
If $M$ is a union of two non-homologous cylinders, then at least one of them has a saddle connection on its top and bottom.
\end{lemma}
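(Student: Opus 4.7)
The plan is to argue by contradiction using the dual graph of Subsection~\ref{sec:intro:DG}. Assume that neither of the two horizontal cylinders $C_1, C_2$ on $M$ has a saddle connection appearing simultaneously in its top and bottom boundary. Equivalently, the dual graph of the decomposition $M = C_1\cup C_2$ has no loops; since it has only two vertices $v_1, v_2$, every edge must join $v_1$ to $v_2$.

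Each horizontal saddle connection $s$ appears exactly once on the top boundary of some cylinder (the one immediately below $s$) and exactly once on the bottom boundary of some cylinder (the one immediately above $s$). By the previous paragraph these two cylinders are distinct, so the set of horizontal saddle connections splits as $S_1 \sqcup S_2$, where
\[
S_1 = \partial^+ C_1 \cap \partial^- C_2, \qquad S_2 = \partial^+ C_2 \cap \partial^- C_1.
\]

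Next I orient every horizontal object from left to right. With this convention the $1$-chain $\partial^+ C_1$ equals the $1$-chain $\partial^- C_2$ (both are $\sum_{s\in S_1}[s]$ with the same orientation), and each of $\partial^+ C_i$ and $\partial^- C_i$ is freely homotopic to the core curve $c_i$ oriented rightward. Passing to $H_1(M,\bZ)$ yields
\[
[c_1] = [\partial^+ C_1] = [\partial^- C_2] = [c_2],
\]
contradicting the assumption that $c_1$ and $c_2$ are non-homologous. There is no substantive obstacle here: the whole argument reduces to the observation that, in the absence of loops in the dual graph, the top side of $C_1$ and the bottom side of $C_2$ are literally the same union of saddle connections on $M$, which forces $c_1$ and $c_2$ to coincide in homology.
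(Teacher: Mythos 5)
Your proof is correct and is exactly the paper's argument: the paper disposes of this with the one-line observation ``If not, then they would be homologous,'' and your write-up simply supplies the details of why the absence of a shared top/bottom saddle connection forces $\partial^+C_1=\partial^-C_2$ and hence $[c_1]=[c_2]$. No issues.
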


\begin{proof}
If not, then they would be homologous.
\end{proof}

\begin{lemma}
\label{3PlusCyls}
Let $\cM$ be a rank two affine submanifold of $\cH(\kappa)$, where $g\geq 3$, with rational affine field of definition. Then $\cM$ contains a horizontally periodic translation surface with at least three cylinders.
\end{lemma}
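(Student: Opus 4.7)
The plan is to combine Smillie-Weiss's theorem with iterative applications of Lemma \ref{lm:WrightTwistPresLem} to maximize the horizontal cylinder count, and then, in the residual two-cylinder case, to use the rationality of $k(\cM)$ together with Lemma \ref{SC2Cyls} to produce a third cylinder by exploiting a ``loop'' saddle connection.

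Concretely, Smillie-Weiss first provides a horizontally periodic $M_0 \in \cM$. While $\Tw(M_i, \cM) \subsetneq \CP(M_i, \cM)$, Lemma \ref{lm:WrightTwistPresLem} yields a nearby $M_{i+1} \in \cM$ with strictly more horizontal cylinders. This iteration must terminate (as the number of horizontal cylinders in genus $g$ is bounded above by $3g-3$) at a horizontally periodic $M \in \cM$ with $\Tw(M, \cM) = \CP(M, \cM)$. At this surface, Theorem \ref{thm:RankkImpkCyl} implies the horizontal cylinders split into at least $k = 2$ equivalence classes of $\cM$-parallel cylinders; in particular, $M$ has at least two horizontal cylinders, and if there are already three or more we are done.

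Otherwise $M$ has exactly two horizontal cylinders $C_1, C_2$, each a singleton equivalence class and hence free. Since two homologous cylinders would be $\cM$-parallel, $C_1$ and $C_2$ are non-homologous, and Lemma \ref{SC2Cyls} produces (up to relabeling) a saddle connection $\alpha$ appearing in both the top and bottom boundary of $C_1$. Using $k(\cM) = \bQ$ and Proposition \ref{prop:SimpCylPres}, we may assume $M$ is square-tiled while retaining this horizontal structure. By Theorem \ref{thm:Wright:Cyl:Def} we may twist $C_1$ by any $t \in \bR$ without leaving $\cM$; for a suitable rational $t$, the top and bottom copies of $\alpha$ become vertically aligned, so that the vertical strip of $C_1$ over this interval closes up into a vertical cylinder $D \subset \overline{C_1}$ on the twisted surface $M_t$. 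Since $M_t$ remains square-tiled, its vertical direction is periodic, and rotating by $90^{\circ}$ yields a horizontally periodic surface $M' \in \cM$ in which $D$ appears as a horizontal cylinder.

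To conclude, we must verify that $M'$ has at least three horizontal cylinders. Proposition \ref{CylinderPropProp} forces the entire $\cM$-equivalence class of $D$ to sit inside $\overline{C_1}$ (since $P(D, \{C_1\}) = 1$ and $P(D, \{C_2\}) = 0$), so this class cannot fill $M'$; in particular, vertical orbits through $C_2$ must close up in at least one further equivalence class of vertical cylinders on $M_t$. The main obstacle is ruling out the possibility that this vertical decomposition consists of just $D$ and one other cylinder. This will be handled by examining the dual graph of the vertical decomposition: the containment $D \subset \overline{C_1}$ combined with the loop structure at $C_1$ and the rank-two constraint either produces a third vertical cylinder directly, or else one can reapply the iterative procedure via Lemma \ref{lm:WrightTwistPresLem} to $M'$. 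Since the collection of cylinder diagrams in $\cH(\kappa)$ is finite and each genuine iteration refines the combinatorial data, the process must terminate at a horizontally periodic surface in $\cM$ with at least three cylinders.
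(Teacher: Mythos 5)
Your first round matches the paper's opening moves: get a horizontally periodic surface with two non-$\cM$-parallel cylinders, use Lemma \ref{SC2Cyls} to find a saddle connection on both boundaries of $C_1$, pass to a square-tiled surface via Proposition \ref{prop:SimpCylPres}, and close up a vertical simple cylinder $D\subset\overline{C_1}$ so that the vertical direction is periodic. The problem is exactly where you say it is -- ruling out the case of only two vertical cylinders -- and your proposed resolution does not work. If the two vertical cylinders are both free (which is the hard case; if they are $\cM$-parallel, Theorem \ref{thm:RankkImpkCyl} finishes), then Lemma \ref{lm:WrightTwistPresLem} gives you nothing, since you cannot conclude $\Tw\neq\CP$ for two singleton classes. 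And the claim that ``each genuine iteration refines the combinatorial data'' so that finiteness of cylinder diagrams forces termination is unfounded: there is no monotone quantity being improved, and a priori each round could simply hand you back another two-cylinder decomposition of the same combinatorial type in a new direction, forever.

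What the paper actually does is run a carefully escalating three-round construction in which each round produces a transverse cylinder with strictly more structure, and the last round terminates for a concrete geometric reason. Round one yields the free vertical simple cylinder $C_1'$ (your $D$). In round two, since $C_1'$ is simple and the other vertical cylinder $C_2'$ is free, one finds a pair of homologous saddle connections in $C_2'$ cutting out a slit torus containing $C_1'$; after twisting and re-perturbing to a square-tiled surface, this produces a transverse cylinder $C_1''$ whose closure is an entire slit torus. In round three, if there are still only two cylinders $C_1''$, $C_2''$, one twists to produce two explicit transverse cylinders $D_1$ and $D_2$, and the punchline is that $\overline{D_1}\cup\overline{D_2}$ cannot fill a surface of genus $g\geq 3$ -- this is the step where the hypothesis $g\geq 3$ is actually used and where the process provably stops. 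Your proposal never isolates this mechanism, so the termination of your iteration is a genuine gap rather than a detail to be checked.
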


\begin{proof}
By Theorem~\ref{thm:RankkImpkCyl}, we know that $\cM$ contains a surface $M$ which is horizontally periodic with at least two cylinders. Obviously we only need to consider the case where $M$ has exactly two cylinders $C_1$ and $C_2$. Note that in this case the two cylinders are not $\cM$-parallel by Theorem \ref{thm:RankkImpkCyl}.   By Lemma \ref{SC2Cyls}, without loss of generality, assume that $C_1$ contains a simple cylinder $C_1'$ formed by considering the foliation in the direction that connects the saddle connection on the top of $C_1$ with its copy on the bottom of $C_1$.  Using $\SL(2,\bR)$, we can assume that $C'_1$ is vertical.

Since the affine field of definition of $\cM$ is $\bQ$, by Proposition~\ref{prop:SimpCylPres} there exists a square-tiled surface $M'$ close to $M$, on which $C'_1$ is also a vertical simple cylinder. Note that $M'$ is vertically periodic, and has at least two vertical cylinders. If $M'$ has more than two vertical cylinders, then we are done. Thus, we only need to consider the case where $M'$ has only one vertical cylinder other than $C'_1$. Rotate $M'$ by $\pi/2$ so that $C'_1$ is now a horizontal simple cylinder. Let $C'_2$ be the other horizontal cylinder of $R_{\pi/2}\cdot M'$ (where $R_\theta=\left(\begin{smallmatrix} \cos\theta & -\sin\theta \\ \sin\theta & \cos\theta \end{smallmatrix}\right)$ ). If $C'_1$ and $C'_2$ are $\cM$-parallel, then by Theorem~\ref{thm:RankkImpkCyl}, there exists a horizontally periodic surface in a neighborhood of $R_{\pi/2}\cdot M'$ with at least three cylinders and we are done.  Therefore, we only need to consider the case: $C'_1$ and $C'_2$
  are free.

Recall that $C'_1$ is simple, thus there exists a pair of homologous saddle connections $\sigma_1,\sigma_2$ in $C'_2$ that cut out a slit torus $T'$ containing $C'_1$. Since $C'_1$ and $C'_2$ are free,  we can freely twist $C'_1$ and $C'_2$ so that $\sigma_i$ is vertical, and the twist of $C'_1$ is zero, that is $C'_1$ can be represented by a rectangle. It follows that $\cM$ contains a surface which has  a vertical cylinder $C''_1$ whose closure is a slit torus. By a slight abuse of notation, we still denote this surface by $M'$. Let $\sigma_0$ denote the unique vertical saddle connection in $C'_1$, then the two boundaries of $C''_1$ are $\sigma_0\cup\sigma_1$ and $\sigma_0\cup\sigma_2$.

Since $\cM$ is defined over $\bQ$, there exists a square-tiled surface $M''$ close to $M'$  on which $\sigma_0,\sigma_1,\sigma_2$ persist and are also vertical (Proposition~\ref{prop:SimpCylPres}). In particular, $C''_1$ persists on $M''$. Since the vertical direction on $M''$ is periodic, $M''$ has vertical cylinders other than $C''_1$. Again we only have to consider the case where $M''$ has two vertical cylinders $C''_1$ and $C''_2$, both are free otherwise we can conclude by Theorem~\ref{thm:RankkImpkCyl}. Rotate $M''$ by $\pi/2$ so that $C''_1$ and $C''_2$ are horizontal. Remark that one of $\sigma_1,\sigma_2$ is contained in the top boundary of $C''_2$, and the other one is contained in the bottom boundary of $C''_2$. Let $\eta_1,\eta_2$ be a pair of saddle connections in $C''_2$ that cut out a parallelogram whose top and bottom sides are $\sigma_1$ and $\sigma_2$. We can freely twist $C''_1$ and $C''_2$ so that there exist two vertical cylinders: $D_1$ is a cylinder in 
 $C''_1$ that contains $\sigma_0$,
 and $D_2$ is the vertical cylinder through $\sigma_1,\sigma_2$ which is bounded by $\eta_1,\eta_2$, and the saddle connections in the boundary of $D_1$.  Since $g\geq 3$, $D_1\cup D_2$ cannot fill the whole surface, therefore there exists in $\cM$ a vertically periodic surface with at least three cylinders. The lemma is then proved.
\end{proof}

We also need the following lemma,  which is specific for the cases $\cH(m,n)$, with $m+n=4$, and strengthens Lemma~\ref{3PlusCyls} a little in those cases.

\begin{lemma}
 \label{lm:Hmn:2cyl:imp:3cyl:1:sim}
 Let $\cM$ be a rank two submanifold of $\cH(m,n)$, with $m+n=4$. Assume that $\cM$ contains a surface $M$ which is horizontally periodic with two cylinders, one of which is simple. Then $\cM$ contains a horizontally periodic surface with at least three cylinders, one of which is simple and not free.
 \end{lemma}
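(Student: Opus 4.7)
The plan is to run the construction from the proof of Lemma~\ref{3PlusCyls} while tracking when the simple cylinder it produces is forced to be $\cM$-parallel to another cylinder. Since $M$ has only two horizontal cylinders and $\cM$ has rank two, Theorem~\ref{thm:RankkImpkCyl} puts $C_1,C_2$ in distinct equivalence classes, so both are free. Lemma~\ref{SC2Cyls} supplies a saddle connection $\sigma$ that lies in both the top and the bottom boundary of (without loss of generality) $C_1$; the foliation inside $C_1$ in the direction from the bottom endpoint of $\sigma$ to the top endpoint produces a simple cylinder $C'_1\subset C_1$ transverse to the horizontal. After applying $SL(2,\bR)$ to make $C'_1$ vertical and invoking Proposition~\ref{prop:SimpCylPres}, I obtain a nearby square-tiled surface $M'\in\cM$ in which $C'_1$ persists as a vertical simple cylinder.

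I now split into cases according to the vertical cylinder decomposition of $M'$. If $M'$ has exactly two vertical cylinders $C'_1,C'_2$ and they are $\cM$-parallel, rotating by $\pi/2$ yields a horizontally periodic surface with two cylinders in a single equivalence class. Since $\cM$ has rank two, the argument in the proof of Lemma~\ref{lm:sim:cyl:2:cl} shows that $\mathrm{Twist}\subsetneq\mathrm{Pres}$, so Lemma~\ref{lm:WrightTwistPresLem} gives a nearby horizontally periodic surface in $\cM$ with at least three cylinders. Because $\cM$-parallelism is preserved by sufficiently small deformations, $C'_1$ remains $\cM$-parallel to $C'_2$ in the new surface, so the simple cylinder $C'_1$ is not free and we are done. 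If $M'$ has two free vertical cylinders, I continue with the slit-torus construction of Lemma~\ref{3PlusCyls}, producing a further pair $C''_1,C''_2$ to which I apply the same dichotomy: if $\cM$-parallel, we conclude as above; if both free, I iterate. If $M'$ has at least three vertical cylinders, I rotate by $\pi/2$ and check whether $C'_1$ remains $\cM$-parallel to any horizontal cylinder of the rotated surface; if yes, we are done.

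The main obstacle is the residual subcase, where after iteration we have a horizontally periodic surface with at least three cylinders in which the preserved simple cylinder $C'_1$ is still free. Here I exploit the fact that rank two forces the horizontal core curves to span a $2$-dimensional subspace of $(T_M\cM)^*$; since distinct equivalence classes contribute linearly independent core curves (cylinders in different classes deform independently), the number of equivalence classes is at most two. With $C'_1$ alone in its class, all the remaining at least two cylinders must lie in a single common class, so they are pairwise $\cM$-parallel and non-free. If one of these non-free cylinders is itself simple, we are done. If not, I apply the slit-torus construction of Lemma~\ref{3PlusCyls} inside one of them: the new simple cylinder produced either lies in the already-non-free class (by Proposition~\ref{CylinderPropProp} applied to cylinder proportions, the new cylinder's $\cM$-class must meet the existing non-free class nontrivially) or it is free, in which case Lemma~\ref{lm:f:sim:cyl} and Lemma~\ref{lm:n:f:sim:cyl} force a configurational contradiction with the bound of six horizontal saddle connections in $\cH(m,n)$ with $m+n=4$. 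The hard part will be organizing this residual case cleanly: the process cannot repeat indefinitely because the number of horizontal cylinders in genus three is bounded by $3g-3=6$, so after finitely many iterations we must arrive at a horizontally periodic surface with at least three cylinders containing a simple non-free cylinder.
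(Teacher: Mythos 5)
Your proposal diverges from the paper's argument at the one point where all the difficulty of this lemma is concentrated, and the replacement you offer does not work.

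First, a smaller issue: you dismiss the case where the two horizontal cylinders of $M$ are $\cM$-parallel by appealing to Theorem~\ref{thm:RankkImpkCyl}. That theorem only asserts the \emph{existence} of some horizontally periodic surface whose cylinders fall into at least two classes, together with an \emph{upper} bound on the dimension spanned by core curves; it says nothing about the particular surface $M$ handed to you by the hypothesis. The paper must and does treat the parallel case separately (it is easy: $\Tw\neq\CP$, so one perturbs as in Lemma~\ref{lm:sim:cyl:2:cl} while keeping the simple cylinder and its parallelism class). This gap is fixable, but it is a gap.

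The essential problem is non-freeness. The entire content of this lemma, beyond Lemma~\ref{3PlusCyls}, is that the simple cylinder in the three-cylinder surface can be taken \emph{not free}, and your proposal never establishes this. The paper's mechanism is: choose the saddle connection $\sigma$ on the top and bottom of the big cylinder so that it \emph{joins the two distinct zeros}, make the resulting simple cylinder $C'$ vertical, and then argue that if $C'$ were free one could collapse it, colliding the zeros and landing in $\cH(4)$; Proposition~\ref{prop:RankkImpRankkBd} and Theorem~\ref{NWANWThm} force the limit into $\prym$, whose Prym involution is incompatible with a two-cylinder decomposition having one simple cylinder. None of this degeneration machinery appears in your proposal, and your residual case rests on a false step: from ``the horizontal core curves span a subspace of dimension at most two'' you conclude ``there are at most two equivalence classes.'' Three pairwise non-proportional functionals can perfectly well span a two-dimensional space (think of $e_1$, $e_2$, $e_1+e_2$), and the paper itself exhibits rank-two configurations with three equivalence classes (Case 3.II, Case 4.I.OB), so the remaining cylinders need not all be mutually $\cM$-parallel and non-free. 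The subsequent appeal to Proposition~\ref{CylinderPropProp} to place a new transverse simple cylinder ``in the already non-free class,'' and the claimed ``configurational contradiction'' from Lemmas~\ref{lm:f:sim:cyl} and \ref{lm:n:f:sim:cyl} together with the count of six saddle connections, are not arguments; the iteration bounded by $3g-3$ never produces the non-free simple cylinder the statement requires.
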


 \begin{proof}
 Let $C_1$ and $C_2$ be the horizontal cylinders of $M$, where $C_1$ is simple. If $C_1$ and $C_2$ are $\cM$-parallel then $\Tw(M,\cM) \neq \CP(M,\cM)$, thus we can use the arguments in Lemma~\ref{lm:sim:cyl:2:cl} to get a square-tiled surface $M'$ close to $M$ on which $C_1$ and $C_2$ persist  (they are always $\cM$-parallel).  The cylinder $C_1$ is still a simple cylinder, but $M'$ has at least three horizontal cylinders, hence the lemma is proved for this case.

Assume now that $C_1$ and $C_2$ are both free. Since $C_1$ is simple, the top (resp. bottom) border of $C_1$ contains only one singularity of $M$.  Remark that the bottom border (resp. top border) of $C_1$ is properly contained in the top border (resp. bottom border)  of $C_2$, and any horizontal saddle connection which is not a border of $C_1$ is contained in both top and bottom borders of $C_2$. From these observations, we  derive that if the top border of $C_2$ contains only one singularity then its bottom border only contains  the same singularity, which means that $M$ has only one singularity.

Since we have assumed that $M \in \cH(m,n)$, there must exist a saddle connection $\sigma$ connecting two distinct zeros  which is contained in both top and bottom borders of $C_2$. Let $C'$ be the simple cylinder in $C_2$ consisting of trajectories crossing $\sigma$ and no other horizontal saddle connections. We can assume $C'$ is vertical.

Again, since $\cM$ is defined over $\bQ$, we can find in a neighborhood of $M$ a square-tiled surface $M'$ on which $C'$ is still a simple vertical cylinder. Note that the  surface $M'$ is vertically periodic. We claim that $C'$ is not free. Indeed, if $C'$ is free, then we can collapse it so that the two zeros in its borders collide, and the resulting surface, denoted by $M''$, belongs to $\cH(4)$. From Proposition~\ref{prop:RankkImpRankkBd}, we know that $M''$ must belong to a rank two affine submanifold $\cM'$ of $\cH(4)$.

From Theorem \ref{NWANWThm}, we have $\cM'=\tilde{\cQ}(3,-1^3)$, therefore $M'' \in \tilde{\cQ}(3,-1^3)$. Observe that by construction $M''$ admits a cylinder decomposition in the horizontal direction into two cylinders,  one of which is simple. But it is impossible for surfaces in $\tilde{\cQ}(3,-1^3)$ to have such a cylinder decomposition. This is because there is an involution $\tau$ with four fixed points in $M''$, and in particular $\tau$ must permute the two cylinders in this decomposition. Thus we have a contradiction, which implies that $C'$ is not free.

Let $\cC'$ be the equivalence class of $C'$ in $M'$, then $\cC'$ contains at least two cylinders. Since $P(C',\{C_2\})=1$, for any $C''\in \cC'$, we also have $P(C'',\{C_2\})$, which means that $C''$ is contained in the closure of $C_2$. It follows that the cylinders in $\cC'$ do not fill $M'$. Therefore, $M'$ has at least three cylinders in the vertical direction. The proof of the lemma is now complete.
 \end{proof}

\section{Three Cylinders}

\begin{conv}
Throughout this section, we will use the following convention for specifying cases.  Case $n$.$R$.$\square$) will denote a horizontally periodic translation surface with $n$ cylinders such that pinching the core curves of every cylinder results in a degenerate surface on a list specified below and denoted by a Roman numeral $R$.  The final term will often be omitted, and is used only if there is a need to specify a subcase.
\end{conv}

We would like to stress the fact that even though our main results only concern strata with two zeros in genus three, some of the following lemmas are actually valid for all strata in genus three.  We begin by considering all possible topological configurations for a degenerate surface resulting from pinching the core curves of every cylinder in a $3$-cylinder diagram in genus three.  We adopt the usual terminology and call a connected component of a degenerate Riemann surface (after removing nodes) a \emph{part}.  We also pinch the core curves of cylinders by letting their heights go to infinity, while their circumferences remain fixed.  With respect to the Abelian differential on the surface this implies that each infinite cylinder gives rise to a pair of simple poles.  Whenever we use the terminology \emph{pair of poles} we will specifically be referring to the two poles arising from pinching a single core curve of a cylinder.

\begin{lemma}
\label{3CylDeg}
If a horizontally periodic genus three translation surface $M$ decomposes into exactly three cylinders, then pinching the core curves of those cylinders, by letting the heights of the cylinders go to infinity so that each cylinder becomes a pair of simple poles of an Abelian differential, degenerates the surface to one of three possible surfaces:
\begin{itemize}
\item 3.I) A sphere with three pairs of simple poles,
\item 3.II) A sphere and a torus joined by three pairs of simple poles, or
\item 3.III) A sphere with a pair of simple poles joined to a torus by two pairs of simple poles.
\end{itemize}
\end{lemma}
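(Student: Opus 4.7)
The plan is to enumerate the topological types of the limit by combining a global Euler-characteristic identity with the non-separating property of the core curves $\gamma_1, \gamma_2, \gamma_3$ of the cylinders $C_1, C_2, C_3$. Because $\omega$ is closed and each $\gamma_i$ is horizontal with $\int_{\gamma_i}\omega = w_i > 0$, Stokes' theorem prevents $\gamma_i$ from bounding a subsurface; i.e., each core curve is non-separating in $M$.

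First I would let $M''$ denote $M$ cut along $\gamma_1\cup\gamma_2\cup\gamma_3$, with connected components $v_1,\dots,v_k$. Cutting along disjoint simple closed curves preserves Euler characteristic, so
\[
-4 = \chi(M'') = \sum_{j=1}^k (2 - 2g_j - d_j),
\]
where $g_j$ and $d_j$ are respectively the genus and number of boundary circles of $v_j$ (after capping the cylinder-side boundary circles by disks). Since $\sum_j d_j = 6$, this yields $\sum_j g_j = k - 1$; the components of the degenerate surface are exactly these $v_j$'s, with boundary circles replaced by simple poles.

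The key step is to extract two local constraints on each $v_j$. First, the oriented boundary of $v_j$ vanishes in $H_1(M)$; each loop at $v_j$ contributes $\gamma_i - \gamma_i = 0$ while each between-edge contributes $\pm\gamma_i$, so a single between-edge at $v_j$ would force some $\gamma_i$ to be null-homologous, contradicting non-separating. Hence every $v_j$ meets at least two between-edges, and in particular $d_j \geq 2$. Second, every $v_j$ contains at least one zero of $\omega$ (since the horizontal boundary of any half-cylinder inside $v_j$ passes through a cone point); writing $G_j$ for the corresponding component of the union $G$ of horizontal saddle connections and zeros, a half-edge count ($2 e_j = \sum_{p\in G_j}2(k_p+1)$) gives $\chi(G_j) = -\sum_{p\in G_j}k_p$, and comparing with $\chi(\text{closed-up } v_j) = \chi(G_j) + d_j = 2 - 2g_j$ produces
\[
g_j = \tfrac{1}{2}\Bigl(2 + \textstyle\sum_{p\in G_j}k_p - d_j\Bigr).
\]
Because each $k_p\geq 1$ and $G_j$ is nonempty, I conclude that $d_j = 2$ forces $g_j\geq 1$.

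From $\sum_j d_j = 6$ and $d_j \geq 2$ one immediately gets $k\leq 3$, and the main obstacle is ruling out $k = 3$: in that case every $d_j = 2$, so every $g_j \geq 1$, giving $\sum g_j \geq 3$ and contradicting $\sum g_j = k-1 = 2$. For $k = 1$ all three edges are loops with $d_1 = 6$ and $g_1 = 0$, which is Case~3.I. For $k = 2$, all between-edges connect $v_1$ to $v_2$ and each vertex meets at least two of them, so the total number of between-edges $a$ satisfies $a \in \{2, 3\}$: if $a = 3$ there are no loops, $(d_1, d_2) = (3, 3)$, and $\sum g_j = 1$ gives $(g_1, g_2) = (0, 1)$ up to relabeling, producing Case~3.II; if $a = 2$ the unique loop sits at one vertex (say $v_1$), so $(d_1, d_2) = (4, 2)$ and the bound $d_2 = 2 \Rightarrow g_2 \geq 1$ forces $(g_1, g_2) = (0, 1)$, producing Case~3.III.
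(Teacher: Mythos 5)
Your proof is correct and follows essentially the same route as the paper's: both rest on the fact that the core curves are non-separating (so every part meets at least two ``between'' cylinders) together with a genus/pole count that forbids a part carrying only two simple poles from being a sphere. You simply make explicit, via the cut-surface identity $\sum_j g_j = k-1$ and the spine computation, the genus bookkeeping that the paper compresses into the remark that each part carries a zero and a sphere must therefore carry at least three simple poles.
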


\begin{proof}
If the surface decomposes into two parts, then there must be at least two nodes joining the two parts because the core curve of the cylinder of an Abelian differential can never be a separating curve.  If there are exactly two nodes (pairs of poles) between the two parts, then the third pair of poles must lie on one of the parts.  This accounts for all pairs of poles and Case 3.III) is the only possibility.

Finally, if there are three pairs of simple poles between the two parts, again the parts are completely determined because all cylinders are accounted for and we get Case 3.II).  Since each part must carry at least two simple poles and a sphere must have at least three simple poles on it, it is not possible to have three or more parts with three cylinders in genus three.
\end{proof}

\begin{remark}
We observe that there is an obvious homological relation among the core curves of cylinders when the surface degenerates to each of Cases 3.II) and 3.III).  Namely, the homology class of one cylinder is equal to the sum of the other two (Case 3.II)), and the core curves of two cylinders are equal in homology (Case 3.III)).  However, in Case 3.I), the core curves of the three cylinders span a Lagrangian subspace of homology because degenerating to a sphere indicates that there is no homological relation among the cylinders.
\end{remark}

\subsection{Case 3.II)}

The goal of this section is to eliminate Case 3.II) by proving that if there is a surface satisfying Case 3.II) in a rank two affine manifold $\cM$ in a stratum with two zeros in genus three, then there is a translation surface in $\cM$ with four horizontal cylinders.

\begin{figure}[htb]
\centering
\begin{tikzpicture}[scale=0.50]
\draw (0,0)--(0,2)--(-.5,5)--(3.5,5)--(4,2)--(4.5,4)--(6.5,4)--(6,2)--(6,0)--cycle;
\draw [dashed] (0,2)--(6,2);
\draw(1.5,5) node[above] {\tiny $\varepsilon_1$};
\draw(5.5,4) node[above] {\tiny $\varepsilon_2$};
\draw(1,0) node[below] {\tiny $\varepsilon_1$};
\draw(5,0) node[below] {\tiny $\varepsilon_2$};
\draw(1.5,3) node {\tiny $C_1$};
\draw(5.5,3) node {\tiny $C_2$};
\draw(3,1) node {\tiny $C_3$};
\foreach \x in {(0,2),(4,2),(6,2)} \filldraw[fill=black] \x circle (2pt);
\draw(0,2) node[left] {\tiny $x_1$};
\draw(4,2) node[below] {\tiny $x_1$};
\draw(6,2) node[right] {\tiny $x_1$};
\end{tikzpicture}
\caption{Case 3.II): A surface in genus three with a simple zero $x_1$, and $\varepsilon_i$ denotes a collection of saddle connections}
\label{CaseIIFig}
\end{figure}
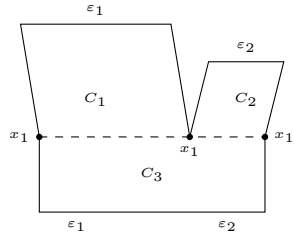

\begin{lemma}
\label{lm:3CylsCaseIILem}
Let $\cM$ be a rank two affine manifold in a stratum in genus three with $k \geq 2$ zeros.  If every translation surface in every rank two manifold $\cM'$ in every stratum in genus three with at most $k-1$ zeros admits a double covering to a half-translation surface, then there does not exist a horizontally periodic translation surface $M \in \cM$ satisfying Case 3.II) such that $\Tw(M, \cM) = \CP(M, \cM)$.
\end{lemma}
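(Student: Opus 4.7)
The plan is to argue by contradiction. Suppose such a surface $M \in \cM$ exists, with horizontal cylinders $C_1, C_2, C_3$ arranged as in Figure \ref{CaseIIFig}. The strategy is to collapse $C_3$ so as to descend to a stratum with $k-1$ zeros, invoke the inductive hypothesis there, and derive a contradiction from the inherited cylinder combinatorics.

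First I would show that all three cylinders are free. In Case 3.II) the degeneration to sphere $+$ torus forces a nontrivial integer linear relation $c_1 + c_2 + c_3 = 0$ (for appropriate orientations) in $H_1(M, \bZ)$, because the three core curves jointly bound a subsurface of $M$. Via the projection $p\colon H^1(M,\Sigma) \to H^1(M)$ this descends to the same relation among the period functionals $f_{c_i}\colon T_M^{\bR}\cM \to \bR$. If two cylinders, say $C_1, C_2$, were $\cM$-parallel with $f_{c_1} = \lambda f_{c_2}$, the relation would force $f_{c_3} = -(\lambda+1) f_{c_2}$; since the period of a cylinder is not locally constant on a $\G$-invariant manifold, $f_{c_3}$ is nonzero on $p(T_M^{\bR}\cM)$, hence $\lambda \neq -1$ and $f_{c_3}$ is also proportional to $f_{c_2}$. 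All three cylinders would then lie in a single equivalence class, contradicting $\text{rank}(\cM) = 2$ under $\Tw(M,\cM) = \CP(M,\cM)$ by Theorem \ref{thm:RankkImpkCyl}. So each of $C_1, C_2, C_3$ is free.

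Next I would prepare to collapse $C_3$ using Proposition \ref{prop:RankkImpRankkBd} with $\cC := \{C_3\}$. By Proposition \ref{prop:SimpCylPres} I may assume $M$ is square-tiled. Since $C_3$ is free I may twist it independently in $\cM$ by Theorem \ref{thm:Wright:Cyl:Def}, and after another application of Proposition \ref{prop:SimpCylPres} I arrange a square-tiled representative in which $C_3$ contains a vertical saddle connection $\sigma$ joining two distinct zeros, while every other saddle connection contained in $C_3$ has nonzero real part. Existence of $\sigma$ uses $k \geq 2$ together with the arithmetic of Case 3.II): the stable degeneration places total zero order $1$ on the sphere part and $3$ on the torus part, so distinct zeros must lie on the boundary of $C_3$. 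Condition (a) of Proposition \ref{prop:RankkImpRankkBd} holds since $C_1, C_2 \notin \cC$; condition (b) holds because in the Case 3.II) dual graph $C_3$ has no self-adjacency (its top is identified with bottoms of $C_1, C_2$ and its bottom with their tops); condition (c) is the content of the preceding sentence. The proposition yields $M' = \lim_{t \to -\infty} a_t^{\cC}(M) \in \cM' \subset \cH(\kappa')$ with $|\kappa'| = k-1$ and $\text{rank}(\cM') = 2$.

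By the inductive hypothesis, $M'$ carries an involution $\tau'$ with $(\tau')^*\omega' = -\omega'$ whose quotient is a half-translation surface. The hard part is the final contradiction. The involution $\tau'$ preserves the horizontal foliation of $M'$ and permutes its horizontal cylinders, which are essentially $C_1, C_2$ (the collapse of $C_3$ fuses its former top and bottom into a common set of horizontal saddle connections between $C_1$ and $C_2$, but does not produce a new cylinder). Either $\tau'$ preserves each of $C_1, C_2$ or it swaps them. In each subcase I would combine the specific boundary identifications inherited from Figure \ref{CaseIIFig} with the positions of the zeros of $M'$, including the merged zero coming from the endpoints of $\sigma$, to rule out compatibility with $(\tau')^*\omega' = -\omega'$. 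For the base case $k = 2$ one has $\cM' \subset \cH(4)$ and $\cM' = \prym$ by Theorem \ref{NWANWThm}, so the contradiction follows most cleanly by matching the inherited $2$-cylinder configuration against the cylinder-diagram classification of $\prym$ in \cite{NguyenWright}; for larger $k$ the same strategy applies using the inductively established classifications in genus three with fewer zeros. The principal obstacle is making this final combinatorial check precise: tracking exactly how the boundaries of $C_3$ are redistributed by the collapse, and verifying that no involution can respect the resulting dual graph together with the zero positions.
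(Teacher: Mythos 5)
Your freeness argument (the relation $c_1+c_2=c_3$ plus Theorem~\ref{thm:RankkImpkCyl} under $\Tw=\CP$) matches the paper's. The divergence, and the problem, is your choice of which cylinder to collapse. The paper collapses one of the two \emph{small} cylinders (the one you call $C_2$), after relabelling so that the surviving small cylinder $C_1$ has at least two saddle connections on its top. This choice makes the endgame immediate: the surviving cylinders $C_1$ and $C_3$ have circumferences $\wth_1$ and $\wth_1+\wth_2$, so the involution furnished by the hypothesis must preserve each of them; and since the sphere part of the Case 3.II) degeneration carries only the simple zero (total zero order $1$), it supports exactly two horizontal saddle connections, one forming the entire bottom border of each small cylinder. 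An involution acting by $-\Id$ on $C_1$ would have to match its $\geq 2$ top saddle connections bijectively with the single one on its bottom --- contradiction. Note that your verification of condition (c) for collapsing a small cylinder would be the same computation you did for $C_3$ (the two borders lie on different parts of the degeneration, hence carry disjoint sets of zeros).

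By instead collapsing $C_3$ you land on a surface whose two surviving cylinders $C_1,C_2$ may well be isometric and \emph{swapped} by the involution --- and such $2$-cylinder configurations genuinely occur on surfaces of $\tilde{\cQ}(3,-1^3)$ (the paper's Lemma~\ref{lm:Hmn:2cyl:imp:3cyl:1:sim} relies on exactly this: the Prym involution ``must permute the two cylinders''). So the mere existence of a $2$-cylinder decomposition on $M'$ is not a contradiction, and the crude count $t_1+t_2=b_1+b_2$ is consistent with both the preserve and swap subcases. To close the argument along your route you would have to track precisely how the $4+k$ horizontal saddle connections redistribute when the top and bottom borders of $C_3$ are identified (one saddle connection is absorbed when the two zeros merge, and you must show it is not one of the two single-saddle-connection bottoms of $C_1,C_2$). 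You explicitly defer this step as ``the principal obstacle,'' so the proof is not complete as written; this is a genuine gap, and it is one the paper avoids entirely by collapsing a small cylinder rather than the large one.
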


\begin{proof}
By contradiction, assume $M \in \cM$ satisfies Case 3.II) and $\Tw(M, \cM) = \CP(M, \cM)$.  By Theorem \ref{thm:RankkImpkCyl}, there are at least two equivalence classes of cylinders.  In fact, all three cylinders must be free because of the homological relation $c_1 + c_2 = c_3$, where $c_i$ is a core curve of the cylinder $C_i$.  Any relation between two of the cylinders induces a relation with the third.  Note that the zero $x_1$ in Figure \ref{CaseIIFig} on the bottom of $C_2$ cannot occur on the top of $C_2$ because $x_1$ is a simple zero.  Therefore, we can twist and collapse $C_2$ while fixing the rest of the surface to reach a translation surface in a lower stratum in genus three.  The resulting surface $M'$ must be contained in a rank two affine manifold by Proposition \ref{prop:RankkImpRankkBd}, or the fact that $C_1$ and $C_3$ are free.

Finally, we claim that $M'$ cannot be a double covering of a half translation surface.  First of all, $C_1$ and $C_3$ have unequal circumferences, so any double covering would induce an involution sending $C_i$ into itself, for $i = 1,3$.  In the argument above, at least one of the cylinders $C_1$ or $C_2$, is not simple and so we could label $C_1$ to be the cylinder that is not simple.  If neither is simple, then choose one of them.  However, if $C_1$ is not simple, then an involution of order two would send all of the saddle connections on its top into the single saddle connection on its bottom.  This is impossible and contradicts the assumption that the rank two manifold in the boundary of $\cM$ admits a double cover to a stratum of half-translation surfaces.
\end{proof}

\begin{corollary}
 \label{cor:no:C3II:in:Hmn}
 If $\cM$ is a rank two submanifold in $\cH(m,n)$, with $m+n=4$, then $\cM$ does not contain any horizontally periodic surface $M$ satisfying Case 3.II) such that $\Tw(M,\cM)=\CP(M,\cM)$.
\end{corollary}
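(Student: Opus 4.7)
The plan is to derive this corollary directly as a specialization of Lemma~\ref{lm:3CylsCaseIILem}. In the stratum $\cH(m,n)$ with $m+n=4$ we are in genus three with exactly $k=2$ zeros, so the hypothesis of the lemma requires us to verify the double-cover condition only for rank two affine manifolds sitting in strata in genus three with at most $k-1=1$ zero, i.e.\ in the stratum $\cH(4)$.

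First I would invoke Theorem~\ref{NWANWThm}: the only rank two affine manifold in $\cH(4)$ is the Prym locus $\prym$, whose surfaces are by definition the standard orienting double covers of quadratic differentials in $\cQ(3,-1^3)$. Consequently, every translation surface in every rank two affine manifold in a stratum of genus three with strictly fewer than two zeros admits a double covering to a half-translation surface, which is precisely the hypothesis required to apply Lemma~\ref{lm:3CylsCaseIILem}.

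Then I would simply apply Lemma~\ref{lm:3CylsCaseIILem} in the stratum $\cH(m,n)$ with $m+n=4$, which directly yields the nonexistence of a horizontally periodic $M\in \cM$ satisfying Case~3.II) with $\Tw(M,\cM)=\CP(M,\cM)$, completing the proof.

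Since this corollary is literally the specialization of the previous lemma, there is no technical obstacle to overcome beyond citing Theorem~\ref{NWANWThm}; the only small thing to double-check is that the genus and zero counts match (genus three, $k=2$), so that $\cH(4)$ really is the only stratum in the lower range to consider.
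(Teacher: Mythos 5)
Your proposal is correct and follows exactly the paper's own argument: verify the hypothesis of Lemma~\ref{lm:3CylsCaseIILem} by invoking Theorem~\ref{NWANWThm} to see that the only rank two affine manifold in $\cH(4)$ is $\prym$, whose surfaces are double covers of quadratic differentials, and then apply the lemma with $k=2$. Nothing is missing.
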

\begin{proof}
 By Theorem \ref{NWANWThm}, the only rank two submanifold of $\cH(4)$ is $\widetilde{\cQ}(3,-1^3)$ consisting of translation surfaces that are double coverings  of quadratic differentials in $\cQ(3,-1^3)$. Thus the corollary follows immediately from Lemma~\ref{lm:3CylsCaseIILem}.
\end{proof}

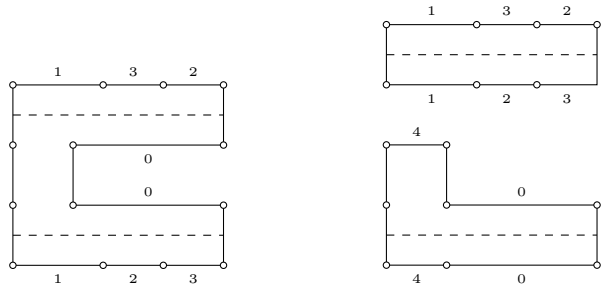
\begin{figure}[htb]
\centering
\begin{minipage}[b]{0.4\linewidth}
\centering
\begin{tikzpicture}[scale=0.40]
\draw (-3,0)--(-3,6)--(4,6)--(4,4)--(-1,4)--(-1,2)--(4,2)--(4,0)--cycle;
\foreach \x in {(-3,0),(-3,6),(4,6),(4,4),(-1,4),(-1,2),(4,2),(4,0),(0,6),(2,6),(0,0),(2,0),(-3,2),(-3,4)} \filldraw[fill=white] \x circle (3pt);
\draw [dashed] (-3,5)--(4,5);
\draw [dashed] (-3,1)--(4,1);
\draw(1.5,2) node[above] {\tiny 0};
\draw(-1.5,6) node[above] {\tiny 1};
\draw(1,6) node[above] {\tiny 3};
\draw(3,6) node[above] {\tiny 2};
\draw(1.5,4) node[below] {\tiny 0};
\draw(-1.5,0) node[below] {\tiny 1};
\draw(1,0) node[below] {\tiny 2};
\draw(3,0) node[below] {\tiny 3};
\end{tikzpicture}
\end{minipage}
\begin{minipage}[t]{0.4\linewidth}
\centering
\begin{tikzpicture}[scale=0.40]
\draw (-3,5)--(-3,7)--(4,7)--(4,5)--cycle;
\foreach \x in {(-3,5),(-3,7),(4,7),(0,7),(2,7),(0,5),(2,5)} \filldraw[fill=white] \x circle (3pt);
\draw [dashed] (-3,6)--(4,6);
\draw(-1.5,7) node[above] {\tiny 1};
\draw(1,7) node[above] {\tiny 3};
\draw(3,7) node[above] {\tiny 2};
\draw(-1.5,5) node[below] {\tiny 1};
\draw(1,5) node[below] {\tiny 2};
\draw(3,5) node[below] {\tiny 3};
\draw (-3,-1)--(-3,3)--(-1,3)--(-1,1)--(4,1)--(4,-1)--cycle;
\foreach \x in {(-3,-1),(-3,1),(-3,3),(-1,3),(-1,1),(4,1),(4,-1),(-1,-1)} \filldraw[fill=white] \x circle (3pt);
\draw [dashed] (-3,0)--(4,0);
\draw(1.5,1) node[above] {\tiny 0};
\draw(-2,3) node[above] {\tiny 4};
\draw(1.5,-1) node[below] {\tiny 0};
\draw(-2,-1) node[below] {\tiny 4};
\end{tikzpicture}
\end{minipage}
\caption{Constructing surfaces satisfying Case 3.III) from genus two surfaces}
\label{CaseIIISurgFig}
\end{figure}

\subsection{Case 3.III)}

As for Case 3.II), the goal of this section is to prove that if a translation surface satisfies this case, then it is always possible to find a translation surface in the affine submanifold $\cM$ with at least four cylinders.  Before doing so, we describe a surgery that allows us to construct all possible cylinder diagrams satisfying Case 3.III) in genus three.

Let $M$ be a translation surface satisfying Case 3.III) in a rank two affine manifold $\cM$ in genus three.  Let $C_1$ and $C_2$ be the two homologous cylinders, and $C_3$ be the third one. If we cut $M$ along the core curves of $C_1$ and $C_2$, depicted as dashed lines in Figure \ref{CaseIIISurgFig}(left), then reglue as in Figure \ref{CaseIIISurgFig}(right), we get two translation surfaces in genus two which are horizontally periodic: the first one admits a $1$-cylinder diagram denoted by $M'_1$, the second one admits a $2$-cylinder diagram, and will be denoted by $M'_2$.  Note that $M'_2$ contains $C_3$.

\begin{lemma}
\label{lm:C3III:no:H31}
 If $M$ admits a cylinder decomposition in Case 3.III), then  $M$ does not belong to $\cH(3,1)$.
\end{lemma}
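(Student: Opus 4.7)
The plan is to use the surgery just introduced, which associates to the hypothetical $M$ a pair of closed genus-two translation surfaces $M'_1$ and $M'_2$, and to derive a contradiction by tracking the order-three zero of $M\in\cH(3,1)$ through this surgery.

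The first step is to note that, since $c_1$ and $c_2$ are homologous, cutting $M$ along $c_1\cup c_2$ disconnects $M$ into two topological pieces $A$ (containing $C_3$) and $B$. Because the core curves $c_1,c_2$ lie in the interiors of the cylinders $C_1,C_2$, they meet no zero of $M$; hence every zero of $M$ lies in exactly one of the two pieces. The second step is to observe that the surgery closes each piece by identifying its two boundary circles, producing the closed genus-two surface $M'_2$ from $A$ and $M'_1$ from $B$. Since the identified circles are smooth and carry no zeros, no new zeros are created. Moreover, a small flat neighborhood of any zero $p$ of $M$ lies entirely in one of the pieces and is transferred unchanged into the corresponding $M'_i$, so $p$ reappears there with the same order.

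Finally, if $M\in\cH(3,1)$ then $M$ carries a zero of order three, which by the preceding steps must be an order-three zero on either $M'_1$ or $M'_2$. However, each $M'_i$ is a closed translation surface of genus two, and any Abelian differential on a genus-two surface has total zero order $2g-2=2$. Since a single zero of order three exceeds this bound, the desired contradiction will follow. The only step that deserves extra care is the preservation of zero orders under the surgery, and this is immediate once one notes that the cuts lie strictly inside the cylinders $C_1$ and $C_2$, so the local flat geometry around each zero of $M$ is untouched.
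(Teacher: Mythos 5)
Your proof is correct and follows essentially the same route as the paper: both use the cut-and-reglue surgery along the homologous core curves $c_1,c_2$ to produce the genus-two surfaces $M'_1$ and $M'_2$, observe that the zeros of $M$ survive unchanged (the paper states this in one line), and conclude from $2g-2=2$ in genus two that an order-three zero is impossible. Your write-up simply makes explicit the details (the cuts avoid the zeros, the reglued circles are smooth) that the paper leaves implicit.
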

\begin{proof}
Remark that the singularities of $M$ are also the singularities of $M'_1$ and $M'_2$. Since $M'_i\in \cH(2)\cup\cH(1,1)$, the lemma follows.
\end{proof}

Recall that in genus two there are two $1$-cylinder diagrams (one in each stratum), and three $2$-cylinder diagrams.  By considering all possible combinations, it is possible to produce all possible cylinder diagrams in genus three satisfying Case 3.III).

Since $C_1$ and $C_2$ are homologous, they are $\cM$-parallel.  If $M$ has two equivalence classes of cylinders, then $C_3$ is free. We first observe

\begin{lemma}
\label{lm:C3III:no:nf:sim:cyl}
If $M$ is horizontally periodic with two horizontal simple cylinders, or a non-free simple cylinder, then $M$ does not satisfy Case 3.III).
\end{lemma}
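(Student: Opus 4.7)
My plan is to derive a topological contradiction from Case 3.III) whenever one of $C_1,C_2$ (the unique non-free $\cM$-parallel equivalence class) is simple. Both hypotheses of the lemma reduce to this: a non-free simple cylinder must lie in $\{C_1,C_2\}$, and among any two simple cylinders of $M$ at least one of $C_1,C_2$ is simple since $\{C_3\}$ is the only free class. Up to swapping $C_1\leftrightarrow C_2$, I would assume $C_1$ is simple, with top saddle connection $\alpha$ and bottom saddle connection $\beta$.

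I would first recall, via Lemma~\ref{3CylDeg} and the Remark that follows it, that in Case 3.III) the closure $T$ of the part of $M$ degenerating to the torus component under pinching of the cores of $C_1,C_2,C_3$ is bounded by the core curves of $C_1,C_2$ and is a genus-one surface with two boundary components, so $\chi(T)=-2$; the only cylinders meeting $T$ are halves of $C_1$ and $C_2$, since $C_3$ lies entirely in the sphere subregion. A short adjacency analysis --- any torus-side top saddle connection needs a cylinder with a torus-side bottom above it, and loops on the simple $C_1$ are impossible because $\alpha$ and $\beta$ sit on opposite sides of the degeneration --- would then force, up to relabeling, the two torus-side boundary circles of $T$ to be exactly the top of $C_1$ and the bottom of $C_2$.

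Next I would observe that every saddle connection on the bottom of $C_2$ has its cylinder-below on the torus side, and the only candidate is $C_1$; hence it lies on top of $C_1=\{\alpha\}$, so equals $\alpha$. The Remark after Lemma~\ref{3CylDeg} gives $c_1=c_2$ in homology, so $w_1=w_2=|\alpha|$, and since the bottom of $C_2$ has total length $w_2=|\alpha|$, the saddle connection $\alpha$ appears on it exactly once. Therefore the top of $C_1$ and the bottom of $C_2$, viewed as closed curves in $M$, are both the single loop through $\alpha$ based at the torus zero, and they coincide.

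Finally, $T$ is the union of the upper half of $C_1$ and the lower half of $C_2$ --- two annuli --- identified along their common outer boundary, the loop $\alpha$. Its Euler characteristic is $\chi(T)=0+0-\chi(S^1)=0$, with two remaining boundary components (the core curves), so $T$ is an annulus of genus zero, contradicting $\chi(T)=-2$. The delicate step is the adjacency analysis in the second paragraph, in which I must rule out configurations where both torus-side boundaries of $T$ come from tops or both from bottoms; after that, the length count and the Euler-characteristic computation are immediate.
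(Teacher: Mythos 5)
Your argument is correct and is essentially the paper's proof: the paper cuts $M$ along the core curves of $C_1,C_2$ and reglues the torus-side piece into a genus-two surface $M'_1$ with a single horizontal cylinder, one of whose boundary components would consist of a single saddle connection if $C_1$ or $C_2$ were simple --- impossible in genus two. Your Euler-characteristic computation on the cut-open torus-side piece $T$ (showing it would be an annulus rather than a genus-one surface with two boundary circles) is the same obstruction, just carried out directly instead of via the regluing surgery to $M'_1$.
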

\begin{proof}
 Suppose that $M$ satisfies Case 3.III). Using the notations above, we see that at least one of $C_1,C_2$ is simple. It follows that the unique horizontal cylinder of $M'_1$ has one boundary consisting of a single saddle connection, which is impossible since $M'_1$ has genus two.
\end{proof}

\begin{lemma}
\label{3CylsCaseIIILemRedNonSimpCyl}
If $\cM$ is a rank two affine manifold in genus three and $\cM$ contains a horizontally periodic translation surface satisfying Case 3.III), and $C_3$ is a free simple cylinder, then $\cM$ contains a horizontally periodic translation surface $M'$ satisfying Case 3.III) such that the cylinder $C_3' \subset M'$ is free and not simple.  Furthermore, if $C_3$ has a double zero on its boundaries, then $C_3'$ has a double zero on its boundaries as well.
\end{lemma}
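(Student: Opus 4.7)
The plan is to descend to the boundary stratum $\cH(4)$ via cylinder collapse of the class $\cC=\{C_1,C_2\}$, use the classification of rank two loci there, and then lift back in a carefully chosen direction to obtain $M'\in\cM$ with $C_3'$ not simple.

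First, I would replace $M$ by a square-tiled surface in $\cM$ via Proposition~\ref{prop:SimpCylPres}, preserving the horizontal decomposition into $C_1,C_2,C_3$, its equivalence classes $\cC=\{C_1,C_2\}$ and $\{C_3\}$, and the simplicity of $C_3$. After a preliminary twist of $\cC$ using Theorem~\ref{thm:Wright:Cyl:Def}, arrange that a vertical saddle connection in $C_1$ joins the sphere-side zero $p$ to the torus-side zero $q$ of the Case~3.III) degeneration. This verifies condition~(c) of Proposition~\ref{prop:RankkImpRankkBd} for $\cC$; conditions~(a) and~(b) are immediate because $C_3\notin\cC$ and because the sphere-side and torus-side boundaries of $C_1,C_2$ are disjoint as horizontal saddle connections.

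Next, applying Proposition~\ref{prop:RankkImpRankkBd} to $\cC$ yields a limit $M''$ of the cylinder-stretch family $a_t^\cC(M)$ lying in a rank two submanifold $\cM''\subset\cH(4)$, which by Theorem~\ref{NWANWThm} must equal $\prym$. In $M''$ the two double zeros $p,q$ of $M$ have merged into a single order-four zero $r$, while $C_3$ persists as a horizontal cylinder with both boundaries attached to $r$ and remains simple. I would then lift $M''$ back to $\cM$ in a new direction: the fiber of the collapse map over $M''$ inside $\cM$ is a two-real-dimensional family of surfaces that separate $r$ into a pair of order-two zeros along various short vectors $v\in\bC$. Using the double cover structure of $\prym$ over $\cQ(3,-1^3)$, together with the position of $r$ on the boundary of $C_3\subset M''$, I would identify a horizontal $v$ such that, after the separation, the new short horizontal saddle connection between the separated zeros lies on one of the boundaries of the cylinder obtained from $C_3$. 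In the resulting $M'\in\cM$, that boundary then splits into two consecutive horizontal saddle connections based at the sphere-side zero, so $C_3'$ is not simple. Because the separation preserves $\cM$-parallelism, $C_3'$ is still free; because the sphere-side zero remains a double zero, the final clause of the statement is preserved.

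The main obstacle is identifying the correct separation direction: one must show that the two-real-dimensional fiber of the collapse over $M''$ inside $\cM$ contains a lift in which the new short saddle connection lands on a boundary of $C_3$; equivalently, that not every lift keeps $C_3$ simple. This requires a detailed analysis of the symmetric structure of $M''$ in $\prym$, in particular the position of $r$ on $\partial C_3$ relative to the Prym involution, and it is the main technical content of the argument.
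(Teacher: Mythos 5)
Your proposal takes a genuinely different route from the paper, and it fails at its first substantive step. You collapse the equivalence class $\cC=\{C_1,C_2\}$ via Proposition~\ref{prop:RankkImpRankkBd} and assert that condition (b) is ``immediate because \dots the sphere-side and torus-side boundaries of $C_1,C_2$ are disjoint as horizontal saddle connections.'' This is false in Case 3.III): the torus part of the degenerate surface carries no cylinder of its own, so its horizontal saddle connections are exactly the top boundary of $C_1$ and the bottom boundary of $C_2$, and these two boundaries consist of the \emph{same} saddle connections (this coincidence is precisely what the surgery producing the one-cylinder surface $M'_1$ exploits, and it is visible in Figure~\ref{CaseIIILemPfFig}). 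Hence there is a saddle connection contained in the bottom of $C_2$ and in the top of $C_1$ with both cylinders in $\cC$, condition (b) fails, and the proposition does not apply: its proof uses (b) exactly to exclude saddle connections crossing two adjacent collapsing cylinders, whose lengths are not controlled as $t\to-\infty$. Whether a modified simultaneous collapse of $C_1$ and $C_2$ can be arranged to land in $\cH(4)$ would need a separate argument you have not supplied. Beyond this, you explicitly leave the second half of your argument unproved: you assert without justification that the fiber of the collapse over $M''$ inside $\cM$ is a two-real-dimensional family of zero-splittings, and that it contains a lift in which the reopened saddle connection lands on $\partial C_3$; you would also still have to verify that such a lift is horizontally periodic with three cylinders in Case 3.III). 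Finally, the lemma is stated for all genus-three strata, whereas your detour through $\cH(4)$ and Theorem~\ref{NWANWThm} only makes sense when the two colliding zeros are the only zeros.

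The paper's proof is entirely local and avoids the boundary altogether: twist the free cylinder $C_3$ so that it contains no vertical saddle connection, then shrink its height to zero along a straight segment in period coordinates. Since no saddle connection degenerates, the endpoint still lies in the stratum and in $\cM$, so the line containing this segment extends slightly past the collapse point (the ``extended cylinder deformation''); the surface on the far side is again horizontally periodic in Case 3.III) with a free cylinder $C_3'$ whose boundaries now contain the former boundaries of $C_1$ and $C_2$, hence $C_3'$ is not simple. No degeneration to $\cH(4)$, no appeal to the classification there, and no lifting argument are required.
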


\begin{proof}
See Figure \ref{CaseIIILemSimpCylDefPfFig} for the complete argument of this proof.  If $C_3$ is a simple cylinder, twist it so that there is no vertical saddle connection (from the zero on its top to the zero on its bottom) contained in $C_3$.  Observe that collapsing $C_3$ yields a translation surface in the interior of $\cM$ because $C_3$ does not contain any vertical saddle connections, so the distance between every pair of zeros is bounded away from zero.

We define an \emph{extended cylinder deformation} in the following way.  The collapse of $C_3$ determines a path in $\cM$ which is a closed line segment $\ell$ in period coordinates.  Therefore, it is natural to consider the  real line $L$ in $H^1(X,\Sigma,\bR+\imath\bR)$ containing this segment.  In the case above, the  segment and its endpoints lie in the interior of $\cM$, which implies that there is an open subset $U_{\ell} \subset L$ such that $\ell \subset U_{\ell} \subset \cM$. We call the points in $U_\ell \setminus \ell$  {\em extended cylinder deformations} of $M$.

We consider the extended cylinder deformation of $C_3$, which realizes the surface depicted on the right side of Figure \ref{CaseIIILemSimpCylDefPfFig} so that we reach a new translation surface in the interior of $\cM$.  Observe that many horizontal trajectories in each of $C_1$ and $C_2$ persist under this deformation.  However, there is a new horizontal trajectory whose boundary entirely contains the top and bottom boundaries of the cylinders of $C_2$ and $C_1$, respectively.  This horizontal trajectory determines a new horizontal cylinder, which we call $C'_3$.  Since $C_3'$ can be deformed via the extended deformation of $C_3$, $C'_3$ must also be free.
\end{proof}

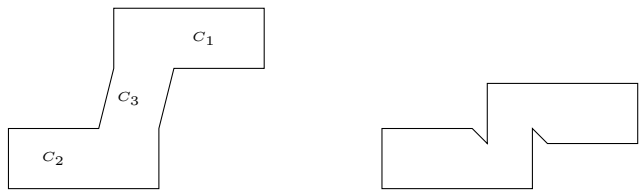
\begin{figure}
\centering
\begin{minipage}[t]{0.4\linewidth}
\centering
\begin{tikzpicture}[scale=0.40]
\draw (-4,0)--(-4,2)--(-1,2)--(-0.5,4)--(-0.5,6)--(4.5,6)--(4.5,4)--(1.5,4)--(1,2)--(1,0)--cycle;
\draw (-2.5,1) node {\tiny $C_2$} (0,3) node {\tiny $C_3$} (2.5,5) node {\tiny $C_1$};
\end{tikzpicture}
\end{minipage}
\begin{minipage}[t]{0.4\linewidth}
\centering
\begin{tikzpicture}[scale=0.40]
\draw (-4,0)--(-4,2)--(-1,2)--(-0.5,1.5)--(-0.5,3.5)--(4.5,3.5)--(4.5,1.5)--(1.5,1.5)--(1,2)--(1,0)--cycle;
\end{tikzpicture}
\end{minipage}
\caption{Deformation of translation surface to Case 3.III) without a simple cylinder.}
\label{CaseIIILemSimpCylDefPfFig}
\end{figure}

Our goal now is to show

\begin{proposition}
\label{prop:3III:imply:4cyls}
Let $\cM$ be a rank two submanifold in one of the strata $\cH(m,n)$, with  $m+n=4$. Suppose that $\cM$ contains a surface $M$ which is horizontally periodic and satisfies Case 3.III). Then $\cM$ contains a
surface admitting a cylinder decomposition with four cylinders.
\end{proposition}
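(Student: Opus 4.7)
The plan is to reduce to a canonical form and then combine the Veech property of nearby square-tiled surfaces with a degeneration to $\cH(4)$ to force a fourth cylinder in a transverse direction.

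First, by Lemma \ref{3CylsCaseIIILemRedNonSimpCyl}, we may assume $C_3$ is free and not simple; combined with Lemma \ref{lm:C3III:no:nf:sim:cyl} (applied to the $\cM$-parallel class $\{C_1,C_2\}$, which contains a simple cylinder only if forbidden), none of $C_1,C_2,C_3$ is simple. Using Proposition \ref{prop:SimpCylPres}, perturb $M$ inside $\cM$ to a square-tiled surface still decomposed into the three horizontal cylinders $C_1,C_2,C_3$ with the same adjacencies. This $M$ is Veech, so every direction containing a saddle connection is periodic.

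Next, try to apply Proposition \ref{prop:RankkImpRankkBd} to the singleton equivalence class $\cC=\{C_3\}$ to collapse $C_3$ and land in $\cH(4)$. The family $\cC$ does not fill $M$, and after twisting $C_3$ appropriately (allowed since $C_3$ is free) one arranges a vertical saddle connection inside $C_3$ joining two distinct zeros together with the non-vanishing real-part condition on the remaining interior saddle connections. Provided $C_3$ has no loop saddle connection (condition (b) of the proposition), the collapse yields $M''\in\cH(4)$ in a rank-two affine submanifold, which by Theorem \ref{NWANWThm} must be $\widetilde{\cQ}(3,-1^3)$. The Prym involution $\tau$ on $M''$ has four fixed points and must swap the two horizontal cylinders inherited from $C_1$ and $C_2$ (since they are $\cM$-parallel with equal core curves). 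This swapping rigidly constrains how the boundary saddle connections of $C_1$ and $C_2$ are glued to each other and to the vestige of $C_3$. Lifting back, the constraints pick out a specific transverse direction on $M$ carrying a saddle connection: by Veechness this direction is periodic, and a direct count using the non-simplicity of all three horizontal cylinders forces at least four transverse cylinders.

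The main obstacle is the possibility that $C_3$ contains a saddle connection in both its top and bottom boundary (a loop at $C_3$ in the dual graph), which would prevent a direct application of Proposition \ref{prop:RankkImpRankkBd} with $\cC=\{C_3\}$. In that subcase I would first attempt to remove the loop by shearing the $\cM$-parallel pair $\{C_1,C_2\}$ jointly (using Theorem \ref{thm:Wright:Cyl:Def}) so as to realign a vertical cross-cut inside $C_3$ and produce an additional horizontal cylinder after rotation, reducing to the loopless case; otherwise Lemma \ref{lm:loop} applied to the pair $(C_i,C_3)$ can often rule the loop configuration out entirely, since $C_3$ is not $\cM$-parallel to $C_1$ or $C_2$ and $C_1\cup C_2$ contains the top/bottom of $C_3$ in a controlled way. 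A secondary combinatorial obstacle is the variety of dual graphs satisfying Case 3.III), which I would organize by the number of saddle connections on each side of $C_3$ and handle via the surgery in Figure \ref{CaseIIISurgFig} relating $M$ to the two genus-two surfaces $M'_1,M'_2$.
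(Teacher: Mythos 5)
Your central step fails. After the reductions, Lemma \ref{lm:C3III:no:H31} forces $\cM\subset\cH(2,2)$, and (since $\cH(2)$ has a unique $1$-cylinder and a unique $2$-cylinder diagram) there is exactly one cylinder diagram to consider, the one in Figure \ref{CaseIIILemPfFig}. In that diagram $C_3$ sits inside the slit torus $M''_1$, so \emph{both} boundary components of $C_3$ contain only one of the two zeros. Consequently there is no saddle connection in $C_3$ joining two distinct singularities, so condition (c) of Proposition \ref{prop:RankkImpRankkBd} cannot be arranged for $\cC=\{C_3\}$, and collapsing $C_3$ does not merge the two zeros and does not land in $\cH(4)$. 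Moreover $C_3$ necessarily carries a saddle connection on both its top and bottom boundary (this is forced by the diagram, and is in fact what produces the free transverse simple cylinder $D_3$ used in the paper), so condition (b) fails as well; this "loop'' is a combinatorial feature of the unique diagram and cannot be removed by shearing, and Lemma \ref{lm:loop} does not exclude it --- that lemma only obstructs two such cylinders from being $\cM$-parallel, which is irrelevant here since $C_3$ is free. So the degeneration to $\widetilde{\cQ}(3,-1^3)$ that your argument hinges on is not available, and the subsequent "direct count forces four transverse cylinders'' is asserted rather than proved.

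The paper's route avoids degeneration entirely: it uses the loop at $C_3$ to produce a free vertical simple cylinder $D_3\subset\overline{C}_3$, uses the splitting $M''_1\sqcup M''_2$ to produce a second vertical simple cylinder $D$ crossing each horizontal cylinder once and disjoint from $D_3$, passes to a nearby square-tiled surface via Proposition \ref{prop:SimpCylPres} to get vertical periodicity, and then shows that if there were only three vertical cylinders they would all be free (Lemma \ref{lm:f:sim:cyl}), forcing the vertical decomposition into Case 3.I) with three free cylinders, which contradicts Lemma \ref{lm:3:f:cyl}. If you want to salvage a degeneration-based argument, the cylinders you could hope to collapse are $C_1$ and $C_2$ (whose boundaries do contain both zeros), not $C_3$; but the cleanest fix is to adopt the transverse-cylinder argument above.
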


\begin{proof}
It suffices to assume that $\Tw(M, \cM) = \CP(M, \cM)$ because otherwise we are done by Lemma \ref{lm:WrightTwistPresLem}.  Since the cylinders must be divided into at least two equivalence classes and $C_1$ and $C_2$ are homologous, thus $\cM$-parallel, $C_3$ is free. By Lemma~\ref{3CylsCaseIIILemRedNonSimpCyl}, we can assume that $C_3$ is not simple. By Lemma~\ref{lm:C3III:no:H31}, we only need to consider the case $\cM \subset \cH(2,2)$, thus both $M'_1$ and $M'_2$ belong to $\cH(2)$. Since there is only one $1$-cylinder diagram, and one $2$-cylinder diagram in $\cH(2)$, it is easy to check that there is only one cylinder diagram we need to consider which is shown in Figure~\ref{CaseIIILemPfFig}.

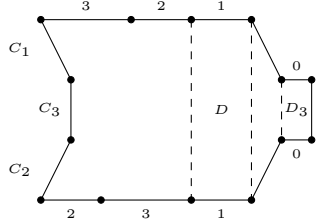
\begin{figure}[htb]
\centering
\begin{minipage}[t]{0.3\linewidth}
\centering
\begin{tikzpicture}[scale=0.40]
\draw (-4,0)--(-3,2)--(-3,4)--(-4,6)--(3,6)--(4,4)--(5,4)--(5,2)--(4,2)--(3,0)--cycle;
\foreach \x in {(-4,0),(-2,0),(1,0),(3,0),(-4,6),(-1,6),(1,6),(3,6),(4,4),(5,4),(5,2),(4,2),(-3,2),(-3,4)} \filldraw[fill=black] \x circle (3pt);
\draw(-4,1) node[left] {\tiny $C_2$};
\draw(-3,3) node[left] {\tiny $C_3$};
\draw(-4,5) node[left] {\tiny $C_1$};
\draw(4.5,3) node {\tiny $D_3$};
\draw(2,3) node {\tiny $D$};
\draw [dashed] (1,6)--(1,0);
\draw [dashed] (3,6)--(3,0);
\draw [dashed] (4,4) -- (4,2);
\draw(4.5,2) node[below] {\tiny 0};
\draw(2,6) node[above] {\tiny 1};
\draw(-2.5,6) node[above] {\tiny 3};
\draw(0,6) node[above] {\tiny 2};
\draw(4.5,4) node[above] {\tiny 0};
\draw(2,0) node[below] {\tiny 1};
\draw(-3,0) node[below] {\tiny 2};
\draw(-0.5,0) node[below] {\tiny 3};
\end{tikzpicture}
\end{minipage}
\caption{Case 3.III):  finding two vertical cylinders.}
\label{CaseIIILemPfFig}
\end{figure}

Remark that in this diagram, if we cut $M$ horizontally along the top of $C_2$ and the bottom of $C_1$, and reglue the two boundaries of two components, we then get a slit torus $M''_1$ that contains $C_3$, and a surface $M''_2$ in $\cH(2)$ horizontally periodic with a single cylinder. Note that in $M''_2$ we have a marked horizontal simple closed geodesic $c$, which corresponds to the slit of $M''_1$, and a marked point $x \in c$, which corresponds to the singularity in the boundary of $C_3$.

Note also that $\overline{C}_3$ contains a transverse simple cylinder $D_3$ crossing the saddle connection $(0)$, it is easy to see that $D_3$ is free. Twist $C_3$ as in Figure~\ref{CaseIIILemPfFig}, so that the free simple cylinder $D_3$ is vertical.

We claim that we can twist $\cC = \{C_1, C_2\}$ so that there is another simple vertical cylinder passing exactly once through all three horizontal cylinders.  To see this, we consider the splitting of $M$ into the connected sum of $M''_1$ and $M''_2$ described above.  Let $C$ denote the unique horizontal cylinder in $M''_2$. Recall that we have a distinguished core curve $c$ of $C$, and a marked point $x \in c$. Note that the boundary of $C$ consists of three horizontal saddle connections, and one can always find two simple cylinders which are disjoint and cross the core curves of $C$ once. Therefore, there always exists a transverse simple cylinder $D$ crossing the core curves of $C$ once whose closure does not contain $x$. Since twisting simultaneously $C_1$ and $C_2$ is the same as twisting $C$, we can assume $D$ is vertical.

To reconstruct $M$, we have to cut $M''_2$ along $c$, and glue the two copies of $c$ with the two sides of the slit in $M'_1$ so that $x$ is identified with both endpoints of the slit. Since  $x$ is not contained in  the closure of $D$, we see that $D$ gives rise to a simple cylinder in $M$ crossing  all of the horizontal cylinders once and disjoint from $D_3$. By a slight abuse of notation, we also denote this cylinder by $D$.

Using Lemma~\ref{prop:SimpCylPres}, we can assume that $M$ is a square-tiled surface, which is vertically periodic. If $M$ has four vertical cylinders, then we are done. Assume that $M$ has only three vertical cylinders. We claim that all three vertical cylinders are free. Let $D'$ denote the third vertical cylinder. We already have $D_3$ is free. Since the closures of $D$ and $D_3$ are disjoint, $D_3$ is only adjacent to $D'$. Thus by Lemma~\ref{lm:f:sim:cyl}, $D'$ is free, it follows immediately that $D$ is free.

Next, we claim that the cylinder decomposition of $M$ in the vertical direction does not satisfy Case 3.II) or Case 3.III). Indeed, if this cylinder decomposition satisfies Case 3.III), then there must be two homologous cylinders, which is impossible as we have three free cylinders. Since Case 3.II) is already excluded, we conclude that this cylinder decomposition satisfies Case 3.I). But in Case 3.I), the core curves of the cylinders span a Lagrangian of $H_1(M,\bZ)$, and we get a contradiction by Lemma~\ref{lm:3:f:cyl}. Thus there must be four vertical cylinders in $M$.
\end{proof}

\section{Three Cylinders: Case 3.I)}
\label{CaseIReductSect}

It turns out that most of the 3-cylinder diagrams satisfy this case.  We state the main result of this section here.  The proof is given at the end of this section.


\begin{theorem}
\label{thm:C3I:imply:4cyl}
Let $\cM$ be a rank two affine manifold in one of the strata $\cH(m,n)$, with $m+n=4$.  If  $\cM$ contains a  horizontally periodic translation surface with three cylinders satisfying Case 3.I), then there exists $M \in \cM$ horizontally periodic with at least four cylinders.
\end{theorem}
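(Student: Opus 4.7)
Throughout, fix $\cM \subset \cH(m,n)$ with $m+n=4$ of rank two, and a horizontally periodic $M \in \cM$ with cylinders $C_1,C_2,C_3$ whose core curves span a Lagrangian subspace of $H_1(M,\bZ)$. Since we aim to produce more horizontal cylinders, Lemma \ref{lm:WrightTwistPresLem} lets us assume $\Tw(M,\cM)=\CP(M,\cM)$, for otherwise a perturbation in $\CP\setminus\Tw$ already yields a surface with at least four horizontal cylinders. Under this assumption Theorem \ref{thm:RankkImpkCyl} forces at least two equivalence classes of $\cM$-parallel cylinders, and Lemma \ref{lm:3:f:cyl} forbids all three from being free. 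Hence, up to relabelling, the class partition is $\{\{C_1,C_2\},\{C_3\}\}$, the horizontal cylinders span a $2$-dimensional subspace of $(T_M^{\bR}\cM)^\ast$, and $C_3$ is the unique free cylinder.

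The next step is an enumeration of the admissible $3$-cylinder diagrams through the directed dual graphs of Section \ref{sec:intro:DG}: a Case 3.I) diagram has three vertices and six edges and no relation among the three core cycles. I would isolate two combinatorial features that cover all but one of these diagrams, corresponding to Propositions \ref{prop:C3I:loop:in:c1:c2} and its neighbours: (P1) one of $C_1,C_2$ is a simple cylinder adjacent only to $C_3$ or only to the other member of its class; (P2) some horizontal saddle connection is contained in both the top and the bottom border of $C_3$. Diagrams with (P1) are ruled out at once by Lemma \ref{lm:n:f:sim:cyl} applied to the pair $\{C_1,C_2\}$, since the two classes have independent core curves in $(T_M^{\bR}\cM)^\ast$.

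For diagrams with (P2) the plan is to manufacture, after an appropriate twist of the class $\{C_1,C_2\}$ and of $C_3$, two transverse simple cylinders: a vertical simple cylinder $D\subset\overline{C_3}$ crossing the loop saddle connection provided by (P2), and a second vertical simple cylinder $D'$ crossing $C_1$ and $C_2$ each exactly once (the twist can be arranged using freeness of $C_3$ and the cylinder deformation Theorem \ref{thm:Wright:Cyl:Def}). Proposition \ref{prop:SimpCylPres} then produces a square-tiled $M'\in\cM$ near $M$ on which $C_1,C_2,C_3,D,D'$ all persist, so $M'$ is vertically periodic. If $M'$ admits at least four vertical cylinders we are done; otherwise $M'$ has at most three vertical cylinders, the case of two being reduced to three by Lemma \ref{3PlusCyls} while preserving $D$ and $D'$. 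A vertical $3$-cylinder decomposition falls into one of the three Cases of Lemma \ref{3CylDeg}: Case 3.II) is forbidden by Corollary \ref{cor:no:C3II:in:Hmn}, Case 3.III) yields a $4$-cylinder surface by Proposition \ref{prop:3III:imply:4cyls}, and Case 3.I) together with the two vertical simple cylinders $D,D'$ contradicts either Lemma \ref{lm:n:f:sim:cyl} (if one of $D,D'$ ends up adjacent to a single other vertical cylinder in its class) or Lemma \ref{lm:3:f:cyl} (if all three vertical cylinders are forced free).

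The single exceptional diagram not falling under (P1) or (P2) is handled in Section \ref{ExceptionalCaseSect} by a direct degeneration argument: collapsing the equivalence class $\{C_1,C_2\}$ via Proposition \ref{prop:RankkImpRankkBd} produces a rank two manifold in $\cH(4)$, which by Theorem \ref{NWANWThm} must be $\tilde{\cQ}(3,-1^3)$, and the resulting Prym involution is incompatible with the degenerate cylinder configuration unless $\cM$ already contains a $4$-cylinder surface close to $M$. The main obstacle I expect is the combinatorial step: verifying that (P1) and (P2) genuinely exhaust all diagrams except one, and in each surviving diagram engineering the twist of $\{C_1,C_2\}$ and $C_3$ so that the constructed vertical simple cylinders actually cross the prescribed horizontal cylinders the right number of times while staying in $\cM$.
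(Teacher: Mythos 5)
There is a genuine gap, and it lies exactly where you flag your own worry: the combinatorial dichotomy (P1)/(P2) does not exhaust the Case 3.I) diagrams up to a single exception, and the reductions you attach to each branch do not work as stated. The paper's classification (Lemma \ref{3CylIException}) uses different properties: (a) \emph{some} cylinder (not only the free one $C_3$) has a horizontal saddle connection on both its top and bottom, or (b) some cylinder is semi-simple. Concrete diagrams escape your (P1)/(P2): the diagram with two strictly semi-simple cylinders and no loop (the right-hand graph in Figure \ref{fig:adm:DG:3vert:val4:noloop}); a simple $C_1$ adjacent to \emph{both} $C_2$ and $C_3$ (Case 2 of Proposition \ref{prop:C3I:1:n-f:sim:cyl}); and the case where the free cylinder $C_3$ is itself simple (Proposition \ref{prop:C3I:c3:is:sim}). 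None of these is your exceptional case. Moreover, your claim that (P1) is ``ruled out at once by Lemma \ref{lm:n:f:sim:cyl}'' is only correct when the simple cylinder is adjacent solely to its classmate; a simple $C_1$ adjacent only to the free cylinder $C_3$ is a perfectly consistent configuration (neither Lemma \ref{lm:f:sim:cyl} nor Lemma \ref{lm:n:f:sim:cyl} applies, since $C_1$ is not free and $C_3$ is not its classmate), and the paper needs the full cylinder-proportion computation of Case 1 of Proposition \ref{prop:C3I:1:n-f:sim:cyl} to handle it. Similarly, in your (P2) branch the terminal step ``Case 3.I) with two vertical simple cylinders contradicts Lemma \ref{lm:n:f:sim:cyl} or Lemma \ref{lm:3:f:cyl}'' is not a contradiction at all: two $\cM$-parallel simple cylinders plus a free one is precisely the situation of Proposition \ref{prop:3CylsI2SimpCyls}, which requires enumerating three explicit diagrams and constructing four-cylinder directions by hand.

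The exceptional-case argument also fails as proposed. In the exceptional diagram (Figure \ref{ExCaseFig}) every pair of cylinders is adjacent, so the class $\cC=\{C_1,C_2\}$ violates hypothesis (b) of Proposition \ref{prop:RankkImpRankkBd}; you cannot collapse that class to land in $\cH(4)$ with controlled degeneration of a single saddle connection. The paper instead observes that the exceptional diagram satisfies the hypotheses of Lemma \ref{lm:2adj:cyl:n:para}, produces a non-free vertical simple cylinder, and feeds the resulting vertical decomposition back into the simple-cylinder propositions. To repair your proof you would need to replace (P1)/(P2) with the paper's properties (a)/(b), supply the proportion arguments of Propositions \ref{prop:C3I:1:n-f:sim:cyl}, \ref{prop:C3I:c3:is:sim}, \ref{prop:C3I:loop:in:c1:c2}, \ref{prop:C3I:semi-sim:cyl} and \ref{prop:3CylsI2SimpCyls} for the surviving branches, and rework the exceptional case without the illegal collapse.
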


Our approach to  prove this theorem is by studying horizontally periodic translation surfaces in genus three that satisfy Case 3.I) and one of the following two non-exclusive properties:

\begin{itemize}
\item[(a)] There exists $i \in \{1,2,3\}$ such that there is a horizontal saddle connection contained in both top and bottom of $C_i$.
\item[(b)] There exists $i \in \{1,2,3\}$ such that $C_i$ is semi-simple (see Definition~\ref{def:semi-sim:cyl}).
\end{itemize}

Though these properties may seem arbitrary, we prove in Lemma \ref{3CylIException} that there is exactly one 3-cylinder diagram satisfying Case 3.I) that does not satisfy one of these properties  in strata  $\cH(m,n)$ with $m + n = 4$.  We prove in Lemma~\ref{lm:ExceptionalCaseReduction}, that the existence of a translation surface satisfying this ``exceptional case'' in a rank two affine manifold implies the existence of a translation surface in the same affine manifold satisfying one of these two properties.

In light of this fact, it suffices to thoroughly study translation surfaces satisfying at least one of the properties above.  We start with the very specific case when two of the cylinders of $M$ are simple, cf. Proposition \ref{prop:3CylsI2SimpCyls}.  This serves as an elementary case from which we can build to greater generality.  Proposition \ref{prop:C3I:1:n-f:sim:cyl} serves as the next step by proving that if $M$ has exactly one non-free simple cylinder, then $\cM$ contains a surface with four cylinders.

If one of the non-free cylinders of $M$ contains a simple cylinder\footnote{The phrases ``$C$ contains a simple cylinder,'' and ``$C$ contains the same saddle connection on its top and bottom,'' are equivalent, and we pass freely between them.}, then $\cM$ contains a surface with four cylinders, cf. Proposition \ref{prop:C3I:loop:in:c1:c2}.  Also, if the free cylinder on $M$ is simple, then the same conclusion holds, cf. Proposition \ref{prop:C3I:c3:is:sim}.  These results are summarized in Proposition \ref{prop:C3I:1sim:cyl}, which says that if one of the three cylinders is simple, then $\cM$ contains a surface with four cylinders.  This leads to Proposition \ref{prop:C3I:loop}, which completes the case of surfaces satisfying Property (a),  that is, that if $M$ has a cylinder containing a simple cylinder, then $\cM$ contains a translation surface with four cylinders.

Finally, Proposition \ref{prop:C3I:semi-sim:cyl}, which concerns semi-simple cylinders, combined with the aforementioned lemmas concerning the exceptional case, completes the proof of Theorem \ref{thm:C3I:imply:4cyl}.

Again we remind the reader that some of the results in this section are written so that they apply to \emph{all $3$-cylinder diagrams in genus three} satisfying Case 3.I).

\subsection{Two Simple Cylinders}

When two of the horizontal cylinders of $M$ are simple, we label them by $C_1$ and $C_2$, the third one is denoted by $C_3$.

\begin{lemma}
 \label{lm:C3I:2SimCyl:para}
Let $\cM$ be a rank two affine manifold in genus three.  Let $M \in \cM$ be a horizontally periodic translation surface with three cylinders such that $C_1$ and $C_2$ are both simple cylinders.  Either $C_1$ and $C_2$ must be $\cM$-parallel, or there exists a horizontally periodic in $\cM$ with at least four cylinders.
\end{lemma}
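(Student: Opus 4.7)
The plan is to prove the contrapositive: if $C_1$ and $C_2$ are not $\cM$-parallel, then I will produce a horizontally periodic surface in $\cM$ with at least four cylinders. First, whenever $\Tw(M,\cM)\neq\CP(M,\cM)$, Lemma \ref{lm:WrightTwistPresLem} directly yields a nearby horizontally periodic surface with strictly more than three horizontal cylinders. So I may assume $\Tw(M,\cM)=\CP(M,\cM)$, and then Theorem \ref{thm:RankkImpkCyl} guarantees that the core curves $c_1,c_2,c_3$ span a two-dimensional subspace of $(T_M^{\bR}\cM)^*$ and that the cylinders split into at least two equivalence classes of $\cM$-parallel cylinders.

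The heart of the argument is a homological observation specific to being in Case 3.I), namely that $\{[c_1],[c_2],[c_3]\}$ is linearly independent in $H_1(M,\bZ)$. I will use this to rule out $C_1$ and $C_2$ being adjacent: if they shared a saddle connection $\sigma$, then since top boundaries of cylinders are glued to bottom boundaries of other cylinders, $\sigma$ would lie on the top of one of $C_1,C_2$ and on the bottom of the other; the simpleness of both $C_1$ and $C_2$ forces $\sigma$ to constitute the entire boundary component in each case. The top (resp.\ bottom) boundary of a cylinder is freely homotopic to its core curve inside the closure of that cylinder, so this would give $[c_1]=\pm[c_2]$ in $H_1(M,\bZ)$, contradicting linear independence. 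A simple cylinder also cannot be adjacent only to itself (such a configuration would force the entire surface to be a torus, impossible in genus three), so each of $C_1$ and $C_2$ is adjacent only to $C_3$.

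Now I suppose for contradiction that $C_1$ and $C_2$ are not $\cM$-parallel. Applying Lemma \ref{lm:n:f:sim:cyl} to the simple cylinder $C_1$, which is only adjacent to $C_3$, shows that $C_1$ and $C_3$ lie in different $\cM$-parallel equivalence classes; the same lemma applied to $C_2$ gives the analogous conclusion for $C_2$ and $C_3$. Together with the hypothesis that $C_1$ and $C_2$ are not $\cM$-parallel, this forces all three cylinders to be free. But Case 3.I) places $c_1,c_2,c_3$ in a Lagrangian subspace of $H_1(M,\bZ)$, and Lemma \ref{lm:3:f:cyl} rules that out. This contradiction yields the $\cM$-parallelism of $C_1$ and $C_2$ in the regime $\Tw=\CP$.

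The main subtlety will be the adjacency step, where the Case 3.I) hypothesis is indispensable: in Cases 3.II) and 3.III) there are homological relations among the core curves which defeat the argument. Once the adjacency is ruled out, the conclusion follows by a direct combination of the twist/preserving-space machinery with the no-three-free-cylinders lemma, with no further case analysis required.
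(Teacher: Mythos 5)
Your proposal is correct and follows essentially the same route as the paper's proof: both arguments rest on the observation that two simple cylinders cannot be adjacent (so each of $C_1,C_2$ is adjacent only to $C_3$), then combine Theorem \ref{thm:RankkImpkCyl}/Lemma \ref{lm:WrightTwistPresLem}, Lemma \ref{lm:n:f:sim:cyl} to exclude $C_i\sim C_3$, and Lemma \ref{lm:3:f:cyl} to exclude all three cylinders being free. The only cosmetic differences are that you argue by contradiction rather than by enumerating the possible numbers of equivalence classes, and you justify the non-adjacency of $C_1$ and $C_2$ homologically via Case 3.I) where the paper uses the more elementary fact that two adjacent simple cylinders would merge into a single larger cylinder.
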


\begin{proof}
 By Lemma~\ref{lm:3:f:cyl}, we know that $C_1,C_2, C_3$ cannot be all free. If they all belong to the same equivalence class, then we must have a horizontally periodic surface in $\cM$ with more cylinders by Theorem~\ref{thm:RankkImpkCyl}. Thus we only need to consider the case  $C_1,C_2,C_3$ fall into two equivalence classes. Note that two simple cylinders cannot be adjacent, therefore the saddle connections in the boundaries of both $C_1$ and $C_2$ are included the boundary of $C_3$. If $C_1$ or $C_2$ is $\cM$-parallel to $C_3$, then we only have one equivalence class by Lemma~\ref{lm:n:f:sim:cyl}. Thus $C_3$ must be free, and $C_1,C_2$ are $\cM$-parallel.
\end{proof}

In what follows we will always assume that $C_1$ and $C_2$ are $\cM$-parallel, and $C_3$ is a free cylinder.   Let $h_i, \ell_i, t_i$ denote respectively the height, width (circumference), and twist of $C_i$. Denote by $a$ and $a'$ (resp. $b$ and $b'$) the saddle connections in the boundary of $C_1$ (resp. $C_2$).

\begin{lemma}
 \label{lm:C3I:2SimCyl:C1C2isom}
 The cylinders $C_1$ and $C_2$ are isometric, that is, they have the same width, height, and twist. Moreover, one can twist $C_3$ so that any vertical trajectory  through $C_1$ or $C_2$ passes exactly once through $C_3$ before closing itself.
\end{lemma}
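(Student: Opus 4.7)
The plan is to construct, for each simple cylinder $C_i$ ($i=1,2$), a long vertical cylinder $D_i \subset \overline{C_i \cup C_3}$ that wraps through $C_i$ and $C_3$ exactly once, and then extract the isometry by applying Proposition~\ref{CylinderPropProp} in two different directions: once with the class $[D_1]$ evaluated on the horizontal cylinders $C_1, C_2$, and once with the class $\cC=\{C_1,C_2\}$ evaluated on $D_1, D_2$.

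First, I would construct $D_1$. Since $C_3$ is free by Lemma~\ref{lm:C3I:2SimCyl:para}, the cylinder twist $u_t^{\{C_3\}}$ is an $\cM$-deformation. The saddle connections $a$ and $a'$ (the bottom and top of the simple cylinder $C_1$) appear respectively on the top and bottom of $C_3$, both of length $w_1$. Twist $C_3$ so that $a$ lies directly above $a'$. Then the vertical strip of width $w_1$ running from $a'$ up through $C_3$ to $a$, continued through $C_1$, closes up into a vertical cylinder $D_1$ of circumference $h_1+h_3$ and width $w_1$ entirely contained in $\overline{C_1\cup C_3}$. The cylinder $D_1$ cannot be free: otherwise Proposition~\ref{CylinderPropProp} applied to the $\cM$-parallel pair $(C_1,C_2)$ and the class $\{D_1\}$ would give $P(C_1,\{D_1\})=P(C_2,\{D_1\})$, contradicting $P(C_1,\{D_1\})=1>0=P(C_2,\{D_1\})$.

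Second, Proposition~\ref{CylinderPropProp} applied to $(C_1,C_2)$ and the non-trivial class $[D_1]$ gives $P(C_2,[D_1])=P(C_1,[D_1])=1$, so the cylinders of $[D_1]$ must fully cover $C_2$. Since $C_2$ is simple and only adjacent to $C_3$, any cylinder in $[D_1]$ meeting $C_2$ is forced to sit entirely inside $\overline{C_2\cup C_3}$, and the construction mirrors that of $D_1$: there is a vertical cylinder $D_2\in[D_1]$ of circumference $h_2+h_3$ and width $w_2$ inside $\overline{C_2\cup C_3}$. Now reverse the roles in Proposition~\ref{CylinderPropProp}: applied to the $\cM$-parallel pair $(D_1,D_2)$ and the class $\cC=\{C_1,C_2\}$, we get $P(D_1,\cC)=P(D_2,\cC)$, i.e.\ $h_1/(h_1+h_3)=h_2/(h_2+h_3)$, hence $h_1=h_2$. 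The widths are controlled by the joint $\cM$-parallelism: the ratio $w_1/w_2$ for $C_1,C_2$ equals the ratio $w(D_1)/w(D_2)=w_1/w_2$ for $D_1,D_2$, and using that $D_1\cup D_2$ meets $C_3$ in disjoint vertical rectangles of widths $w_1,w_2$, the equation $P(C_3,[D_1])=(w_1+w_2)/w_3$ combined with a second independent proportion (obtained by applying the same construction in the vertical direction after a further twist) pins down $w_1=w_2$. The twists $t_1=t_2$ then follow from the joint action of $u_t^{\cC}$ on $C_1$ and $C_2$.

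Finally, the alignment claim for $C_3$ is built into the construction: by the very choice of twist of $C_3$ that produces $D_1$ (equivalently $D_2$), every vertical trajectory through $C_1$ lies in $D_1$, and every vertical trajectory through $C_2$ lies in $D_2$, so each such trajectory crosses $C_3$ exactly once before closing. I expect the main obstacle to be the width equality: the height equality falls out of a single application of Proposition~\ref{CylinderPropProp}, but $w_1=w_2$ requires pinning down the $\cM$-parallel ratio to exactly $1$ rather than just some commensurable rational, and seems to need a second, independent proportion relation coming from an auxiliary cylinder transverse to both $D_1$ and $D_2$.
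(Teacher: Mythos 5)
Your opening construction matches the paper's: twist the free cylinder $C_3$ so that the top of $C_1$ sits directly above its bottom, obtain a vertical cylinder $D_1$ with $C_1\subset\overline{D}_1$, and use $\cM$-parallelism of $C_1$ and $C_2$ to force a partner $D_2$ confined to $\overline{C_2\cup C_3}$. But there is a genuine gap right after that: you assert $D_2$ has circumference $h_2+h_3$, i.e.\ that it crosses $C_2$ and $C_3$ exactly once each. Nothing forces this. All six horizontal saddle connections lie on the boundary of $C_3$, and some of them appear on both its top and bottom, so a vertical cylinder inside $\overline{C_2\cup C_3}$ can wind through $C_3$ several times ($n_3\geq n_2$, with strict inequality possible) before closing. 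Worse, the claim that it crosses $C_3$ only once is equivalent to the second copy of $C_2$'s boundary saddle connection lying directly above the first, which is essentially the ``moreover'' clause of the lemma — you are assuming part of the conclusion. The paper gets around this by running the proportion identity with the multiplicities left in: $P(D_1,\{C_3\})=P(D_2,\{C_3\})$ yields only $h_1/h_2=n_2/n_3\leq 1$, and one must repeat the whole construction starting from $C_2$ to get the reverse inequality $h_2/h_1\leq 1$; only the combination forces $h_1=h_2$ and $n_2=n_3$. So the height equality needs two applications of Proposition~\ref{CylinderPropProp} in symmetric configurations, not one.

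The width and twist equalities are also not established. The observation that $w(D_1)/w(D_2)=w_1/w_2$ is circular and carries no information, and the proposed ``second independent proportion'' is not specified; your own closing remark correctly identifies this as the unresolved obstacle. The paper's resolution is a different idea entirely: the closures of $D_1$ and $D_2$ are slit tori $T_1$, $T_2$; for every simple cylinder inside $T_1$ avoiding the slit, $\cM$-parallelism (via $P(\cdot,\{D_1,D_2\})=1$) produces a matching simple cylinder inside $T_2$ with equal area proportion, and then Lemma~8.1 of \cite{AulicinoNguyenWright} concludes that two slit tori with this matching property are isometric — which delivers width, height, and twist equality of $C_1$ and $C_2$ in one stroke. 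Without some such mechanism (or an explicit second transverse direction with a worked-out proportion identity), the claim $w_1=w_2$, and hence the isometry, is not proved. The appeal to the joint action of $u_t^{\cC}$ for the twists likewise proves nothing, since that action shifts both twists and cannot detect whether they agree.
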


\begin{proof}
Since $C_1$ and $C_3$ are not $\cM$-parallel, we can twist them so that $a'$ is right above $a$, and $t_1=0$ (see Figure~\ref{fig:Case3I:2simp:cyl}). It follows that there exists a vertical cylinder $C'_1$ crossing only $C_1$ and  $C_3$ such that $C_1\subset \overline{C}'_1$. Since $C_2$ is $\cM$-parallel to $C_1$, there must be a vertical cylinder $C_2'$ in the same class as $C'_1$ passing through $C_2$.

\begin{figure}[htb]
\centering
\begin{tikzpicture}[scale=0.35]
\fill[gray!30] (0,0) -- (3,0) -- (3,5) -- (0,5) -- cycle;
\fill[gray!30] (7,0) -- (10, 0) -- (10,3) -- (11,5)  -- (8,5) -- (7,3) -- cycle;

\draw[thin] (0,0) -- (4,0) (6,0) -- (11,0) (0,3) -- (4,3) (6,3) -- (11,3);
\draw[thin] (0,0) -- (0,5) -- (3,5) -- (3,0);
\draw[thin] (7,0) -- (7,3) -- (8,5) -- (11,5) -- (10,3) -- (10,0);
\draw[thin]  (8,5) -- (8,0) (9,5) -- (9,0) (10,5) -- (10,3);

\draw (5,0) node {$\dots$} (5,3) node {$\dots$} (12,0) node {$\dots$} (12,3) node {$\dots$};

\foreach \x in {(0,5), (0,3), (0,0), (3,5), (3,3), (3,0), (8,5), (7,3), (7,0), (11,5), (10,3), (10,0)} \filldraw[fill=black] \x circle (3pt);
\draw (1.5,0) node[below] {\tiny $a$} (1.5,3) node[below] {\tiny $a'$} (8.5,0) node[below] {\tiny $b$} (8.5,3) node[below] {\tiny $b'$};
\draw (0,1.5) node[left] {\tiny $c$} (3,1.5) node[right] {\tiny $c'$} (7,1.5) node[left] {\tiny $d$} (10,1.5) node[right] {\tiny $d'$};

\draw (1.5,4) node {\tiny $C_1$} (7,4) node {\tiny $C_2$} (5,1.5) node {\tiny $C_3$};
\end{tikzpicture}
\caption{Case 3.I) with two simple cylinders: the shaded regions correspond to two slit tori.}
\label{fig:Case3I:2simp:cyl}
\end{figure}
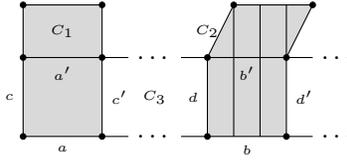

Clearly $C'_2$ can only cross $C_2$ and $C_3$. Let $n_i$ be the number of times $C'_2$ crosses $C_i, i=2,3$.  In fact, $n_3 \geq n_2$ because $C_2'$ cannot pass through $C_2$ without passing through $C_3$.  We have $P(C_1', C_3) = P(C_2', C_3)$ implies
$$\frac{h_3}{h_1 + h_3} = \frac{n_3h_3}{n_2h_2 + n_3h_3}.$$
This simplifies to the relation
$$1 \geq \frac{n_2}{n_3} = \frac{h_1}{h_2}.$$

If we twist the cylinders $C_2$ and $C_3$ to get a vertical cylinder crossing $C_2$ and $C_3$ once, and consider the vertical cylinder $\cM$-parallel to it, we get the following relation
$$\frac{h_3}{h_2 + h_3} = \frac{n_3'h_3}{n_1'h_1 + n_3'h_3},$$
where $n_3' \geq n_1'$.  However, this forces
$$1 \geq \frac{n_1'}{n_3'} = \frac{h_2}{h_1},$$
which implies $h_1 = h_2$, $n_2 = n_3$, and $n_1' = n_3'$.  But the condition $n_2=n_3$ can only be satisfied when $b'$ is right above $b$, which means that when $C_3$ is twisted so that $a'$ is right above $a$, then $b'$ is right above $b$.

Now we prove that $C_1$ and $C_2$ are isometric.  Let $c$ and $c'$ be the vertical saddle connections in $C_3$ that join the left endpoints of $a'$ to the left endpoint of $a$, and the right endpoint of $a'$ to the right endpoint of $a$ respectively. Similarly, let $d$ and $d'$ be the vertical saddle connections in $C_3$ that join the left endpoints of $b'$ to the left endpoint of $b$, and the right endpoint of $b'$ to the right endpoint of $b$, respectively. Note that $c$ and $c'$ (resp. $d$ and $d'$) cut out a slit torus denoted by $T_1$ (resp. denoted by $T_2$) which is the closure of $C'_1$ (resp. $C'_2$) (see Figure~\ref{fig:Case3I:2simp:cyl}).

Let $D_1$ be any simple cylinder in $T_1$ disjoint from the slit.  Then $D_1$ corresponds to a simple cylinder on $M$. Note that the complement of $D_1$ in $T_1$ is a parallelogram bounded by  the borders of $D_1$ and the pair $c,c'$. Since $C'_2$ is $\cM$-parallel to $C'_1$, there must exist a cylinder $D_2$ crossing $C'_2$ which is $\cM$-parallel to $D_1$. We claim that $D_2$ is contained in $T_2$. Indeed, let $\cC'$ be the equivalence class of $C'_1$ and $C'_2$, then we have $\cC'=\{C'_1,C'_2\}$ since any cylinder in this equivalence class must cross $C_1$ or $C_2$.  We have
$$
P(D_1,\cC')=1 \Rightarrow P(D_2,\cC')=1,
$$
which means that $D_2$ is contained in the union  $T_1\cup T_2$. If $D_2$ intersects $T_1$ since it is parallel to $D_1$ it must cross both $c$ and $c'$, thus it cannot be contained in $T_1\cup T_2$. We derive that $D_2$ must be contained in $T_2$.  Moreover, since we have $h_1=h_2$, and the heights of $c$ and $d$ are both equal to $h_3$, it is not difficult to check that
$$
\frac{\Area(D_1)}{\Area(T_1)} =\frac{\Area(D_2)}{\Area(T_2)}.
$$

We can now use \cite[Lem. 8.1]{AulicinoNguyenWright} to conclude that $T_1$ and $T_2$ are isometric. The lemma is then proved.
\end{proof}

\begin{remark}
 Both Lemmas~\ref{lm:C3I:2SimCyl:para} and \ref{lm:C3I:2SimCyl:C1C2isom} are valid in all strata of genus three.
\end{remark}

\begin{proposition}
\label{prop:3CylsI2SimpCyls}
Let $\cM$ be a rank two affine manifold in genus three in a stratum with two zeros.  If $M \in \cM$ is a horizontally periodic translation surface satisfying Case 3.I) and two of the cylinders are simple, then there is a translation surface in $\cM$ with four cylinders.
\end{proposition}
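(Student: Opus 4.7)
The plan is to produce a four-cylinder surface in $\cM$ by analyzing the vertical direction of a square-tiled perturbation of $M$. By Lemma~\ref{lm:C3I:2SimCyl:para} I may assume $C_1$ and $C_2$ are $\cM$-parallel and $C_3$ is free, since otherwise the conclusion already holds. Lemma~\ref{lm:C3I:2SimCyl:C1C2isom} then provides that $C_1,C_2$ are isometric and, after a suitable normalization of the twist of $C_3$, that there are vertical simple cylinders $C_1'\supset C_1$ and $C_2'\supset C_2$ crossing $C_3$ exactly once, with $C_1'$ and $C_2'$ being $\cM$-parallel (they are the cores of the two isometric slit tori $T_1,T_2$).

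Since a rank-two affine manifold in genus three is arithmetic, I apply Proposition~\ref{prop:SimpCylPres} to obtain a square-tiled surface $\widetilde M\in\cM$ arbitrarily close to $M$ on which all horizontal saddle connections of $M$ (and hence the vertical simple cylinders $C_1',C_2'$ obtained above) persist. Because $\widetilde M$ is Veech, its vertical direction is periodic, and the complement $C_3\setminus(C_1'\cup C_2')$ decomposes into $k\ge 1$ further vertical cylinders. If $k\ge 2$, then $\widetilde M$ has at least four vertical cylinders, and rotating by $-\pi/2$ produces the desired horizontally periodic surface in $\cM$.

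It remains to handle the residual case $k=1$, so $\widetilde M$ has exactly three vertical cylinders $C_1',C_2',D$. If $D$ is $\cM$-parallel to $C_1'$ and $C_2'$, the unique vertical class gives a vertical cylinder preserving space of codimension one, while $p$ of the vertical twist space is isotropic in the symplectic $p(T_{\widetilde M}^{\bR}\cM)$, forcing the vertical twist space to have codimension at least two; Lemma~\ref{lm:WrightTwistPresLem} applied in the vertical direction then yields a nearby periodic surface with strictly more vertical cylinders, completing the argument. If instead $D$ is free, I plan to apply Proposition~\ref{prop:RankkImpRankkBd} to collapse the horizontal $\cM$-parallel class $\{C_1,C_2\}$: after twisting this class so that a vertical saddle connection in $C_1$ connects the two distinct zeros of the stratum, the collapse produces a horizontally periodic surface in $\cH(4)$ contained in a rank-two submanifold, which by Theorem~\ref{NWANWThm} equals $\widetilde{\cQ}(3,-1^3)$. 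The only surviving horizontal cylinder is $C_3$, and I derive the contradiction by checking that the Prym involution (with its four fixed points) cannot be compatible with the top/bottom identifications created by the collapse of the two simple cylinders $C_1$ and $C_2$, exactly in the spirit of the argument used in the proof of Lemma~\ref{lm:Hmn:2cyl:imp:3cyl:1:sim}.

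The principal obstacle is this last step, together with the underlying combinatorial question of whether both zeros of the stratum can be arranged to lie on the boundary of $C_1$; if only one zero appears on $\partial C_1$ (and hence on $\partial C_2$ by the isometry of Lemma~\ref{lm:C3I:2SimCyl:C1C2isom}), then no vertical saddle connection of $C_1$ joins distinct zeros and the collapse of $\{C_1,C_2\}$ is not available. In that subcase I would instead collapse the free cylinder $\{C_3\}$ via Proposition~\ref{prop:RankkImpRankkBd}, using that the missing zero is forced onto the remaining horizontal saddle connections of $C_3$ to supply the required vertical saddle connection, again reaching $\widetilde{\cQ}(3,-1^3)$ and deriving the analogous Prym-incompatibility contradiction from the resulting two-simple-cylinder decomposition.
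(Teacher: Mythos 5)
Your reduction to the case where $C_1,C_2$ are $\cM$-parallel and isometric, your passage to a square-tiled surface, and your treatment of the subcases ``$k\ge 2$'' and ``$D$ in the class of $C_1',C_2'$'' are all sound and parallel to arguments the paper uses elsewhere. The genuine gap is in the residual case where the third vertical cylinder $D$ is free -- which is precisely the hard case (diagram (C) of Figure~\ref{fig:2SimpCylPf}) -- and your proposed degeneration cannot be carried out. To collapse the class $\{C_1,C_2\}$ via Proposition~\ref{prop:RankkImpRankkBd} you need condition (c): exactly one vertical saddle connection, joining the two distinct zeros, in the whole equivalence class. But Lemma~\ref{lm:C3I:2SimCyl:C1C2isom} forces $C_1$ and $C_2$ to have equal twists, and each simple cylinder carries exactly one zero on its top and one on its bottom, so either both of $C_1,C_2$ contain a vertical saddle connection or neither does. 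In the former case two vertical saddle connections shrink simultaneously and their difference is an essential closed curve (it crosses $c_1$ and $c_2$ once each), so the limit pinches a curve and does not land in $\cH(4)$; in the latter case there is nothing to collapse. Your fallback of collapsing $\{C_3\}$ fails for a different reason: with two zeros all six horizontal saddle connections lie on $\partial C_3$, and the two saddle connections $e,f$ not bounding $C_1$ or $C_2$ necessarily appear on both the top and the bottom of $C_3$, violating condition (b). Even setting these obstructions aside, the intended contradiction is not established: a one-cylinder (or suitably symmetric) horizontally periodic surface in $\widetilde{\cQ}(3,-1^3)$ can perfectly well admit the Prym involution, so ``incompatibility with the top/bottom identifications'' would have to be argued diagram by diagram, which you do not do.

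For comparison, the paper avoids degeneration entirely here: using that there are exactly six horizontal saddle connections it enumerates the three possible diagrams and treats each by an explicit construction; in the problematic case (C) it uses the free vertical cylinder through $e$ and $f$ to normalize lengths and then shears $\{C_1,C_2\}$ by $A=\left(\begin{smallmatrix}1 & 1/3\\ 0 & 1\end{smallmatrix}\right)$ and $C_3$ by $A^{-1}$ to produce four vertical cylinders. A smaller point: in your perturbation step, persistence of the horizontal saddle connections does not by itself imply that $C_1',C_2'$ remain vertical; you must also impose vanishing of the real parts of the vertical saddle connections in the rational linear system of Proposition~\ref{prop:SimpCylPres} (this is fixable, but ``and hence'' is not justified as written).
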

\begin{proof}
A horizontally periodic surface in genus three with two zeros has exactly six horizontal saddle connections. In this case all of the horizontal saddle connections of $M$ are contained in the boundary of $C_3$. We have $a,a'$ and $b,b'$, in the boundaries of $C_1$ and $C_2$, respectively, and two other ones, denoted by $e$ and $f$, that are contained in both top and bottom border of $C_3$. We can twist $C_3$ so that a saddle connection joining the left endpoint of $a'$ to the left endpoint of $a$ is vertical. By Lemma~\ref{lm:C3I:2SimCyl:C1C2isom}, we know that $b'$ must lie right above $b$. It is now easy to check that there are only three diagrams for $M$ as shown in  Figure~\ref{fig:2SimpCylPf}. We also twist $C_1$ and $C_2$ so that there are two vertical cylinders $C'_1,C'_2$ crossing $C_3$ once such that $C_i \subset \overline{C}'_i, \, i=1,2$.

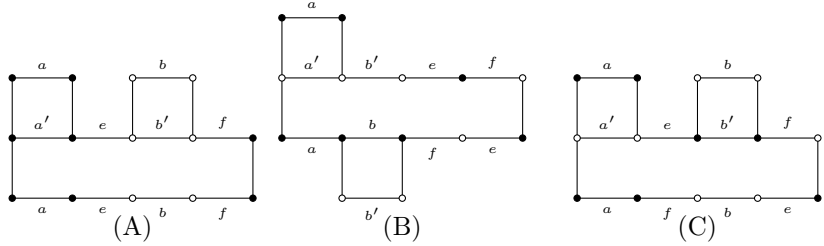
\begin{figure}
\centering
\begin{minipage}[t]{0.3\linewidth}
\centering
\begin{tikzpicture}[scale=0.4]
\draw[thin] (0,4) -- (0,0) -- (8,0) -- (8,2) -- (6,2) -- (6,4) -- (4,4) -- (4,2) -- (2,2) -- (2,4) -- cycle;
\draw[thin] (0,2) -- (2,2) (4,2) -- (6,2);

\draw (1,4) node[above] {\tiny $a$} (1,2) node[above] {\tiny $a'$} (1,0) node[below] {\tiny $a$} (3,2) node[above] {\tiny $e$} (3,0) node[below] {\tiny $e$} (5,4) node[above] {\tiny $b$} (5,2) node[above] {\tiny $b'$} (5,0) node[below] {\tiny $b$} (7,2) node[above] {\tiny $f$} (7,0) node[below] {\tiny $f$};

\foreach \x in {(0,4), (0,2), (0,0), (2,4),(2,2), (2,0), (8,2), (8,0)} \filldraw[fill=black] \x circle (3pt);
\foreach \x in {(4,4), (4,2), (4,0), (6,4),(6,2), (6,0)} \filldraw[fill=white] \x circle (3pt);

\draw (4,-1) node {(A)};

\end{tikzpicture}
\end{minipage}
\begin{minipage}[t]{0.3\linewidth}
\begin{tikzpicture}[scale=0.4]
\draw[thin] (0,6) -- (0,2) -- (2,2) -- (2,0) -- (4,0) -- (4,2) -- (8,2) -- (8,4) -- (2,4) -- (2,6) -- cycle;
\draw[thin] (0,4) -- (2,4) (2,2) -- (4,2);

\draw (1,6) node[above] {\tiny $a$} (1,4) node[above] {\tiny $a'$} (1,2) node[below] {\tiny $a$} (3,4) node[above] {\tiny $b'$} (3,2) node[above] {\tiny $b$} (3,0) node[below] {\tiny $b'$} (5,4) node[above] {\tiny $e$} (7,2) node[below] {\tiny $e$} (7,4) node[above] {\tiny $f$} (5,2) node[below] {\tiny $f$};

\foreach \x in {(0,6), (0,2), (2,6), (2,2), (4,2), (6,4), (8,2)} \filldraw[fill=black] \x circle (3pt);

\foreach \x in {(0,4), (2,4), (2,0), (4,4), (4,0), (6,2), (8,4)} \filldraw[fill=white] \x circle (3pt);
\draw (4,-1) node {(B)};
\end{tikzpicture}
\end{minipage}
\begin{minipage}[t]{0.3\linewidth}
\centering
\begin{tikzpicture}[scale=0.4]
\draw[thin] (0,4) -- (0,0) -- (8,0) -- (8,2) -- (6,2) -- (6,4) -- (4,4) -- (4,2) -- (2,2) -- (2,4) -- cycle;
\draw[thin] (0,2) -- (2,2) (4,2) -- (6,2);

\draw (1,4) node[above] {\tiny $a$} (1,2) node[above] {\tiny $a'$} (1,0) node[below] {\tiny $a$} (3,2) node[above] {\tiny $e$} (3,0) node[below] {\tiny $f$} (5,4) node[above] {\tiny $b$} (5,2) node[above] {\tiny $b'$} (5,0) node[below] {\tiny $b$} (7,2) node[above] {\tiny $f$} (7,0) node[below] {\tiny $e$};

\foreach \x in {(0,4),  (0,0), (2,4), (2,0), (4,2), (6,2), (8,0)} \filldraw[fill=black] \x circle (3pt);
\foreach \x in {(0,2), (2,2), (4,4),  (4,0), (6,4), (6,0), (8,2)} \filldraw[fill=white] \x circle (3pt);

\draw (4,-1) node {(C)};
\end{tikzpicture}
\end{minipage}
\caption{$3$-cylinder diagrams with two simple cylinders}
\label{fig:2SimpCylPf}
\end{figure}
\begin{itemize}
\item[$\bullet$] In Case (A), we immediately have four vertical cylinders.

\item[$\bullet$]  In Case (B), consider the cylinders $E$ and $F$ that are contained in $C_3$ and cross only $e$ and $f$ respectively. It is easy to see that $E$ and $F$ are free since any cylinder parallel to $E$ or $F$ must intersect $C'_1\cup C'_2$. Twisting $E$ in one direction followed by $F$ so that the horizontal trajectories on $C_3$ persist, we can find a surface in $\cM$ close to $M$ with four horizontal cylinders.

\item[$\bullet$]  For Case (C) there is a vertical cylinder through $e$ and $f$ which is free. Thus we can freely change lengths of $e$ and $f$ (which are equal). In particular, we can assume that each of  $C_1$ and $C_2$ are each constructed from a standard square, and $C_3$ is constructed from the union of four standard squares. Now if we twist $C_1$ and $C_2$ by $A = \left(\begin{smallmatrix} 1 & 1/3 \\ 0 & 1 \end{smallmatrix}\right)$, and $C_3$ by $A^{-1}$, then $M$ has a decomposition into four cylinders in the vertical direction.
\end{itemize}

The proof of the proposition is now complete.
\end{proof}

\subsection{One Non-Free Simple Cylinder}
\label{sec:C3I:1simp:cyl}
\begin{proposition}
\label{prop:C3I:1:n-f:sim:cyl}
Let $\cM$ be a rank two submanifold in genus three with $k\geq 2$ zeros. Assume that $\cM$ contains a surface $M$ admitting a cylinder decomposition in the horizontal direction with three cylinders in Case 3.I). Denote the horizontal cylinders of $M$ by $C_1,C_2,C_3$ and suppose that $C_1$ and $C_2$ are $\cM$-parallel while $C_3$ is free. If one of the  cylinders $C_1,C_2$ is simple, but the other one is not, then there exists a surface in $\cM$ which is horizontally periodic with four cylinders.
\end{proposition}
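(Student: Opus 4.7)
The plan is to exhibit a cylinder decomposition of some surface in $\cM$ with four cylinders by working in the vertical direction, after reducing to a square-tiled representative and creating a vertical simple cylinder from $C_1$.

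\medskip

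\noindent\textbf{Reductions.} By Lemma~\ref{lm:WrightTwistPresLem} we may assume $\Tw(M,\cM)=\CP(M,\cM)$, since otherwise an arbitrarily small deformation already yields a horizontally periodic surface in $\cM$ with strictly more cylinders. By Proposition~\ref{prop:SimpCylPres} we may then replace $M$ by a square-tiled surface on which every horizontal saddle connection (and hence every horizontal cylinder) persists; in particular the vertical direction on $M$ is periodic.

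\medskip

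\noindent\textbf{A vertical simple cylinder.} Let $C_1$ denote the simple cylinder, $C_2$ the non-simple $\cM$-parallel cylinder, and $\cC=\{C_1,C_2\}$. Since $C_1$ is simple and $\cM$-parallel to $C_2$, Lemma~\ref{lm:n:f:sim:cyl} forbids $C_1$ from being adjacent only to $C_2$, so at least one of the two sides of $C_1$ is incident to the boundary of the free cylinder $C_3$. Using the freedom to twist $C_3$ and simultaneously twist the class $\cC$, we produce a vertical simple cylinder $D$ with $C_1\subset \overline{D}\subset\overline{C_1}\cup\overline{C_3}$, and then reapply Proposition~\ref{prop:SimpCylPres} to keep $M$ square-tiled while preserving $D$.

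\medskip

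\noindent\textbf{Counting vertical cylinders.} Let $n_v$ be the number of vertical cylinders of $M$. If $n_v\geq 4$ we are done. If $n_v=2$, then $D$ is a simple vertical cylinder and Lemma~\ref{lm:Hmn:2cyl:imp:3cyl:1:sim} (applied after rotating by $\pi/2$) yields a surface in $\cM$ that is periodic in the new horizontal direction with three cylinders, one of which is simple and not free; the earlier results of this section then furnish the desired fourth cylinder. Assume $n_v=3$. After again invoking Lemma~\ref{lm:WrightTwistPresLem} in the vertical direction, we may further assume $\Tw=\CP$ vertically. Apply the trichotomy of Lemma~\ref{3CylDeg} to the vertical decomposition: Case~3.II is excluded by Corollary~\ref{cor:no:C3II:in:Hmn}, Case~3.III is handled by Proposition~\ref{prop:3III:imply:4cyls}, so it remains to treat Case~3.I in the vertical direction. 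If the three vertical cylinders are all free, Lemma~\ref{lm:3:f:cyl} yields a contradiction with rank two, so they split into exactly two equivalence classes of sizes one and two.

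\medskip

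\noindent\textbf{The last sub-case.} The key point is that $C_2$ is not simple, so its horizontal boundary contains a saddle connection $\alpha$ distinct from the (at most two) saddle connections bounding $\overline{D}$. Since $C_1\subset\overline{D}$, no vertical cylinder other than $D$ meets the interior of $C_1$, and Proposition~\ref{CylinderPropProp} applied to $\cC$ then pins down the proportion in $\cC$ of every vertical cylinder in the equivalence class of size two. A short bookkeeping of the horizontal saddle connections of $\partial C_2$, combined with this proportion rigidity, forces a vertical cylinder crossing $\alpha$ that is not among the three already present, contradicting $n_v=3$. This contradiction completes the proof. The main obstacle is precisely this last step: carefully using the non-simplicity of $C_2$ together with Proposition~\ref{CylinderPropProp} to extract the missing vertical cylinder from the saddle connection $\alpha$.
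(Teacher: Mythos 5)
There is a genuine gap, and it occurs right at the start of your construction. From Lemma~\ref{lm:n:f:sim:cyl} you correctly conclude that $C_1$ is not adjacent only to $C_2$, so at least one side of $C_1$ meets $C_3$; but you then immediately produce a vertical cylinder $D$ with $C_1\subset\overline{D}\subset\overline{C_1}\cup\overline{C_3}$. Such a $D$ can only exist when \emph{both} sides of $C_1$ are glued to $C_3$: if the bottom of $C_1$ is identified with the top of $C_2$, every vertical trajectory through $C_1$ enters $C_2$, and no vertical cylinder containing $C_1$ can avoid $C_2$. The paper splits the proof into exactly these two cases, and the case ``$C_1$ adjacent to both $C_2$ and $C_3$'' is handled by a completely different argument (a cylinder $C_1'$ crossing each $C_i$ once, a proportion computation forcing $n_1=n_2=n_3$ for any $\cM$-parallel companion, and then $P(C_1,\cC')=P(C_2,\cC')$ yielding the contradiction $\wth_1=\wth_2$). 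Your proposal never addresses this configuration. A second, smaller error: even in the favorable case, the cylinder $D$ containing $C_1$ cannot be \emph{simple} — its closure is a slit torus and each of its boundary components consists of (at least) a vertical saddle connection inside $C_1$ together with one side of the slit in $C_3$. This matters because your $n_v=2$ sub-case leans on Lemma~\ref{lm:Hmn:2cyl:imp:3cyl:1:sim}, whose hypothesis requires one of the two cylinders to be simple, and your later exclusion of the vertical Case 3.III implicitly uses a non-free simple vertical cylinder. (There is also a circularity risk in appealing to ``the earlier results of this section'' to finish the $n_v=2$ branch, since the relevant earlier result is Proposition~\ref{prop:C3I:1:n-f:sim:cyl} itself.)

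Finally, the step you yourself flag as the main obstacle — extracting a fourth vertical cylinder from a saddle connection $\alpha$ of the non-simple $C_2$ via ``proportion rigidity and bookkeeping'' — is asserted rather than proved, and it is precisely where the real work lies. In the paper's treatment of the corresponding case one must first run two proportion computations to get $h_1=h_2$ and $n_2=n_3$, deduce from this that the bottom border of $C_2$ sits directly above its top border inside $C_3$, and then use the Case 3.I hypothesis (the core curves span a Lagrangian, so $c_3\neq c_1+c_2$) to show that the rectangles $R_1\cup R_2$ do \emph{not} fill $C_3$. Only then does the non-simplicity of $C_2$ produce a vertical cylinder $D_2$ not filling $C_2$, whose equivalence class is forced by Proposition~\ref{CylinderPropProp} to contain at least three cylinders while not filling the surface, so that \cite{SmillieWeissMinSets} supplies the fourth cylinder. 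None of these ingredients — in particular the use of the Lagrangian hypothesis and the appeal to Smillie--Weiss rather than to the vertical three-cylinder trichotomy — appears in your sketch, so the argument as written does not close.
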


Without loss of generality, we can assume that $C_1$ is simple, but $C_2$ is not. If $C_1$ is only adjacent to $C_2$, then by Lemma~\ref{lm:n:f:sim:cyl} all the horizontal cylinders must belong to the same equivalence class. Thus $C_1$ must be adjacent to $C_3$. We first show

\begin{lemma}
\label{lm:sim:cyl:adj:f:cyl}
If $C_1$ is adjacent to $C_3$, then no saddle connection in the top border of $C_2$ occurs also in its bottom border.
\end{lemma}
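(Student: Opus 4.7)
The plan is to argue by contradiction: assume there exists a saddle connection $\sigma$ contained in both the top and bottom boundaries of $C_2$, and then deduce that $C_1$ and $C_2$ cannot be $\cM$-parallel by invoking Lemma~\ref{lm:loop}. The overall setup of the proposition (rank two, three horizontal cylinders split into two equivalence classes $\{C_1,C_2\}$ and $\{C_3\}$) already supplies the standing hypotheses of Lemma~\ref{lm:loop}, so the task reduces to verifying the geometric hypotheses on the boundary structure of $C_1$.

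The main input is that $C_1$ is simple, so its top and bottom boundaries each consist of a single saddle connection, say $a$ on top and $a'$ on bottom. The hypothesis that $C_1$ is adjacent to $C_3$ means that at least one of $a,a'$ lies in the boundary of $C_3$. Since $C_3$ is free (being the unique cylinder in its equivalence class), it is not $\cM$-parallel to $C_1$. If $a$ lies in the bottom boundary of $C_3$, then the entire top boundary of $C_1$ is contained in the bottom of a cylinder not $\cM$-parallel to $C_1$; applying Lemma~\ref{lm:loop} with $C = C_1$ and $C' = C_2$ (together with the postulated saddle connection $\sigma$) yields that $C_1$ and $C_2$ cannot lie in the same equivalence class, contradicting our assumption.

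The remaining configuration is when $a$ is not on $C_3$, but $a'$ lies in the top boundary of $C_3$. To reduce this to the previous case, I would apply the rotation $R_\pi = \bigl(\begin{smallmatrix} -1 & 0 \\ 0 & -1 \end{smallmatrix}\bigr) \in \mathrm{SL}(2,\bR)$, which preserves $\cM$ and preserves every horizontal cylinder set-theoretically, but swaps the top and bottom of each cylinder. After this rotation, the former bottom of $C_1$ becomes the new top of $C_1$ and is contained in the new bottom of $C_3$, so the argument from the previous paragraph applies verbatim.

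The only obstacle is keeping track of the two subcases of which boundary of $C_1$ touches $C_3$, which is handled by the rotation trick above. Beyond this, the argument is a direct application of Lemma~\ref{lm:loop}, once one observes that a simple cylinder $C_1$ adjacent to a non-$\cM$-parallel cylinder $C_3$ has exactly the boundary structure required by the lemma's hypothesis.
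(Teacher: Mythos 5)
Your proof is correct, and the underlying mechanism is the same one the paper uses: the hypothesized saddle connection produces a transverse simple cylinder $D_2$ entirely inside $C_2$, and a cylinder-proportion comparison between $D_2$ and a $\cM$-parallel cylinder meeting $C_1$ yields the contradiction. The difference is one of packaging. You outsource the proportion argument to Lemma~\ref{lm:loop}, which forces you to verify its hypothesis that the \emph{top} of $C_1$ is glued to a non-$\cM$-parallel cylinder, hence the case split and the $R_\pi$ rotation trick (which is legitimate: $-\mathrm{Id}$ preserves $\cM$, preserves the horizontal cylinders and their equivalence classes, and swaps tops and bottoms). The paper instead runs the proportion computation directly: any transverse cylinder $D_1$ crossing the simple cylinder $C_1$ must cross both its top and its bottom saddle connection, so adjacency of $C_1$ to $C_3$ on \emph{either} side immediately gives $P(D_1,\{C_3\})>0$, while $P(D_2,\{C_3\})=0$, contradicting Proposition~\ref{CylinderPropProp}. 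That version needs no case analysis; yours buys reuse of an already-stated lemma at the cost of the top/bottom bookkeeping. Both are complete.
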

\begin{proof}
If such a saddle connection exists, then $C_2$ contains a vertical simple cylinder $D_2$. Let  $\cD$ denote the equivalence class of $D_2$. Since $C_1$ is $\cM$-parallel to $C_2$, there must be a cylinder $D_1\in \cD$ such that $D_1 \cap C_1$ has non-zero area. But $C_1$ is simple and adjacent to $C_3$, hence we have $P(D_1,C_3) > 0$, while $P(D_2,C_3)=0$.  We then get a contradiction.
\end{proof}

\begin{proof}[Proof of Proposition~\ref{prop:C3I:1:n-f:sim:cyl}] We only need to consider two cases:

\medskip

\noindent \textbf{Case 1:} $C_1$ is only adjacent to $C_3$.   Let $\sigma$ denote the saddle connection on the top of $C_1$ that is contained in the bottom of $C_3$.  Twisting $C_1$ and $C_3$ independently, we can get a vertical cylinder $C_1'$ containing $C_1$ and crossing $C_1$ and $C_3$ only once.  There must exist $C_2'$ crossing $C_2$ that is $\cM$-parallel to $C_1'$.  The cylinder $C_2'$ cannot pass through $C_1$, so assume that $C_2'$ passes through $C_i, \; i=2,3$, $n_i$ times.  In fact, $n_3 \geq n_2$ because the borders of $C_2$ are contained in the borders of $C_3$.  Letting $h_i$ denote the height of $C_i$, for all $i=1,2,3$, the equality $P(C_1', \{C_3\}) = P(C_2', \{C_3\})$ implies
$$\frac{h_3}{h_1 + h_3} = \frac{n_3h_3}{n_2h_2+n_3h_3}.$$
It follows
$$1 \geq \frac{n_2}{n_3} = \frac{h_1}{h_2}.$$

Likewise, it is possible to make a symmetric argument by twisting $C_2$ and $C_3$ to get a vertical cylinder $C_2''$  passing once through $C_2$ and $C_3$ (see Figure~\ref{C1orC2SimpFigP2}).  Then there must exist a cylinder $C_1''$ crossing $C_1$ that is $\cM$-parallel to $C_2''$.  Let $C_1''$ pass through $C_i$, $n_i'$ times, for $i = 1, 2, 3$.  Since the top of $C_1$ is identified to the bottom of $C_3$ and every saddle connection on the top of $C_2$ is identified to a saddle connection on the bottom of $C_3$, we have the relation $n_3' \geq n_1' + n_2'$.  The equality $P(C_1'', \{C_3\}) = P(C_2'', \{C_3\})$ implies
$$\frac{n_3'h_3}{\sum_i n_i'h_i} = \frac{h_3}{h_2+h_3}.$$
This simplifies to the relation
$$\frac{n_3'-n_1'}{n_2'} = \frac{h_1}{h_2} \leq 1.$$
However, this implies that $n_3' \leq n_1' + n_2'$, which yields $n_1' + n_2' = n_3'$, as well as $n_2 = n_3$ and $h_1 = h_2$.

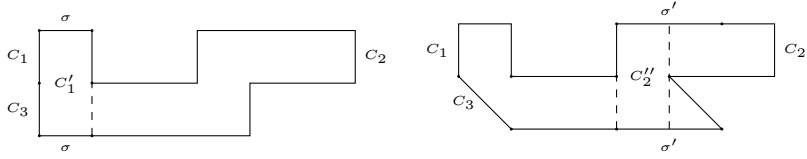
\begin{figure}
\centering
\begin{minipage}[t]{0.45\linewidth}
\centering
\begin{tikzpicture}[scale=0.35]
\draw (-10,0)--(-10,4)--(-8,4)--(-8,2)--(-4,2)--(-4,4)--(2,4)--(2,2)--(-2,2)--(-2,0)--cycle;
\foreach \x in {(-10,0),(-10,2),(-10,4),(-8,4),(-8,2),(-8,0)} \draw \x circle (1pt);
\draw [dashed] (-8,0)--(-8,2);
\draw(-9,4) node[above] {\tiny $\sigma$};
\draw(-9,0) node[below] {\tiny $\sigma$};
\draw(-10,3) node[left] {\tiny $C_1$};
\draw(-10,1) node[left] {\tiny $C_3$};
\draw(2,3) node[right] {\tiny $C_2$};
\draw(-9,2) node {\tiny $C_1'$};
\end{tikzpicture}
\end{minipage}
\begin{minipage}[b]{0.45\linewidth}
\centering
\begin{tikzpicture}[scale=0.35]
\draw (2,0)--(0,2)--(0,4)--(2,4)--(2,2)--(6,2)--(6,4)--(12,4)--(12,2)--(8,2)--(10,0)--cycle;
\foreach \x in {(6,0),(6,2),(6,4),(10,4),(8,2),(10,0),(2,0),(0,2),(2,2)} \draw \x circle (1pt);
\draw [dashed] (8,0)--(8,4);
\draw [dashed] (6,0)--(6,2);
\draw(8,4) node[above] {\tiny $\sigma'$};
\draw(8,0) node[below] {\tiny $\sigma'$};
\draw(0,3) node[left] {\tiny $C_1$};
\draw(1,1) node[left] {\tiny $C_3$};
\draw(12,3) node[right] {\tiny $C_2$};
\draw(7,2) node {\tiny $C''_2$};
\end{tikzpicture}
\end{minipage}
\caption{Cylinders in Proposition~\ref{prop:C3I:1:n-f:sim:cyl}: Case 1}
\label{C1orC2SimpFigP2}
\end{figure}

Recall that $C_1'$ is the vertical cylinder containing $C_1$.  Let $\cC'$ be the equivalence class of $C_1'$.  Then
$$1 = P(C_2, \cC') = P(C_1, \cC').$$
Thus, $C_2$ is filled by cylinders in $\cC'$.  However, the equality, $n_2 = n_3$ implies that no cylinder $\cM$-parallel to $C_1'$ can pass through $C_3$ more than once before entering $C_2$. We derive that as subsets of the borders of $C_3$, the bottom border of $C_2$ must lie right above the top border of $C_2$. Let $R_2$ be the subset of $C_3$ which is filled out by vertical trajectories joining saddle connections in the bottom border of $C_2$ to saddle connections in the top border of $C_2$. Remark that  $R_2$ is a union of rectangles in $C_3$.

Let $R_1$ denote the rectangle which is the intersection $C'_1\cap C_3$. Note that $R_2$ is disjoint from $C'_1$. We claim that $R_1\cup R_2 \varsubsetneq C_3$. Indeed, if it is the case then any saddle connection in the top border of $C_3$ is either contained in the bottom border of $C_1$ or the bottom border of $C_2$. Thus we have $c_3=c_1+c_2$, where $c_i$ is the core curve of $C_i$, which contradicts the hypothesis of Case 3.I).

Recall that $C_2$ is not a simple cylinder. Without loss of generality, let $C_2$ contain two or more saddle connections on its bottom that are identified to the top of $C_3$.  It is possible to twist $C_2$ (and $C_1$) so that there is a vertical closed trajectory passing once through each of $C_2$ and $C_3$  and intersects only one the saddle connections in the bottom of $C_2$.  Let $D_2$ denote the cylinder corresponding to this closed trajectory, then $D_2$ does not fill $C_2$.

Let $\cD$ be the equivalence class of $D_2$. There must exist a vertical cylinder $D_1$ in $\cD$ crossing $C_1$. Since $C_1$ is simple, the union of $C_1$ and $R_1$ is a slit torus. Thus $D_1$ must fill $C_1\cup R_1$, and we have $P(C_1,\cD)=1$. It follows that $P(C_2,\cD)=1$, and since $D_2$ does not fill $C_2$,  there must exist another cylinder $D'_2$ in $\cD$ that crosses $C_2$. Thus $\cD$ contains at least three vertical cylinders.  But $\cD$ does not fill $C_3$ since $R_1\cup R_2 \varsubsetneq C_3$. Thus by applying the result of \cite{SmillieWeissMinSets}, we can conclude that $\cM$ contains a vertically periodic surface with at least four vertical cylinders.

\bigskip

\noindent \textbf{Case 2:} $C_1$ is adjacent to both $C_2$ and $C_3$.
Without loss of generality, assume the bottom of $C_1$, denoted $\sigma'$, is attached to the top of $C_2$ and the top of $C_1$, denoted $\sigma$, is
identified to a saddle connection in the bottom of $C_3$.

We claim that after appropriate twisting and stretching, there is a cylinder $C_1'$ passing exactly once through every cylinder.  We first twist $C_1$ and $C_2$ so that $t_1=0$. Let $\sigma''$ be a saddle connection on the bottom of $C_2$ lying below $\sigma'$ (see Figure~\ref{C1orC2SimpFigP1}). By Lemma~\ref{lm:sim:cyl:adj:f:cyl}, $\sigma''$ must be identified to the top of $C_3$.  Consider a vertical trajectory $\gamma$ ascending from $\sigma''$ through $C_2$ and $C_1$ to $\sigma$, which passes through $\sigma'$ and no other saddle connection.  Then after twisting $C_3$ while fixing $C_1$ and $C_2$, we see that the copy of $\sigma$ in the bottom of $C_3$ can be arranged so that the trajectory $\gamma$ after traversing $C_3$ closes when intersecting $\sigma''$ again in the top of $C_3$.  This determines a cylinder $C_1'$ as claimed that passes exactly once through $C_i$, for each $i$.

Either $C_1'$ is free, or it is not.  If $C_1'$ is not free, then there is a cylinder $C_2'$ that is $\cM$-parallel to $C_1'$.  Let $h_i$ denote the height of $C_i$, for all $i$, and let $C_2'$ pass through the cylinder $C_i$, $n_i$ times.  The cylinders $C_1'$ and $C_2'$ must satisfy the equality $P(C_1', \{C_3\}) = P(C_2', \{C_3\})$, which yields
$$\frac{h_3}{\sum_i h_i} = \frac{n_3 h_3}{\sum_i n_ih_i},$$
and simplifies to
$$n_1h_1 + n_2h_2 = n_3(h_1 + h_2).$$
Some observations are in order.  Since every vertical trajectory passing downwards from $C_1$ enters $C_2$, and every vertical trajectory passing downwards from $C_2$ enters $C_3$ by Lemma~\ref{lm:sim:cyl:adj:f:cyl}, we have $n_3 \geq n_2 \geq n_1$.  The above equality can be transformed to
$$\left(1 - \frac{n_1}{n_3} \right)h_1 + \left(1 - \frac{n_2}{n_3} \right) h_2 = 0.$$
Noting that $n_1/n_3 \leq 1$, $n_2/n_3 \leq 1$, and obviously $h_1, h_2 > 0$ implies that this equality can only hold if $n_1 = n_2 = n_3$.  In other words, any cylinder $\cM$-parallel to $C_1'$ must pass through every horizontal cylinder an equal number of times.

If there are two cylinders that are $\cM$-parallel to $C_1'$, then we are done because we would have a surface vertically periodic with only one equivalence class of three cylinders (by Theorem~\ref{thm:RankkImpkCyl}).  Let $\mathcal{C}'$ be the equivalence class of cylinders $\cM$-parallel to $C_1'$.  Let $h_i'$ be the height of $C_i'$, for $i = 1,2$.  Let $\ell_i$ be the circumference of $C_i$, for $i = 1,2$.  Let $n$ be the number of times $C_2'$ passes through each horizontal cylinder.  Letting $n=0$ is equivalent to saying that $C_1'$ is free.  It will be clear to the reader that a contradiction is achieved regardless of the value of $n$.  We compute the portion $P(C_1, \mathcal{C}') = P(C_2, \mathcal{C}')$, which yields
$$\frac{h_1'h_1 + nh_2'h_1}{\ell_1 h_1} = \frac{h_1'h_2 + nh_2'h_2}{\ell_2 h_2}.$$
This simplifies to $\ell_1 = \ell_2$, which is clearly a contradiction because $C_2$ is a simple cylinder whose bottom is identified to the top of $C_1$, and so they cannot possibly have equal circumferences without being the same cylinder.

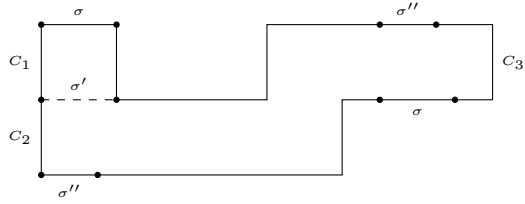
\begin{figure}[htb]
\centering
\begin{tikzpicture}[scale=0.50]
\draw (0,0)--(0,4)--(2,4)--(2,2)--(6,2)--(6,4)--(12,4)--(12,2)--(8,2)--(8,0)--cycle;
\foreach \x in {(0,0),(0,2),(0,4),(2,4),(2,2),(1.5,0),(9,2),(11,2),(9,4),(10.5,4)} \filldraw[fill=black] \x circle (2pt);
\draw [dashed] (0,2)--(2,2);
\draw(1,2) node[above] {\tiny $\sigma'$};
\draw(9.75,4) node[above] {\tiny $\sigma''$};
\draw(1,4) node[above] {\tiny $\sigma$};
\draw(10,2) node[below] {\tiny $\sigma$};
\draw(.75,0) node[below] {\tiny $\sigma''$};
\draw(0,3) node[left] {\tiny $C_1$};
\draw(0,1) node[left] {\tiny $C_2$};
\draw(12,3) node[right] {\tiny $C_3$};

\end{tikzpicture}
\caption{Cylinders in Proposition~\ref{prop:C3I:1:n-f:sim:cyl}: Case 2}
\label{C1orC2SimpFigP1}
\end{figure}

\end{proof}


\subsection{$C_1$ or $C_2$ Contains a Simple Cylinder}


\begin{proposition}
\label{prop:C3I:loop:in:c1:c2}
Let $\cM$ be a rank two affine manifold in a stratum in genus three with $k \geq 2$ zeros.  If $M \in \cM$ is a horizontally periodic translation surface satisfying Case 3.I) and $C_1$ or $C_2$ contains a saddle connection $\sigma$ on its top and bottom, i.e. $C_1$ or $C_2$ contains a simple cylinder, then there exists a horizontally periodic translation surface $M' \in \cM$ that either satisfies Case 3.I) and has two simple cylinders, or $M'$ has at least four cylinders.

In particular, if $k = 2$, then there exists $M' \in \cM$ horizontally periodic with four cylinders.
\end{proposition}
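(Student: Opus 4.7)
The plan is to convert the loop $\sigma$ inside $C_1$ into a horizontal simple cylinder on a new surface in $\cM$, and then analyze the resulting horizontal cylinder decomposition using the earlier propositions of this section.

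Without loss of generality, $C_1$ is the cylinder containing $\sigma$. Since $\Tw(M,\cM)=\CP(M,\cM)$ can be assumed (otherwise Lemma~\ref{lm:WrightTwistPresLem} immediately yields four cylinders), Lemma~\ref{lm:3:f:cyl} together with Theorem~\ref{thm:RankkImpkCyl} force $\{C_1,C_2\}$ to form a single $\cM$-parallel class while $C_3$ is free. Applying Theorem~\ref{thm:Wright:Cyl:Def}, I would twist $\{C_1,C_2\}$ so that the two copies of $\sigma$ on the top and bottom of $C_1$ lie directly above each other; this produces a vertical simple cylinder $D\subset C_1$ whose top and bottom saddle connections are both $\sigma$. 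Proposition~\ref{prop:SimpCylPres} then provides a nearby square-tiled surface $\widetilde M\in\cM$ on which $D$ persists as a vertical simple cylinder. Rotating $\widetilde M$ by $\pi/2$ gives a horizontally periodic $M'\in\cM$ in which $D$ is now a horizontal simple cylinder.

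The proof then splits on the number $n$ of horizontal cylinders of $M'$. If $n\ge 4$ the first conclusion holds. If $n=2$, Lemma~\ref{lm:Hmn:2cyl:imp:3cyl:1:sim} (we are in $\cH(m,n)$ with $m+n=4$) produces a three-cylinder surface containing a non-free simple cylinder; Corollary~\ref{cor:no:C3II:in:Hmn} and Proposition~\ref{prop:3III:imply:4cyls} eliminate Cases 3.II) and 3.III), and Proposition~\ref{prop:C3I:1:n-f:sim:cyl} finishes the remaining Case 3.I). If $n=3$, the simple cylinder $D$ itself must be one of the three, and the same trichotomy applies: Cases 3.II) and 3.III) give four cylinders by the previous results, while in Case 3.I) with $D$ in the non-free class we invoke Proposition~\ref{prop:C3I:1:n-f:sim:cyl}.

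The main obstacle is the remaining subcase where $M'$ has exactly three horizontal cylinders in Case 3.I) with $D$ being a free simple cylinder and the other two cylinders (forming the non-free $\cM$-parallel class) not simple. To resolve this I would either twist the non-free class, exploiting the geometry inherited from the original loop $\sigma$, to produce a second simple cylinder in that class and thereby land in the ``two simple cylinders'' alternative of the conclusion; or collapse $D$ via Proposition~\ref{prop:RankkImpRankkBd} (after checking its three conditions) to obtain a rank two affine submanifold of $\cH(4)$, which by Theorem~\ref{NWANWThm} must be $\widetilde{\cQ}(3,-1^3)$, and derive a contradiction from the incompatibility of the cylinder structure with the Prym involution. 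Once the dichotomy ``four cylinders or two simple cylinders in Case 3.I)'' is established, the final assertion for $k=2$ follows by applying Proposition~\ref{prop:3CylsI2SimpCyls} to the two-simple-cylinder case.
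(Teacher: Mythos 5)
Your overall architecture matches the paper's: make the simple cylinder $D\subset C_1$ crossing $\sigma$ vertical, perturb to a nearby square-tiled surface via Proposition~\ref{prop:SimpCylPres}, and analyze the transverse cylinder decomposition through the trichotomy of $3$-cylinder cases, feeding the Case 3.I) subcases into Propositions~\ref{prop:C3I:1:n-f:sim:cyl} and \ref{prop:3CylsI2SimpCyls}. But there is a genuine gap exactly where you flag the ``main obstacle'': the subcase in which $D$ is a \emph{free} simple cylinder never occurs, and the observation that rules it out is the one substantive idea of the paper's proof. Since $D$ is contained in $C_1$ and is disjoint from $C_2$, if $D$ were alone in its equivalence class then Proposition~\ref{CylinderPropProp} would give $P(C_1,\{D\})=P(C_2,\{D\})=0$, contradicting $P(C_1,\{D\})>0$ and the standing assumption that $C_1$ and $C_2$ are $\cM$-parallel. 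Hence $D$ has an $\cM$-parallel partner $C_2'$, and since $P(C_2',\{C_3\})=P(D,\{C_3\})=0$, that partner lies inside $C_1\cup C_2$. This is what guarantees at least three transverse cylinders (so your $n=2$ branch is vacuous) and, more importantly, puts a non-free simple cylinder into the transverse decomposition, so that Case 3.III) is excluded by Lemma~\ref{lm:C3III:no:nf:sim:cyl} and Case 3.I) reduces to Proposition~\ref{prop:C3I:1:n-f:sim:cyl} (if $C_2'$ is not simple) or to the two-simple-cylinder alternative (if it is). Without this step your proof does not close.

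Neither of your two fallback strategies for the (in fact vacuous) free-$D$ case is carried out, and the second one is suspect on its own terms: collapsing $D$ via Proposition~\ref{prop:RankkImpRankkBd} requires a vertical saddle connection in $D$ joining two \emph{distinct} singularities (condition (c)), whereas the boundary saddle connection $\sigma$ of $D$ may well join a zero to itself, in which case the collapse does not land in $\cH(4)$ at all. So the correct fix is not to repair that branch but to delete it by proving $D$ is non-free as above.
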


\begin{proof}
Without loss of generality, let $C_1$ be the cylinder with a saddle connection $\sigma$ on its top and bottom.  Let $C_1'$ be the simple cylinder in $C_1$ which is formed by the trajectories from $\sigma$ to itself. We can suppose that $C'_1$ is vertical.  We see that $C_1'$ cannot be free because $C_1$ is not free.  Therefore, there is a cylinder $C_2'$ that is $\cM$-parallel to $C_1'$.  Proposition~\ref{prop:SimpCylPres} guarantees that there is a nearby square-tiled  surface on which $C_1'$ is also a simple vertical cylinder.  By a slight abuse of notation, we denote this square-tiled surface by $M$.

Observe that $C_2' \subset C_1 \cup C_2$ because it must have zero proportion in $C_3$ to satisfy the equality $0 = P(C_1', \{C_3\}) = P(C_2', \{C_3\})$.  Thus, there are at least three vertical cylinders on $M$. If there are four vertical cylinders then we are done. Therefore, we only need to consider the case where $M$ is decomposed into three cylinders in the vertical direction. Let $C'_3$ be the third vertical cylinder which necessarily crosses $C_3$. Clearly, $C'_3$ is not $\cM$-parallel to $C'_1$ and $C'_2$.

Consider the cylinder decomposition in the vertical direction. We know that Case 3.II) is excluded by Corollary~\ref{cor:no:C3II:in:Hmn}.  Case 3.III) can be excluded by noting that $C_1'$ is a non-free simple cylinder and that can never occur in Case 3.III) (Lemma~\ref{lm:C3III:no:nf:sim:cyl}).  Therefore, we must be in Case 3.I).  If $C'_2$ is not simple, we conclude by Proposition~\ref{prop:C3I:1:n-f:sim:cyl}.  Otherwise, $C'_1$ and $C'_2$ are both simple.  The final claim follows from Proposition \ref{prop:3CylsI2SimpCyls}.
\end{proof}


\subsection{$C_3$ Is Simple}

\begin{proposition}
 \label{prop:C3I:c3:is:sim}
Let $\cM$ be a rank two affine manifold in a stratum in genus three with $k \geq 2$ zeros.  If there exists  $M \in \cM$ which is horizontally periodic satisfying Case 3.I) such that $C_3$ is a simple cylinder, then there exists a horizontally periodic translation surface $M' \in \cM$ that either satisfies Case 3.I) and had two simple cylinders, or $M'$ has at least four cylinders.

In particular, if $k = 2$, then there exists $M' \in \cM$ with four cylinders.
\end{proposition}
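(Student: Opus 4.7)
The plan is to adapt the strategy of Proposition~\ref{prop:C3I:loop:in:c1:c2}: construct a vertical simple cylinder through $C_3$, pass to a nearby square-tiled surface via Proposition~\ref{prop:SimpCylPres}, rotate by $\pi/2$, and analyze the resulting horizontal decomposition. The simplicity of $C_3$ (rather than the presence of a loop inside a non-free cylinder) will be the input that forces the existence of this transverse simple cylinder.

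First I would reduce according to which among $C_1,C_2,C_3$ are $\cM$-parallel. By Theorem~\ref{thm:RankkImpkCyl} and Lemma~\ref{lm:3:f:cyl}, exactly two of them lie in one equivalence class and the third is free. If $C_3$ is free, then $C_1,C_2$ are $\cM$-parallel; if $C_3$ is non-free, then by Lemma~\ref{lm:n:f:sim:cyl} the simple cylinder $C_3$ cannot be adjacent only to its $\cM$-parallel partner, forcing $C_3$ to be adjacent to the free cylinder among $\{C_1,C_2\}$ as well.

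In each case I would produce a vertical simple cylinder $D$ whose closure contains $\overline{C_3}$. Since the top and bottom of $C_3$ each consist of a single saddle connection, by Theorem~\ref{thm:Wright:Cyl:Def} we can twist $C_3$ (or the equivalence class to which it belongs, plus the complementary class) so that the unique top saddle connection lies directly above the unique bottom saddle connection. This produces $D$ as a simple vertical cylinder whose closure is a slit torus or annulus built from $\overline{C_3}$ together with a thin rectangle in an adjacent cylinder. Then I apply Proposition~\ref{prop:SimpCylPres} to perturb to a nearby square-tiled surface $M'\in\cM$ on which $D$ persists as a vertical simple cylinder. Since $M'$ is vertically periodic, I now examine the vertical cylinder decomposition of $M'$.

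If there are at least four vertical cylinders we are done. Otherwise there are exactly three. Corollary~\ref{cor:no:C3II:in:Hmn} rules out Case 3.II). To rule out Case 3.III), I would argue that $D$ is a non-free simple cylinder in this decomposition (its class must contain a cylinder crossing $C_1$ or $C_2$ by Proposition~\ref{CylinderPropProp}) and invoke Lemma~\ref{lm:C3III:no:nf:sim:cyl}; if instead $D$ turns out to be free, the same lemma applied to the other vertical simple cylinder forced by Lemma~\ref{lm:f:sim:cyl} handles the case. Therefore the vertical decomposition of $M'$ satisfies Case 3.I). The presence of $D$ as a simple cylinder gives one simple cylinder in this new $3$-cylinder Case 3.I) decomposition; I would then either reduce to two simple cylinders, or, when none of the other vertical cylinders is simple, apply Proposition~\ref{prop:C3I:1:n-f:sim:cyl} to conclude. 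The ``in particular'' statement for $k=2$ follows by feeding the two-simple-cylinder output into Proposition~\ref{prop:3CylsI2SimpCyls}.

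The main obstacle will be verifying that the transverse cylinder $D$ we extract through $C_3$ is genuinely simple in the various adjacency patterns (top and bottom of $C_3$ glued to the same cylinder versus to different cylinders, and $C_3$ free versus $\cM$-parallel to $C_1$ or $C_2$), and that after rotation Case 3.III) can indeed always be excluded by the non-free simple cylinder criterion of Lemma~\ref{lm:C3III:no:nf:sim:cyl}. Once these case distinctions are laid out, the argument parallels the one already carried out for Proposition~\ref{prop:C3I:loop:in:c1:c2}.
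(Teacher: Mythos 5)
Your proposal has a genuine gap at its central step: the construction of a vertical simple cylinder $D$ whose closure contains $\overline{C_3}$. The picture you describe --- $\overline{C_3}$ together with a thin rectangle in an adjacent cylinder forming a slit torus --- is only valid when both boundary saddle connections of $C_3$ are glued to the \emph{same} cylinder, i.e.\ when $C_3$ is adjacent to only one of $C_1,C_2$. But in that configuration Lemma~\ref{lm:f:sim:cyl} already forces that adjacent cylinder to be free, contradicting the $\cM$-parallelism of $C_1$ and $C_2$, so this case never actually occurs. In the case that does occur, the top of $C_3$ lies in the bottom of $C_1$ and the bottom of $C_3$ lies in the top of $C_2$, and any transverse cylinder through $C_3$ must traverse $C_1$ and $C_2$ as well (possibly wrapping around them several times, since their circumferences exceed that of $C_3$). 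There is no reason such a cylinder is simple, and without simplicity your exclusion of Case 3.III) via Lemma~\ref{lm:C3III:no:nf:sim:cyl} and your appeal to Proposition~\ref{prop:C3I:1:n-f:sim:cyl} after rotation both collapse. (A secondary issue: your use of Corollary~\ref{cor:no:C3II:in:Hmn} presupposes $k=2$, whereas the statement is for $k\geq 2$.)

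The paper's proof is entirely combinatorial and constructs no transverse cylinder at all. It splits on the adjacency of $C_3$: if $C_3$ is adjacent to only one of $C_1,C_2$, Lemma~\ref{lm:f:sim:cyl} gives the immediate contradiction noted above; if $C_3$ is adjacent to both, then either one of $C_1,C_2$ carries the same saddle connection on its top and bottom --- in which case Proposition~\ref{prop:C3I:loop:in:c1:c2}, the very result you were trying to imitate, already yields the conclusion --- or else the top of $C_1$ and the bottom of $C_2$ consist of the same saddle connections, making $c_1$ and $c_2$ homologous and contradicting Case 3.I). A reduction of this kind is what is needed here, not a new transverse-cylinder construction.
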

\begin{proof}
We have two cases:
\begin{itemize}
\item[$\bullet$] $C_3$ is only adjacent to one of $C_1,C_2$. Without loss of generality, assume that $C_3$ is only adjacent to $C_1$, then Lemma~\ref{lm:f:sim:cyl} implies that $C_1$ is also free, which contradicts the hypothesis that $C_1$ and $C_2$ are $\cM$-parallel.

\item[$\bullet$] $C_3$ is adjacent to both of $C_1,C_2$. We can assume that the top border of $C_3$ is contained in the bottom border of $C_1$ and the bottom border of $C_3$ is contained in the top border of $C_2$. If one of $C_1$ and $C_2$ contains a saddle connection in both its top and bottom borders, then we are done by Proposition~\ref{prop:C3I:loop:in:c1:c2}. Otherwise, the top border of $C_1$ and the bottom border of $C_2$ contain the same saddle connections, which means that the core curves of $C_1$ and $C_2$ are homologous, thus we have a contradiction to the assumption of Case 3.I).
\end{itemize}
\end{proof}

As an immediate consequence, we have

\begin{proposition}
\label{prop:C3I:1sim:cyl}
Let $\cM$ be a rank two invariant submanifold in $\cH(m,n)$, with $m+n=4$. Assume that $\cM$ contains a horizontally periodic surface $M$ with three horizontal cylinders in Case 3.I), and one of the cylinders is simple. Then $\cM$ contains a horizontally periodic surface with at least four cylinders.
\end{proposition}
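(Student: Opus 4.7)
The plan is to reduce to the three propositions already proved in this subsection by a simple case analysis on which of the three horizontal cylinders is simple and how the $\cM$-parallel classes are distributed.

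First I set up the framework. Since $\cM$ has rank two, Theorem~\ref{thm:RankkImpkCyl} tells us that we may as well assume $\Tw(M,\cM)=\CP(M,\cM)$ (otherwise Lemma~\ref{lm:WrightTwistPresLem} already produces a horizontally periodic surface with more cylinders), so the horizontal cylinders of $M$ split into at least two equivalence classes of $\cM$-parallel cylinders. By Lemma~\ref{lm:3:f:cyl}, since the three core curves span a Lagrangian in Case~3.I), $C_1,C_2,C_3$ cannot all be free. Hence there are exactly two equivalence classes, one of which contains two of the cylinders and the other contains one free cylinder. Relabelling if necessary, we may assume that $C_1$ and $C_2$ are $\cM$-parallel and that $C_3$ is the unique free cylinder.

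Now I split into cases according to which cylinder is simple. If $C_3$ is simple, then Proposition~\ref{prop:C3I:c3:is:sim} applies directly: either we already obtain a horizontally periodic surface in $\cM$ with four cylinders (and we are done), or we obtain one still in Case~3.I) but now with two simple cylinders; the latter situation is handled by Proposition~\ref{prop:3CylsI2SimpCyls}, which produces a surface in $\cM$ with four cylinders since $\cM \subset \cH(m,n)$ with $m+n=4$, i.e.\ genus three with two zeros. If instead the simple cylinder is $C_1$ or $C_2$ (WLOG $C_1$), then there are two further possibilities. If $C_2$ is also simple, then $M$ itself satisfies the hypotheses of Proposition~\ref{prop:3CylsI2SimpCyls}, which directly yields a surface with four cylinders. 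If $C_2$ is not simple, then $M$ is exactly in the setting of Proposition~\ref{prop:C3I:1:n-f:sim:cyl} (a simple cylinder $\cM$-parallel to a non-simple one, with a free third cylinder), which again yields a horizontally periodic surface in $\cM$ with four cylinders.

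There is essentially no obstacle here: the only thing to be careful about is the book-keeping of which cylinder plays which role and verifying that the ``in particular, if $k=2$'' conclusions of Propositions~\ref{prop:C3I:c3:is:sim} and~\ref{prop:C3I:loop:in:c1:c2} apply in our setting, which they do since $\cH(m,n)$ with $m+n=4$ has exactly two zeros. The content of the proposition is therefore just the observation that the preceding three propositions, together with the Lagrangian obstruction Lemma~\ref{lm:3:f:cyl}, jointly cover every possible configuration of a Case~3.I) surface with a simple horizontal cylinder.
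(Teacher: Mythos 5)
Your proposal is correct and follows essentially the same route as the paper's proof: reduce to two equivalence classes via Lemma~\ref{lm:WrightTwistPresLem} and Lemma~\ref{lm:3:f:cyl}, then dispatch the three configurations (two simple cylinders, a non-free simple cylinder, a free simple cylinder) to Propositions~\ref{prop:3CylsI2SimpCyls}, \ref{prop:C3I:1:n-f:sim:cyl}, and \ref{prop:C3I:c3:is:sim}, respectively. The only cosmetic difference is that you organize the cases by which labelled cylinder is simple, while the paper organizes them by whether the simple cylinder is $\cM$-parallel to another or free; the content is identical.
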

\begin{proof}
 If all of the horizontal cylinders of $M$ are $\cM$-parallel, then $\Tw(M,\cM) \neq  \CP(M,\cM)$, and we can conclude by Lemma~\ref{lm:WrightTwistPresLem}. By Lemma~\ref{lm:3:f:cyl} we know that the cylinders cannot be all free. Thus we only need to consider  the case where there are two equivalence classes. If there are two simple cylinders, then we can conclude by Proposition~\ref{prop:3CylsI2SimpCyls}. Suppose that there is only one simple cylinder.  If the simple cylinder is $\cM$-parallel to another cylinder, then the proposition follows from Proposition~\ref{prop:C3I:1:n-f:sim:cyl}. Otherwise we have a free simple cylinder, and the proposition follows from Proposition~\ref{prop:C3I:c3:is:sim}.
\end{proof}

\begin{proposition}
\label{prop:C3I:loop}
Let $\cM$ be a rank two invariant submanifold in $\cH(m,n)$, with $m+n=4$. Assume that $\cM$ contains a horizontally periodic  surface $M$ with three horizontal cylinders. If there exists a horizontal saddle connection $\s$ that is contained in both top and bottom border of the same cylinder, then $\cM$ contains a horizontally periodic surface with four cylinders.
\end{proposition}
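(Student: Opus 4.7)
The argument reduces the hypothesis (existence of a horizontally periodic $M$ with three cylinders and a loop $\sigma$) to situations already treated in this section. First I would reduce to Case 3.I). If $M$ satisfies Case 3.II), then either $\Tw(M,\cM)\neq\CP(M,\cM)$ and Lemma~\ref{lm:WrightTwistPresLem} immediately yields a nearby horizontally periodic surface with four cylinders, or $\Tw(M,\cM)=\CP(M,\cM)$ and Corollary~\ref{cor:no:C3II:in:Hmn} excludes this configuration. If $M$ satisfies Case 3.III), Proposition~\ref{prop:3III:imply:4cyls} produces the four-cylinder surface directly. So from now on $M$ is in Case 3.I). A further application of Lemma~\ref{lm:WrightTwistPresLem} lets us assume $\Tw(M,\cM)=\CP(M,\cM)$; combining Theorem~\ref{thm:RankkImpkCyl} with Lemma~\ref{lm:3:f:cyl} forces exactly two equivalence classes, namely $\{C_1,C_2\}$ (with $C_1\parallel_\cM C_2$) and $\{C_3\}$ (free).

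Next I would case-split on the location of the loop. If $\sigma\subset C_1\cup C_2$, then Proposition~\ref{prop:C3I:loop:in:c1:c2} applies and, since our stratum $\cH(m,n)$ has $k=2$ zeros, it directly yields a horizontally periodic surface in $\cM$ with four cylinders. The remaining case is $\sigma\subset C_3$. The two copies of $\sigma$ on the top and bottom of $C_3$ cut out a simple sub-cylinder $D\subset C_3$; since $C_3$ is free, Theorem~\ref{thm:Wright:Cyl:Def} allows us to twist $C_3$ so that $D$ becomes vertical without leaving $\cM$. Rotating by $\pi/2$ (which preserves $\cM$) then moves $D$ to a horizontal simple cylinder, and Proposition~\ref{prop:SimpCylPres} supplies a square-tiled surface $N\in\cM$ on which $D$ persists as a horizontal simple cylinder.

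Being square-tiled, $N$ is horizontally periodic. Let $n$ be its number of horizontal cylinders; note that $n\geq 2$ since a single simple cylinder cannot fill a genus-three surface. If $n\geq 4$ we are done. If $n=3$, then $N$ has a simple horizontal cylinder $D$, and the standard case analysis succeeds: Corollary~\ref{cor:no:C3II:in:Hmn} together with Lemma~\ref{lm:WrightTwistPresLem} rules out Case 3.II); Proposition~\ref{prop:3III:imply:4cyls} handles Case 3.III); and Proposition~\ref{prop:C3I:1sim:cyl} handles Case 3.I). If $n=2$, Lemma~\ref{lm:Hmn:2cyl:imp:3cyl:1:sim} applied to $N$ yields a horizontally periodic surface in $\cM$ with at least three cylinders, one of which is a non-free simple cylinder; re-entering the three-cylinder analysis, Lemma~\ref{lm:C3III:no:nf:sim:cyl} now additionally excludes Case 3.III), and the same trio of results finishes the proof.

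The main obstacle is the sub-case $\sigma\subset C_3$: one has to build a new horizontally periodic surface in $\cM$ with a horizontal simple cylinder out of the vertical simple cylinder $D$ inherent in $C_3$, and care is needed to choose the twist of $C_3$ and the square-tiled approximation from Proposition~\ref{prop:SimpCylPres} in such a way that $D$ survives as a horizontal simple cylinder; once this is in place, all remaining configurations have already been analyzed in earlier propositions of this section.
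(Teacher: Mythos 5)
Your proof is correct and follows essentially the same strategy as the paper's: turn the loop $\s$ into a transverse simple cylinder $D$, pass to a nearby square-tiled surface on which $D$ persists, and re-run the two- and three-cylinder case analysis in the new periodic direction via Corollary~\ref{cor:no:C3II:in:Hmn}, Proposition~\ref{prop:3III:imply:4cyls}, and Proposition~\ref{prop:C3I:1sim:cyl}. The paper's version is shorter only because it skips your preliminary reduction to Case 3.I) and the case split on which cylinder contains $\s$: it applies Lemma~\ref{lm:sim:cyl:2:cl} directly to whatever cylinder carries the loop, which already yields two equivalence classes of vertical cylinders and hence (via the proof of Lemma~\ref{lm:Hmn:2cyl:imp:3cyl:1:sim}) at least three vertical cylinders, so your separate treatment of the two-cylinder outcome and your appeal to Proposition~\ref{prop:C3I:loop:in:c1:c2} are redundant but harmless.
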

\begin{proof}
  Let $C$ be the horizontal cylinder that contains $\s$ in both top and bottom borders. There exists a simple cylinder $D \subset C$ crossing only $\s$. We can suppose that $D$ is vertical. By Lemma~\ref{lm:sim:cyl:2:cl}, there exists a square-tiled surface $M'$ close to $M$ on which $D$ is also a vertical simple cylinder, and there are at least two equivalence classes of vertical cylinders.
  From the proof Lemma~\ref{lm:Hmn:2cyl:imp:3cyl:1:sim}, we derive that $M'$ has at least three vertical cylinders. Assume that $M'$ has exactly three vertical cylinders. Consider the cylinder decomposition in the vertical direction of $M'$. Recall that Case 3.II) is excluded by Corollary~\ref{cor:no:C3II:in:Hmn}. If this cylinder decomposition satisfies Case 3.III), then we are done by Proposition~\ref{prop:3III:imply:4cyls}, otherwise we are in Case 3.I), and the proposition follows from Proposition~\ref{prop:C3I:1sim:cyl}.
\end{proof}

\begin{remark}
Observe that the property of a cylinder containing the same saddle connection on top and bottom is equivalent to saying a cylinder contains a simple cylinder.
\end{remark}

\subsection{Semi-Simple Cylinders}
\begin{definition}\label{def:semi-sim:cyl}
A cylinder is \emph{semi-simple} if the boundary of one side of the cylinder consists of a single saddle connection.  Of course, simple cylinders are semi-simple.  We say that a cylinder is \emph{strictly semi-simple} if it is semi-simple, but not simple.
\end{definition}

\begin{proposition}
\label{prop:C3I:semi-sim:cyl}
Let $\cM$ be a rank two invariant submanifold in a stratum with exactly two zeros in genus three. If $\cM$ contains a horizontally periodic surface $M$ with three horizontal cylinders in Case 3.I) and one of the cylinders is strictly semi-simple, then there exists $M' \in \cM$ with four cylinders.
\end{proposition}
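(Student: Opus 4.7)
The plan is to first carry out the standard reductions afforded by the propositions already established in this section, and then handle separately the cases where the strictly semi-simple cylinder is free or not.

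Let $C_3$ denote the strictly semi-simple cylinder; without loss of generality, its top boundary is a single saddle connection $\sigma$ (necessarily a loop at one zero $p$, since it is a closed curve) while its bottom boundary consists of at least two saddle connections. By Proposition~\ref{prop:C3I:1sim:cyl} we may assume that none of $C_1,C_2,C_3$ is simple, and by Proposition~\ref{prop:C3I:loop} we may assume that no horizontal saddle connection lies in both the top and bottom of the same cylinder; otherwise we are already done. In particular, $\sigma$ must lie on the bottom of another horizontal cylinder, which we label $C_1$. Moreover, by Lemma~\ref{lm:3:f:cyl} the three cylinders cannot all be free, and by Theorem~\ref{thm:RankkImpkCyl} (combined with Lemma~\ref{lm:WrightTwistPresLem}) we may assume there are exactly two equivalence classes of $\cM$-parallel cylinders.

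First I would handle the case where $C_3$ is free. Since the top of $C_3$ has only the single zero $p$ while the bottom carries at least two saddle connections, careful angle/zero accounting forces the bottom of $C_3$ to contain the other zero $q \neq p$ of the stratum $\cH(m,n)$ (otherwise all vertices on both boundaries would be $p$ and one checks $C_3$ would have had to be simple, contradicting strict semi-simplicity). I can therefore pick a vertical saddle connection in $C_3$ from $q$ to $p$, verify hypotheses (a)--(c) of Proposition~\ref{prop:RankkImpRankkBd} (using the loop-free assumption for (b)), and collapse $C_3$. The limit lies in a rank-two affine submanifold of $\cH(4)$, which by Theorem~\ref{NWANWThm} is $\widetilde{\cQ}(3,-1^3)$. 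The resulting surface is then a double cover with an order-two involution $\tau$ that must preserve the horizontal cylinder decomposition into $C_1$ and $C_2$. Using that $C_1$ inherited a distinguished saddle connection (the former $\sigma$) while $C_2$ did not, together with the fact that $\tau$ has four fixed points and must permute cylinders of equal circumference, one extracts a contradiction just as in the proof of Lemma~\ref{lm:Hmn:2cyl:imp:3cyl:1:sim}. Hence $C_3$ cannot have been free.

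It remains to treat the case where $C_3$ is $\cM$-parallel to one of $C_1,C_2$. After appropriately twisting $C_3$ and the cylinder on the other side of $\sigma$, there is a vertical cylinder $D$ through $\sigma$ crossing $C_3$ once; passing to a nearby square-tiled surface via Proposition~\ref{prop:SimpCylPres}, $D$ becomes a genuine vertical cylinder while the structure of the horizontal decomposition is preserved. The equivalence class of $D$ then must, by Proposition~\ref{CylinderPropProp} applied to the equivalence class of $C_3$, produce parallel copies inside the cylinders parallel to $C_3$, yielding at least two transverse cylinders. I would then analyze the resulting vertical decomposition: if it has at least four cylinders we are done; if it has three, Corollary~\ref{cor:no:C3II:in:Hmn} excludes Case 3.II), Proposition~\ref{prop:3III:imply:4cyls} handles Case 3.III), and in Case 3.I) either one of the new vertical cylinders is simple (Proposition~\ref{prop:C3I:1sim:cyl}), contains a saddle connection on both sides (Proposition~\ref{prop:C3I:loop}), or is again strictly semi-simple. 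In the last subcase the extra information that $D$ uses the specific loop $\sigma$ forces the vertical and horizontal structures to be incompatible, again by the cylinder-proportion argument.

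The main obstacle will be the free case: extracting a clean contradiction from the involution on $\widetilde{\cQ}(3,-1^3)$ requires carefully matching the two post-collapse horizontal cylinders against $\tau$, as the cylinder $C_3$ disappears in the limit and its trace on the remaining cylinders must be accounted for. A secondary difficulty is ensuring that the cascade of subcases in the non-free analysis actually terminates rather than producing another strictly semi-simple configuration, which is why the specific role of $\sigma$ as a single-saddle-connection boundary is essential for breaking the symmetry in the proportion computation.
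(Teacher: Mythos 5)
There is a genuine gap, and it occurs in both branches of your case division. In your ``$C_3$ free'' case, the entire argument rests on the claim that the bottom boundary of the strictly semi-simple cylinder must contain the second zero $q$, so that a vertical saddle connection joining \emph{distinct} singularities exists and condition (c) of Proposition~\ref{prop:RankkImpRankkBd} can be verified. Your justification (``otherwise \dots one checks $C_3$ would have had to be simple'') is not a proof: strict semi-simplicity only says one boundary is a single saddle connection and the other has at least two, and nothing prevents all of those saddle connections from being based at the same zero $p$. Without that claim the collapse does not merge the two zeros and you never land in $\cH(4)$. Even granting it, the contradiction you want to extract from the Prym involution on $\widetilde{\cQ}(3,-1^3)$ is not the situation of Lemma~\ref{lm:Hmn:2cyl:imp:3cyl:1:sim}: there the involution had to exchange a simple cylinder with a non-simple one, whereas here both surviving cylinders $C_1,C_2$ are non-simple by your own reductions, so ``must permute cylinders of equal circumference'' does not by itself yield anything.

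In the non-free case the key object --- a \emph{simple} transverse cylinder --- is asserted rather than constructed. A vertical trajectory leaving $C_3$ through the loop $\sigma$ enters $C_1$ and must eventually return to $\sigma$ to close up; there is no reason it does so after crossing $C_3$ only once, and no reason the resulting cylinder is simple. The actual content of the paper's proof is precisely this construction: it shows (by the case analysis on where the single-saddle-connection boundary attaches, using the no-loop reduction and the Lagrangian condition of Case 3.I) that there are always two non-$\cM$-parallel cylinders $C,D$ with a saddle connection $\s_1$ on top of $C$ and bottom of $D$ \emph{and} a saddle connection $\s_2$ on bottom of $C$ and top of $D$; Lemma~\ref{lm:2adj:cyl:n:para} then produces a simple vertical cylinder contained in $\overline{C}\cup\overline{D}$, which after Lemma~\ref{lm:sim:cyl:2:cl} forces the new $3$-cylinder vertical decomposition into the cases already handled by Propositions~\ref{prop:C3I:1:n-f:sim:cyl} and~\ref{prop:C3I:c3:is:sim} (a non-free simple cylinder rules out Cases 3.II), 3.III) and the strictly semi-simple loop you worry about). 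Your proposal skips this combinatorial step entirely, and the ``cascade termination'' worry you raise at the end is exactly the unresolved consequence: without producing a genuinely simple non-free transverse cylinder, the recursion could indeed return another strictly semi-simple configuration and never terminate.
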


We first show the following

\begin{lemma}
\label{lm:2adj:cyl:n:para}
Let $C$ and $D$ be two horizontal cylinders in $M$.  Assume that there are two horizontal saddle connections $\s_1,\s_2$ such that
\begin{itemize}
 \item $\s_1$ is contained in the top boundary of $C$ and in the bottom boundary of $D$

 \item $\s_2$ is contained in the bottom boundary of $C$ and in the top boundary of $D$
\end{itemize}
If $C$ and $D$ are not $\cM$-parallel, then there exists in $\cM$ a surface $M'$ admitting a cylinder decomposition in the vertical direction with at least one simple vertical cylinder, and two equivalence classes of parallel cylinders.
\end{lemma}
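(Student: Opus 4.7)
The plan is to exploit the non-$\cM$-parallelism of $C$ and $D$ to twist them independently, via Theorem~\ref{thm:Wright:Cyl:Def}, and construct a distinguished vertical cylinder on $M$ that crosses only $C$ and $D$. By applying the cylinder twist to the equivalence class of $C$, I would align the left endpoint of $\s_1$ on the top of $C$ vertically above the left endpoint of $\s_2$ on the bottom of $C$; this produces a vertical saddle connection $\alpha\subset C$ joining these two zeros. Independently twisting the equivalence class of $D$, I would align the left endpoint of $\s_2$ on the top of $D$ vertically above the left endpoint of $\s_1$ on the bottom of $D$, producing a vertical saddle connection $\beta\subset D$. Because the top of $C$ is identified to the bottom of $D$ along $\s_1$ and the bottom of $C$ to the top of $D$ along $\s_2$, the concatenation $\alpha\cup\beta$ is a closed vertical loop on $M$.

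Assume without loss of generality that $|\s_1|\le|\s_2|$. The maximal vertical cylinder $E$ containing the regular closed vertical trajectories obtained by a small rightward shift of $\alpha\cup\beta$ then has width $|\s_1|$, wraps exactly once through each of $C$ and $D$, is disjoint from every other horizontal cylinder, and has $\alpha\cup\beta$ on one side of its boundary and a single vertical saddle connection $\gamma$ on the other. The saddle connection $\gamma$ is a loop at the right endpoint of $\s_1$: descending vertically from this zero through $C$ lands in the interior of $\s_2$ (since $|\s_1|\le|\s_2|$), then continuing through $D$ returns to the same zero on the bottom of $D$.

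I then apply Proposition~\ref{prop:SimpCylPres}, used in the vertical direction via a rotation by $\pi/2$, to replace $M$ by a nearby square-tiled $M'\in\cM$ on which $\alpha,\beta,\gamma$ persist as vertical saddle connections; then $E$ persists in $M'$ as a vertical cylinder with the same combinatorial boundary, and $M'$ is vertically periodic. For the two equivalence classes of vertical $\cM$-parallel cylinders, I invoke Proposition~\ref{CylinderPropProp}: any cylinder $\cM$-parallel to $E$ must replicate its cylinder proportions in every horizontal cylinder; since $E$ has zero proportion in all horizontal cylinders other than $C$ and $D$, and $C,D$ are not themselves $\cM$-parallel, $E$ cannot be $\cM$-parallel to any vertical cylinder that meets a horizontal cylinder other than $C$ or $D$. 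As $M'$ has genus three and $E$ alone cannot fill it, such other vertical cylinders necessarily exist and supply a second equivalence class.

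The last and most delicate step is to exhibit an actual simple vertical cylinder in the decomposition of $M'$. The vertical cylinder $F$ adjacent to $E$ across the loop $\gamma$ already has $\gamma$ as one full boundary component, and the hard part will be showing that its opposite boundary also reduces to a single vertical saddle connection. I plan to trace this opposite boundary using the local identifications around the zeros at the endpoints of the neighboring saddle connections on the top of $C$, the interior of $\s_2$, and the bottom of $D$ bounding $F$ from the right; if a single-saddle-connection far boundary is not immediate, one further independent twist of the equivalence classes of $C$ and of $D$ (still allowed, as they remain not $\cM$-parallel) is used to align the surrounding zeros so that this other boundary collapses to a single vertical saddle connection, turning $F$ into the required simple vertical cylinder in $M'$.
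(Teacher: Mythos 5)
Your construction aligns the \emph{endpoints} of $\s_1$ and $\s_2$ vertically, and this choice is precisely what prevents you from obtaining a simple cylinder: the boundary component $\alpha\cup\beta$ of your cylinder $E$ consists of two saddle connections, so $E$ itself is not simple, and you are forced to look for simplicity in the adjacent cylinder $F$ across $\gamma$. That last step is a genuine gap. The trajectories of $F$ just beyond $\gamma$ lie past the right endpoint of $\s_1$, hence cross whatever horizontal saddle connections follow $\s_1$ on the top of $C$ and on the bottom of $D$; these can lead into other horizontal cylinders, so $F$ need not stay in $\overline{C}\cup\overline{D}$ and there is no reason its far boundary reduces to a single saddle connection. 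Your fallback of ``one further independent twist'' is not available either: the two free parameters (the twists of the equivalence classes of $C$ and of $D$) were already spent creating $\alpha$ and $\beta$, so no freedom remains to adjust $F$. (The case $|\s_1|=|\s_2|$ also breaks your claim that $\gamma$ is a single saddle connection.) The paper's construction avoids all of this by choosing the twists differently: it places an endpoint of $\s_1$ directly above an \emph{interior} point of $\s_2$, so that the closed vertical leaf through that zero crosses $\mathrm{int}(\s_2)$ and returns to itself, and the interior point can be chosen so that the other singular leaf of the same vertical strip is a loop at an endpoint of $\s_2$ crossing $\mathrm{int}(\s_1)$. Each boundary of the resulting vertical cylinder $V\subset\overline{C}\cup\overline{D}$ is then a single closed saddle connection, so $V$ is simple from the start, and Proposition~\ref{prop:SimpCylPres} transports it to a nearby square-tiled, vertically periodic surface.

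Your argument for the two equivalence classes is also incomplete. The proportion computation only shows that $E$ cannot be $\cM$-parallel to a vertical cylinder meeting a horizontal cylinder outside the equivalence classes of $C$ and $D$; it says nothing about vertical cylinders contained in the union of those classes, and such ``outside'' vertical cylinders need not exist at all (for instance when $C$ and $D$ fill $M$, or when every horizontal cylinder is $\cM$-parallel to $C$ or to $D$). The paper does not attempt a direct proportion argument here: it invokes Lemma~\ref{lm:sim:cyl:2:cl}, whose proof uses the rank-two hypothesis, rationality, and the comparison of $\Tw$ with $\CP$ to force a second equivalence class while preserving the simple cylinder. You need that input (or an equivalent argument) to close this step.
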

\begin{proof}
We can twist $C$ and $D$ so that there exists a simple vertical cylinder $V$ crossing only $\s_1$ and $\s_2$ (among the horizontal saddle connections) and contained in $\overline{C}\cup \overline{D}$ (see Figure~\ref{fig:2adj:cyl:n:para}).

\begin{figure}[htb]
  \centering
  \begin{tikzpicture}[scale=0.50]
  \fill[blue!30] (2,4) -- (2,0) -- (3,0) -- (3,4) -- cycle;

  \draw[thin, dashed] (0,0) -- (2,0) (5,0) -- (6,0) (0,2) -- (1,2) (3,2) -- (6,2) (0,4) -- (2,4) (5,4) -- (6,4);
  \draw (2,0) -- (5,0) (1,2) -- (3,2) (2,4) -- (5,4);
  \draw (2,0) -- (2,4) (3,0) -- (3,4);

  \foreach \x in {(2,0),(5,0),(1,2),(3,2),(2,4),(5,4)} \filldraw[fill=black] \x circle (2pt);

  \draw(4,4) node[above] {\tiny $\sigma_1$};
  \draw(1.5,2) node[below] {\tiny $\sigma_2$};
  \draw(3.5,0) node[below] {\tiny $\sigma_1$};

  \draw(0,3) node {\tiny $C$};
  \draw(0,1) node {\tiny $D$};
  \draw (2.5,3) node {\tiny $V$};
  \end{tikzpicture}
\caption{}
\label{fig:2adj:cyl:n:para}
\end{figure}

From Proposition~\ref{prop:SimpCylPres}, there exists a surface $M'$ in $\cM$ admitting a cylinder decomposition in the vertical direction with a simple cylinder. By Lemma~\ref{lm:sim:cyl:2:cl}, we can find such a surface with at least two equivalence classes of vertical cylinders.
\end{proof}

\bigskip
\begin{proof}[Proof of Proposition~\ref{prop:C3I:semi-sim:cyl}]
We only need to consider the case where the horizontal cylinders of $M$ belong to two equivalence classes. As usual we denote the three horizontal cylinders of $M$ by $C_1,C_2,C_3$, where $C_1,C_2$ are $\cM$-parallel and $C_3$ is free. By Proposition~\ref{prop:C3I:loop}, we can assume that no horizontal saddle connection is contained in both top and bottom of the same cylinder. We will show that there always exist two cylinders that satisfy the conditions of Lemma~\ref{lm:2adj:cyl:n:para}.

\begin{itemize}
 \item[$\bullet$] {\bf Case 1:} One of $C_1$ and $C_2$ is strictly semi-simple. Without loss of generality, we can assume that $C_1$ is strictly semi-simple and the bottom border of $C_1$ consists of a single saddle connection. We have two subcases:
 \begin{itemize}
 \item[-] \underline{Case 1.a:} The bottom border of $C_1$ is included in the top border of $C_2$.  Let $\s_1$ be a saddle connection in the top border of $C_2$, which is not the one in the bottom of $C_1$.  By the hypothesis that no saddle connection is contained in both top and bottom borders of the same cylinder, we derive  that $\s_1$ must be contained in the bottom border of $C_3$. Note also that any  saddle connection in the top border of $C_3$ must be contained in the bottom border of $C_2$, since it cannot be contained in the bottom of $C_1$ nor $C_3$.  Therefore there is a saddle connection $\s_2$ in the bottom of $C_2$ that is contained in the top of $C_3$.

\item[-] \underline{Case 1.b:} The bottom border of $C_1$ is contained in the top border of $C_3$. By the hypothesis, any saddle in the top of $C_3$ cannot belong to the bottom of $C_3$, thus there exists a saddle connection $\s_1$ in the top of $C_3$ that is contained in the bottom of $C_2$. By the same assumption, any saddle connection in the top of $C_2$ either belongs to the bottom of $C_1$ or $C_3$. But the bottom of $C_1$ is already contained in the top of $C_3$, thus there must exists a saddle connection $\s_2$ in the top of $C_2$, which is contained in the bottom of $C_3$.
\end{itemize}

\item[$\bullet$] {\bf Case 2:} $C_3$ is semi-simple. Without loss of generality, we can assume that  the bottom border of $C_3$ consists of a single saddle connection, which  is contained in the top of $C_2$. We claim that there exists a saddle connection in the top of $C_3$ that is contained in the bottom of $C_2$. Assume that no saddle connection in the top of $C_3$ is contained in the bottom of $C_2$. Note that, by assumption, all of the saddle connections in the top of $C_1$  must be contained in the bottom of $C_2$. It follows that the top of $C_1$ and the bottom of $C_2$ contain the same set of saddle connections, which means that $C_1$ and $C_2$ are homologous. Therefore, we have a contradiction to the hypothesis of Case 3.I).
\end{itemize}

In all cases let $V$ be the vertical simple cylinder crossing each of $C_2$ and $C_3$ once, whose existence is guaranteed by Lemma~\ref{lm:2adj:cyl:n:para}. Since $V$ does not cross $C_1$, there must exist  another vertical cylinder $V'$, which is $\cM$-parallel to $V$ crossing $C_1$. By Lemma~\ref{lm:sim:cyl:2:cl}, we can find a square-tiled surface close to $M$ on which $V$ and $V'$ persist, $V$ remains simple, and $M'$ has at least two equivalence classes of vertical cylinders. In particular, $M'$ has at least three vertical cylinders. If $M'$ has four vertical cylinders, then we are done. Otherwise, observe that we are not in Case 3.II) or Case 3.III) since one of the non-free vertical cylinders is simple. Thus we are in Case 3.I), and we conclude by Propositions \ref{prop:C3I:1:n-f:sim:cyl} and \ref{prop:C3I:c3:is:sim}.
\end{proof}

\subsection{Case 3.I): The Exceptional Case}
\label{ExceptionalCaseSect}

\begin{figure}[htb]
\centering
\begin{minipage}{0.24\linewidth}
\centering
\begin{tikzpicture}[scale=0.50]
\draw (0,0)--(0,2)--(-2,2)--(-2,4)--(-4,4)--(-4,6)--(0,6)--(0,4)--(2,4)--(2,2)--(4,2)--(4,0)--cycle;
\foreach \x in {(0,0),(0,2),(-2,2),(-2,4),(-4,4),(-4,6),(0,6),(0,4),(2,4),(2,2),(4,2),(4,0),(2,0),(-2,6)} \draw \x circle (1pt);
\draw(-3,6) node[above] {\tiny $2$};
\draw(-1,6) node[above] {\tiny $4$};
\draw(1,4) node[above] {\tiny $1$};
\draw(3,2) node[above] {\tiny $3$};
\draw(1,0) node[below] {\tiny $1$};
\draw(3,0) node[below] {\tiny $2$};
\draw(-1,2) node[below] {\tiny $4$};
\draw(-3,4) node[below] {\tiny $3$};
\draw(-2,5) node {\tiny $C_1$};
\draw(0,3) node {\tiny $C_2$};
\draw(2,1) node {\tiny $C_3$};
\end{tikzpicture}
\end{minipage}
\caption{The exceptional 3-cylinder diagram satisfying  Case 3.I)}
\label{ExCaseFig}
\end{figure}
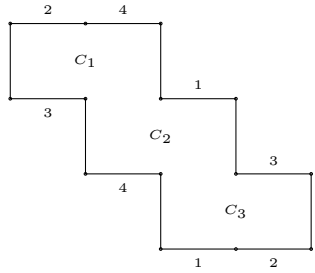

\begin{lemma}
\label{3CylIException}
Given a horizontally periodic translation surface $M \in \mathcal{H}(m,n)$ such that $M$ has exactly three cylinders $C_1, C_2, C_3$ satisfying Case 3.I), there exists exactly one $3$-cylinder diagram such that
\begin{itemize}
\item no horizontal saddle connection is contained in both the top and bottom of the same cylinder, and
\item none of the $C_i$ is semi-simple.
\end{itemize}
The unique $3$-cylinder diagram satisfying these conditions is given in Figure \ref{ExCaseFig}.
\end{lemma}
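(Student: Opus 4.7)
The plan is to reduce the problem to enumerating directed dual graphs (in the sense of Section~\ref{sec:intro:DG}) satisfying explicit valency constraints, and then to show that under the two stated hypotheses only one such graph exists and its ribbon structure is automatic.

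First I will count horizontal saddle connections: in $\cH(m,n)$ with $m+n=4$, any horizontally periodic surface has exactly $m+n+2=6$ horizontal saddle connections (cf.\ property (2) of dual graphs in Section~\ref{sec:intro:DG}). Each saddle connection contributes exactly once to some top border and once to some bottom border, so summing the number of saddle connections over the $3+3=6$ borders of the cylinders gives $12$. Since no $C_i$ is semi-simple, every border contains at least two saddle connections, and the two bounds together force every border of every cylinder to contain \emph{exactly} two saddle connections.

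Next I translate the data into the directed dual graph: three vertices, six edges, every vertex has in-degree $2$ and out-degree $2$, and the no-loop hypothesis forbids edges from a vertex to itself. If $a_{ij}$ denotes the number of edges from $C_i$ to $C_j$ for $i\neq j$, then the matrix $(a_{ij})$ has zero diagonal, non-negative integer entries, and all row and column sums equal to $2$. A short case analysis yields only two solutions up to relabeling: (i) $a_{ij}=1$ for every $i\neq j$, giving the complete directed graph on three vertices; or (ii) $a_{i,\sigma(i)}=2$ for some $3$-cycle $\sigma$ on $\{1,2,3\}$, a ``cyclic doubled'' diagram.

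I then eliminate (ii). In that case the two saddle connections in the bottom border of $C_i$ coincide with the two saddle connections in the top border of $C_{\sigma(i)}$, so the bottom boundary circle of $C_i$ equals the top boundary circle of $C_{\sigma(i)}$ as oriented closed curves on $M$. Hence $c_i=c_{\sigma(i)}$ in $H_1(M,\bZ)$ for every $i$, and so $c_1=c_2=c_3$; this contradicts the hypothesis of Case 3.I), which requires $\mathrm{span}(c_1,c_2,c_3)$ to be a Lagrangian (hence $3$-dimensional) subspace. (Alternatively, stacking $C_1,C_2,C_3$ along common boundary circles produces a topological torus, contradicting the genus-three hypothesis.)

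For (i), since each top and each bottom border contains exactly two saddle connections, the cyclic orderings of outgoing rays and of incoming rays at each vertex are tautologically determined, so the complete dual graph, and hence the cylinder diagram, is unique. A direct check confirms that it is the diagram depicted in Figure~\ref{ExCaseFig}. I expect the main subtlety to be step (ii): one must argue carefully that identifying the full top border of $C_{\sigma(i)}$ with the full bottom border of $C_i$ genuinely yields the \emph{homological} identity $c_i=c_{\sigma(i)}$, and not merely a statement about unoriented curves; this is handled using the fact that all horizontal cylinders are oriented consistently by the flat structure.
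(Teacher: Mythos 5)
Your proof is correct and follows the same basic strategy as the paper (enumeration of dual graphs in the sense of Section~\ref{sec:intro:DG}), but the bookkeeping and one key elimination step are genuinely different. The paper only uses the \emph{total} valency: no loops and no semi-simple cylinders force every vertex to have valency exactly $4$, which pins down the undirected graph (each pair of vertices joined by two edges); it then runs through the possible orientations, discards those that are forbidden configurations (Figure~\ref{fig:forbid:config}), and is left with the two directed graphs of Figure~\ref{fig:adm:DG:3vert:val4:noloop}, one of which is rejected because it has semi-simple cylinders. You instead encode non-semi-simplicity directly as in-degree $=$ out-degree $=2$ at every vertex, which immediately kills the paper's second orientation (there $C_2$ has out-degree $3$) and reduces everything to the matrix equation with row and column sums $2$; the price is that you must separately dispose of the ``cyclic doubled'' solution, which you do via the homological identity $c_i=c_{\sigma(i)}$ and the Lagrangian hypothesis of Case 3.I). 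That step is sound: each boundary circle of a cylinder is homologous to its core curve and, as a $1$-cycle, equals the sum of the (consistently oriented) horizontal saddle connections it traverses, so coincidence of the two borders as sets of saddle connections already gives $c_i=c_{\sigma(i)}$ in $H_1(M,\bZ)$. The trade-off between the two routes: the paper's forbidden-configuration argument eliminates the doubled cycle without ever invoking Case 3.I), whereas your argument uses the hypothesis of the lemma but avoids having to justify the somewhat delicate ``two cylinders merge into a larger one'' claim. Your final step (the cyclic order on a two-element set is unique, so the complete dual graph, hence the diagram, is determined) matches the paper's conclusion.

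One caveat: your parenthetical alternative for eliminating case (ii) --- that stacking the three cylinders cyclically produces a torus, contradicting genus three --- does not work as stated. The Euler characteristic computation ($2$ zeros, $6$ saddle connections, cylinders of zero Euler characteristic) gives $\chi(M)=-4$ for \emph{any} gluing pattern, so the surface has genus three regardless; the genus is carried by the identifications at the singularities, not by the coarse cyclic structure. Rely on the homological argument, which is the one that actually closes this case.
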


\begin{proof}
Assume that the cylinder decomposition in the horizontal direction of $M$ satisfies the two conditions of the lemma. Let $\cD$ be the dual graph of this cylinder decomposition. By definition, $\cD$ has three vertices and six edges, any vertex has valency at least four and no loop. Since total valency of the vertices is $12$, all vertices must have valency equal to four.  If there is a pair of vertices that are connected by only one edge, then each vertex in this pair is connected to the remaining one by three edges, and we get a contradiction. It follows that any pair of vertices are connected by two edges, this gives us a unique choice for $\cD$.

\begin{figure}[htb]
\begin{minipage}[t]{0.4\linewidth}
\centering
\begin{tikzpicture}[scale=0.4, inner sep=0.2mm, vertex/.style={circle, draw=black, fill=blue!30, minimum size=1mm},>= stealth]

\node (above) at (0,4) [vertex] {\tiny $C_1$};
\node (left) at (-3,0) [vertex] {\tiny $C_2$};
\node (right) at (3,0) [vertex] {\tiny $C_3$};

\draw[->] (above) to (left) ;
\draw[->] (left) to [out=75, in=210] (above);

\draw[->] (left) to (right);
\draw[->] (right) to[out=210, in=-30] (left);

\draw[->] (right) to (above);
\draw[->] (above) to[out=-30, in=105] (right);

\draw[thin] (-4,-2) -- (0,-2) -- (0,-4) -- (2,-4) -- (2,-6) -- (4,-6) -- (4,-8) -- (0,-8) -- (0,-6) -- (-2,-6) -- (-2,-4) -- (-4,-4) -- cycle;
\draw (-2,-4) -- (0,-4) (0,-6) -- (2,-6);

\foreach  \x in {(-4,-2), (-2,-4), (0,-2), (0,-6), (2,-8), (2,-4), (4,-6)} \filldraw[fill=white] \x circle (3pt);
\foreach \x in {(-4,-4), (-2,-2), ( -2,-6), (0,-4), (0,-8), (2,-6), (4,-8)} \filldraw[fill=black] \x circle (3pt);

\draw (-3,-2) node[above] {\tiny 1} (3,-8) node[below] {\tiny 1} (-1,-2) node[above] {\tiny 2} (-1,-6) node[below] {\tiny 2} (1,-4) node[above] {\tiny 3} (1,-8) node[below] {\tiny 3} (-3,-4) node[below] {\tiny 4} (3,-6) node[above] {\tiny 4} (-1,-4) node[above] {\tiny 5} (1,-6) node[above] {\tiny 6};

\draw (-3,-3) node {\tiny $C_1$} (0,-5) node {\tiny $C_2$} (3,-7) node {\tiny $C_3$};

\end{tikzpicture}
\end{minipage}
\begin{minipage}[t]{0.4\linewidth}
\centering
\begin{tikzpicture}[scale=0.4, inner sep=0.2mm, vertex/.style={circle, draw=black, fill=blue!30, minimum size=1mm},>= stealth]

\node (above) at (0,4) [vertex] {\tiny $C_1$};
\node (left) at (-3,0) [vertex] {\tiny $C_2$};
\node (right) at (3,0) [vertex] {\tiny $C_3$};

\draw[->] (above) to (left) ;
\draw[->] (left) to [out=75, in=210] (above);

\draw[->] (left) to (right);
\draw[->] (left) to[out=-30, in=210] (right);

\draw[->] (right) to (above);
\draw[->] (above) to[out=-30, in=105] (right);

%

\draw[thin] (-4,-2) -- (-4,-8) -- (2,-8) -- (2,-6) -- (4,-6) -- (4,-4) -- (2,-4) -- (2,-2) -- cycle;
\draw[thin] (-4,-4) -- (2,-4) (-4,-6) -- (2,-6) ;

\foreach  \x in {(-4,-2), (-4,-6),(-2,-8), (0,-2), (2,-2), (2,-6), (4,-6)} \filldraw[fill=white] \x circle (3pt);
\foreach \x in {(-4,-4), (-4,-8),(-2,-2), (0,-8), (2,-4), (2,-8), (4,-4)} \filldraw[fill=black] \x circle (3pt);

\draw (-3,-2) node[above] {\tiny 1} (-1,-8) node[below] {\tiny 1} (-1,-2) node[above] {\tiny 2} (-3,-8) node[below] {\tiny 2} (1,-2) node[above] {\tiny 3} (3,-6) node[below] {\tiny 3} (-1,-4) node[above] {\tiny 4} (3,-4) node[above] {\tiny 5} (1,-8) node[below] {\tiny 5} (-1,-6) node[above] {\tiny 6};

\draw (1,-5) node {\tiny $C_1$} (-3,-7) node {\tiny $C_2$} (-3,-3) node {\tiny $C_3$};
\end{tikzpicture}

\end{minipage}

\caption{Admissible dual graphs having three vertices with valency equal four and no loop: the one on the right gives a diagram with two semi-simple cylinders.}
\label{fig:adm:DG:3vert:val4:noloop}
\end{figure}
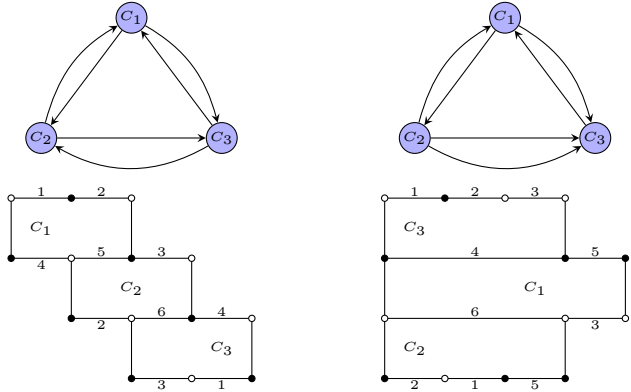

To get the cylinder diagram we need first an orientation for each edge. There are only two configurations of the orientations that are not forbidden, and to each of these configurations we have a unique set of compatible cyclic orderings at the vertices. We therefore have two corresponding cylinder diagrams as shown in Figure~\ref{fig:adm:DG:3vert:val4:noloop}. But in one of the diagrams we have semi-simple cylinders. Thus there is only one diagram that satisfies the conditions of the lemma, it is easy to see that this diagram gives a surface in $\cH(2,2)^{\rm odd}$.
\end{proof}

We call the translation surface in Figure \ref{ExCaseFig} \emph{the exceptional case}.

%
%

\begin{lemma}
\label{lm:ExceptionalCaseReduction}
If a rank two manifold $\cM$ contains a horizontally periodic surface with three cylinders arranged in the exceptional case, then $\cM$ contains a translation surface with four cylinders.
\end{lemma}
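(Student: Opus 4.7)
By Lemma~\ref{lm:WrightTwistPresLem} we may assume $\Tw(M,\cM)=\CP(M,\cM)$; then Theorem~\ref{thm:RankkImpkCyl} and Lemma~\ref{lm:3:f:cyl} force exactly two $\cM$-parallel classes of horizontal cylinders, and the $S_{3}$ symmetry of the exceptional dual graph lets us assume $C_{1}$ is $\cM$-parallel to $C_{2}$ while $C_{3}$ is free. By Proposition~\ref{prop:SimpCylPres} we pass to a nearby square-tiled surface on which the horizontal decomposition is preserved, so $M$ is also vertically periodic. If the vertical decomposition has four or more cylinders we are done; otherwise it has at most three. A three-cylinder vertical decomposition in Case 3.II is ruled out by Corollary~\ref{cor:no:C3II:in:Hmn}, Case 3.III is handled by Proposition~\ref{prop:3III:imply:4cyls}, and within Case 3.I every sub-case other than the exceptional one is handled by Propositions~\ref{prop:C3I:1sim:cyl}, \ref{prop:C3I:loop}, and \ref{prop:C3I:semi-sim:cyl}; a two-cylinder vertical decomposition yields more horizontal cylinders via the argument of Lemma~\ref{3PlusCyls}.

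The main remaining case is that the vertical decomposition of $M$ is again the exceptional diagram. Here the cylinder proportion principle (Proposition~\ref{CylinderPropProp}) applied to the three natural vertical cylinders $V_{ij}$, each crossing exactly the pair $\{C_{i},C_{j}\}$, pins down the unique compatible vertical $\cM$-parallel structure: the two verticals crossing $C_{3}$ are $\cM$-parallel, while the third one is free. Equating their proportions in the class $\{C_{1},C_{2}\}$ simplifies to $h_{1}/(h_{1}+h_{3})=h_{2}/(h_{2}+h_{3})$ and forces $h_{1}=h_{2}$, and a symmetric computation yields analogous rigidity for the widths. A direct inspection of the identifications of the exceptional diagram shows that each of the six horizontal saddle connections joins the two distinct zeros of $M$, and the same is true for the vertical saddle connection lying inside the free cylinder $C_{3}$.

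These last two checks verify the hypotheses of Proposition~\ref{prop:RankkImpRankkBd} for the singleton class $\{C_{3}\}$, so collapsing $C_{3}$ produces a surface $M'\in\cH(4)$ in a rank two affine submanifold; by Theorem~\ref{NWANWThm} this submanifold must be $\prym$. The Prym involution of $M'$ swaps the two horizontal cylinders descending from $C_{1}$ and $C_{2}$ and fixes the zero of order four obtained by coalescing the two zeros of $M$, imposing an approximate involution on $M$ itself that pairs up its geometric data. The main obstacle, and the crux of the proof, is to convert this rigidity into an honest fourth horizontal cylinder in $\cM$: the plan is to perform a simultaneous stretch and twist of the class $\{C_{1},C_{2}\}$ allowed by Theorem~\ref{thm:Wright:Cyl:Def}, with parameters dictated by the symmetry produced by the collapsed Prym involution, so that on the deformed surface either a horizontal saddle connection acquires a copy on both the top and bottom of the same cylinder or a cylinder becomes semi-simple; then Proposition~\ref{prop:C3I:loop} or Proposition~\ref{prop:C3I:semi-sim:cyl} delivers the desired four-cylinder surface in $\cM$.
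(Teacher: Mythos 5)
Your opening reduction (assume $\Tw(M,\cM)=\CP(M,\cM)$, two classes, $C_1\parallel C_2$ and $C_3$ free by the symmetry of the diagram) matches the paper. But the proof then diverges and contains a genuine gap: you perturb to a square-tiled surface \emph{before} arranging any special transverse cylinder, so you cannot rule out that the vertical decomposition is again the exceptional diagram, and your treatment of that residual case is not a proof. You derive some metric rigidity ($h_1=h_2$), collapse $C_3$ into $\prym$, and then announce that ``the plan is to perform a simultaneous stretch and twist \dots so that \dots a cylinder becomes semi-simple.'' That final step is exactly the crux, and it is left unexecuted; nothing in the proposal shows that such a deformation exists, stays in $\cM$, or actually produces a loop or a semi-simple cylinder. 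As written, the argument is circular in spirit (it tries to escape the exceptional case by deforming back into one of the cases already handled) without demonstrating that the escape succeeds.

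The missing idea is the paper's use of Lemma~\ref{lm:2adj:cyl:n:para}. In the exceptional diagram every pair of cylinders shares one saddle connection in each direction, so $C_2$ and $C_3$ (which are not $\cM$-parallel) satisfy the hypotheses of that lemma. One therefore twists to create a vertical \emph{simple} cylinder $V$ contained in $\overline{C}_2\cup\overline{C}_3$; since $C_1\parallel C_2$, there is a vertical cylinder $V'$ in the class of $V$ crossing $C_1$, so $V$ is not free. Applying Proposition~\ref{prop:SimpCylPres} and Lemma~\ref{lm:sim:cyl:2:cl} yields a square-tiled surface with at least three vertical cylinders, at least two equivalence classes, and a non-free simple vertical cylinder. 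The presence of that simple cylinder is what kills the self-referential case: the exceptional diagram has no simple cylinders, Case 3.III) is excluded by Lemma~\ref{lm:C3III:no:nf:sim:cyl}, Case 3.II) by Lemma~\ref{lm:3CylsCaseIILem}, and Proposition~\ref{prop:C3I:1sim:cyl} finishes Case 3.I). If you insert this construction before your perturbation step, the entire second half of your proposal (the proportion computations, the collapse to $\prym$, and the unproved deformation) becomes unnecessary. A smaller point: your claim that a two-cylinder vertical decomposition is handled ``via the argument of Lemma~\ref{3PlusCyls}'' only produces three cylinders, not four; the paper avoids this by guaranteeing at least two vertical equivalence classes with a non-free simple member, which forces at least three vertical cylinders directly.
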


\begin{proof}
It suffices to assume that $\Tw(M, \cM) = \CP(M, \cM)$ by Lemma~\ref{lm:WrightTwistPresLem}, so there are two equivalence classes.  Remark that in this exceptional case, the combinatorial properties of every cylinder are the same. Thus without loss of generality, we can assume that $C_1,C_2$ are $\cM$-parallel, and $C_3$ is free.

Observe that  the hypothesis of Lemma~\ref{lm:2adj:cyl:n:para} is satisfied.  Thus, $\cM$ contains a translation surface $M$ containing a vertical simple cylinder $V$ crossing $C_2$ and $C_3$.  Since $V$ does not cross $C_1$, there must exist another vertical cylinder $V'$ in the equivalence class of $V$ that crosses $C_1$. By Proposition~\ref{prop:SimpCylPres}, we can find in a neighborhood of $M$ in $\cM$ a square-tiled surface on which $V$ and $V'$ persist, and $V$ remains simple. By Lemma~\ref{lm:sim:cyl:2:cl}, we get another square-tiled surface $M''$ close to $M'$ (hence $M''$ satisfies the same properties regarding $V$ and $V'$), on which we have at least two equivalence classes of vertical cylinders. In particular, $M''$ has at least three vertical cylinders. Since we have one simple cylinder that is not free, we are not in Case 3.III).  Since Case 3.II) is excluded by Lemma~\ref{lm:3CylsCaseIILem}, we conclude that the cylinder decomposition in the vertical direction
  of $M''$ satisfies Case 3.I).
Therefore, the lemma follows from Proposition~\ref{prop:C3I:1sim:cyl}.
\end{proof}

\subsection{Proof of Theorem~\ref{thm:C3I:imply:4cyl}}

\begin{proof}[Proof of Thm. \ref{thm:C3I:imply:4cyl}]
By Lemma \ref{3CylIException}, every cylinder diagram satisfying Case 3.I) either has a cylinder containing a simple cylinder, has a cylinder which is semi-simple, or satisfies the exceptional case.

By Proposition \ref{prop:C3I:semi-sim:cyl}, if a translation surface with three cylinders in $\cM$ satisfies Case 3.I) and has a strictly semi-simple cylinder, then either there is a translation surface in $\cM$ with four cylinders and we are done, or there is a translation surface satisfying Case 3.I) with two simple cylinders.  In the latter case, we can conclude by Proposition \ref{prop:3CylsI2SimpCyls}.  If the translation surface has a simple cylinder, then we conclude by Propositions \ref{prop:C3I:1:n-f:sim:cyl} and \ref{prop:C3I:c3:is:sim}, which address every possibility for a simple cylinder in the cylinder diagram.

If one of the cylinders contains a simple cylinder, then we conclude by Proposition \ref{prop:C3I:loop}.

Finally, if the cylinder diagram satisfies the exceptional case, then we conclude by Lemma \ref{lm:ExceptionalCaseReduction}.
\end{proof}

\begin{corollary}
 \label{cor:Hmn:4cyl:diag}
 Let $\cM$ be a rank two invariant submanifold in one of the strata $\cH(m,n)$, with $m+n=4$. Then $\cM$ contains a horizontally periodic surface with four cylinders.
\end{corollary}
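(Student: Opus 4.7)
The plan is to organize the corollary as a short case analysis that assembles the machinery developed across the previous sections. The first step is to invoke Lemma \ref{3PlusCyls} to obtain a horizontally periodic surface $M \in \cM$ with at least three horizontal cylinders. If $M$ already has four or more cylinders, there is nothing to prove, so I would reduce immediately to the case where $M$ admits a decomposition into \emph{exactly} three horizontal cylinders.

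Next, I would apply Lemma \ref{3CylDeg} to split the analysis into the three topological Cases 3.I), 3.II), and 3.III), according to the shape of the degenerate surface obtained by pinching the three horizontal core curves. Two of the three cases are handled directly by results already proved: Case 3.I) is disposed of by Theorem \ref{thm:C3I:imply:4cyl}, and Case 3.III) by Proposition \ref{prop:3III:imply:4cyls}; each produces a surface in $\cM$ admitting a horizontal cylinder decomposition with four cylinders.

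The remaining case, Case 3.II), requires one small additional step. Corollary \ref{cor:no:C3II:in:Hmn} prohibits any such $M\in\cM$ from satisfying $\Tw(M,\cM)=\CP(M,\cM)$, so necessarily $\Tw(M,\cM) \subsetneq \CP(M,\cM)$. Lemma \ref{lm:WrightTwistPresLem} then yields a horizontally periodic surface in $\cM$, arbitrarily close to $M$, with strictly more horizontal cylinders than $M$; since $M$ has three, this new surface has at least four. Combining the three subcases finishes the proof.

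I do not anticipate any real obstacle at this stage, as all the serious technical work has been absorbed into Theorem \ref{thm:C3I:imply:4cyl}, Proposition \ref{prop:3III:imply:4cyls}, and Corollary \ref{cor:no:C3II:in:Hmn}; the corollary itself is simply the clean packaging of these three inputs together with the $3$-cylinder topological classification in Lemma \ref{3CylDeg}.
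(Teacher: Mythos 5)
Your proposal is correct and matches the paper's intended argument: the corollary is stated in the paper without an explicit proof precisely because it is the assembly of Lemma \ref{3PlusCyls}, the trichotomy of Lemma \ref{3CylDeg}, Theorem \ref{thm:C3I:imply:4cyl}, Proposition \ref{prop:3III:imply:4cyls}, and Corollary \ref{cor:no:C3II:in:Hmn} together with Lemma \ref{lm:WrightTwistPresLem}, exactly as you describe. The only implicit ingredient worth making explicit is that Lemma \ref{3PlusCyls} requires $k(\cM)=\bQ$, which holds automatically since every rank two affine manifold in genus three is arithmetic.
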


\section{Rank Two Invariant Submanifolds in $\cH(m,n)$, $m + n = 4$}

\subsection{$4$-Cylinder Diagrams in $\mathcal{H}(m,n)$, where $m + n = 4$}

The following lemma is valid for all translation surfaces in genus three.

\begin{lemma}
\label{lm:4CylDeg}
If a translation surface $M$ in genus three decomposes into four cylinders, then pinching the core curves of those cylinders degenerates the surface (in the sense of Lemma \ref{3CylDeg}) to one of four possible surfaces:
\begin{itemize}
\item 4.I) Two spheres joined by four pairs of simple poles.
\item 4.II) Two spheres joined by two pairs of simple poles such that each sphere has a pair of simple poles.
\item 4.III) Two spheres joined by three pairs of simple poles such that one sphere carries an additional pair of simple poles.
\item 4.IV) Two spheres and a torus such that the spheres have three simple poles and the torus has two simple poles.
\end{itemize}

If $M\in \cH(m,n)$, then Case 4.IV) cannot occur.
\end{lemma}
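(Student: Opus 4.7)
The plan is to mimic the argument of Lemma~\ref{3CylDeg}: we let the heights of the four cylinders tend to infinity, so that each pinched core curve becomes a node carrying a pair of simple poles on the resulting stable nodal curve, and then we combine the arithmetic genus formula with residue-theoretic constraints to enumerate the possibilities. If the limit has $k$ irreducible components of geometric genera $g_{1},\dots,g_{k}$ joined by $n=4$ nodes, the arithmetic genus formula $g_{a}=\sum_{j}g_{j}+n-k+1$ specialised to $g=3$ gives
$$\sum_{j}g_{j}\;=\;k-2.$$
The total number of pole incidences is $2n=8$; the residue theorem forces each component to carry at least two simple poles; and a rational component with exactly two simple poles would correspond to a horizontal cylinder of $M$ not among our four, either contradicting the cylinder decomposition or realising a configuration forbidden in the dual graph (cf.\ Section~\ref{sec:intro:DG}). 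Hence each sphere component carries at least three simple poles.

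These constraints immediately rule out $k=1$ (since then $\sum g_{j}=-1$) and $k\geq 4$: in that range $\sum g_{j}=k-2\geq 2$ forces at most two components to be of genus zero (every non-sphere has $g_{j}\geq 1$), giving at least two spheres, and then the pole count is bounded below by $3\cdot 2+2(k-2)=2k+2$, which exceeds $8$ once $k\geq 4$. The remaining cases $k=2$ and $k=3$ are handled by a short combinatorial enumeration. For $k=2$ the components are both spheres; denoting by $c$ the number of nodes joining them and $l_{1},l_{2}$ the numbers of self-loops on each, the equations $c+l_{1}+l_{2}=4$ and $c+2l_{i}\geq 3$ have, up to symmetry, the solutions $(c,l_{1},l_{2})=(4,0,0),(3,1,0),(2,1,1)$, corresponding to Cases~4.I, 4.III, 4.II respectively. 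For $k=3$ we must have one torus and two spheres; the pole lower bounds $3+3+2=8$ already saturate the total count, so each sphere carries exactly three poles and the torus exactly two, which is Case~4.IV.

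For the last assertion, suppose $M\in\cH(m,n)$ with $m+n=4$ admits a degeneration of type 4.IV. On each component $X_{j}$ of geometric genus $g_{j}$ with $b_{j}$ simple poles, the restricted Abelian differential has zeros of total order $2g_{j}-2+b_{j}$, so the two spheres must carry zeros of total order $1$ each and the torus zeros of total order $2$. The surface $M$ has only two zeros, of orders $m$ and $n\geq 1$, and cylinder pinching does not split zeros; these two indivisible zeros are therefore distributed among the three components. A sphere can receive zeros summing to $1$ only by hosting a single zero of order exactly $1$, so we would need both $m$ and $n$ to equal $1$, contradicting $m+n=4$. This rules out Case~4.IV in $\cH(m,n)$.

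The step I expect to be the main obstacle is the careful combinatorial enumeration for $k=2$: the pole-count inequalities on their own admit further configurations (for instance $(c,l_{1},l_{2})=(1,1,2)$, in which one sphere carries three poles from one connecting node and one self-loop while the other carries five poles from one connecting node and two self-loops), and one must verify either that such configurations are subsumed into the listed ones after an appropriate identification, or that they are excluded by the finer constraints coming from the bipartite incidence structure between tops and bottoms of cylinders encoded in the dual graph of Section~\ref{sec:intro:DG}.
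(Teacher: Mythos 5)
Your enumeration for $k=2$ has a genuine gap, and it is exactly the one you flag at the end: the constraints you impose ($c+l_1+l_2=4$, each sphere carrying at least three poles) also admit $(c,l_1,l_2)=(1,2,1)$ and, up to relabelling, $(1,1,2)$, i.e.\ two spheres joined by a \emph{single} node, one carrying two self-nodes and the other one self-node. This configuration is not among Cases 4.I)--4.III), so without excluding it the lemma does not follow. The missing idea is not a dual-graph incidence constraint but the observation the paper's proof leads with: the core curve of a cylinder of an Abelian differential is never a separating curve, so the two parts must be joined by at least two pairs of poles. Equivalently, apply the residue theorem on one component: the residues at the two branches of a self-node cancel in pairs, so the residues at the connecting nodes must sum to zero; each such residue is (up to sign) the circumference of a cylinder, hence nonzero, so a single connecting node is impossible. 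This gives $c\geq 2$ and kills $(1,2,1)$, $(1,1,2)$ and $(1,3,0)$ at once; your remaining solutions $(4,0,0)$, $(3,1,0)$, $(2,1,1)$ then match Cases 4.I), 4.III), 4.II) as you say. (The same principle is needed, strictly speaking, to rule out a bridge in the $k=3$ dual graph, though for the statement of Case 4.IV) as written only the pole counts matter.)

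Apart from this, your route is sound and somewhat more systematic than the paper's: you organize the case analysis by the arithmetic genus formula $\sum_j g_j = k-2$ and a global pole count, where the paper argues directly by the number of nodes joining two parts and treats three parts ad hoc. Your justification that every sphere carries at least three poles (a two-pole sphere has no zeros, hence would be an extra cylinder or a forbidden configuration) is correct, and your argument for the final assertion --- distributing zeros of total orders $1,1,2$ over the components forces two simple zeros, incompatible with $m+n=4$ --- is valid, though the paper's version is shorter: three parts each require a zero, but a surface in $\cH(m,n)$ has only two.
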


\begin{proof}
The core curves of the four cylinders must always be linearly dependent in homology in genus three.  This implies that the degenerate surface must have at least two parts.  There must be at least two pairs of poles between the parts because the core curve of a cylinder is never a separating curve for an Abelian differential.  If the two remaining pairs of poles were on the same part, then this would be impossible in genus three because every sphere must have at least three simple poles.  If each pair of poles lies on a different part, this yields Case 4.II).

If there are three pairs of poles between the two parts, then the other pair of poles lies on one of the parts which implies that both parts have genus zero and all pairs of poles are accounted for.  This yields Case 4.III).

If there are four pairs of poles between the two parts, then all of them are accounted for, and the surface satisfies Case 4.I).

Next we consider the case of three parts.  Each part must have at least two simple poles, which accounts for at least three of the four pairs of poles.  If the fourth cylinder were contained on a single part, then the condition that a sphere must carry at least three simple poles implies that we would have a surface of genus greater than three.  Hence, the fourth cylinder lies between two parts, and the only part with two simple poles is forced to be a torus.  This is exactly Case 4.IV).

Finally, there cannot be four parts with four pairs of poles in genus three.

Since each part must contain a zero, if $M \in \cH(m,n)$, then we only have two parts, which means that Case 4.IV) cannot occur.
\end{proof}

\

\subsection*{Conventions and Notation:}
\begin{itemize}
\item[$\bullet$] In what follows, we denote by $c_i$ a core curve of $C_i$, and by $\alpha_i$ the corresponding element in $(T^{\bR}_{M}\cM)^\ast$,
$h(C_i),\wth(C_i),\twist(C_i)$ are respectively the height, width (circumference), and twist of $C_i$. We sometimes write $h_i,\wth_i$, and $\twist_i$  instead of $h(C_i), \wth(C_i), \twist(C_i)$.

\item[$\bullet$] By ``twisting'' or ``shearing''  a family of cylinders, we will mean applying a matrix $\left(\begin{smallmatrix} 1 & t \\ 0 & 1 \end{smallmatrix} \right)$ to each cylinder in this family, and keeping the other cylinders unchanged.

\item[$\bullet$] Let us suppose that $M$ is horizontally periodic, and denote by $C_1,\dots,C_n$ its horizontal cylinders. For each $i \in \{1,\dots,n\}$, we pick a saddle connection $s_i$ contained in $C_i$ joining a zero in the bottom border and a zero in the top border of $C_i$. Remark that every element of $H^1(M,\Sigma,\bR)$ is completely determined by its values on the $s_i$ and the horizontal saddle connections. Let $\Twi_i$ be the element in $H^1(M,\Sigma;\bR)$ satisfying $\Twi_i(s_j)=\delta_{ij}$, and $\Twi_i(s)=0$ for any horizontal saddle connection $s$ on $M$. Twisting a cylinder $C_i$ gives a path in the stratum,  the tangent vector to this path is $h_i\Twi_i$. If we twist a family of cylinders $C_{i_1},\dots,C_{i_k}$ simultaneously, then the tangent vector to the corresponding path in the stratum is given by $v=h_{i_1}\Twi_{i_1}+\dots+h_{i_k}\Twi_{i_k}$. Note that $\Twi_i$ vanishes on all the core curves of the horizontal cylinders.
\end{itemize}

\subsection{$\mathcal{H}(3,1)$}

We first observe

\begin{lemma}
\label{lm:H31:C4III}
If $M$ is a horizontally periodic translation surface with four cylinders in $\mathcal{H}(3,1)$, then $M$ satisfies Case 4.III).
\end{lemma}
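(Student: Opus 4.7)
The plan is to eliminate Cases 4.I), 4.II), and 4.IV) by a pole-counting argument using the degree of the canonical bundle on each part of the degenerate surface, which forces a rigid constraint on how the zeros of orders $3$ and $1$ may be distributed.

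By Lemma \ref{lm:4CylDeg}, $M$ falls into one of Cases 4.I)--4.IV), and the last sentence of that same lemma already rules out Case 4.IV) because $M$ lies in $\cH(m,n)$. The key observation for the remaining cases is that a sphere part carrying $p$ simple poles (arising from pinched cylinder core curves) and zeros of orders $k_{i_1},\dots,k_{i_s}$ inherited from $M$ must satisfy
\[
k_{i_1}+\dots+k_{i_s}=p-2,
\]
by the degree formula for the canonical bundle on a genus-zero Riemann surface.

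First I would count the poles on each sphere in Cases 4.I) and 4.II). In Case 4.I), all four cylinders join the two spheres, so each sphere receives one simple pole from each cylinder, giving $p=4$ on both sides and requiring the zeros on each sphere to have orders summing to $2$. In Case 4.II), each sphere receives two simple poles from the two connecting cylinders and two more from its own loop cylinder, again yielding $p=4$ and the same requirement. But the zeros of $M$ have orders $3$ and $1$, and since individual zeros cannot be split, the only partitions of $\{3,1\}$ into two (possibly empty) subsets have subset sums $\{4,0\}$ or $\{3,1\}$ --- never $\{2,2\}$. Hence Cases 4.I) and 4.II) are incompatible with $\cH(3,1)$, leaving Case 4.III) as the only possibility.

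There is no serious obstacle here: the argument is a pure accounting exercise, and as a sanity check one verifies that Case 4.III) is indeed consistent, since the sphere carrying the loop has $3+2=5$ poles (so zero orders must sum to $3$, i.e., it contains the zero of order $3$) and the other sphere has $3$ poles (so zero orders must sum to $1$, i.e., it contains the zero of order $1$).
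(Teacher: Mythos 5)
Your proposal is correct and uses essentially the same argument as the paper: applying the degree formula $\sharp(\text{zeros})-\sharp(\text{poles})=2g'-2$ to each sphere of the degenerate surface and observing that only the five-pole sphere of Case 4.III) can carry the zero of order three. The paper states this more tersely (it only remarks that $3-5=-2$ is the unique way to accommodate the order-three zero), while you spell out the pole count of $4$ on each sphere in Cases 4.I) and 4.II) and the resulting impossibility of the partition $\{2,2\}$; the content is identical.
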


\begin{proof}
By the formula for the degree of the canonical bundle, $\sharp(\text{zeros}) - \sharp(\text{poles}) = 2g'-2$, where $g'$ is the genus of a part of a degenerate Riemann surface, and $\sharp(\cdot)$ is the sum of the orders of the elements in the set.  By inspection of the parts of the surface for each of the degenerate surfaces in Lemma~\ref{lm:4CylDeg}, and by counting the total order of the poles on each part of the degenerate surface, we immediately see that only the sphere carrying five simple poles in Case 4.III) can admit a zero of order three because the formula above would read $3-5=-2$.  Therefore, it is the only possible case in $\mathcal{H}(3,1)$.
\end{proof}

From Proposition~\ref{prop:DG:complete}, we have

\begin{lemma}
 \label{lm:H31:4cyl:diag}
 Let $M$ be a horizontally periodic surface in $\cH(3,1)$. If $M$ has four horizontal cylinders, then the cylinder decomposition in the horizontal direction is given by one of the diagrams in Figure~\ref{fig:H31:4cyl:diag}.

 \begin{figure}[htb]
 \centering
 \begin{minipage}[t]{0.3\linewidth}
 \centering
 \begin{tikzpicture}[scale=0.4]
 \draw (-1,7) -- (0,4) -- (0,2) -- (4,2) -- (4,0) -- (6,0) -- (6,4) -- (8,6) -- (4,6) -- (2,4) -- (1,7) -- cycle;
 \foreach \x in {(-1,7), (0,2), (1,7),(2,2),(4,6),(4,2),(4,0), (6,6), (6,2),(6,0), (8,6)} \filldraw[fill=black] \x circle (3pt);
 \foreach \x in {(0,4),(2,4),(6,4)} \filldraw[fill=white] \x circle (3pt);

 \draw (0,7) node[above] {\tiny $1$} (1,2) node[below] {\tiny $1$} (5,6) node[above] {\tiny $2$} (3,2) node[below] {\tiny $2$} (7,6) node[above] {\tiny $3$} (5,0) node[below] {\tiny $3$};

 \draw (0.5,5.5) node {\tiny $C_1$} (5,5) node {\tiny $C_2$} (3,3) node {\tiny $C_3$} (5,1) node {\tiny $C_4$};

 \draw  (3,-2) node[above] {Case 4.III.UA)};
 \end{tikzpicture}
 \end{minipage}
\begin{minipage}[t]{0.3\linewidth}
\centering
 \begin{tikzpicture}[scale=0.4]
 \draw (-1,6) -- (0,4) -- (0,2) -- (2,2) -- (2,0) -- (6,0) -- (6,2) -- (4,2) -- (4,4) -- (5,7) -- (3,7) -- (2,4) -- (1,6) -- cycle;
 \foreach \x in {(-1,6), (0,2), (1,6),(3,7), (2,2), (2,0), (5,7),(4,2),(4,0), (6,2),(6,0)} \filldraw[fill=black] \x circle (3pt);
  \foreach \x in {(0,4),(2,4),(4,4)} \filldraw[fill=white] \x circle (3pt);

  \draw (0,6) node[above] {\tiny $1$} (1,2) node[below] {\tiny $1$} (4,7) node[above] {\tiny $2$} (3,0) node[below] {\tiny $2$} (5,2) node[above] {\tiny $3$} (5,0) node[below] {\tiny $3$};
  \draw (0.5,5) node {\tiny $C_1$} (3.5,5.5) node {\tiny $C_2$} (2,3) node {\tiny $C_3$} (4,1) node {\tiny $C_4$};

 \draw  (3,-2) node[above] {Case 4.III.UB)};
\end{tikzpicture}
\end{minipage}
\begin{minipage}[t]{0.3\linewidth}
\centering
\begin{tikzpicture}[scale=0.4]
\fill[blue!30] (4,6) -- (4,2) -- (6,2) -- (6,6) -- cycle;
\draw (-1,7) -- (0,4) -- (0,0) -- (4,0) -- (4,2) -- (6,2) -- (6,6) -- (2,6) -- (2,4) -- (1,7)  -- cycle;
 \foreach \x in {(-1,7), (0,2), (0,0), (1,7), (2,6), (2,0), (4,6), (4,2), (4,0), (6,6), (6,2)} \filldraw[fill=black] \x circle (3pt);
  \foreach \x in {(0,4),(2,4),(6,4)} \filldraw[fill=white] \x circle (3pt);

  \draw (0,7) node[above] {\tiny $1$} (1,0) node[below] {\tiny $1$} (3,6) node[above] {\tiny $2$} (3,0) node[below] {\tiny $2$} (5,6) node[above] {\tiny $3$} (5,2) node[below] {\tiny $3$};

  \draw (0.5,5.5) node {\tiny $C_1$} (3,5) node {\tiny $C_2$} (2,3) node {\tiny $C_3$} (2,1) node {\tiny $C_4$} (5,4) node {\tiny $D$};

 \draw  (3,-2) node[above] {Case 4.III.UC)};
\end{tikzpicture}
\end{minipage}
\begin{minipage}[b]{0.5\linewidth}
\centering
\begin{tikzpicture}[scale=0.4]
\fill[blue!30] (4,2) -- (4,0) -- (6,0) -- (6,2) -- cycle;
\draw (-1,7) -- (0,4) -- (0,0) -- (6,0) -- (6,2) -- (4,2) -- (4,4) -- (5,6) -- (3,6) -- (2,4) -- (1,7)  -- cycle;
 \foreach \x in {(-1,7), (0,2), (0,0), (1,7), (3,6), (2,0), (5,6), (4,2), (4,0), (6,2), (6,0)} \filldraw[fill=black] \x circle (3pt);
  \foreach \x in {(0,4),(2,4),(4,4)} \filldraw[fill=white] \x circle (3pt);

  \draw (0,7) node[above] {\tiny $1$} (1,0) node[below] {\tiny $1$} (4,6) node[above] {\tiny $2$} (3,0) node[below] {\tiny $2$} (5,2) node[above] {\tiny $3$} (5,0) node[below] {\tiny $3$};

  \draw (0.5,5.5) node {\tiny $C_1$} (3.5,5) node {\tiny $C_2$} (2,3) node {\tiny $C_3$} (2,1) node {\tiny $C_4$} (5,1) node {\tiny $D$};

 \draw  (3,-2) node[above] {Case 4.III.UD)};
\end{tikzpicture}
\end{minipage}
\caption{$4$-cylinder diagrams in $\cH(3,1)$}
\label{fig:H31:4cyl:diag}
 \end{figure}
\end{lemma}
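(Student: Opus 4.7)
The plan is to combine the topological restriction from Lemma~\ref{lm:H31:C4III} with a systematic enumeration of admissible complete dual graphs.

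First, by Lemma~\ref{lm:H31:C4III} the surface $M$ satisfies Case 4.III), so pinching the four core curves yields two spheres joined by three nodes, with one sphere carrying an additional pair of simple poles. Applying the degree formula $\sum(\mathrm{zeros}) - \#(\mathrm{poles}) = -2$ on each sphere shows that the order-$1$ zero lies on the sphere $S_A$ bearing three simple poles, while the order-$3$ zero lies on the sphere $S_B$ bearing five simple poles. Consequently, exactly one of the four cylinders, call it $D$, has both of its ends on $S_B$ (it is the ``loop'' cylinder), and each of the remaining three cylinders has one end on $S_A$ and one end on $S_B$.

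Next, since the order-$1$ zero carries $4$ horizontal separatrices and the order-$3$ zero carries $8$, we deduce that exactly $2$ horizontal saddle connections lie on $S_A$ and $4$ on $S_B$, each being an arc based at the unique zero of its ambient sphere. The directed dual graph therefore has four vertices, six edges, and a single loop at the vertex corresponding to $D$; moreover, the weighting condition of item~(5) in Section~\ref{sec:intro:DG} further constrains the cyclic data at each vertex. The next step would be to enumerate all admissible complete dual graphs matching this data by choosing: (i) the cyclic ordering of the four separatrices at the order-$1$ zero on $S_A$, which up to symmetry reduces to essentially one pattern; (ii) the cyclic ordering of the eight separatrices at the order-$3$ zero on $S_B$, respecting that two of the four saddle connections on $S_B$ must together bound $D$; and (iii) the matching of the bridge cylinders' top sides to their bottom sides, compatible with the forbidden configurations of Figure~\ref{fig:forbid:config}. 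A direct case analysis should show that exactly the four diagrams in Figure~\ref{fig:H31:4cyl:diag} arise.

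The main obstacle will be the combinatorial bookkeeping. Many candidate cyclic orderings are eliminated by the requirement that every vertex have valency at least two and that no pair of vertices share identical top and bottom boundaries (which would force them into a single larger cylinder, as excluded by item~(6) of Section~\ref{sec:intro:DG}). Once these forbidden cases are removed, the remaining possibilities should organize themselves according to the position of $D$ relative to its two adjacent bridge cylinders on $S_B$, giving precisely the four listed diagrams 4.III.UA--4.III.UD.
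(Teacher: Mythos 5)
Your route differs from the paper's: the paper proves this lemma by citing Proposition~\ref{prop:DG:complete}, i.e.\ it enumerates all eleven $4$-cylinder diagrams in $\cH(m,n)$, $m+n=4$, by classifying undirected dual graphs according to their valency vectors and then checking admissible orientations, after which one reads off which diagrams have zeros of orders $3$ and $1$. You instead first invoke Lemma~\ref{lm:H31:C4III} to force Case 4.III) and use the topology of the degeneration to constrain the enumeration. Your preliminary analysis is correct as far as it goes: the order-one zero does lie on the three-pole sphere $S_A$, which carries exactly two horizontal saddle connections, the order-three zero lies on the five-pole sphere $S_B$ with the remaining four, and exactly one cylinder $D$ has both boundary components on $S_B$.

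There is, however, a genuine error at the step ``the directed dual graph therefore has \dots a single loop at the vertex corresponding to $D$.'' A loop at a vertex of the dual graph means that some saddle connection is contained in \emph{both} the top and the bottom boundary of that cylinder. Having both boundary circles of $D$ on $S_B$ only says that every saddle connection in either boundary of $D$ is based at the order-three zero; it does not force the two boundaries to share a saddle connection. Concretely, in Cases 4.III.UA) and 4.III.UC) of Figure~\ref{fig:H31:4cyl:diag} the cylinder $C_4$ has both boundaries made of saddle connections at the order-three zero, yet its top is glued entirely to $C_3$ while its bottom is glued to other cylinders, so its vertex carries no loop; the corresponding dual graphs are DG.4.II.a of Figure~\ref{fig:DG:2235} and DG.4.IV.a of Figure~\ref{fig:DG:2334}, neither of which has a loop. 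An enumeration restricted to complete dual graphs with a loop at $D$ would therefore produce only Cases 4.III.UB) and 4.III.UD) and miss the other two diagrams, including 4.III.UA), which is the one actually realized in Lemma~\ref{Hhyp22UniqueDiagRed} and its $\cH(3,1)$ analogues. To repair the argument you must enumerate all distributions of the four saddle connections of $S_B$ among the tops and bottoms of the four cylinders, of which a self-gluing of $D$ is only one possibility.
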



\begin{theorem}
\label{H31Rank2Class}
There are no rank two affine manifolds in $\mathcal{H}(3,1)$.
\end{theorem}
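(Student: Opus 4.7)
The plan is to assume for contradiction that $\cM \subset \cH(3,1)$ has rank two and to eliminate each of the four possible $4$-cylinder diagrams produced by Lemma~\ref{lm:H31:4cyl:diag}. Since a rank two affine manifold in genus three is arithmetic by \cite{WrightFieldofDef}, Corollary~\ref{cor:Hmn:4cyl:diag} produces a horizontally periodic surface $M \in \cM$ with four horizontal cylinders, and Lemma~\ref{lm:H31:C4III} forces $M$ to satisfy Case 4.III), so $M$ admits one of the diagrams UA, UB, UC, UD. Using Theorem~\ref{thm:RankkImpkCyl}, I can assume $\Tw(M,\cM)=\CP(M,\cM)$, so that the four horizontal cylinders split into at least two equivalence classes of $\cM$-parallel cylinders whose core curves $c_1,\dots,c_4$ span a two-dimensional subspace of $(T_M^\bR\cM)^*$.

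For each of the four diagrams, I would first read off the homological relations among $c_1,\dots,c_4$ and enumerate the partitions into equivalence classes that are compatible with the rank two hypothesis. Many partitions are ruled out immediately by the general results of Section~2: Lemmas~\ref{lm:f:sim:cyl} and \ref{lm:n:f:sim:cyl} constrain how simple cylinders can sit inside an equivalence class; Lemma~\ref{lm:loop} excludes parallelism whenever one cylinder has a saddle connection on both its top and bottom while another has its top boundary met only by non-parallel cylinders; and Proposition~\ref{CylinderPropProp} rigidly forces area proportions to match across an equivalence class. In particular, the diagrams UC and UD both contain a simple cylinder $D$, and Lemmas~\ref{lm:f:sim:cyl}/\ref{lm:n:f:sim:cyl} should quickly pin down the equivalence class of $D$ (either forcing it to be free or constraining its class).

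Once the admissible equivalence class structure is identified in each diagram, the plan is to produce an equivalence class $\cC$ satisfying the hypotheses of Proposition~\ref{prop:RankkImpRankkBd}: namely, $\cC$ does not fill $M$, no saddle connection is simultaneously on the top and bottom of cylinders of $\cC$, and there is a vertical saddle connection in some cylinder of $\cC$ joining the two distinct zeros (of orders $3$ and $1$) while all other saddle connections in $\overline{\bigcup \cC}$ have non-zero real part. Proposition~\ref{prop:SimpCylPres} allows me to pass to a nearby square-tiled surface to ensure that rational parameters can be chosen freely. Collapsing $\cC$ merges the two zeros and yields a rank two surface in $\cH(4)$, which by Theorem~\ref{NWANWThm} must lie in $\widetilde{\cQ}(3,-1^3)$. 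The contradiction will then be extracted combinatorially: surfaces in $\widetilde{\cQ}(3,-1^3)$ admit a holomorphic involution $\tau$ with four fixed points which preserves the horizontal foliation and permutes the horizontal cylinders with matching heights and widths, and in each of UA, UB, UC, UD the cylinder data inherited after collapsing $\cC$ will be incompatible with any such involution (for instance, by producing an odd number of horizontal cylinders of a given width, or by forcing $\tau$ to identify a non-simple cylinder with a simple one as in the proof of Lemma~\ref{lm:Hmn:2cyl:imp:3cyl:1:sim}).

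The main obstacle will be the diagram-by-diagram verification that a suitable $\cC$ can always be arranged and that the resulting image in $\cH(4)$ genuinely violates the involutive symmetry of $\widetilde{\cQ}(3,-1^3)$. The UC/UD cases should be the most delicate because of the presence of the simple cylinder, and may require first invoking Lemma~\ref{lm:n:f:sim:cyl} or Lemma~\ref{lm:loop} to rigidify the equivalence structure before collapsing; conversely, the UA/UB cases with no simple cylinder will likely require a careful twist of the parallel class to expose a vertical saddle connection between the two distinct zeros that permits the collapse to $\cH(4)$.
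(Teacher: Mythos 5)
Your overall architecture matches the paper for three of the four diagrams: in Cases 4.III.UA), UB), and UC) the paper likewise pins down the equivalence classes (using the relation $c_3=c_1+c_2$, Lemma~\ref{lm:n:f:sim:cyl}, Lemma~\ref{lm:loop}, and cylinder-proportion computations), isolates a free cylinder, collapses it via Proposition~\ref{prop:RankkImpRankkBd}, and contradicts Theorem~\ref{NWANWThm}. One refinement you omit but would discover in execution: in several sub-cases the collapsed surface lands in $\cH^{\rm hyp}(4)$ rather than $\cH^{\rm odd}(4)$, where the contradiction is immediate (no rank two submanifold exists at all), so the involution argument is only needed when the collapse produces an odd-component surface whose cylinder diagram is incompatible with the Prym involution.

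The genuine gap is Case 4.III.UD), where your uniform collapsing strategy cannot be carried out. There the homological relations together with Lemma~\ref{lm:n:f:sim:cyl} force the equivalence classes to be exactly $\cC=\{C_1,C_2,C_3\}$ and $\{C_4\}$ (if there were three or more classes, a free simple cylinder could be collapsed into $\cH^{\rm odd}(4)$ and excluded as you describe), and \emph{neither} class satisfies hypothesis (b) of Proposition~\ref{prop:RankkImpRankkBd}: the class $\{C_1,C_2,C_3\}$ contains adjacent cylinders, and $C_4$ carries the saddle connection labeled $3$ on both its top and bottom boundaries, so collapsing it pinches the core curve of the simple cylinder $D\subset C_4$ rather than merging the two zeros. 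No free cylinder and no collapsible class exist, so no degeneration to $\cH(4)$ is available. The paper instead concludes by a direct flat-geometry argument: it stretches the free simple cylinder $D$ so that $\ell(3)\geq\ell(1)+\ell(2)$, twists $\cC$ and $C_4$ to create a vertical cylinder $D_1$ whose closure contains $C_2$, takes the $\cM$-parallel vertical cylinder $D_2$ through $C_1$, and derives $(n_4h_2-n_1h_1)+(n_4-n_1)h_3=0$ from $P(D_1,\{C_4\})=P(D_2,\{C_4\})$, which is incompatible with $n_4>n_1$ and $h_2\geq h_1$. You would need to supply an argument of this kind (or some other non-degeneration argument) to close Case UD; your proposed route does not produce one.
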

\begin{proof}
 Let $\cM$ be an invariant rank two submanifold in $\cH(3,1)$. By Corollary~\ref{cor:Hmn:4cyl:diag}, $\cM$ contains a horizontally periodic surface with four cylinders. From Lemma~\ref{lm:H31:4cyl:diag}, the cylinder decomposition of $M$ is given by one of the diagrams in Figure~\ref{fig:H31:4cyl:diag}.

 Since a horizontally periodic surface in $\cH(3,1)$ has at most four horizontal cylinders, Theorem~\ref{thm:RankkImpkCyl} implies that the cylinders of $M$ must fall into at least two equivalence classes.  We will find a contradiction for each of these diagrams. In what follows, we refer to Figure~\ref{fig:H31:4cyl:diag} for notations and details of the proofs.

\begin{itemize}
 \item[a)] {\bf Case 4.III.UA):} Observe that we have $c_3=c_1+c_2 \in H_1(M,\bZ)$. Therefore, if two of $C_1,C_2,C_3$ are $\cM$-parallel, then they all belong to the same equivalence class. Consequently, if there are exactly two equivalence classes, then the classes must be $\{C_1,C_2,C_3\}$ and $\{C_4\}$. But from Lemma \ref{lm:n:f:sim:cyl}, we know that $C_1$ and $C_3$ are not $\cM$-parallel. Thus there must be at least three equivalence classes.  In particular, either $C_1$ or $C_2$ is free.

 Assume that  $C_1$ is free. We can then collapse it so that there is a unique saddle connection that crosses $C_1$ that is reduced to a point. The resulting surface $M'$ is contained in $\cH(4)$. By Proposition~\ref{prop:RankkImpRankkBd} and Theorem \ref{NWANWThm}, it follows that $M'$ is contained in $\tilde{\cQ}(3,-1^3) \subset \cH(4)^{\rm odd}$. But a surface in $\tilde{\cQ}(3,-1^3)$ does not admit the cylinder diagram of $M'$ since there must exist an involution that fixes one cylinder and exchanges the other two (See Case (O4) in \cite{AulicinoNguyenWright}).

 If $C_2$ is free, then we can twist it by an appropriate amount and collapse it to get a surface in $\cH(4)^{\rm hyp}$. But the arguments above show that this is also impossible. Thus we get a contradiction, which means that the cylinder diagram of $M$ cannot be in Case 4.III.UA).\\

 \item[b)] {\bf Case 4.III.UB):} Again we have $c_3=c_1+c_2 \in H_1(M,\bZ)$, and $C_1$ cannot be $\cM$-parallel to $C_3$. It follows that there are least three equivalence classes. We claim that $C_2$ must be free. Indeed, since $C_1$ and $C_3$ do not belong the same equivalence class, $C_2$ is neither $\cM$-parallel to $C_1$ nor $C_3$. Remark that $C_4$ contains a transverse simple cylinder thus $C_2$ is not $\cM$-parallel to $C_4$ by Lemma~\ref{lm:loop}. We can then conclude that $C_2$ is free. Collapse $C_2$ so that the two singularities of $M$ collide, we then get a surface in $\cH^{\rm hyp}(4)$. But this is a contradiction with Proposition~\ref{prop:RankkImpRankkBd}, since we have no rank two invariant submanifolds in $\cH^{\rm hyp}(4)$.

 \item[c)] {\bf Case 4.III.UC):} We first claim that $C_2$ and $C_3$ are not $\cM$-parallel. Remark  that there always exists a transverse cylinder $D$ which is contained in $\overline{C}_2\cup\overline{C}_3$ crossing the saddle connection number $3$. We can assume that  $D$ is vertical. Let $\cC$ be the equivalence class of $C_2$. If $\cC$ contains $C_3$, then it also contains $C_1$. Since we have at least two equivalence classes of horizontal cylinders, $C_4$ must be free. There must exist a vertical cylinder $D'$ in the  equivalence class of $D$ that crosses $C_1$. But such a cylinder also crosses $C_4$, which is a contradiction since $P(D,\{C_4\})=0$.

 We now claim that $C_2$ and $C_4$ are not $\cM$-parallel. Assume by contradiction that this is the case, then $C_1$ and $C_3$ must be free. Let $D$ be the vertical cylinder through $C_2$ and $C_3$ constructed above. There must exist a vertical cylinder $D'$ in the equivalence class of $D$ that crosses $C_4$. Since that $D$ does not cross $C_1$, neither does $D'$. Let $n_i, \ i=2,3,4,$ be the number of times $D'$ crosses $C_i$. It is easy to see that we have $n := n_2 = n_3 = n_4$. It follows that
 $$
 P(D',\{C_3\})=\frac{h_3}{h_2+h_3+h_4} \text{, while } P(D,\{C_3\})=\frac{h_3}{h_2+h_3}.
 $$
 Thus we cannot have $P(D,\{C_3\})=P(D',\{C_3\})$ unless $h_4=0$, which is impossible.  We can now conclude that $C_2$ is free. Collapsing $C_2$ so that a single saddle connection is reduced to a point gives us a surface $M'$ in $\cH^{\rm odd}(4)$ which admits no involution acting by $-{\rm Id}$ on the flat metric structure. But by Proposition~\ref{prop:RankkImpRankkBd}, this  surface must belong to $\tilde{\cQ}(3,-1^3)$. We have again a contradiction, which means that this case cannot occur.

 \item[d)] {\bf Case 4.III.UD):} If there are three equivalence classes of horizontal cylinders or more, either $C_1$ or $C_2$ must be free. In both cases, by collapsing a free simple cylinder gives us a surface in $\cH^{\rm odd}(4)$ but not in $\tilde{\cQ}(3,-1^3)$. Thus the horizontal cylinders must fall into two equivalence classes, which are $\cC:=\{C_1,C_2,C_3\}$ and $\{C_4\}$.

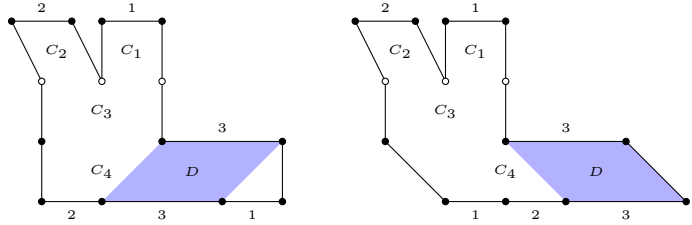
\begin{figure}[htb]
\centering
\begin{minipage}{0.4\linewidth}
\centering
\begin{tikzpicture}[scale=0.40]
\fill[blue!30] (4,2) -- (2,0) -- (6,0) -- (8,2) -- cycle;
\draw (-1,6) -- (0,4) -- (0,0) -- (8,0) -- (8,2) -- (4,2) -- (4,6) -- (2,6) -- (2,4) -- (1,6)  -- cycle;
 \foreach \x in {(-1,6), (0,2), (0,0),(1,6), (2,6), (2,0), (4,6),(4,2),(6,0), (8,2), (8,0)} \filldraw[fill=black] \x circle (3pt);
  \foreach \x in {(0,4),(2,4),(4,4)} \filldraw[fill=white] \x circle (3pt);

  \draw (0,6) node[above] {\tiny $2$} (1,0) node[below] {\tiny $2$} (3,6) node[above] {\tiny $1$} (4,0) node[below] {\tiny $3$} (6,2) node[above] {\tiny $3$} (7,0) node[below] {\tiny $1$};

  \draw (0.5,5) node {\tiny $C_2$} (3,5) node {\tiny $C_1$} (2,3) node {\tiny $C_3$} (2,1) node {\tiny $C_4$} (5,1);
  \draw (5,1) node {\tiny $D$};

\end{tikzpicture}
\end{minipage}
\begin{minipage}{0.4\linewidth}
\centering
\begin{tikzpicture}[scale=0.40]
\fill[blue!30] (4,2) -- (6,0) -- (10,0) -- (8,2) -- cycle;
\draw (-1,6) -- (0,4) -- (0,2)-- (2,0) -- (10,0) -- (8,2) -- (4,2) -- (4,6) -- (2,6) -- (2,4) -- (1,6)  -- cycle;
 \foreach \x in {(-1,6), (0,2), (2,0),(1,6), (2,6), (4,0), (4,6),(4,2),(6,0), (8,2), (10,0)} \filldraw[fill=black] \x circle (3pt);
  \foreach \x in {(0,4),(2,4),(4,4)} \filldraw[fill=white] \x circle (3pt);

  \draw (0,6) node[above] {\tiny $2$} (3,0) node[below] {\tiny $1$} (3,6) node[above] {\tiny $1$} (5,0) node[below] {\tiny $2$} (6,2) node[above] {\tiny $3$} (8,0) node[below] {\tiny $3$};

  \draw (0.5,5) node {\tiny $C_2$} (3,5) node {\tiny $C_1$} (2,3) node {\tiny $C_3$} (4,1) node {\tiny $C_4$} (5,1);

  \draw (7,1) node {\tiny $D$};

\end{tikzpicture}
\end{minipage}
\caption{The two deformations in the Proof of Case 4.III.UD)}
\label{Case4IIIUDPfFig}
\end{figure}

Let $C_i$ have height $h_i$, for all $i$.  Let $\ell(k)$ denote the length of the saddle connection labeled by $k$. Let $D$ be the simple cylinder in $C_4$ whose core curves only cross the saddle connection $3$ (see Figure~\ref{Case4IIIUDPfFig}).  Observe that $D$ is free, since any other cylinder parallel to $D$ must cross $C_3$.   We can stretch the cylinder $D$ so that $\ell(3) \geq \ell(1) + \ell(2)$.  Depending on whether $h_1 \geq h_2$ or $h_1 \leq h_2$, we perform one of the following deformations depicted in Figure~\ref{Case4IIIUDPfFig}.  If $h_2 \geq h_1$, then twist $\cC$ and $C_4$ so that $C_2$ lies directly over $2$ in the bottom of $C_4$, and $C_1$ lies directly over $3$ in the bottom of $C_4$.  If $h_2 \leq h_1$, then twist $\cC$ and $C_4$ so that $C_1$ lies directly over $1$ in the bottom of $C_4$, and $C_2$ lies directly over $3$ in the bottom of $C_4$.  We consider only the case where 
 $h_2 \geq h_1$.

By Proposition \ref{prop:SimpCylPres}, there is a vertical cylinder $D_1$ passing through $C_2$, $C_3$ and $C_4$ whose closure contains $C_2$.  Since $C_1$ is parallel to $C_2$ and $D_1$ does not intersect $C_1$, there is a vertical cylinder $D_2$ that is $\cM$-parallel to $D_1$ and intersects $C_1$.  Let $D_1$ pass through each of $C_2$, $C_3$, $C_4$, $n$ times.  Let $D_2$ pass through $C_i$, $n_i$ times for $i \in \{1, 3, 4\}$.  Clearly $D_2$ does not pass through $C_2$.  Some observations are in order.  Note that all trajectories in $D_2$ ascending from $C_3$ enter $C_1$, and all trajectories ascending from $C_1$ enter $C_4$ followed eventually by $C_3$.  Hence, $n_1 = n_3$.  The assumption that $C_1$ lies directly over saddle connection $3$ combined with the assumption that $\ell(3) \geq \ell(1) + \ell(2)$ implies $n_4 > n_1$.  Then $P(D_1, \{C_4\}) = P(D_2, \{C_4\})$ implies
$$\frac{nh_4}{n(h_2 + h_3 + h_4)} = \frac{n_4h_4}{n_1h_1 + n_3h_3 + n_4h_4},$$
which simplifies to
$$(n_4h_2 - n_1h_1) + (n_4 - n_1)h_3 = 0.$$
Since we have $h_2\geq h_1$ by assumption, this equation holds only if $n_4 = n_1$, which is a contradiction.  Thus, $\cM$ contains no horizontally periodic surface with four cylinders.
\end{itemize}
The proof of the  theorem is now complete.
\end{proof}

\begin{remark}
In all of the cases above, the rank two property was obvious after collapsing cylinders even without the use of Proposition \ref{prop:RankkImpRankkBd}.
\end{remark}

\begin{remark}
Also, the degeneration argument itself was unnecessary in these cases because in each case there was a Lagrangian subspace of equivalence classes prelimit.
\end{remark}

\subsection{$\cH^{\rm hyp}(2,2)$}

As a direct consequence of Proposition~\ref{prop:DG:complete}, we have the following

\begin{lemma}
\label{Hhyp224CylDiags}
There are two $4$-cylinder diagrams in $\mathcal{H}^{\rm hyp}(2,2)$, and they are pictured in Figure \ref{H4hyp4CylDiags}.
\end{lemma}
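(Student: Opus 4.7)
The plan is to invoke Proposition~\ref{prop:DG:complete}, which following the dual graph strategy outlined in Section~\ref{sec:intro:DG} and Section~\ref{sec:list:4cyl:diag} provides a complete enumeration of $4$-cylinder diagrams in genus three, and then to extract from that list the ones that lie in the hyperelliptic component $\cH^{\rm hyp}(2,2)$.

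First I would restrict the enumeration to $4$-cylinder diagrams supported in the strata $\cH(m,n)$ with $m+n=4$: by Lemma~\ref{lm:4CylDeg}, such diagrams only degenerate in Cases 4.I), 4.II), or 4.III). Using the constraints listed in Section~\ref{sec:intro:DG} (connectedness, exactly six edges in $\cH(m,n)$, valency at least two at each vertex, consistent positive edge weights, and absence of the forbidden configurations of Figure~\ref{fig:forbid:config}), I would enumerate the admissible undirected dual graphs with four vertices and six edges, and for each choose compatible edge orientations together with cyclic orderings of incoming and outgoing rays at each vertex. This bookkeeping is exactly what Proposition~\ref{prop:DG:complete} carries out, so at this stage one simply has a finite list of complete dual graphs to process.

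Next I would impose the hyperelliptic constraint. A surface in $\cH^{\rm hyp}(2,2)$ carries a hyperelliptic involution $\tau$ acting as $-\mathrm{Id}$ in local flat coordinates and exchanging the two zeros. Since $\tau$ preserves the horizontal foliation, it permutes the horizontal cylinders while reversing core curve orientations, and it swaps the top and bottom borders of each cylinder after a $\pi$-rotation. For every complete dual graph emerging from the previous step I would check whether such a combinatorial involution is realizable, remembering that $\tau$ must exchange the two distinct zeros; diagrams arising in $\cH(3,1)$ or $\cH^{\rm odd}(2,2)$ are discarded because they carry no such involution of the required type.

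The main obstacle is therefore purely bookkeeping: without losing any case, one needs to sift the output of Proposition~\ref{prop:DG:complete} and confirm that precisely two diagrams admit the required hyperelliptic symmetry. Once this is done, direct inspection matches the survivors with the two pictures listed in Figure~\ref{H4hyp4CylDiags}, completing the proof.
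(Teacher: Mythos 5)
Your proposal is correct and follows essentially the same route as the paper: the paper states this lemma as a direct consequence of Proposition~\ref{prop:DG:complete}, with the sorting of the eleven diagrams into connected components (which you carry out via the hyperelliptic involution exchanging the two zeros) explicitly left to the reader.
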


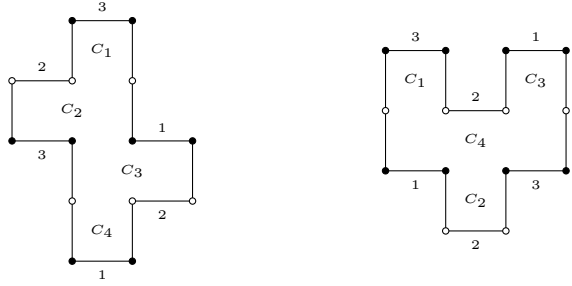
\begin{figure}[htb]
\centering
\begin{minipage}{0.4\linewidth}
\centering
\begin{tikzpicture}[scale=0.40]
\draw (0,0)--(0,4)--(-2,4)--(-2,6)--(0,6)--(0,8)--(2,8)--(2,4)--(4,4)--(4,2)--(2,2)--(2,0)--cycle;
\foreach \x in {(0,0),(0,4),(-2,4),(0,8),(2,8),(2,4),(4,4),(2,0)} \filldraw[fill=black] \x circle (3pt);
\foreach \x in {(-2,6),(0,6),(4,2),(2,2),(0,2),(2,6)} \filldraw[fill=white] \x circle (3pt);

\draw(3,4) node[above] {\tiny $1$};
\draw(-1,6) node[above] {\tiny $2$};
\draw(1,8) node[above] {\tiny $3$};
\draw(1,0) node[below] {\tiny $1$};
\draw(3,2) node[below] {\tiny $2$};
\draw(-1,4) node[below] {\tiny $3$};
\draw(2,3) node {\tiny $C_3$};
\draw(1,7) node {\tiny $C_1$};
\draw(0,5) node {\tiny $C_2$};
\draw(1,1) node {\tiny $C_4$};
\end{tikzpicture}
\end{minipage}
\begin{minipage}{0.4\linewidth}
\centering
\begin{tikzpicture}[scale=0.40]
\draw (0,0)--(0,4)--(2,4)--(2,2)--(4,2)--(4,4)--(6,4)--(6,0)--(4,0)--(4,-2)--(2,-2)--(2,0)--cycle;
\foreach \x in {(0,0),(0,4),(2,4),(4,4),(6,4),(6,0),(4,0),(2,0)} \filldraw[fill=black] \x circle (3pt);
\foreach \x in {(0,2),(2,-2),(2,2),(4,-2),(4,2),(6,2)} \filldraw[fill=white] \x circle (3pt);

\draw(1,4) node[above] {\tiny $3$};
\draw(3,2) node[above] {\tiny $2$};
\draw(5,4) node[above] {\tiny $1$};
\draw(1,0) node[below] {\tiny $1$};
\draw(3,-2) node[below] {\tiny $2$};
\draw(5,0) node[below] {\tiny $3$};

\draw(1,3) node {\tiny $C_1$};
\draw(3,-1) node {\tiny $C_2$};
\draw(5,3) node {\tiny $C_3$};
\draw(3,1) node {\tiny $C_4$};
\end{tikzpicture}
\end{minipage}
\caption{The two $4$-cylinder diagrams in $\Hyp$: Case 4.I.HA) (left) and Case 4.I.HB) (right)}
\label{H4hyp4CylDiags}
\end{figure}

Let us now show

\begin{lemma}
\label{Hhyp22UniqueDiagRed}
If $\cM$ is a rank two manifold in $\mathcal{H}^{\rm hyp}(2,2)$, then $\cM$ contains the horizontally periodic translation surface $M$ satisfying Case 4.I.HA).
\end{lemma}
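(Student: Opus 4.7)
The plan is to use Corollary~\ref{cor:Hmn:4cyl:diag} to obtain a horizontally periodic surface $M \in \cM$ with four cylinders; by Lemma~\ref{Hhyp224CylDiags}, $M$ realizes either Case 4.I.HA) or Case 4.I.HB). I will rule out 4.I.HB), so that $M$ must realize 4.I.HA).

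Assume $M$ realizes 4.I.HB), with cylinders $C_1,C_2,C_3,C_4$ as in Figure~\ref{H4hyp4CylDiags}. Each of $C_1,C_2,C_3$ is a simple cylinder whose only adjacent cylinder is $C_4$, so Lemma~\ref{lm:n:f:sim:cyl} yields $C_i\not\sim_\cM C_4$ for $i=1,2,3$; hence $C_4$ is free. The Case 4.I degeneration (two spheres joined by four pairs of simple poles) provides the homological relation $c_4=c_1+c_2+c_3$, so the core-curve span in $(T^{\bR}_M\cM)^\ast$ coincides with the span of $\{c_1,c_2,c_3\}$. Theorem~\ref{thm:RankkImpkCyl} pins this span to dimension exactly two, ruling out the case where $c_1,c_2,c_3$ are all pairwise proportional (which would force the span to be one-dimensional). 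Therefore some $C_k\in\{C_1,C_2,C_3\}$ is not $\cM$-parallel to either of the other two, i.e., $C_k$ is free.

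Next I apply Proposition~\ref{prop:RankkImpRankkBd} to $\cC=\{C_k\}$. Conditions (a) and (b) hold directly from Figure~\ref{H4hyp4CylDiags}, since $C_k$'s top and bottom saddle connections are distinct and identified only with parts of $C_4$'s boundary. For (c), I shear $C_k$ (staying in $\cM$ by Theorem~\ref{thm:Wright:Cyl:Def}) to set its twist to zero: as $C_k$ is simple with its top vertex lying in one zero-class and its bottom vertex in the other, it then contains a unique vertical saddle connection $\sigma$ joining the two distinct zeros of $M$, and every other saddle connection in $C_k$ --- obtained by winding around the cylinder --- has non-zero real part. Proposition~\ref{prop:RankkImpRankkBd} then produces a limit surface $M'\in\cH(4)$ contained in a rank-two affine submanifold $\cM'\subset\cH(4)$. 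By the Kontsevich-Zorich diagram of boundary components, degenerations of $\cH^{\rm hyp}(2,2)$ that merge its two zeros remain in $\cH^{\rm hyp}(4)$, so $M'\in\cH^{\rm hyp}(4)$. However, Theorem~\ref{NWANWThm} asserts that the only rank-two affine submanifold of $\cH(4)$ is $\widetilde{\cQ}(3,-1^3)\subset\cH^{\rm odd}(4)$, which is disjoint from $\cH^{\rm hyp}(4)$. This contradiction rules out Case 4.I.HB).

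The main obstacle I anticipate is justifying that the limit $M'$ lies in $\cH^{\rm hyp}(4)$ rather than $\cH^{\rm odd}(4)$. This should follow from the fact that the hyperelliptic involution of $M$ restricts to a rotation by $180^\circ$ on each horizontal cylinder (so that each cylinder is preserved as a set), hence extends continuously through the collapse of $C_k$ and produces a hyperelliptic involution on $M'$.
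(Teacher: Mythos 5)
Your proof is correct and follows essentially the same route as the paper: isolate a free simple cylinder in Case 4.I.HB) using the relation $c_4=c_1+c_2+c_3$ together with Lemma~\ref{lm:n:f:sim:cyl} and the fact that there are at least two equivalence classes, then collapse it via Proposition~\ref{prop:RankkImpRankkBd} to land in a rank two submanifold of $\cH^{\rm hyp}(4)$, contradicting Theorem~\ref{NWANWThm}. Your additional care in checking hypothesis (c) of Proposition~\ref{prop:RankkImpRankkBd} and in justifying that the degeneration stays in the hyperelliptic component (via the Kontsevich--Zorich adjacency diagram, or equivalently the persistence of the hyperelliptic involution) only makes explicit what the paper leaves implicit.
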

\begin{proof}
By Lemma \ref{Hhyp224CylDiags}, both $4$-cylinder diagrams in $\mathcal{H}^{\rm hyp}(2,2)$ satisfy Case 4.I). Note that the horizontal cylinders of $M$ must fall into at least two equivalence classes by Theorem~\ref{thm:RankkImpkCyl}. We will show that $M$ cannot satisfy Case 4.I.HB).  Let $C_1,C_2,C_3$ denote the simple cylinders, and $C_4$ the largest one. Since we have the relation $c_1+c_2+c_3=c_4$, if all of the simple cylinders belong to the same equivalence class, then this equivalence class also contains $C_4$, and we have a contradiction to the assumption that there are at least two equivalence classes. If one of the simple cylinders is $\cM$-parallel to $C_4$, then Lemma~\ref{lm:n:f:sim:cyl} implies that there is only one equivalence class. Thus, we derive that there is at least a free simple cylinder.

Collapsing a simple cylinder so that the zeros collide yields a translation surface in $\cH^{\rm hyp}(4)$. By Proposition~\ref{prop:RankkImpRankkBd}, this surface must belong to a rank two invariant submanifold of $\cH^{\rm hyp}(4)$.  But there is no rank two affine manifold in $\cH^{\rm hyp}(4)$ by Theorem \ref{NWANWThm}. Therefore, Case 4.I.HB) does not occur for surfaces in a rank two invariant submanifold of $\cH^{\rm hyp}(2,2)$.
\end{proof}

\begin{lemma}
\label{H22hypEqClasses}
If $M$ satisfies Case 4.I.HA) and its orbit closure $\cM$ has rank two, then there are two equivalence classes of cylinders: $\mathcal{C} = \{C_1, C_4\}$ and $\mathcal{C}' = \{C_2, C_3\}$.
\end{lemma}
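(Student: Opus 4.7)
The plan is to exploit the hyperelliptic involution $\phi$ of $M$, which exists because $\cM\subseteq\Hyp$ and satisfies $\phi^{\ast}\omega=-\omega$.

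First I would pin down how $\phi$ permutes the four horizontal cylinders. As a flat isometry reversing the horizontal direction, $\phi$ induces a combinatorial automorphism of the cylinder diagram of Case 4.I.HA) that swaps the top and bottom of each cylinder. There are only two such automorphisms of this diagram: the identity, and the involution swapping $C_1\leftrightarrow C_4$ and $C_2\leftrightarrow C_3$ (forced by the fact that $C_1,C_4$ are the only two simple cylinders and must be exchanged with each other). The identity action is incompatible with the defining property of $\cH^{\mathrm{hyp}}(2,2)$, namely that $\phi$ swaps the two zeros of $\omega$; if $\phi$ fixed every cylinder setwise, it would preserve each top and bottom boundary and hence preserve each zero separately. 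Thus $\phi(C_1)=C_4$ and $\phi(C_2)=C_3$.

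Second, I would use the identification $T_M\Hyp=H^1(M,\Sigma;\bC)^{\phi^{\ast}=-1}$, so that $\phi^{\ast}$ acts as $-1$ on $T_M\cM\subseteq T_M\Hyp$. Combined with $\phi_{\ast}c_i=-c_{\phi(i)}$ (the orientation of core curves is reversed under $\phi$), for any $\xi\in T_M\cM$ one computes
\[
c_i(\xi)\;=\;-c_i(\phi^{\ast}\xi)\;=\;-(\phi^{\ast}\xi)(c_i)\;=\;-\xi(\phi_{\ast}c_i)\;=\;\xi(c_{\phi(i)})\;=\;c_{\phi(i)}(\xi).
\]
Hence $c_1=c_4$ and $c_2=c_3$ as linear functionals on $T_M\cM$, which by definition says that $C_1$ is $\cM$-parallel to $C_4$ and $C_2$ is $\cM$-parallel to $C_3$.

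Third, Lemma~\ref{lm:n:f:sim:cyl} applies to the simple cylinder $C_1$, whose only adjacent cylinder in the diagram is $C_2$ (and $\cM$ is arithmetic of rank at least two). It gives that $C_1$ and $C_2$ cannot belong to the same equivalence class, so the two classes produced in the previous step are distinct. The $\cM$-parallelism classes are therefore exactly $\cC=\{C_1,C_4\}$ and $\cC'=\{C_2,C_3\}$. The main obstacle is Step~1: correctly pinning down the non-trivial action of $\phi$ on the cylinders, which rests on the standard characterization of $\cH^{\mathrm{hyp}}(2,2)$ (as opposed to $\cH^{\mathrm{odd}}(2,2)$) as the component in which the hyperelliptic involution swaps the two zeros of $\omega$. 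Once this combinatorial input is established, Step~2 is a direct linear-algebra computation on the tangent space and Step~3 is a routine invocation of Lemma~\ref{lm:n:f:sim:cyl}.
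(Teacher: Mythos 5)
Your Step 1 identifies the action of the hyperelliptic involution backwards, and this is fatal to the whole argument. In the diagram of Case 4.I.HA) the two zeros are distributed as follows: the top of $C_1$ (saddle connection $3$), the bottom of $C_4$ (saddle connection $1$), the bottom of $C_2$ and the top of $C_3$ all carry one zero, while the bottom of $C_1$, the top of $C_4$, the top of $C_2$ and the bottom of $C_3$ carry the other. Consequently the combinatorial involution that sends $C_1\mapsto C_4$ and $C_2\mapsto C_3$ (exchanging tops with bottoms) \emph{fixes} each of the two zeros, whereas the one that preserves each cylinder setwise (rotating it by $\pi$, hence exchanging its top and bottom boundaries) \emph{swaps} the two zeros. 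Since the hyperelliptic involution on a surface in $\cH^{\rm hyp}(2,2)$ exchanges the two zeros, it is the latter: $\phi(C_i)=C_i$ for all $i$. Your Step 2 then yields only the tautology $c_i=c_i$ on $T_M\cM$ and produces no $\cM$-parallelism whatsoever. (Note also that your stated reason for excluding the identity permutation --- that it would ``preserve each top and bottom boundary'' --- contradicts your own setup, in which every candidate automorphism swaps the top and bottom of each cylinder.) The involution you actually need, the one exchanging $C_1$ with $C_4$, is the Prym-type involution $\tau$ with $\tau^*\omega=-\omega$ and four fixed points; its existence is precisely the content of the subsequent Lemma~\ref{H22hypDoubleCov}, is \emph{not} automatic from membership in $\Hyp$, and is proved there only \emph{after} the equivalence classes have been determined. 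Assuming it here is circular.

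The paper's proof proceeds quite differently and does not use any involution: it first shows no simple cylinder can be free (a free simple cylinder could be collapsed to land in a rank two submanifold of $\cH^{\rm hyp}(4)$, which does not exist by Theorem~\ref{NWANWThm} and Proposition~\ref{prop:RankkImpRankkBd}); it rules out $C_1\sim C_2$ and $C_3\sim C_4$ by Lemma~\ref{lm:n:f:sim:cyl} (this part matches your Step 3); it rules out $C_1\sim C_3$ using the homology relation $c_1+c_3=c_2+c_4$; and it establishes $C_2\sim C_3$ by a cylinder-proportion argument with a transverse cylinder through saddle connection $3$. If you want to salvage an involution-based argument, you would have to first prove that $\tau$ exists, which is essentially the hard content of this part of the paper.
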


\begin{proof}
Again by Theorem~\ref{thm:RankkImpkCyl}, we know that there are at least two equivalence classes of horizontal cylinders.  We first notice that the simple cylinders cannot be free, otherwise by collapsing such a cylinder so that the zeros collide, we get a surface in $\cH^{\rm hyp}(4)$ whose orbit closure is a rank two invariant submanifold by Proposition~\ref{prop:RankkImpRankkBd}. In particular, we derive that $C_1$ and $C_4$ are not free. If $C_1$ is $\cM$-parallel to $C_2$, then we only have one equivalence class by Lemma~\ref{lm:n:f:sim:cyl}, thus $C_1$ and $C_2$ are not $\cM$-parallel. By the same argument, $C_3$ and $C_4$ are not $\cM$-parallel.

Assume that $C_1$ and $C_3$ are $\cM$-parallel. Since $C_4$ is not free and not $\cM$-parallel to $C_1$, it must be $\cM$-parallel to $C_2$. But since we have the following relation $c_1+c_3=c_2+c_4 \in H_1(M,\bZ)$, this would imply that $C_3$ and $C_4$ are $\cM$-parallel, thus there is only one equivalence class. Hence, we derive that $C_1$ and $C_4$ must be $\cM$-parallel.

It remains to show that $C_2$ and $C_3$ are $\cM$-parallel. Twist $C_1$ and $C_2$ independently so that there is a vertical cylinder $D$ contained in $\overline{C}_1\cup\overline{C}_2$ crossing the saddle connection $3$. There must exist another vertical cylinder $D'$ in the equivalence class of $D$ that crosses $C_4$. Clearly $D'$ must cross $C_3$. If $C_3$ is free then $P(D,\{C_3\})=0$, while $P(D',\{C_3\})>0$. Thus, we must have $C_2$ and $C_3$ are $\cM$-parallel. The lemma is then proved.
\end{proof}

\begin{lemma}
\label{H22hypDoubleCov}
If $M$ satisfies Case 4.I.HA) and its orbit closure $\cM$ has rank two, then $M$ admits a double covering to a half-translation surface in $\mathcal{Q}(1^2,-1^2)$.
\end{lemma}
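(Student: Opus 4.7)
The plan is to exhibit an involution $\sigma\colon M\to M$ with $\sigma^*\omega=-\omega$ possessing exactly four fixed points---two at the double zeros of $\omega$ and two at regular points---so that $M/\sigma$ is a half-translation surface in $\cQ(1^2,-1^2)$.

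First I would establish geometric rigidity. From Lemma~\ref{H22hypEqClasses} the horizontal cylinders partition into the equivalence classes $\cC=\{C_1,C_4\}$ and $\cC'=\{C_2,C_3\}$. The diagram of Case 4.I.HA) shows that $C_1$ is a simple cylinder adjacent only to $C_2$, so $\overline{C_1}\cup\overline{C_2}$ is a slit torus, and symmetrically $\overline{C_4}\cup\overline{C_3}$ is a slit torus. Twisting within each equivalence class to produce vertical cylinders that cross the slits, and then applying Proposition~\ref{CylinderPropProp} in the spirit of the proof of Lemma~\ref{lm:C3I:2SimCyl:C1C2isom}, yields that $C_1$ and $C_4$ have matching widths, heights, and twists, and likewise for $C_2$ and $C_3$.

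With this rigidity in hand, the $180^{\circ}$ rotation of the polygon in Figure~\ref{H4hyp4CylDiags} about its center descends to a well-defined involution $\sigma$ on $M$: it respects every cylinder side identification because paired cylinders are isometric, and it permutes the labelled saddle connections by $1\leftrightarrow 3$, $2\leftrightarrow 2$, while exchanging the unlabelled adjacency saddle between $C_1$ and $C_2$ with the one between $C_3$ and $C_4$. Since the $180^{\circ}$ rotation sends $dx+i\,dy$ to its negative, $\sigma^*\omega=-\omega$. Because $\sigma$ exchanges $C_1\leftrightarrow C_4$ and $C_2\leftrightarrow C_3$, no interior point of a cylinder is fixed; a direct inspection shows that exactly two horizontal saddle connections are preserved setwise by $\sigma$---the saddle labelled $2$ and the unlabelled saddle between $C_2$ and $C_3$---each contributing a fixed point at its midpoint, and in addition the two double zeros of $\omega$ are fixed because their polygon-vertex representatives are permuted among themselves by $\sigma$. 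This accounts for exactly four fixed points.

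Finally, the local model $\omega=z^n\,dz$ with $\sigma(z)=-z$, under the change of variables $w=z^2$, shows that the quadratic differential $\omega^2$ descends to $M/\sigma$ with order $n-1$ at each branch point: the two double zeros of $\omega$ ($n=2$) become simple zeros and the two regular fixed points ($n=0$) become simple poles. By Riemann--Hurwitz $M/\sigma$ has genus one, so $M/\sigma\in\cQ(1^2,-1^2)$ and $M$ is its orienting double cover, as claimed. The main obstacle is the geometric rigidity in the first step: without the isometric matching enforced by $\cM$-parallelism, the rotation of the polygon would generally fail to respect the identifications on $M$, and once that rigidity is secured the rest of the argument reduces to a combinatorial verification against the cylinder diagram.
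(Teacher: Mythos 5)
Your overall architecture---show that $C_1,C_4$ and $C_2,C_3$ are isometric pairs, then rotate by $\pi$ and count fixed points---matches the paper's, and your endgame (which saddle connections are preserved, the local model $w=z^2$, Riemann--Hurwitz) is fine. The gap is in the rigidity step, which you compress into one sentence but which is the actual content of the lemma. First, a smaller point: $\overline{C_1}\cup\overline{C_2}$ is not a slit torus. In Case 4.I.HA) the simple cylinder $C_1$ is glued to $C_2$ along two distinct arcs (its top, labelled $3$, and its bottom), so the union has Euler characteristic $-2$ and two boundary slits; the slit tori to which a \cite[Lem. 8.1]{AulicinoNguyenWright}-type argument could apply are the closures of the vertical cylinders containing $C_1$ and $C_4$, and showing that those two vertical cylinders are $\cM$-parallel with matching proportions is itself nontrivial.

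Second, and more seriously, Proposition~\ref{CylinderPropProp} cannot by itself ``yield matching widths, heights, and twists.'' In the paper's proof the two proportion computations (for a vertical cylinder through saddle connection $3$ containing $C_1$, and one through saddle connection $1$ containing $C_4$) only produce the ratio identity $h_1/h_4=h_2/h_3$ together with the combinatorial fact that saddle connection $2$ lies directly over itself; the width equalities $\wth(C_2)=\wth(C_3)$ and $\wth(C_1)=\wth(C_4)$ then come from the free vertical cylinder $V$ through saddle connection $2$. The remaining equalities $h_2=h_3$ and $\twist(C_1)=\twist(C_4)$ do \emph{not} follow from any proportion identity: the paper obtains each by a degeneration argument --- if they failed, collapsing $V$ (resp.\ collapsing the class $\{C_1,C_4\}$) would produce a surface in a rank two affine submanifold of $\cH^{\rm hyp}(4)$ by Proposition~\ref{prop:RankkImpRankkBd}, contradicting Theorem~\ref{NWANWThm}. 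Your proposal never invokes this degeneration-to-$\cH(4)$ mechanism, and without it the isometries needed to make the $\pi$-rotation well defined are not established.
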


\begin{proof}
By Lemma \ref{H22hypEqClasses}, the two equivalence classes must be $\mathcal{C} = \{C_1, C_4\}$ and $\mathcal{C}' = \{C_2, C_3\}$. We prove that this surface must admit a double covering to a half-translation surface in the stratum $\mathcal{Q}(1^2,-1^2)$.  Twist every cylinder in the equivalence classes of the cylinders $C_1$ and $C_2$ (which are in different equivalence classes) to get a vertical cylinder $V_1$ through saddle connection $3$.  This cylinder will contain $C_1$, but it does not pass through $C_4$ and $C_3$.  Therefore, there must be a cylinder $V_2$ that is $\cM$-parallel to $V_1$ that crosses $C_4$, hence $C_3$.

\begin{figure}[htb]
\centering
\begin{minipage}{0.45\linewidth}
\centering
\begin{tikzpicture}[scale=0.40]
\draw  (0,8) -- (0,4) -- (2,4) -- (2,2) -- (4,2) -- (4,0) -- (6,0) -- (6,4) -- (4,4) -- (4,6) -- (2,6) -- (2,8) -- cycle;
\draw[dashed] (2,6) -- (2,4) (4,4) -- (4,2);
\foreach \x in {(0,8),(0,4),(2,8),(2,4),(4,4),(4,0),(6,4),(6,0)} \filldraw[fill=black] \x circle (3pt);
\foreach \x in {(0,6),(2,6),(2,2),(4,6),(4,2),(6,2)} \filldraw[fill=white] \x circle (3pt);
\draw (1,8) node[above] {\tiny $3$} (1,4) node[below] {\tiny $3$} (3,6) node[above] {\tiny $2$} (3,2) node[below] {\tiny $2$} (5,4) node[above] {\tiny $1$} (5,0) node[below] {\tiny $1$};
\draw (1,6) node {\tiny $V_1$} (5,2) node {\tiny $V_2$} (3,4) node {\tiny $V$};
\end{tikzpicture}
\end{minipage}
\begin{minipage}{0.45\linewidth}
\centering
\begin{tikzpicture}[scale=0.40]
\draw[dashed] (2,1) -- (2,0) (4,-1) -- (4,-4);
\draw (0,0)--(0,3)--(2,3)--(2,1)--(4,0)--(4,-1)--(6,-1)--(6,-6)--(4,-6)--(4,-4)--(2,-3)--(2,0)--cycle;
\foreach \x in {(0,3),(0,0),(2,3),(2,0),(4,-1),(4,-6),(6,-1),(6,-6)} \filldraw[fill=black] \x circle (3pt);
\foreach \x in {(0,1),(2,1),(2,-3),(4,0),(4,-4),(6,-4)} \filldraw[fill=white] \x circle (3pt);
\draw (3,-2) node {\tiny $V$};
\end{tikzpicture}
\end{minipage}
\caption{Proof of Lemma \ref{H22hypDoubleCov}}
\label{H22hypLemPfFig}
\end{figure}

Note that $V_2$ does not cross $C_1$.  For all $i$, let $V_2$ pass through $C_i$, $n_i$ times.  We have $n_3=n_2+n_4$.  Let the height of $C_i$ be $h_i$, for all $i$, and compute
$$P(V_1, \mathcal{C}) = P(V_2, \mathcal{C}) \Rightarrow \frac{h_1}{h_1+h_2} = \frac{n_4h_4}{n_2h_2 + n_3h_3+n_4h_4},$$
which implies
$$\frac{h_4}{h_1} = \frac{n_2h_2+n_3h_3}{n_4h_2} \geq \frac{n_3}{n_4}\frac{h_3}{h_2}.$$

Similarly, if we twist $C_3$ and $C_4$ to get a vertical cylinder $V_1'$ through the saddle connection $1$ containing $C_4$, then there is a cylinder $V_2'$ that is $\cM$-parallel to $V_1'$  through $C_1$.  Letting $V_2'$ pass through $C_i$, $n_i'$ times. Remark that $n'_4=0$ and $n'_2=n'_1+n'_3$. We then have
$$P(V_1', \mathcal{C}) = P(V_2', \mathcal{C}) \Rightarrow \frac{h_4}{h_3 + h_4} = \frac{n_1' h_1}{n_1'h_1' + n_2'h_2 + n_3'h_3},$$
which simplifies to
$$\frac{h_1}{h_4} = \frac{n_2'h_2+n'_3h_3}{n_1'h_3} \geq \frac{n'_2}{n'_1}\frac{h_2}{h_3}.$$
Combining these two inequalities yields
$$ 1 =\frac{h_4}{h_1}\frac{h_1}{h_4} \geq \frac{n_3}{n_4}\frac{n'_2}{n'_1} \geq 1$$
because $n_3 \geq n_4$ and $n'_2 \geq n'_1$.  The equality occurs if any only if $n_2 = n'_3 = 0$, which implies that $h_1/h_4 = h_2/h_3$, and $V_2$ must be entirely contained in $C_3$ and $C_4$.  In particular, it cannot pass through the saddle connection $2$, and this is only possible if $2$ lies directly over itself.  This means exactly that if the sides of $C_2$ are twisted so that they are vertical, then $C_3$ must also have vertical sides.

Moreover, there must be a vertical cylinder $V$ passing vertically from $2$ to itself that is contained entirely in $C_2$ and $C_3$.  Remark that $V$ is free. From the relation $P(C_2,\{V\})=P(C_3,\{V\}))$ we derive that $\wth(C_2)=\wth(C_3)$ and $\wth(C_1)=\wth(C_4)$, where $\wth(C_i)$ is the circumference of $C_i$.

Without loss of generality, let $h_3 \geq h_2$.  Twist $V$ as in Figure \ref{H22hypLemPfFig}, so that the zeros $v_1$ and $v_2$ lie on the same horizontal saddle connection.  In fact, if $h_3 > h_2$, then there will be exactly one horizontal saddle connection joining $v_1$ and $v_2$ in $V$.  Therefore, if we collapse $V$, we would degenerate to a rank two orbit closure by Proposition \ref{prop:RankkImpRankkBd} in $\mathcal{H}^{\rm hyp}(4)$, which does not exist.  Such a contradiction implies that we must indeed have $h_3 = h_2$, which in turn implies $h_1 = h_4$.

We now show that $C_1$ and $C_4$ have the same twist. Let us twist $C_1$ and $C_2$ so that the vertical cylinder $V_1$ exists. Assume that the twist of $C_4$ is non-zero. In this case collapsing $C_1$ and $C_4$ simultaneously only destroys a single saddle connection (which is contained in $C_1$).  Thus, the resulting surface belongs to $\cH^{\rm hyp}(4)$. Since such a surface must be contained in a rank two invariant submanifold by  Proposition \ref{prop:RankkImpRankkBd}, we then get a contradiction. Therefore the twist of $C_4$ must be zero, which means that $C_1$ and $C_4$ are isometric.

Observe now that we have an involution of $M$ that sends $C_1$ and $C_2$ to $C_4$ and $C_3$, respectively. This involution fixes the two singularities of $M$ and two other points. Thus $M$ is the double covering of a quadratic differential in $\tilde{\cQ}(1^2,-1^2)$.
\end{proof}

\begin{theorem}
\label{thm:H22hypClass}
The Prym locus $\tilde{\mathcal{Q}}(1^2,-1^2)$ is the unique rank two invariant submanifold in $\mathcal{H}^{\rm hyp}(2,2)$.
\end{theorem}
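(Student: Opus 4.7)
The plan is to chain the preceding lemmas of this subsection to find a surface in $\cM$ that already lies in $\tilde{\cQ}(1^2,-1^2)$, then upgrade via $\G$-invariance, dimension, and an analysis of the Prym involution. Concretely: by Corollary~\ref{cor:Hmn:4cyl:diag}, $\cM$ contains a horizontally periodic surface $M$ with four horizontal cylinders; Lemma~\ref{Hhyp22UniqueDiagRed} rules out Case~4.I.HB), leaving $M$ in Case~4.I.HA); then Lemma~\ref{H22hypDoubleCov} shows $M$ is the standard orienting double cover of a half-translation surface in $\cQ(1^2,-1^2)$, i.e.\ $M \in \tilde{\cQ}(1^2,-1^2)$.

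With $M \in \cM \cap \tilde{\cQ}(1^2,-1^2)$ in hand, the next step is to deduce the inclusion $\tilde{\cQ}(1^2,-1^2) \subseteq \cM$. Since $\tilde{\cQ}(1^2,-1^2)$ is closed and $\G$-invariant, and $\cM$ is $\G$-invariant, both contain $\overline{\G \cdot M}$. By Lemma~\ref{H22hypEqClasses}, at $M$ the horizontal cylinders are split into the two $\cM$-parallel equivalence classes $\{C_1,C_4\}$ and $\{C_2,C_3\}$, whose core curves remain linearly independent in $(T_M\overline{\G\cdot M})^*$, so $\overline{\G\cdot M}$ already realizes rank two. Since $\tilde{\cQ}(1^2,-1^2)$ is connected (by \cite{LanneauComponents}), of complex dimension $4$, rank two, and has trivial REL, it admits no proper affine submanifold of rank two; hence $\overline{\G\cdot M} = \tilde{\cQ}(1^2,-1^2)$, whence $\tilde{\cQ}(1^2,-1^2) \subseteq \cM$.

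The main obstacle is the reverse inclusion $\cM \subseteq \tilde{\cQ}(1^2,-1^2)$. Since the REL dimension of $\cH^{\rm hyp}(2,2)$ is $1$, a rank-two affine submanifold there has complex dimension in $\{4,5\}$; only in the $5$-dimensional case could $\cM$ strictly contain the $4$-dimensional $\tilde{\cQ}(1^2,-1^2)$, and then through a one-dimensional REL direction. The hard part is to exclude this. The plan is to analyze the REL direction at $M$: it is supported on the relative class $[\mathrm{pt}_1 - \mathrm{pt}_2]$ of the two zeros, which is invariant under the Prym involution $\tau$ that fixes both zeros pointwise, whereas $T_M\tilde{\cQ}(1^2,-1^2)$ is precisely the $\tau$-anti-invariant subspace of $H^1(M,\Sigma,\bC)$. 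The balance relations derived in Lemma~\ref{H22hypDoubleCov} (equalities of heights, circumferences and twists between $C_1,C_4$ and between $C_2,C_3$) show that every cylinder-deformation direction generating $T_M\cM$ pairs $\cM$-parallel cylinders according to $\tau$, hence is $\tau$-anti-invariant. Thus no REL direction can lie in $T_M\cM$, forcing $\dim_\bC \cM = 4$ and therefore $\cM = \tilde{\cQ}(1^2,-1^2)$.
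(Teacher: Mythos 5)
Your first step coincides exactly with the paper's: chain Corollary~\ref{cor:Hmn:4cyl:diag}, Lemma~\ref{Hhyp22UniqueDiagRed}, and Lemma~\ref{H22hypDoubleCov} to produce one surface $M \in \cM \cap \tilde{\cQ}(1^2,-1^2)$ in Case 4.I.HA). The problem is in how you pass from this single surface to the equality of manifolds. The critical gap is in your third paragraph: you assert that ``every cylinder-deformation direction generating $T_M\cM$'' is $\tau$-anti-invariant and conclude $T_M\cM$ contains no REL direction. But the cylinder deformations (twists and stretches of the two equivalence classes $\{C_1,C_4\}$ and $\{C_2,C_3\}$) span only a $2$-complex-dimensional subspace of $T_M\cM$, which has complex dimension $4$ or $5$; Theorem~\ref{thm:Wright:Cyl:Def} puts these directions \emph{inside} the tangent space but nothing says they generate it. So showing that these particular vectors are $\tau$-anti-invariant does not control an arbitrary $v \in T_M\cM$, and in particular does not exclude a REL component. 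There is also a secondary gap in your second paragraph: $\overline{\G\cdot M}$ need not have rank two ($M$ could be square-tiled with a closed $\G$-orbit), and $\cM$-parallelism does not descend to $\overline{\G\cdot M}$-parallelism, since the smaller manifold admits fewer deformations and its equivalence classes may merge; so ``$\overline{\G\cdot M}$ already realizes rank two'' is unjustified.

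The paper closes the hard inclusion $\cM \subseteq \tilde{\cQ}(1^2,-1^2)$ by a different and more robust device that you should adopt: for \emph{any} $v \in T_M^{\bR}\cM$ and $\eps$ small, the surface $M + \eps v$ still lies in $\cM$ and still carries the same horizontal cylinder diagram 4.I.HA) (the diagram is an open condition), so Lemma~\ref{H22hypDoubleCov} applies verbatim to $M+\eps v$ and shows $M + \eps v \in \tilde{\cQ}(1^2,-1^2)$. This gives $T_M^{\bR}\cM \subseteq T_M^{\bR}\tilde{\cQ}(1^2,-1^2)$ directly, without needing to identify generators of the tangent space or to analyze the involution on cohomology. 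The dimension count $4 \leq \dim_\bC\cM \leq \dim_\bC\tilde{\cQ}(1^2,-1^2) = 4$, together with the connectedness of $\tilde{\cQ}(1^2,-1^2)$, then yields $\cM = \tilde{\cQ}(1^2,-1^2)$, and the reverse inclusion you tried to establish first comes for free.
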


\begin{proof}
By Corollary~\ref{cor:Hmn:4cyl:diag} and Lemma~\ref{Hhyp22UniqueDiagRed}, every rank two invariant submanifold $\cM$ in $\mathcal{H}^{\rm hyp}(2,2)$ contains a horizontally periodic translation surface $M$ satisfying Case 4.I.HA).  Lemma \ref{H22hypDoubleCov} implies that $M$ is contained in the Prym locus $\tilde{\cQ}(1^2,-1^2)$.  Set $V:=T^{\bR}_M\cM$ and $W:=T^{\bR}_M\tilde{\cQ}(1^2,-1^2)$. In what follows, we identify $M$ with a point in $H^1(M,\Sigma,\bR+\imath\bR)$. Pick any vector $v\in V$, for $\eps >0$ small enough $M+\eps v$ is also a surface in $\cM$ admitting a cylinder decomposition in the horizontal direction with the same diagram. Lemma~\ref{H22hypDoubleCov} applied to $M+\eps v$ then implies that $M+\eps v \in \tilde{\cQ}(1^2,-1^2)$. Thus we can conclude that $V \subset W$ and $\cM \subset \tilde{\cQ}(1^2,-1^2)$.  Moreover,
$$
\dim_\bC\cM = \dim_\bR V \leq \dim_\bR W =\dim_\bC\tilde{\cQ}(1^2,-1^2)=4.
$$
But since $\cM$ has rank two, by definition its dimension is at least four. Hence we have $\dim_\bC\cM=\dim_\bC\tilde{\cQ}(1^2,-1^2)$, and consequently $\cM=\tilde{\cQ}(1^2,-1^2)$. The proof of the theorem is now complete.
\end{proof}

\begin{lemma}
 \label{lm:H2:in:Hyp22}
 The locus $\tilde{\cQ}(1^2,-1^2)$ is also the set $\covershyp$ of unramified double coverings of $\cH(2)$ in $\cH^{\rm hyp}(2,2)$.
\end{lemma}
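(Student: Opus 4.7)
\noindent The plan is to establish $\widetilde{\cQ}(1^2,-1^2)=\covershyp$ by constructing both inclusions explicitly: in the forward direction by composing the Prym involution with the hyperelliptic involution, and in the reverse direction by composing the deck transformation with the hyperelliptic involution. Both constructions hinge on the same elementary local observation.

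For the forward inclusion, let $M\in\widetilde{\cQ}(1^2,-1^2)$. By construction $M$ carries a Prym involution $\tau$ with $\tau^*\omega=-\omega$, fixing the two zeros $p_1,p_2$ of $\omega$ (lifts of the simple zeros of the quadratic differential on the base torus). By Theorem~\ref{thm:H22hypClass}, $M\in\cH^{\rm hyp}(2,2)$, so $M$ carries the hyperelliptic involution $\iota$ with $\iota^*\omega=-\omega$; moreover $\iota$ swaps $p_1$ and $p_2$, as is characteristic of the hyperelliptic component of $\cH(g-1,g-1)$ for $g=3$. Centrality of $\iota$ in $\mathrm{Aut}(M)$ gives $\iota\tau=\tau\iota$, so $\sigma:=\iota\tau$ is an involution with $\sigma^*\omega=\omega$. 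The key local claim is that any such $\sigma$ is automatically fixed-point-free on $M\in\cH(2,2)$: if $\sigma$ fixed a point $p$, in a local coordinate $z$ with $\sigma(z)=-z$ and $\omega=f(z)\,dz$ we would have $\sigma^*\omega=-f(-z)\,dz$, and the equation $\sigma^*\omega=\omega$ would force $f$ to be odd, so that $\omega$ vanishes to some odd order at $p$; but every point of $M\in\cH(2,2)$ has $\omega$-order $0$ or $2$, both even. Hence $M\to N:=M/\sigma$ is an unramified double cover with $g(N)=2$ by Riemann--Hurwitz, and $\omega$ descends to an Abelian differential $\eta$ on $N$. Since $\sigma(p_1)=\iota\tau(p_1)=\iota(p_1)=p_2$, the two zeros form a single $\sigma$-orbit and project to a single zero of order $2$ of $\eta$. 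Hence $N\in\cH(2)$ and $M\in\covershyp$.

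The reverse inclusion follows by a symmetric construction. Given $M\in\covershyp$ with deck involution $\sigma$ of the unramified cover $M\to N\in\cH(2)$, let $\iota$ be the hyperelliptic involution of $M$ and set $\tau:=\iota\sigma$, an involution with $\tau^*\omega=-\omega$. The involution $\bar\iota$ induced by $\iota$ on $N$ satisfies $\bar\iota^*\eta=-\eta$ and hence equals the hyperelliptic involution $\iota_N$, giving $M/\langle\iota,\sigma\rangle=N/\iota_N=\bP^1$. Each of the $6$ Weierstrass points of $N$ has two preimages in $M$ that $\iota$ either both fixes or both swaps; matching $|\mathrm{Fix}(\iota)|=8$ forces $\iota$ to fix both preimages over exactly $4$ Weierstrass points and swap them over the remaining $2$. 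At each of these last two Weierstrass points (one of which is the zero of $N$), the two preimages are $\tau$-fixed because both $\iota$ and $\sigma$ swap them; this gives $|\mathrm{Fix}(\tau)|=4$. So $M/\tau$ has genus one, and a local expansion exhibits $\omega^2$ as a quadratic differential on $M/\tau$ with simple zeros at the images of the two zeros of $\omega$ and simple poles at the images of the other two $\tau$-fixed points. Thus $M/\tau\in\cQ(1^2,-1^2)$ and $M\in\widetilde{\cQ}(1^2,-1^2)$. The only substantive technical point in the whole argument is the local parity computation that forces $\sigma$ to act freely on $M\in\cH(2,2)$; everything else is routine bookkeeping of $(\bZ/2)^2$-orbits enabled by the centrality of the hyperelliptic involution and Riemann--Hurwitz.
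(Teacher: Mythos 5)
Your argument is correct in substance, and it is worth separating the two inclusions. For $\widetilde{\cQ}(1^2,-1^2)\subset\covershyp$ you do essentially what the paper does: form the product of the Prym and hyperelliptic involutions and show it is a fixed-point-free involution preserving $\omega$ whose genus-two quotient lies in $\cH(2)$. The only difference is how freeness is obtained -- you get it from the local parity observation that a fixed point of an $\omega$-preserving involution forces an odd-order zero, whereas the paper first computes $\dim_\bC\ker(\vartheta^*-{\rm Id})=2$ to get the quotient genus and then invokes Riemann--Hurwitz; these are interchangeable. The reverse inclusion is where you genuinely diverge: the paper disposes of it in one line by observing that $\covershyp$ is a rank two affine invariant submanifold of $\cH^{\rm hyp}(2,2)$ and quoting the uniqueness statement of Theorem~\ref{thm:H22hypClass}, while you construct the Prym involution $\tau=\iota\sigma$ directly on an arbitrary unramified double cover and verify by a fixed-point count that the quotient is a genus-one quadratic differential in $\cQ(1^2,-1^2)$. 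Your route is more elementary and self-contained (it does not lean on the classification machinery behind Theorem~\ref{thm:H22hypClass}), at the cost of more bookkeeping; the paper's route is shorter but only works because the hard classification has already been done.

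One step in your reverse direction is under-justified: you assert that the involution $\bar\iota$ induced on $N$ equals the hyperelliptic involution $\iota_N$ ``because $\bar\iota^*\eta=-\eta$.'' That implication is not immediate, since an elliptic involution of a genus two surface also has a one-dimensional anti-invariant subspace of $\Omega(N)$. The claim is nonetheless true and has a one-line proof you should insert: since $\iota$ is central, $\sigma$ descends to an involution $\bar\sigma$ of $M/\iota\cong\bP^1$, and $N/\bar\iota\cong M/\langle\iota,\sigma\rangle\cong\bP^1/\bar\sigma\cong\bP^1$, which characterizes $\iota_N$. (Alternatively, the anti-invariant differential of an elliptic involution always has two simple zeros at the fixed points, so it cannot lie in $\cH(2)$.) With that patch the fixed-point count $|\mathrm{Fix}(\tau)|=4$ and the rest of the argument go through.
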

\begin{proof}
Recall that by definition $M=(X,\omega)$, where $X$ is a Riemann surface of genus three, and $\omega$ is a holomorphic $1$-form on $X$ having two double zeros. For any $M=(X,\omega) \in \tilde{\cQ}(1^2,-1^2) \subset \cH^{\rm hyp}(2,2)$ we have two involutions on $X$: the hyperelliptic one denoted by $\iota$ and the another one coming from the double covering of a quadratic differential in $\cQ(1^2,-1^2)$ denoted by $\tau$. Set $\vartheta :=\iota\circ\tau$, since $\iota$ commutes with all the automorphisms of $X$, $\vartheta$ is also an involution.

Set $Y:= X/\langle \vartheta \rangle$. By definition, we have $\iota^*\omega=\tau^*\omega=-\omega$, thus $\omega \in \ker(\vartheta -{\rm Id})\subset \Omega(X)$ where $\Omega(X)$ is the space of holomorphic $1$-forms on $X$. Remark that $\dim_\bC\ker(\vartheta -{\rm Id})=\dim_\bC\ker(\tau+{\rm Id})=2$, therefore $Y$ is a surface of genus two and $\omega$ arises from a holomorphic $1$-form $\eta$ on $Y$.

Let $\pi : X \rightarrow Y$ be the double covering induced by $\vartheta$. Since $Y$ has genus two, the Riemann-Hurwitz formula then implies that $\pi$ is unramified. It follows that $\eta$ has a double zero on $Y$, that is $(Y,\eta)\in \cH(2)$. Therefore we have $\tilde{\cQ}(1^2,-1^2) \subset \covershyp$. Since $\covershyp$ is clearly a rank two invariant submanifold of $\cH^{\rm hyp}(2,2)$, from Theorem~\ref{thm:H22hypClass} we can conclude that $\covershyp=\tilde{\cQ}(1^2,-1^2)$.
\end{proof}

\subsection{$\cH^{\rm odd}(2,2)$}

Our goal is to show the following

\begin{theorem}
\label{thm:H22:odd:4cyl}
Let $\cM$ be a rank two affine manifold of $\Odd$.   Then either
$$
\left\{
\begin{array}{ccc}
 \dim_{\bC}\cM=5 & \text{ and } & \cM=\prymodd \\
  \dim_{\bC}\cM=4 & \text{ and } & \cM=\coversodd,
\end{array}
\right.
$$

\noindent where $\coversodd$ is the locus of unramified double covers of surfaces in $\cH(2)$ in $\Odd$.
\end{theorem}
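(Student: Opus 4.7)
The plan is to imitate the strategy of Theorem~\ref{thm:H22hypClass}, but with the added subtlety that now two rank two submanifolds ($\prymodd$ of dimension five, and $\coversodd$ of dimension four, with $\coversodd \subset \prymodd$) must be produced. By Corollary~\ref{cor:Hmn:4cyl:diag}, $\cM$ contains a horizontally periodic surface $M$ with four horizontal cylinders $C_1,\dots,C_4$. First I would enumerate, via the dual graph machinery of Section~\ref{sec:intro:DG}, all $4$-cylinder diagrams in $\Odd$; each falls into Case 4.I), 4.II), or 4.III) of Lemma~\ref{lm:4CylDeg} (Case 4.IV) being excluded in $\cH(m,n)$).

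For each diagram I would then determine which distributions of $\{C_1,\ldots,C_4\}$ into $\cM$-parallel classes are compatible with $\mathrm{rank}(\cM)=2$. Theorem~\ref{thm:RankkImpkCyl} forces at least two classes; Lemmas~\ref{lm:f:sim:cyl}, \ref{lm:n:f:sim:cyl}, \ref{lm:loop} constrain how simple/semi-simple cylinders can be glued to free or non-free neighbors; and Proposition~\ref{prop:RankkImpRankkBd} combined with Theorem~\ref{NWANWThm} is the main eliminator: whenever a cylinder can be collapsed so that a single saddle connection degenerates, the resulting surface lies in a rank two submanifold of $\cH(4)$, hence in $\tilde{\cQ}(3,-1^3)$, and in particular must carry a holomorphic involution acting by $-\mathrm{Id}$ on periods and exchanging cylinders in the expected way. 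Any diagram whose collapse is incompatible with this Prym involution is discarded, exactly as in Lemma~\ref{Hhyp22UniqueDiagRed} and in the proof of Theorem~\ref{H31Rank2Class}. I expect Case 4.II) and some sub-cases of 4.III) to be eliminated this way; the survivors should all satisfy Case 4.I), analogously to the hyperelliptic situation.

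For the surviving diagrams I would then prove the analog of Lemma~\ref{H22hypDoubleCov}: after using Proposition~\ref{CylinderPropProp} to twist the two $\cM$-parallel classes and comparing proportions on transverse $\cM$-parallel vertical cylinders, I would force equalities of heights, widths, and twists that exhibit an involution $\tau$ of $M$ fixing the two zeros and acting by $-\mathrm{Id}$ on $\omega$. This shows $M\in\prymodd$, and as in Theorem~\ref{thm:H22hypClass} (by deforming $M$ inside $\cM$ by small vectors of $T^{\bR}_M\cM$ and reapplying the same argument) it shows $\cM\subset\prymodd$. A rank two submanifold of $\cH(2,2)$ has complex dimension $4$ or $5$, according to whether the kernel of the projection from relative to absolute cohomology is trivial or one-dimensional. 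If $\dim_\bC\cM=5$, then $\cM=\prymodd$ by equality of dimensions and irreducibility. If $\dim_\bC\cM=4$, then on $M$ the REL kernel is zero, which on a Prym cover forces (via the analysis of Lemma~\ref{lm:H2:in:Hyp22} adapted to $\Odd$) an additional fixed-point-free involution $\vartheta=\iota\circ\tau$, where $\iota$ is the hyperelliptic involution of the genus-two quotient; this exhibits $M$ as an unramified double cover of a surface in $\cH(2)$, so $M\in\coversodd$, and again equality of dimensions gives $\cM=\coversodd$.

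The main obstacle I anticipate is the combinatorial enumeration and elimination step. The stratum $\Odd$ has noticeably more $4$-cylinder diagrams than $\Hyp$, and distinguishing which ones belong to the $5$-dimensional Prym locus versus the $4$-dimensional cover locus will require delicate proportion arguments, because the difference amounts to one extra REL relation (a height or twist equality) that must be detected from the cylinder combinatorics. A secondary difficulty is that in the dimension $4$ case one must rule out the possibility that $\cM$ sits inside $\prymodd$ strictly between $\coversodd$ and the full Prym locus; this should be handled by showing that the only rank two invariant subvariety of $\prymodd$ of dimension four is $\coversodd$ itself, using that $\coversodd$ is $\G$-invariant and $\prymodd\setminus\coversodd$ cannot support a proper invariant submanifold of dimension four by a direct tangent-space computation.
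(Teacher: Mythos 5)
Your overall architecture (enumerate the $4$-cylinder diagrams via dual graphs, eliminate some by collapsing onto $\tilde{\cQ}(3,-1^3)$, exhibit a Prym involution, then split by dimension) is the right family of ideas, but two concrete steps would fail as stated. First, your prediction that ``the survivors should all satisfy Case 4.I), analogously to the hyperelliptic situation'' is wrong, and fatally so for detecting $\coversodd$. In $\Odd$ the five diagrams are 4.I.OA), 4.I.OB), 4.II.OA), 4.II.OB), 4.II.OC); only 4.II.OC) is eliminated outright, and 4.II.OA) is deformed into 4.II.OB) rather than discarded. Moreover, the Case 4.I) diagrams do not merely place $M$ in $\prymodd$: in each of them one produces a free simple cylinder whose collapse lands in $\prym$, so $\prym$ lies in the boundary of $\cM$ and algebraicity forces $\dim_\bC\cM=5$, i.e.\ $\cM=\prymodd$ immediately. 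Consequently a $4$-dimensional rank two submanifold can only be seen through Case 4.II.OB) surfaces; if your elimination scheme left only Case 4.I) standing, you would conclude $\cM=\prymodd$ in all cases and never find $\coversodd$. The dichotomy is detected inside Case 4.II.OB) by whether the extra REL relation (the twist of $C_4$ vanishes mod $\bZ$ and $C_2$, $C_4$ are isometric) holds on every nearby surface, not by which topological case survives.

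Second, your identification of the $4$-dimensional case with $\coversodd$ leans on ``the analysis of Lemma~\ref{lm:H2:in:Hyp22} adapted to $\Odd$,'' i.e.\ on composing the Prym involution $\tau$ with the hyperelliptic involution $\iota$. Surfaces in $\Odd$ are not hyperelliptic, so there is no $\iota$ to compose with, and that mechanism does not transfer. The correct construction is different: once one forces $C_1$ isometric to $C_3$ \emph{and} $C_2$ isometric to $C_4$ with zero relative twist, the surface admits a fixed-point-free automorphism $f$ of order two whose derivative is $+\Id$ (a translation automorphism, not $-\Id$), satisfying $f^*\omega=\omega$; the quotient is then an unramified genus two cover with $\eta\in\cH(2)$. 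You would also need to prove, rather than assume, the connectedness of $\coversodd$ to rule out $\cM$ being a proper component of it, and to rule out an intermediate $\cM$ with $\coversodd\subsetneq\cM\subsetneq\prymodd$ one argues that $\dim_\bC\cM\in\{4,5\}$ and that the $4$-dimensional case forces every Case 4.II.OB) surface of $\cM$ into $\coversodd$, whence $\cM=\coversodd$ by equality of dimensions; your proposed ``direct tangent-space computation on $\prymodd\setminus\coversodd$'' is not how the constraint actually arises.
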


By Corollary~\ref{cor:Hmn:4cyl:diag}, we know that $\cM$ contains a horizontally periodic surface with four cylinders.  The following lemma is a direct consequence of Proposition~\ref{prop:DG:complete}.

\begin{lemma}
\label{lm:4cyl:dm}
The $4$-cylinder diagrams in $\Odd$ satisfy one of the following five cases: 4.I.OA), 4.I.OB), 4.II.OA), 4.II.OB), 4.II.OC) as shown in Figures~\ref{H22OddCase4IFig} and \ref{H22OddCase4IIFig}.
\end{lemma}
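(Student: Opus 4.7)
The plan is a direct invocation of Proposition~\ref{prop:DG:complete} followed by a short filter. By Section~\ref{sec:intro:DG}, a complete dual graph associated with a $4$-cylinder decomposition of a surface in $\cH(m,n)$ with $m+n=4$ must have four vertices (one per cylinder) and exactly six edges (property~(2), since this is the maximal number of horizontal saddle connections in such a stratum); every vertex has valency at least two (property~(3)); the edges admit an orientation and a positive flow-balanced weighting (property~(5)); and the graph avoids the four forbidden local configurations of Figure~\ref{fig:forbid:config} (property~(6)).

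First I would apply Proposition~\ref{prop:DG:complete}, which provides an explicit finite list of all complete dual graphs meeting these constraints. Concretely this means enumerating admissible underlying multigraphs (the valency sequence sums to $12$ with each entry at least $2$, so only a handful of shapes are possible: $(3,3,3,3)$, $(2,3,3,4)$, $(2,2,4,4)$, $(2,2,3,5)$, $(2,2,2,6)$, the latter two quickly being ruled out by the flow condition and by Figure~\ref{fig:forbid:config}), then orienting the edges in every way compatible with positive flow balance, and finally choosing a cyclic ordering of the rays at each vertex. Next I would reconstruct each corresponding cylinder diagram, read off the orders of the two singularities from the cyclic orderings to ensure the resulting surface lies in $\cH(2,2)$ rather than $\cH(3,1)$, and compute the parity of its spin structure to decide between $\Hyp$ and $\Odd$.

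By Lemma~\ref{Hhyp224CylDiags}, exactly two of the resulting diagrams lie in $\Hyp$, namely Cases 4.I.HA) and 4.I.HB). Removing these leaves precisely the five diagrams shown in Figures~\ref{H22OddCase4IFig} and \ref{H22OddCase4IIFig}, which are naturally partitioned by their degeneration behavior in the sense of Lemma~\ref{lm:4CylDeg}: two lie in Case~4.I), yielding the labels 4.I.OA) and 4.I.OB), while three lie in Case~4.II), yielding 4.II.OA), 4.II.OB), 4.II.OC). No diagram in $\Odd$ falls in Case~4.III), which can be seen by checking that Case~4.III) forces the presence of a cylinder whose closure is a slit torus, a configuration whose spin parity computation lands in $\Hyp$.

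The main obstacle is essentially the bookkeeping in the enumeration: Proposition~\ref{prop:DG:complete} supplies completeness, so the work reduces to systematically matching each candidate complete dual graph to a cylinder diagram and checking stratum and parity, then confirming by inspection that the five surviving diagrams coincide with those pictured in Figures~\ref{H22OddCase4IFig} and~\ref{H22OddCase4IIFig}. Since the list of admissible complete dual graphs is short and explicit, this verification is finite and routine rather than conceptually involved.
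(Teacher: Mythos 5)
Your top-level strategy---invoke Proposition~\ref{prop:DG:complete} and then filter the resulting eleven diagrams by stratum and spin parity---is exactly how the paper treats this lemma (it is stated there as a direct consequence of that proposition). However, your gloss on what the enumeration actually produces contains two concrete errors that would derail the argument if you carried it out as written. First, you claim that the valency vectors $(2,2,3,5)$ and $(2,2,2,6)$ are ``quickly ruled out by the flow condition and by Figure~\ref{fig:forbid:config}.'' They are not: both admit dual graphs with no forbidden configuration (DG.4.II.a, DG.4.II.b and DG.4.I in Figures~\ref{fig:DG:2235} and~\ref{fig:DG:2226}), and the vector $(2,2,2,6)$ in particular is the source of Case 4.I.OA)---one of the five diagrams your lemma is supposed to produce---as well as of Case 4.I.HB) in $\Hyp$. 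The valency vector that \emph{is} excluded is $(3,3,3,3)$, which you retain; it dies not by the flow condition but because four mutually adjacent semi-simple cylinders would force each circumference to be strictly smaller than another, an impossibility. So your filter discards a needed case and keeps the one that should be discarded.

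Second, your reason for excluding Case 4.III) from $\Odd$ is wrong. You assert that Case 4.III) forces a slit-torus cylinder whose spin parity ``lands in $\Hyp$,'' but $\Hyp$ contains no Case 4.III) diagrams either: both of its $4$-cylinder diagrams (4.I.HA and 4.I.HB) satisfy Case 4.I). The correct reason is arithmetic, as in Lemma~\ref{lm:H31:C4III}: in Case 4.III) the degenerate surface consists of two spheres carrying three and five simple poles respectively, and the formula $\sharp(\mathrm{zeros})-\sharp(\mathrm{poles})=-2$ on each sphere forces total zero orders $1$ and $3$ on the two parts. Hence a Case 4.III) diagram with exactly two zeros necessarily lies in $\cH(3,1)$, not in either component of $\cH(2,2)$; no spin computation is involved. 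With these two points repaired, the rest of your plan (matching the surviving diagrams against Figures~\ref{H22OddCase4IFig} and~\ref{H22OddCase4IIFig} and sorting them by the degeneration type of Lemma~\ref{lm:4CylDeg}) is the intended finite check.
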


\begin{figure}[htb]
\begin{minipage}[t]{0.4\linewidth}
\centering
\begin{tikzpicture}[scale=0.40]
\draw (0,0)--(0,4)--(2,4)--(2,2)--(4,2)--(4,4)--(6,4)--(6,0)--(4,0)--(4,-2)--(2,-2)--(2,0)--cycle;
\foreach \x in {(0,0),(0,4),(2,4),(4,4),(6,4),(6,0),(4,0),(2,0)} \filldraw[fill=black] \x circle (3pt);
\foreach \x in {(2,2),(4,2),(4,-2),(2,-2),(0,2),(6,2)} \filldraw[fill=white] \x circle (3pt);
\draw(1,4) node[above] {\tiny $1$};
\draw(3,2) node[above] {\tiny $2$};
\draw(5,4) node[above] {\tiny $3$};
\draw(1,0) node[below] {\tiny $1$};
\draw(3,-2) node[below] {\tiny $2$};
\draw(5,0) node[below] {\tiny $3$};
\draw(1,3) node {\tiny $C_1$};
\draw(3,-1) node {\tiny $C_2$};
\draw(5,3) node {\tiny $C_3$};
\draw(3,1) node {\tiny $C_4$};

\end{tikzpicture}
\end{minipage}
\begin{minipage}[t]{0.4\linewidth}
\centering
\begin{tikzpicture}[scale=0.40]
\draw (0,0)--(0,2)--(-2,2)--(-2,6)--(0,6)--(0,8)--(2,8)--(2,6)--(2,4)--(4,4)--(4,0)--cycle;
\foreach \x in {(0,0),(0,8),(2,8),(2,4),(4,4),(4,0),(-2,4),(2,0)} \filldraw[fill=black] \x circle (3pt);
\foreach \x in {(-2,6),(-2,2), (0,6),(0,2), (2,6), (4,2)} \filldraw[fill=white] \x circle (3pt);
\draw(-1,6) node[above] {\tiny $1$};
\draw(1,8) node[above] {\tiny $2$};
\draw(3,4) node[above] {\tiny $3$};
\draw(-1,2) node[below] {\tiny $1$};
\draw(1,0) node[below] {\tiny $2$};
\draw(3,0) node[below] {\tiny $3$};

\draw (-3,7) node {\tiny $C_1$} (-3,5) node {\tiny $C_2$} (-3,3) node {\tiny $C_3$} (-3,1) node {\tiny $C_4$};
\end{tikzpicture}
\end{minipage}
\caption{$\Odd$ Cases 4.I.OA) (left), and 4.I.OB) (right)}
\label{H22OddCase4IFig}
\end{figure}
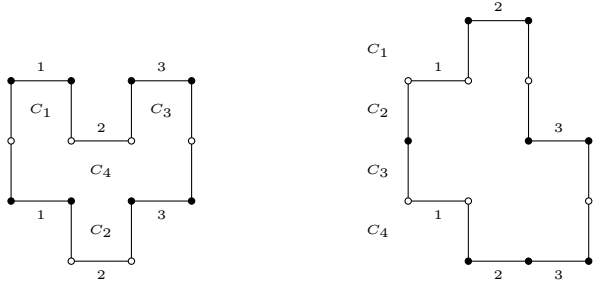

\begin{figure}[htb]
\centering
\begin{minipage}[t]{0.3\linewidth}
\centering
\begin{tikzpicture}[scale=0.40]
\draw (0,0)--(0,8)--(4,8)--(4,6)--(2,6)--(2,4)--(4,4)--(4,2)--(2,2)--(2,0)--cycle;

\foreach \x in {(0,0),(0,8),(2,8), (4,8),(4,2),(2,2),(2,0),(0,2)} \filldraw[fill=black] \x circle (3pt);
\foreach \x in {(4,6),(2,6),(2,4),(4,4),(0,4),(0,6)} \filldraw[fill=white] \x circle (3pt);

\draw(1,8) node[above] {\tiny 1};
\draw(3,4) node[above] {\tiny 3};
\draw(3,8) node[above] {\tiny 2};
\draw(1,0) node[below] {\tiny 1};
\draw(3,2) node[below] {\tiny 2};
\draw(3,6) node[below] {\tiny 3};

\draw(0,1) node[left] {\tiny $C_4$};
\draw(0,3) node[left] {\tiny $C_3$};
\draw(0,5) node[left] {\tiny $C_2$};
\draw(0,7) node[left] {\tiny $C_1$};

\end{tikzpicture}
\end{minipage}
\begin{minipage}[t]{0.3\linewidth}
\centering
\begin{tikzpicture}[scale=0.40]
\draw (0,0)--(0,8)--(4,8)--(4,6)--(2,6)--(2,4)--(4,4)--(4,2)--(2,2)--(2,0)--cycle;
\foreach \x in {(0,0),(0,8),(2,8), (4,8),(4,6),(2,6),(2,0), (0,6)} \filldraw[fill=black] \x circle (3pt);
\foreach \x in {(0,4),(0,2),(2,4),(2,2),(4,4),(4,2)} \filldraw[fill=white] \x circle (3pt);
\draw(1,8) node[above] {\tiny 1};
\draw(3,4) node[above] {\tiny 3};
\draw(3,8) node[above] {\tiny 2};
\draw(1,0) node[below] {\tiny 1};
\draw(3,2) node[below] {\tiny 3};
\draw(3,6) node[below] {\tiny 2};

\draw(0,1) node[left] {\tiny $C_4$};
\draw(0,3) node[left] {\tiny $C_3$};
\draw(0,5) node[left] {\tiny $C_2$};
\draw(0,7) node[left] {\tiny $C_1$};
\end{tikzpicture}
\end{minipage}
\begin{minipage}[t]{0.3\linewidth}
\centering
\begin{tikzpicture}[scale=0.40]
\draw (0,0)--(0,8)--(2,8)--(2,6)--(4,6)--(4,4)--(6,4)--(6,2)--(4,2)--(4,0)--cycle;
\foreach \x in {(0,0),(0,8),(2,8),(2,6),(4,6),(2,0),(0,6),(4,0)} \filldraw[fill=black] \x circle (3pt);

\foreach \x in {(4,4),(6,4),(6,2),(4,2),(0,2),(0,4)} \filldraw[fill=white] \x circle (3pt);

\draw(1,8) node[above] {\tiny 1};
\draw(3,6) node[above] {\tiny 2};
\draw(5,4) node[above] {\tiny 3};
\draw(1,0) node[below] {\tiny 1};
\draw(3,0) node[below] {\tiny 2};
\draw(5,2) node[below] {\tiny 3};

\draw(0,1) node[left] {\tiny $C_4$};
\draw(0,3) node[left] {\tiny $C_3$};
\draw(0,5) node[left] {\tiny $C_2$};
\draw(0,7) node[left] {\tiny $C_1$};

\end{tikzpicture}
\end{minipage}
\caption{$\Odd$ Cases 4.II.OA) (left), 4.II.OB) (center), 4.II.OC) (right)}
\label{H22OddCase4IIFig}
\end{figure}
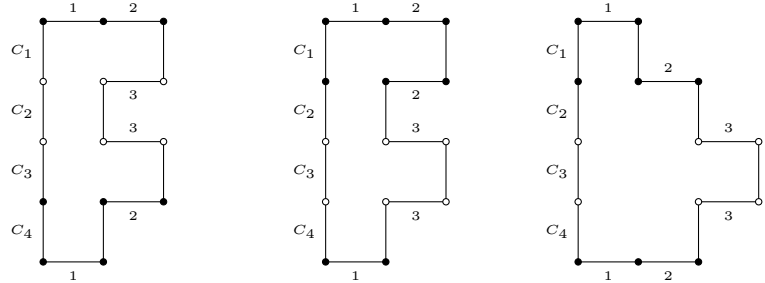

We will consider each of the diagrams listed in Lemma~\ref{lm:4cyl:dm}. Theorem~\ref{thm:H22:odd:4cyl} will follow from the Lemmas~\ref{lm:Odd:4IOA}, \ref{lm:Odd:4IOB}, \ref{lm:Odd:4IIOC}, \ref{lm:Odd:4IIOA}, \ref{lm:Odd:4IIOB}. Throughout, it is important to note that if $\cM \subset \cH(2,2)$ has rank two, then $\dim_{\mathbb{C}}(\cM) \leq 5$.

\bigskip

%
%
%

\subsubsection{Case 4.I.OA)}

\begin{lemma}
\label{lm:Odd:4IOA}
Let $M$ be a horizontally periodic translation surface in a rank two affine manifold $\cM$.  If $M$ satisfies Case 4.I.OA), then $\cM = \prymodd$.
\end{lemma}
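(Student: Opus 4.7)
My plan is to follow the template of Lemma~\ref{H22hypDoubleCov} for the hyperbolic component: first pin down the partition of $C_1,\dots,C_4$ into $\cM$-parallel equivalence classes, then use cylinder proportions to force enough metric symmetries on $M$ to exhibit an involution $\tau$ with $\tau^*\omega=-\omega$, and finally upgrade $M\in\prymodd$ to $\cM=\prymodd$ by a tangent-space and dimension argument.

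For the equivalence-class step, by Theorem~\ref{thm:RankkImpkCyl} the four cylinders split into at least two classes, and I claim that none of the simple cylinders $C_1, C_2, C_3$ can be free: indeed, collapsing such a free simple cylinder via Proposition~\ref{prop:RankkImpRankkBd} would yield a surface in $\cH(4)$ whose orbit closure has rank two, hence equals $\widetilde{\cQ}(3,-1^3)\subset\cH^{\rm odd}(4)$ by Theorem~\ref{NWANWThm}, and the resulting three-cylinder diagram turns out to be incompatible with the combinatorial pattern imposed by the Prym involution of $\widetilde{\cQ}(3,-1^3)$. Combining this with Lemma~\ref{lm:n:f:sim:cyl} and with the homological relation among the four core curves coming from Case~4.I), I expect to force the partition into $\cC:=\{C_1, C_3\}$ and $\cC':=\{C_2, C_4\}$, following the left--right reflection symmetry of the diagram. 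Next, twisting $\cC$ and $\cC'$ independently to build transverse vertical cylinders and applying Proposition~\ref{CylinderPropProp} to their proportions should give $h(C_1)=h(C_3)$, $\wth(C_1)=\wth(C_3)$, $\twist(C_1)=\twist(C_3)$ and the analogous equalities for $(C_2,C_4)$; together with the combinatorial symmetry of the diagram, these metric identifications will produce an explicit involution $\tau$ exchanging $C_1\leftrightarrow C_3$ and $C_2\leftrightarrow C_4$ with $\tau^*\omega=-\omega$ and four isolated fixed points, so $M$ becomes the standard orienting double cover of a half-translation surface in $\cQ(4,-1^4)$ and thus $M\in\prymodd$.

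Finally, the deformation argument of the proof of Theorem~\ref{thm:H22hypClass} applies: for $v\in T^\bR_M\cM$ with $\|v\|$ small enough, $M+\eps v$ still realizes Case~4.I.OA), so the preceding analysis puts $M+\eps v\in\prymodd$, giving $T^\bR_M\cM\subset T^\bR_M\prymodd$ and $\dim_\bC\cM\leq 5$. To exclude $\cM=\coversodd$ I will argue that no surface in $\coversodd$ admits a four-cylinder decomposition of type Case~4.I.OA): since $\coversodd$ consists of unramified double covers of $\cH(2)$, whose horizontal cylinder decompositions lift from those of $\cH(2)$ (at most two cylinders on the base), a short case-by-case inspection of the possible lifts rules out the combinatorial pattern of Case~4.I.OA). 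Hence $M\notin\coversodd$, $\cM\neq\coversodd$, and therefore $\cM=\prymodd$. The main obstacle will be the equivalence-class step: ruling out every alternative partition of the four cylinders (three-class splits, and the ``wrong'' two-class pairings such as $\{C_1,C_4\}\sqcup\{C_2,C_3\}$) requires a careful case analysis combining the degeneration technology of Proposition~\ref{prop:RankkImpRankkBd} with explicit combinatorial information about $\widetilde{\cQ}(3,-1^3)$ and its action on three-cylinder diagrams.
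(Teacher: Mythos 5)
Your proposal has a genuine gap at its very first substantive step, and the error propagates through the rest of the argument. You claim that none of the simple cylinders $C_1,C_2,C_3$ can be free, on the grounds that collapsing a free simple cylinder would produce a $3$-cylinder diagram in $\cH^{\rm odd}(4)$ incompatible with the Prym involution of $\prym$. This is exactly backwards. In Case 4.I.OA) the three simple cylinders are each attached only to $C_4$, and collapsing one of them yields a surface with one non-simple cylinder and two simple cylinders glued to it --- precisely the configuration that \emph{is} realized in $\prym$, where the involution fixes the large cylinder and exchanges the two simple ones. So the degeneration produces no contradiction; it produces the involution. The correct conclusion from Theorem~\ref{thm:RankkImpkCyl}, the relation $c_1+c_2+c_3=c_4$, and Lemma~\ref{lm:n:f:sim:cyl} is that $C_4$ \emph{and at least one simple cylinder are free}. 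The paper then collapses that free simple cylinder, lands in $\prym$ by Proposition~\ref{prop:RankkImpRankkBd} and Theorem~\ref{NWANWThm}, and extends the Prym involution of the limit back over the collapsed cylinder (a simple cylinder is a parallelogram and admits a central symmetry), giving $M\in\prymodd$ directly.

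Your target partition $\cC=\{C_1,C_3\}$, $\cC'=\{C_2,C_4\}$ is also impossible: each simple cylinder is adjacent only to $C_4$, so Lemma~\ref{lm:n:f:sim:cyl} forbids any of them from being $\cM$-parallel to $C_4$; and the involution you aim to construct, exchanging $C_2$ with $C_4$, cannot exist since $C_2$ is simple while $C_4$ carries three saddle connections on each boundary. You have transplanted the proof of Lemma~\ref{H22hypDoubleCov} for Case 4.I.HA) --- whose diagram genuinely pairs a simple cylinder with the large one --- onto Case 4.I.OA), which is the combinatorial analogue of 4.I.HB) and requires the degeneration argument instead. Finally, your last step (ruling out $\cM=\coversodd$ by inspecting lifts from $\cH(2)$) is unnecessary in the paper's route: once $\prym$ lies in the boundary of $\cM$, algebraicity forces $\dim_\bC\cM>\dim_\bC\prym=4$, hence $\dim_\bC\cM=5$ and $\cM=\prymodd$ by the tangent-space inclusion.
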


\begin{proof}
Observe that
$$c_1+c_2+c_3=c_4.$$
By Theorem~\ref{thm:RankkImpkCyl}, the horizontal cylinders of $M$ fall into at least two equivalence classes. We claim that one of the simple cylinders is free.  If $C_1,C_2,C_3$ are $\cM$-parallel, then the relation above implies that $C_4$ also belongs to this equivalence class, and we have a contradiction to the assumption that there are at least two equivalence classes. If one of $C_1,C_2,C_3$ is $\cM$-parallel to $C_4$, then we also have a unique equivalence class by Lemma~\ref{lm:n:f:sim:cyl}. Therefore, we can conclude that $C_4$ and one of the simple cylinders are free.


Let $C_1$ be the free simple cylinder.  Collapse $C_1$ while keeping the other cylinders unchanged.  This yields a surface $M' \in \prym$ by Proposition~\ref{prop:RankkImpRankkBd} and Theorem \ref{NWANWThm}.

In particular, $M' \in \prym$, thus there exists an involution $\tau'$ on $M'$ that fixes $C_4$ and exchanges $C_2$ and $C_3$. Remark that $C_1$ degenerates to a horizontal saddle connection invariant by $\tau'$.  We claim that $\tau'$ can be extended to an involution $\tau$ on $X$ that fixes $C_1,C_4$ and exchanges $C_2$ and $C_3$.  To see this it suffices to show that there is an involution fixing $C_1$.  However, every simple cylinder can be realized as a parallelogram, which clearly admits an involution of order two given by rotation by $\pi$.  Thus, $\tau'$ acts on $M$ exactly as claimed, and we conclude that $M \in \prymodd$.

Since the boundary of $\cM$ contains $\prym$, by algebraicity, $\dim_{\bC}(\cM) > \dim_{\bC} \prym = 4$.  Hence, $\dim_{\bC}(\cM) = 5$.  For any $v\in T^\bR_M \cM$ and $\eps\in \bR$ small enough, the deformation $M_\eps:=M+\eps v$ of $M$ also has a cylinder decomposition in the horizontal direction with the same diagram. The arguments above then imply that $M_\eps \in \prymodd$, thus we have $T^\bR_M \cM \subset T^\bR_M \prymodd$. Since $\dim_\bC\cM=\dim_\bC\prymodd$, we have $T^\bR_M \cM = T^\bR_M \prymodd$ and $\cM=\prymodd$.
\end{proof}

\bigskip

\subsubsection{Case 4.I.OB)}

\begin{lemma}
\label{lm:Odd:4IOB}
Let $M$ be a horizontally periodic translation surface in a rank two affine manifold $\cM$.  If $M$ satisfies Case 4.I.OB), then $\cM = \prymodd$.
\end{lemma}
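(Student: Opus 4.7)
The plan is to imitate the strategy of Lemma~\ref{lm:Odd:4IOA}. Inspection of Figure~\ref{H22OddCase4IFig} shows that in Case 4.I.OB) the cylinder $C_1$ is the only simple cylinder, $C_2$ and $C_4$ are strictly semi-simple, and $C_3$ has two saddle connections on each boundary; moreover, since pinching all four cylinders degenerates $M$ to two spheres joined by four pairs of simple poles, the four core curves span a subspace of $H_1(M;\bZ)$ of codimension one, and reading off the diagram shows that the unique homological relation has the form $c_1+c_3=c_2+c_4$ (up to signs and relabeling). Combining this relation with Theorem~\ref{thm:RankkImpkCyl} (at least two equivalence classes), Lemma~\ref{lm:n:f:sim:cyl} applied to $C_1$, and proportion-type arguments based on transverse cylinders through pairs of the $C_i$, I will conclude that \emph{$C_1$ must be free}. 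This is the computational crux.

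Once $C_1$ is known to be free, I will collapse it; by Proposition~\ref{prop:RankkImpRankkBd} the resulting surface $M'$ lies in a rank two affine submanifold of $\cH(4)$, and Theorem~\ref{NWANWThm} identifies it as $\prym$. Consequently $M'$ carries the Prym involution $\tau'$ that exchanges the two original zeros of $M$. I will then extend $\tau'$ to an involution $\tau$ of $M$ by using that a simple cylinder is isometric to a parallelogram and therefore admits a canonical order-two rotation about its centre; gluing this rotation on $C_1$ to $\tau'$ on $M\setminus C_1$ along the identified boundaries yields a well-defined involution of $M$ with the correct action on the remaining cylinders, which shows $M\in\prymodd$.

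A dimension count finishes the lemma. By the algebraicity of $\cM$ (\cite{Filip2}) and the fact that $\prym$, of complex dimension four, sits in the boundary of $\cM$, we get $\dim_\bC\cM\geq 5$; but $\cM\subset\Odd$ has rank two, forcing $\dim_\bC\cM=5=\dim_\bC\prymodd$. Applying the same argument to any deformation $M+\eps v$ with $v\in T_M^\bR\cM$ (which preserves the cylinder diagram for $\eps$ small enough) gives $T_M^\bR\cM\subset T_M^\bR\prymodd$, and equality of dimensions yields $\cM=\prymodd$. The main obstacle I anticipate is establishing freeness of $C_1$: since $C_1$ is adjacent to both $C_2$ (via its bottom boundary) and $C_4$ (via the saddle connection on its top), Lemma~\ref{lm:n:f:sim:cyl} does not immediately rule out $\cM$-parallelism of $C_1$ with another cylinder, and one must use the homological relation $c_1+c_3=c_2+c_4$ together with vertical cylinders $\cM$-parallel to transverse ones and Proposition~\ref{CylinderPropProp} to pin down the equivalence classes. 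The remaining steps are essentially bookkeeping modeled on Lemma~\ref{lm:Odd:4IOA}.
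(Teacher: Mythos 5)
Your reading of the diagram for Case 4.I.OB) is accurate ($C_1$ is the unique simple cylinder, $C_2$ and $C_4$ are strictly semi-simple, and the relation is $c_1+c_3=c_2+c_4$), and your endgame --- collapse a free $C_1$ into $\cH(4)$, invoke Proposition~\ref{prop:RankkImpRankkBd} and Theorem~\ref{NWANWThm} to land in $\prym$, extend the Prym involution across the parallelogram $C_1$, and finish with the algebraicity/dimension count --- is exactly how the paper concludes. But the crux, freeness of $C_1$, is precisely the step you leave unproved, and the tools you list for it are not the ones that make it work. The paper's route is: (i) rule out a single equivalence class, and rule out the pairing $\{C_1,C_3\},\{C_2,C_4\}$, using the homological relation; (ii) rule out the pairings $\{C_1,C_2\},\{C_3,C_4\}$ and $\{C_1,C_4\},\{C_2,C_3\}$ not by cylinder proportions but by evaluating the functionals $\alpha_i$ on the period vector of $M$: $\cM$-parallelism forces $\alpha_i=\bigl(\wth(C_i)/\wth(C_j)\bigr)\alpha_j$, and substituting into $\alpha_1+\alpha_3=\alpha_2+\alpha_4$ forces equal circumferences, contradicting the strict inequalities $\wth(C_2)>\wth(C_1)$ and $\wth(C_3)>\wth(C_4)$ read off the diagram; (iii) rule out all four cylinders being free, since three free classes would give a three-dimensional isotropic subspace of the four-dimensional symplectic space $p(T_M^{\bR}\cM)$; and, crucially, (iv) rule out $C_2$ or $C_4$ being free by a \emph{degeneration} argument: collapsing a free $C_2$ (or $C_4$) collides the two zeros and yields a three-cylinder surface in $\cH(4)$ that must lie in $\prym$ by Proposition~\ref{prop:RankkImpRankkBd} and Theorem~\ref{NWANWThm}, yet whose cylinder diagram cannot carry the Prym involution. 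Only after (i)--(iv) does one know the classes are $\{C_2,C_4\},\{C_1\},\{C_3\}$, whence $C_1$ (and $C_3$) is free.

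Step (iv) is the one your declared toolkit (Lemma~\ref{lm:n:f:sim:cyl}, transverse cylinders, Proposition~\ref{CylinderPropProp}) does not reach: as you note, $C_1$ is adjacent to both $C_2$ and $C_4$, so Lemma~\ref{lm:n:f:sim:cyl} is silent, and there is no evident proportion computation that excludes, for instance, the configuration $\{C_1,C_3\},\{C_2\},\{C_4\}$ without passing to the boundary. The circumference-ratio evaluation in (ii) is likewise absent from your list. So the proposal has a genuine gap exactly at its self-identified ``computational crux''; the missing ideas are the evaluation of $\alpha_i$ on the period vector and, above all, the auxiliary collapses of $C_2$ and $C_4$ into $\cH(4)$.
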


\begin{proof}
Again, by Theorem~\ref{thm:RankkImpkCyl}, we know that the horizontal cylinders of $M$ must fall into at least two equivalence classes. If all of the cylinders are free, then $p(T^\bR_M\cM)$ contains a Lagrangian subspace of dimension three.  Therefore, we have at most three equivalence classes.  We claim that there are exactly three equivalence classes of cylinders.  Observe that we have
$$c_1+c_3=c_2+c_4.$$

Therefore, if three of the cylinders belong to an equivalence class then all four cylinders are $\cM$-parallel. Note that we cannot have $\cC_1=\{C_1,C_3\}$ and $\cC_2=\{C_2,C_4\}$ because it would imply that there is one equivalence class by the homological relation among the core curves of cylinders.

Suppose that we have $\cC_1=\{C_1,C_2\}$ and $\cC_2=\{C_3,C_4\}$. Let us denote by $\wth(C_i)$ the circumference of $C_i$ and $\alpha_i$ the element of $(T^\bR_M\cM)^\ast$ defined by $C_i$. Recall that $C_i$ and $C_j$ are $\cM$-parallel precisely means $\alpha_i$ and $\alpha_j$ are proportional (collinear). Thus, there exist $\lambda, \mu \in \bR_{>0}$ such that $\alpha_1=\lambda \alpha_2$ and $\alpha_3=\mu \alpha_4$. Note that
$$
 \lambda=\cfrac{\wth(C_1)}{\wth(C_2)}, \, \, \mu=\cfrac{\wth(C_3)}{\wth(C_4)}
$$
Thus we have
 $$
  (1-\lambda)\alpha_2+(1-\mu)\alpha_4=0 \in (T^\bR_{M}\cM)^\ast.
 $$
But by assumption, $\alpha_2$ and $\alpha_4$ are not collinear, since $C_2$ and $C_4$ are not $\cM$-parallel.  Hence, we must have $\lambda = \mu = 1$.  However, it is clear that $\wth(C_2) > \wth(C_1)$, and $\wth(C_3)> \wth(C_4)$, thus we get a contradiction.  By the same argument, $\{C_1, C_4\}$ and $\{C_2, C_3\}$ are not equivalence classes. We can now conclude that the cylinders $C_1,\dots,C_4$ belong to three equivalence classes.

If $C_2$ is free, then we can collapse it to get a surface $N$ in $\cH(4)$ which must belong to $\prym$ (by Proposition~\ref{prop:RankkImpRankkBd} and Theorem~\ref{NWANWThm}). But a surface in $\prym$ cannot admit the same cylinder decomposition as $N$, therefore we have a contradiction, which means that $C_2$ is not free. The same arguments apply to $C_4$.

Since $M$ has three equivalence classes, and we know that neither of $C_2$ and  $C_4$ is free, it follows that  $C_2$ and $C_4$ must belong to the same equivalence class and both $C_1$ and $C_3$ are free.  We can then collapse $C_1$ to get a surface $N$ in $\cH(4)$. Proposition~\ref{prop:RankkImpRankkBd} and Theorem~\ref{NWANWThm} imply that this surface is contained in $\prym$.  The Prym involution of $N$ extends to an involution of $M$ that fixes $C_1,C_3$, and exchanges $C_2,C_4$. In particular, we have $C_2$ and $C_4$ are isometric, and $M \in \prymodd$.

Since the boundary of $\cM$ contains $\prym$, by algebraicity, $\dim_{\bC}(\cM) > \dim_{\bC} \prym = 4$.  Hence, $\dim_{\bC}(\cM) = 5$.  For any vector $v \in T^\bR_M\cM$, and $\eps \in \bR$ small enough, the deformation $M_\eps:=M+\eps v$ also admits a cylinder decomposition in the horizontal direction with the same diagram. The arguments above show that $M_\eps \in \prymodd$. Thus we have $T^\bR_M\cM \subset T^\bR_M\prymodd$.   But $\dim_\bR T^\bR_M\prymodd = 5$,  thus we have $ T^{\bR}_M\cM =T^\bR_M\prymodd=5$, which implies that $\cM=\prymodd$.
\end{proof}
%
%

\bigskip

\subsubsection{Case 4.II.OC)}

\begin{lemma}
\label{lm:Odd:4IIOC}
If $M$ is a horizontally periodic surface in $\cM$, then $M$ cannot admit a cylinder decomposition in Case 4.II.OC).
\end{lemma}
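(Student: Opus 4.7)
My plan is to exhibit, in the transverse direction, a three-cylinder decomposition of $M$ that falls into Case 3.I) of Lemma \ref{3CylDeg} with all three cylinders free, thereby contradicting Lemma \ref{lm:3:f:cyl}.

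Since the saddle connection ``$3$'' appears in both the top and the bottom boundaries of $C_3$, the cylinder $\overline{C_3}$ contains a simple transverse cylinder $V$ whose core curve crosses ``$3$'' exactly once. By Proposition \ref{CylinderPropProp}, any cylinder $\cM$-parallel to $V$ must satisfy $P(\cdot,\{C_3\})=1$ and hence be contained in $\overline{C_3}$; but every parallel trajectory inside $C_3$ disjoint from $V$ enters and leaves $C_3$ through the two distinct saddle connections $b$ (on top of $C_3$) and $c$ (on bottom of $C_3$) without closing up. Hence $V$ is the only transverse cylinder of its direction inside $\overline{C_3}$, so $V$ is free. Lemma \ref{lm:f:sim:cyl} applied to this simple free cylinder $V$, which is adjacent only to $C_3$, then forces $C_3$ to be free as well.

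Next, I twist $C_3$ (now free) to align $V$ vertically and apply Proposition \ref{prop:SimpCylPres} to pass to a nearby square-tiled surface in $\cM$, obtaining a surface that is both horizontally periodic with the same 4.II.OC) diagram and vertically periodic, with twists arranged so that the vertical decomposition is as simple as possible. A direct trace of vertical trajectories then shows the vertical decomposition has exactly three cylinders: $V$ itself, a cylinder $D$ crossing each of $C_1, C_2, C_3, C_4$ exactly once, and a cylinder $D_1$ crossing $C_2, C_3, C_4$ exactly once (via the saddle connection ``$2$''). Tracing the cylinder-end incidence graph shows that the six vertical-cylinder ends all lie in a single connected component, so pinching the three vertical core curves yields a single connected part (a sphere with six simple poles carrying the two double zeros), placing the vertical decomposition in Case 3.I).

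The degeneration pattern of Case 4.II) forces $c_2 = \pm c_4$ in $H_1(M,\bZ)$, so $C_2$ and $C_4$ are $\cM$-parallel; combined with $C_3$ free, the only equivalence-class structures compatible with rank two are $\{C_1, C_2, C_4\}, \{C_3\}$ and $\{C_1\}, \{C_2, C_4\}, \{C_3\}$. In either structure the cylinder proportions against $\{C_3\}$ compute to three distinct values,
\[
P(V,\{C_3\})=1,\quad P(D,\{C_3\})=\frac{h_3}{h_1+h_2+h_3+h_4},\quad P(D_1,\{C_3\})=\frac{h_3}{h_2+h_3+h_4},
\]
so by Proposition \ref{CylinderPropProp} the vertical cylinders $D, D_1, V$ are pairwise non-$\cM$-parallel and all three are free. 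Applying Lemma \ref{lm:3:f:cyl} (after rotating by $\pi/2$ so that the vertical decomposition becomes horizontal) yields the desired contradiction. The most delicate step I anticipate is arranging the twists so that the three-cylinder vertical structure really persists under the approximation by a square-tiled surface, which I expect to handle cleanly using the freeness of $V$ and $C_3$ together with Proposition \ref{prop:SimpCylPres}.
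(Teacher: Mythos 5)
Your overall strategy (produce a transverse three-cylinder decomposition in Case 3.I) with all three cylinders free and invoke Lemma \ref{lm:3:f:cyl}) is a legitimate alternative to the paper's endgame, but the proof has a genuine gap at its very first step: the claim that $V$ is free. You justify it by applying Proposition \ref{CylinderPropProp} with $\cC=\{C_3\}$, but that proposition only applies when $\cC$ is an equivalence class of $\cM$-parallel cylinders, i.e., when $C_3$ is free --- which is exactly what you are trying to deduce (via $V$) a few lines later. If $C_3$ is $\cM$-parallel to $C_2$ and $C_4$ (or to $C_1$), then the equivalence class containing $C_3$ is strictly larger than $\{C_3\}$, a cylinder $\cM$-parallel to $V$ need only have full proportion in that larger union, and in that scenario there genuinely are cylinders parallel to $V$ crossing $C_2$ (resp.\ $C_1$). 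Ruling out these scenarios is the actual content of the lemma: the paper spends the bulk of its proof on precisely this point, assuming $C_3$ lies in the class of $C_2,C_4$ or is $\cM$-parallel to $C_1$ and deriving contradictions from explicit cylinder-proportion computations ($P(C_2,\cD)<P(C_3,\cD)$ in the first case, $P(D',\cC)>0=P(D,\cC)$ in the second). Your argument silently assumes the conclusion of that work.

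A secondary problem: even granting that $V$ is free, Lemma \ref{lm:f:sim:cyl} does not apply to the pair $(V,C_3)$, since that lemma concerns a simple cylinder adjacent, in the same direction, to exactly one other cylinder, whereas $V$ is transverse to and contained in $\overline{C_3}$. This step is repairable --- once $V$ is known to be free, $P(C_i,\{V\})=P(C_3,\{V\})>0$ would force any cylinder $\cM$-parallel to $C_3$ to meet $V\subset\overline{C_3}$ --- but the citation is wrong as written. The remainder of your argument (the explicit vertical three-cylinder decomposition into $V$, $D$, $D_1$, the three distinct proportions against $\{C_3\}$ showing the vertical cylinders are pairwise non-parallel and hence all free, and the Lagrangian/Case 3.I) contradiction) is sound modulo the care you already flag about arranging twists and passing to a square-tiled surface, and it is a genuinely different finish from the paper's, which instead proves that $C_1$ is also free and exhibits a three-dimensional isotropic subspace spanned by $p(\Twi_1)$, $p(\Twi_3)$, $p(h_2\Twi_2+h_4\Twi_4)$. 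But without an honest proof that $C_3$ is free, the lemma is not established.
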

\begin{proof}
We assume that $M$ admits a decomposition in Case 4.II.OC) and will derive a contradiction. We first observe that $c_2$ and $c_4$ are homologous, hence $C_2$ and $C_4$ belong to the same equivalence class $\cC$.

We will now show that $C_3$ is free. Suppose that $C_3\in \cC$, then $C_1$ is free since there are at least two equivalence classes. There exists a simple cylinder $D$ in $C_3$ which can be supposed to be vertical. Let $\cD=\{D,D_1,\dots,D_k\}$ denote the equivalence class of $D$. Since $C_2$ is $\cM$-parallel to $C_3$, there must exist some vertical cylinders in $\cD$ that cross $C_2$. Since $D$ does not cross $C_1$, neither do $D_1,\dots,D_k$, it follows that $D_1,\dots,D_k$ do not fill $C_2$.   For each $i=1,\dots,k$, let $h'_i$ be the height of $D_i$, and let $n_i$ (resp. $m_i$) be the number of intersections of the core curve of $D_i$ with the core curve of $C_2$ (resp. the core curve of $C_3$). Clearly, $n_i \leq m_i$. Let $\ell_i$ denote the length of the horizontal saddle connection $i$, and let $\ell_4 = \ell_1 + \ell_2$.  Remark that the height of $D$ is $\ell_3$.  We have
$$
P(C_2,\cD)=\cfrac{(n_1h'_1+\dots+n_kh'_k)h_2}{\ell_4h_2}=\cfrac{n_1h'_1+\dots+n_kh'_k}{\ell_4}
$$
\noindent and
$$
P(C_3,\cD)=\cfrac{(m_1h'_1+\dots+m_kh'_k+\ell_3)h_3}{(\ell_4+\ell_3)h_3}=\cfrac{m_1h'_1+\dots+m_kh'_k+\ell_3}{\ell_4+\ell_3}.
$$

\noindent Since the cylinders in $\cD$ do not fill $C_2$, we have $P(C_2,\cD) < 1$. Thus
$$
P(C_2,\cD) < \cfrac{n_1h'_1+\dots + n_kh'_k + \ell_3}{\ell_4+\ell_3} \leq \cfrac{m_1h'_1+\dots+m_kh'_k+\ell_3}{\ell_4+\ell_3}=P(C_3,\cD).
$$
We then get a contradiction, from  which we conclude that $C_3$ cannot be $\cM$-parallel to $C_2$.

Next, let us assume that $C_3$ is $\cM$-parallel to $C_1$. Let $D$ be the vertical simple cylinder in $C_3$ described above. Since $C_1$ is $\cM$-parallel to $C_3$, there must exist a vertical cylinder $D'$ in the equivalence class of $D$ that crosses $C_1$.  However, any vertical cylinder crossing $C_1$ must cross $C_2$ and $C_4$.  Hence, $P(D',\cC) > 0$, but $P(D,\cC) = 0$.  The contradiction implies that $C_3$ is free.

Let us now assume that $C_1 \in \cC$.  Let $4$ be the saddle connection between $C_2$ and $C_3$, and let $5$ be the saddle connection between $C_3$ and $C_4$.  Cut off $C_3$ from $M$ and identify $4$ and $5$, we then get a surface $M_1 \in \cH(2)$ which is the union of $C_1,C_2,C_4$.  By construction,  $M_1$ is decomposed into two horizontal cylinders with distinguished simple closed geodesic $c$ in the larger cylinder which is the identification of $4$ and $5$ together with a marked point $x$ on this geodesic that corresponds to the singularity in the boundary of $C_3$.

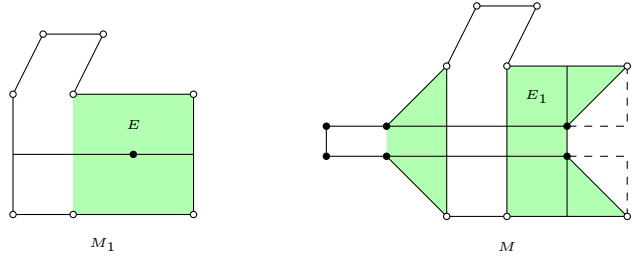
\begin{figure}[htb]
\centering
\begin{minipage}[t]{0.4\linewidth}
\centering
\begin{tikzpicture}[scale=0.4]
\fill[green!30] (2,4) -- (2,0) -- (6,0) -- (6,4) -- cycle;

\draw (0,4) -- (0,0) -- (6,0) -- (6,4) -- (2,4) -- (3,6) -- (1,6) -- cycle;
\draw (0,2) -- (6,2);

\foreach \x in {(0,4),(0,0),(1,6),(2,4), (2,0),(3,6),(6,4),(6,0)} \filldraw[fill=white] \x circle (3pt);
\filldraw[fill=black] (4,2) circle (3pt);

\draw (4,3) node {\tiny $E$} (3,-1) node {\tiny $M_1$};
\end{tikzpicture}
\end{minipage}
\begin{minipage}[t]{0.4\linewidth}
\centering
\begin{tikzpicture}[scale=0.4]
\fill[green!30] (-2,3) -- (-2,2) -- (0,0) -- (0,5) -- cycle;
\fill[green!30] (2,5) -- (2,0) -- (6,0) -- (4,2) -- (4,3) -- (6,5) -- cycle;

\draw (-4,3) -- (-4,2) -- (-2,2) -- (0,0) -- (6,0) -- (4,2) -- (4,3) -- (6,5) -- (2,5) -- (3,7) -- (1,7) -- (0,5) -- (-2,3) -- cycle;
\draw (-2,3) -- (4,3);
\draw (0,5) -- (0,0) (-2,2) -- (4,2) (2,5) -- (2,0) (4,5) -- (4,3) (4,2) -- (4,0) ;

\draw[dashed] (6,5) -- (6,3) -- (4,3);
\draw[dashed] (6,0) -- (6,2) -- (4,2);

\foreach \x in {(0,5),(0,0),(1,7),(2,5), (2,0),(3,7),(6,5),(6,0)} \filldraw[fill=white] \x circle (3pt);
\foreach \x in {(-4,3), (-4,2), (-2,3), (-2,2), (4,3), (4,2)} \filldraw[fill=black] \x circle (3pt);
\draw (3,4) node {\tiny $E_1$} (2,-1) node {\tiny $M$};
\end{tikzpicture}
\end{minipage}
 \caption{Decomposition in the vertical direction of $M$}
 \label{fig:4IIOC:vert:cyl}
\end{figure}

Using $U = \{\left( \begin{smallmatrix} 1 & t \\ 0 & 1 \end{smallmatrix} \right), \, t \in \bR\}$, we can assume that there is a vertical cylinder $E$ contained in the larger cylinder of $M_1$ crossing the saddle connection $2$. Since $C_3$ is free, we can freely twist it so that there is a vertical simple cylinder $D$ contained in $C_3$. Glue $C_3$ back to $M_1$, we see that either $E$ extends to a vertical cylinder in $M$ crossing $C_2,C_3,C_4$ (if the marked point $x\not\in E$), or $E$ splits into two vertical cylinders (if $x \in E$) (see Figure~\ref{fig:4IIOC:vert:cyl}). In both cases let $E_1$ be one of the cylinders arising from $E$. Note that $E_1$ crosses $C_2$ and $C_4$, but not $C_1$. Since we assume that $C_1$ is $\cM$-parallel to $C_2$, there must exist a vertical cylinder $E_2$ in the equivalence class of $E_1$ that crosses $C_1$.  Let $E_2$ cross $C_i$, $n_i$ times.
  Some observations are in order.  Since $C_2$ and $C_4$ are homologous, $n_2 = n_4$.  The assumption that the simple cylinder $D \subset C_3$ is vertical implies $n_3 = n_2$ as well.  This allows us to compute $P(E_1,\{C_3\}) = P(E_2,\{C_3\})$ to get
$$\frac{h_3}{h_2+h_3+h_4} = \cfrac{n_2 h_3}{n_1h_1+n_2h_2+n_2h_3+n_2h_4}.$$
Hence, $n_1 = 0$, which implies the contradictory conclusion that no cylinder $\cM$-parallel to $E_1$ passes through $C_1$.  We can then conclude that $C_1$ is free.

Recall that $h_i\zeta_i$ is the tangent vector (in $T^{\bR}_M \cM$) to the path which is the deformation of $M$ by twisting $C_i$ and fixing the rest of the surface.  In particular $\zeta_i$ evaluates to zero on the core curves of all the cylinders (as the lengths of the core curves are unchanged along this path).

Observe that the core curves of $C_1,C_2,C_3$  span a Lagrangian in $H_1(M,\bZ)$. Since $C_1, C_3$ are free, and  $C_2,C_4$ are $\cM$-parallel, Theorem~\ref{thm:Wright:Cyl:Def} implies that $\Twi_1, \Twi_3$, and $h_2\Twi_2+h_4\Twi_4$ belong to $T^\bR_M \cM$. Since $\Twi_i$ vanishes on the core curves of $C_1,\dots,C_4$, the cup product of $H^1(M,\bR)$ vanishes on the subspace $L \subset p(T^\bR_M\cM)$ spanned by $p(\Twi_1),p(\Twi_3),p(h_2\Twi_2+h_4\Twi_4)$. Since $\cM$ has rank two,   we have $\dim_\bR L \leq 2$.

Let $\Twi'_1=p(\Twi_1),\Twi'_2=p(h_2\Twi_2+h_4\Twi_4), \Twi'_3=p(\Twi_3)$. Let $s_i$ be an oriented saddle connection in $C_i$ joining the zero in the bottom border to the zero in the top border of $C_i$. Let $\gamma_1=s_1, \gamma_2=s_2\cup s_4$, and $\gamma_3=s_3$, then $\gamma_1,\gamma_2,\gamma_3$ represent elements of $H_1(M,\bZ)$. Remark  that we have
$$
\Twi'_i(\gamma_j)=0 \text{  if  } i \neq j \text{ and }  \Twi'_i(\gamma_i) \neq 0
$$

\noindent which implies that $\Twi'_1, \Twi'_2, \Twi'_3$ are independent, hence $\dim_\bR L=3$. We then have a contradiction, which means that Case 4.II.OC) cannot occur.
\end{proof}

\bigskip

\subsubsection{Case 4.II.OA)}

\begin{lemma}
\label{lm:Odd:4IIOA}
Let $M$ be a horizontally periodic translation surface in a rank two affine manifold $\cM \subset \cH^{\rm odd}(2,2)$.  If $M$ satisfies Case 4.II.OA), then there exists $M' \in \cM$ horizontally periodic satisfying Case 4.II.OB).
\end{lemma}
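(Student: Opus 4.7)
My strategy is to pass to the vertical direction of $M$, use the rank-two obstruction to force extra transverse cylinders, and then identify the resulting $4$-cylinder diagram as 4.II.OB) by eliminating the other possibilities from Lemma~\ref{lm:4cyl:dm}.

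First, I would read off the equivalence classes of $\cM$-parallel horizontal cylinders in Case 4.II.OA). Direct inspection of the diagram gives the homology relations $c_1 \equiv -c_3$ and $c_2 \equiv -c_4$ in $H_1(M,\bZ)$, so $\{C_1, C_3\}$ and $\{C_2, C_4\}$ are automatically $\cM$-parallel pairs. Together with Theorem~\ref{thm:RankkImpkCyl} (which forces at least two equivalence classes), these must be exactly the two classes; in particular, the two simple cylinders $C_2$ and $C_4$ share the same class, while $C_1, C_3$ each carry two saddle connections on top and bottom.

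Next, I would analyze the vertical direction. One sees two vertical cylinders: $V_1$, which traverses each of $C_1,\dots,C_4$ once, and $V_2$, which is confined to $C_1 \cup C_3$. Using Proposition~\ref{CylinderPropProp} against both horizontal equivalence classes, the plan is to show that $V_1$ and $V_2$ must be $\cM$-parallel, so the vertical direction has only one equivalence class. Because $\mathrm{rank}(\cM) = 2$, this forces $\Tw(M,\cM) \neq \CP(M,\cM)$ in the vertical direction, so Lemma~\ref{lm:WrightTwistPresLem}, combined with Proposition~\ref{prop:SimpCylPres} to stay in a square-tiled neighborhood, produces a nearby surface $M'' \in \cM$ that is vertically periodic with strictly more vertical cylinders. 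Rotating $M''$ by $\pi/2$ and invoking Theorem~\ref{thm:C3I:imply:4cyl} if only three cylinders appear, I obtain a horizontally periodic $M' \in \cM$ with four horizontal cylinders.

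Finally, I would identify the diagram of $M'$ among the five possibilities in Lemma~\ref{lm:4cyl:dm}. Case 4.II.OC) is excluded by Lemma~\ref{lm:Odd:4IIOC}. Cases 4.I.OA) and 4.I.OB) would force $\cM = \prymodd$ by Lemmas~\ref{lm:Odd:4IOA} and \ref{lm:Odd:4IOB}, and one checks directly that $\prymodd$ contains surfaces of type 4.II.OB), so we are done in those cases. This leaves Cases 4.II.OA) and 4.II.OB), and the desired conclusion is that $M'$ is genuinely of type 4.II.OB). The main obstacle will be precisely this last discrimination: ruling out that the deformation returns $M'$ to type 4.II.OA). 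I expect this to be handled by tracking the singularities and adjacencies produced by the Lemma~\ref{lm:WrightTwistPresLem} perturbation and observing that the new simple vertical cylinder created in Step~2 yields a saddle-connection pattern incompatible with the symmetric identifications defining 4.II.OA), leaving only 4.II.OB).
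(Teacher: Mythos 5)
Your proposal breaks down at two load-bearing points. First, the opening homology claim is false: in Case 4.II.OA) one reads off $c_1=c_3$ (both equal $c_2+[3]$ and $c_4+[2]$ in $H_1(M,\bZ)$, where $[2],[3]$ denote the classes of the saddle connections so labeled), but $c_2$ and $c_4$ are \emph{not} homologous --- their difference is $[2]-[3]$, a nontrivial class, and indeed $c_1,c_2,c_4$ span a $3$-dimensional subspace. So $\{C_2,C_4\}$ is not automatically an equivalence class, and your argument never addresses the genuinely possible configuration in which $C_2$ is $\cM$-parallel to $C_1,C_3$ while $C_4$ is free. The paper has to do real work to exclude this sub-case: it deforms $M$ by $\imath\eps v$ with $v\in\ker\alpha_1\setminus\ker\alpha_4$, shows the resulting square-tiled surface is horizontally periodic of type 4.II.OC), and appeals to Lemma~\ref{lm:Odd:4IIOC}. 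Second, your vertical-direction mechanism cannot work as described. With the twists normalized, the two vertical cylinders are $V_1$, crossing all four $C_i$, and $V_2\subset\overline{C}_1\cup\overline{C}_3$; applying Proposition~\ref{CylinderPropProp} to the equivalence class containing $C_4$ (whether $\{C_4\}$ or $\{C_2,C_4\}$) gives $P(V_2,\cdot)=0$ while $P(V_1,\cdot)>0$, so $V_1$ and $V_2$ are forced into \emph{different} classes --- the opposite of what you plan to show. Two free vertical cylinders spanning a $2$-dimensional isotropic subspace are perfectly consistent with rank two, so no inequality $\Tw\neq\CP$ follows and no extra cylinders are produced.

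Even granting that a four-cylinder surface $M'$ could be produced, the final discrimination you defer --- ruling out that $M'$ lands back in type 4.II.OA) --- is precisely the content of the lemma, and ``I expect this to be handled'' is not an argument. The paper's proof avoids all of this with a direct computation: after splitting into the two sub-cases above (ruling out ``$C_2$ and $C_4$ both free'' by the Lagrangian obstruction, as you would), it tilts the cylinders outside the class of $C_1,C_3$ out of the horizontal by adding $\imath\eps v$ to the period coordinates, keeps $c_1=c_3$ horizontal, identifies the complement of $C_1'\cup C_3'$ as two slit tori containing $C_2'$ and $C_4'$, and reads off the new horizontal diagram explicitly as 4.II.OB) (or as 4.II.OC), yielding the contradiction in sub-case (a)). You should replace the vertical-periodicity route with that deformation argument.
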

\begin{proof}
Clearly we have $c_1$ and $c_3$ are homologous, therefore $\alpha_1=\alpha_3$ ($\alpha_i$ is the element of $(T^\bR_M\cM)^\ast$ defined by $c_i$). Let $\cC$ denote the equivalence class of $C_1$. Since we have at least two equivalence classes, at least one of $C_2,C_4$ does not belong to $\cC$. Without loss of generality,  let us assume that $C_4 \not\in \cC$. Set $V:=T^\bR_M\cM \subset H^1(M,\Sigma,\bR)$.  If $C_2$ and $C_4$ are both free, then by similar arguments to the proof of Case 4.II.OC), we see that  the projection of the family $ \{h_1\Twi_1+h_3\Twi_3, \Twi_2,\Twi_4\} $ spans a Lagrangian subspace of dimension three in $p(V)$, which is impossible. Therefore, we only need to consider  two cases:
 \begin{itemize}
  \item[a)]  $C_2 \in \cC$ and $C_4$ is free. In this case we have $\ker\alpha_1=\ker\alpha_2=\ker\alpha_3 \neq \ker\alpha_4 \subset T^\bR_M\cM$. Let $v \in \ker\alpha_1\setminus \ker\alpha_4$ be a vector such that $\alpha_1(v)=0$ and $\alpha_4(v)=1$. Consider  the surface $M':=M+\imath\eps v$, with $\eps \in \bR$ small enough.  We will identify surfaces in a neighborhood of $M$ with elements of $H^1(M,\Sigma;\bR)$ via the period mapping.  With this identification, if $s$ is a saddle connection or a core curve of a cylinder on $M$, by $s(M)$ and $s(M')$ we will mean the periods of $s$ in $M$ and $M'$ respectively.   Note that the cylinders $C_i$ persist under all small deformations of $M$ in the sense that closed curves are sent to closed curves even if they do not remain parallel (or horizontal).
  We have
  $$
  c_j(M')=c_j(M) \in \bR, \, j=1,2,3, \text{ since } v(c_j)=0,
  $$
 \noindent and
 $$
 c_4(M')=c_4(M)+\imath\eps \not\in \bR,
 $$
which implies that $c_1,c_2,c_3$ are horizontal in $M'$, but $c_4$ is not.  It follows that the lower boundary of $C_3$ and the upper boundary of $C_1$ become broken lines in $M'$ (see Figure~\ref{fig:Case:4IIOA:deform1}). Let $C'_i$ denote the cylinder in $M'$ with core curve $c_i$. Note also that the cylinders  $C'_1,C'_2,C'_3,C'_4$ do not fill $M'$.  The complement of $C'_1\cup C'_2\cup C'_3$ is a slit torus that properly contains $C'_4$. Since $\cM$ is defined over $\bQ$, we can choose $M$ and $\eps$ so that $M'$ is a square-tiled surface, in particular $M'$ admits a cylinder decomposition in the horizontal direction, which means that  the slit torus is horizontally periodic. It is not difficult to check that the cylinder decomposition of $M'$ belongs to the Case 4.II.OC). Hence, by Lemma~\ref{lm:Odd:4IIOC} this case does not occur.
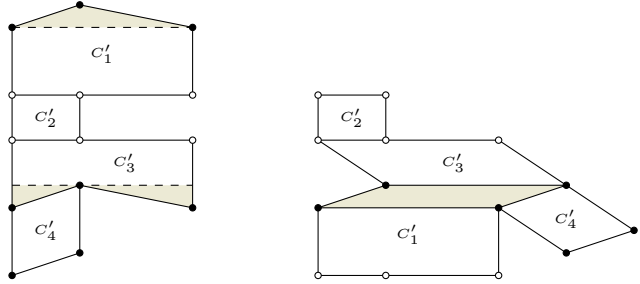
\begin{figure}[htb]
\begin{minipage}[t]{0.4\linewidth}
\centering
\begin{tikzpicture}[scale=0.3]
\fill[black!40!yellow!20] (0,11) -- (8,11) -- (3,12) -- cycle;
\fill[black!40!yellow!20] (0,4) -- (0,3) -- (3,4) -- cycle;
\fill[black!40!yellow!20] (3,4) -- (8,4) -- (8,3) -- cycle;
\draw (0,0) -- (3,1) -- (3,4) -- (8,3) -- (8,6) -- (3,6) -- (3,8) -- (8,8) -- (8,11) -- (3,12) -- (0,11) -- cycle;
\draw (0,8) -- (3,8) (0,6) -- (3,6) (0,3) -- (3,4);
\draw[dashed] (0,11 ) -- (8,11) (0,4) -- (8,4);
\foreach \x in {(0,11),(0,3),(0,0), (3,12),(3,4), (3,1), (8,11), (8,3)} \filldraw[fill=black] \x circle (4pt);
\foreach \x in {(0,8),(0,6),(3,8),(3,6),(8,8),(8,6)} \filldraw[fill=white] \x circle (4pt);

\draw (4,9) node[above] {\tiny $C'_1$} (1.5,7) node {\tiny $C'_2$} (5,5) node {\tiny $C'_3$} (1.5,2) node {\tiny $C'_4$};
\end{tikzpicture}
\end{minipage}
\begin{minipage}[t]{0.4\linewidth}
\centering
\begin{tikzpicture}[scale=0.3]
\fill[black!40!yellow!20] (0,3) -- (8,3) -- (11,4) -- (3,4) -- cycle;

\draw (0,0) -- (8,0) -- (8,3) -- (11,1) -- (14,2) -- (11,4) -- (8,6) -- (3,6) -- (3,8) -- (0,8) -- (0,6) -- (3,4) -- (0,3) -- cycle;
\draw (0,6) -- (3,6) (3,4) -- (11,4) (0,3) -- (8,3) -- (11,4);

\foreach \x in {(0,3),(3,4),(8,3),(11,4),(11,1),(14,2)} \filldraw[fill=black] \x circle (4pt);
\foreach \x in {(0,8),(0,6),(0,0),(3,8),(3,6),(3,0),(8,6),(8,0)} \filldraw[fill=white] \x circle (4pt);

\draw (4,1) node[above] {\tiny $C'_1$} (6,5) node {\tiny $C'_3$} (1.5,7) node {\tiny $C'_2$} (11,2.5) node {\tiny $C'_4$} ;
\end{tikzpicture}
\end{minipage}
\caption{Deformation of a surface in Case 4.II.OA) to Case 4.II.OC)}
\label{fig:Case:4IIOA:deform1}
\end{figure}

 \item[b)] $C_2$ and $C_4$ are $\cM$-parallel. By assumption, there exists $\mu \in \bR_{>0}$ such that $\alpha_2=\mu\alpha_4$.  Since $C_4 \not\in \cC$, there exists a vector $v \in V$ such that $\alpha_1(v)=0$ and $\alpha_4(v)=1$. Note that in this case we have $\alpha_2(v) =\mu\alpha_4(v)\neq 0$. Consider the deformation $M'=M+\imath\eps v$, with $\eps \in \bR$ small enough. Let $C'_i$ be the cylinders in $M'$ corresponding to $C_i$. Using the same argument as above, we see that $C'_1$ and $C'_3$ are horizontal, but $C'_2$ and $C'_4$ are not. Note that in this case $C'_1$ and $C'_3$ are simple cylinders, and the complement of $C'_1\cup C'_3$ is the disjoint union of two slit tori containing $C'_2$ and $C'_4$ (see Figure~\ref{fig:Case:4IIOA:deform2}). We can choose $M$ and $\eps$ so that $M'$ is a square-tiled surface.  Hence, $M'$ is horizontally periodic. It is not difficult to check that the diagram of the cylinder decomposition of $M'$ belongs to Case 4.II.OB).  The lem
 ma is then proved.

\begin{figure}[htb]
\begin{minipage}[t]{0.4\linewidth}
\centering
\begin{tikzpicture}[scale=0.35]
\fill[blue!30!] (0,10) -- (6,10) -- (3,11) -- cycle;
\fill[blue!30] (0,3) -- (0,2) -- (3,3) -- cycle;
\fill[blue!30] (3,3) -- (6,2) -- (6,3) -- cycle;

\fill[black!20!yellow!30] (0,5) -- (6,5) -- (3,6) -- cycle;
\fill[black!20!yellow!30] (0,8) -- (0,7) -- (3,8) -- cycle;
\fill[black!20!yellow!30] (3,8) -- (6,8)  -- (6,7) -- cycle;

\draw (0,0) -- (3,1) -- ( 3,3) -- ( 6,2) -- (6,5) -- (3,6) -- (3,8) -- (6,7) -- (6,10) -- (3,11) -- (0,10) -- cycle;
\foreach \x in {(0,10), (0,8), (0,5), (0,3)} \draw \x -- +(6,0);
\foreach \x in {(0,7), (0,5), (0,2)} \draw \x -- +(3,1);

\foreach \x in {(0,10),(0,2), (0,0), (3,11), (3,3), (3,1) ,(6,10), (6,2)} \filldraw[fill=black] \x circle (4pt);
\foreach \x in {(0,7),(0,5), (3,8), (3,6),(6,7), (6,5)} \filldraw[fill=white] \x circle (4pt);

\draw (4,9) node {\tiny $C'_1$} (1.5,6.5) node {\tiny $C'_2$} (4,4) node {\tiny $C'_3$} (1.5,1.5) node {\tiny $C'_4$};
\end{tikzpicture}
\end{minipage}
\begin{minipage}[t]{0.4\linewidth}
\centering
\begin{tikzpicture}[scale=0.35]
\fill[blue!30!] (0,0) -- (6,0) -- (9,1) -- (3,1) -- cycle;
\fill[black!20!yellow!30] (0,3) -- (3,4) -- (9,4) -- (6,3) -- cycle;

\draw (0,0) -- (6,0) -- ( 9,-1) -- (12,0) -- (9,1) -- (6,3) -- (9,2) -- (12,3) -- (9,4) -- (6,6) -- (0,6) -- (3,4) -- (0,3) -- (3,1) -- cycle;

\draw (3,4) -- (9,4) -- (6,3) -- (0,3) (3,1) -- (9,1) -- (6,0);

\foreach \x in {(0,6), (0,0), (3,1), (6,6), (6,0), (9,1) , (9,-1), (12,0)} \filldraw[fill=black] \x circle (4pt);
\foreach \x in {(0,3),(3,4), (6,3), (9,4), (9,2), (12,3)} \filldraw[fill=white] \x circle (4pt);

\draw (5,5) node {\tiny $C'_1$} (9,3) node {\tiny $C'_2$} (5,2) node {\tiny $C'_3$} (9,0) node {\tiny $C'_4$};

\end{tikzpicture}
\end{minipage}
\caption{Deformation of a surface in Case 4.II.OA) to Case 4.II.OB)}
\label{fig:Case:4IIOA:deform2}
\end{figure}
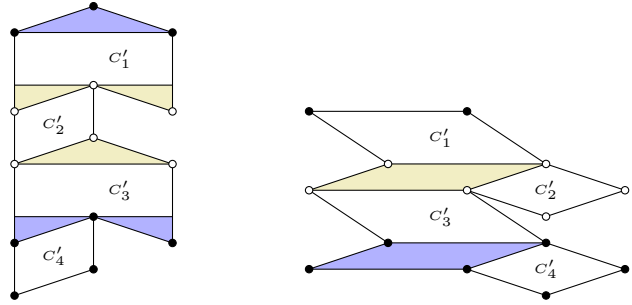
\end{itemize}
\end{proof}

%

\bigskip

\subsubsection{Case 4.II.OB)}

\begin{lemma}
\label{lm:Odd:4IIOB}
Let $M$ be a horizontally periodic translation surface in a rank two affine manifold $\cM \subset \Odd$.  If $M$ satisfies Case 4.II.OB), then $C_1$ and $C_3$ are isometric, and either
\begin{itemize}
 \item[a)] $\cM = \prymodd$, or

 \item[b)] $\cM = \coversodd \subset \prymodd$, in which case $C_2$ and $C_4$ are isometric.
\end{itemize}

In particular, $\coversodd$ is connected.
\end{lemma}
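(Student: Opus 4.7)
The plan has three parts: (i) use the homological constraints of Case 4.II.OB) to pin down the $\cM$-parallelism combinatorics, (ii) collapse the class $\{C_2,C_4\}$ to descend to $\tilde{\cQ}(3,-1^3)\subset\cH^{\rm odd}(4)$, and (iii) lift the Prym involution back to $M$ and distinguish the two cases by its fixed-point behavior. The main obstacle is step (iii).

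First I would observe that in the degeneration of Lemma~\ref{lm:4CylDeg} applied to Case 4.II.OB), the cylinders $C_2$ and $C_4$ are the two ``connecting'' cylinders between the spherical components, so that $c_2=c_4$ in $H_1(M,\bZ)$ while $c_1,c_2,c_3$ span a Lagrangian subspace. Hence $\alpha_2=\alpha_4$, which forces $C_2\parallel_\cM C_4$. Combined with Theorem~\ref{thm:RankkImpkCyl} and the rank two hypothesis, only two configurations for the equivalence classes are possible: either $C_1\parallel_\cM C_3$ (giving the two classes $\{C_1,C_3\}$ and $\{C_2,C_4\}$), or $C_1$ and $C_3$ are both free (giving three classes $\{C_1\},\{C_3\},\{C_2,C_4\}$). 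The subsequent steps apply uniformly in both configurations.

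Next I would collapse the class $\cC=\{C_2,C_4\}$. Each of $C_2,C_4$ is simple and contains a vertical saddle connection joining the two distinct singularities of $M$, so after an appropriate twist the hypotheses of Proposition~\ref{prop:RankkImpRankkBd} are satisfied, and the limit $M'$ lies in $\cH^{\rm odd}(4)$ inside a rank two affine submanifold. By Theorem~\ref{NWANWThm}, this submanifold is $\tilde{\cQ}(3,-1^3)$. A combinatorial check identifying the cylinder diagram of $M'$ in $\tilde{\cQ}(3,-1^3)$ that arises from the collapse shows that the Prym involution $\tau'$ of $M'$ exchanges the images of $C_1$ and $C_3$; pulling back the $\tau'$-equivariant structure through the break-up-a-zero deformation extends $\tau'$ to an involution $\tau$ on $M$ with $\tau(C_1)=C_3$, $\tau(C_3)=C_1$, and $\tau(\{C_2,C_4\})=\{C_2,C_4\}$. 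In particular $C_1$ and $C_3$ are isometric. This lifting is the main obstacle, as it requires careful tracking of the specific Case (O4)-type diagram in $\tilde{\cQ}(3,-1^3)$ and compatibility of the deformation with the Prym structure.

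The proof then splits on the action of $\tau$ on $\{C_2,C_4\}$. In case (a), $\tau$ preserves each of $C_2,C_4$ individually, acting on each as rotation by $\pi$ with two fixed points per cylinder; the four fixed points descend to the four simple poles of the quotient in $\cQ(4,-1^4)$, so $M\in\prymodd$. By algebraicity, $\dim_\bC\cM>\dim_\bC\tilde{\cQ}(3,-1^3)=4$, forcing $\dim_\bC\cM=5=\dim_\bC\prymodd$, and the tangent-space inclusion argument used in Lemmas~\ref{lm:Odd:4IOA} and \ref{lm:Odd:4IOB} gives $\cM=\prymodd$. In case (b), $\tau$ swaps $C_2\leftrightarrow C_4$ (so no fixed points on $C_2\cup C_4$) and also exchanges $C_1\leftrightarrow C_3$ (so no fixed points on $C_1\cup C_3$ either), hence $\tau$ is globally fixed-point free; the quotient $M\to M/\tau$ is therefore an unramified double cover of a surface in $\cH(2)$, so $M\in\coversodd\subset\prymodd$, and the isometry of $C_2,C_4$ follows from $\tau$ swapping them. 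An analogous tangent-space computation with $\dim_\bC\coversodd=4$ yields $\cM=\coversodd$. Connectedness of $\coversodd$ then follows from the uniqueness statement $\cM=\coversodd$ for any rank two orbit closure of complex dimension four meeting Case 4.II.OB).
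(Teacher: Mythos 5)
Your central step---collapsing the equivalence class $\{C_2,C_4\}$ to land in $\tilde{\cQ}(3,-1^3)\subset\cH^{\rm odd}(4)$ and then lifting the Prym involution---fails precisely in case (b), and that failure is the whole content of the lemma's dichotomy. Since $C_2$ and $C_4$ are $\cM$-parallel, Theorem~\ref{thm:Wright:Cyl:Def} only lets you twist them \emph{simultaneously}: the twist of $C_2$ changes by $th_2$ and that of $C_4$ by $th_4$. Condition (c) of Proposition~\ref{prop:RankkImpRankkBd} requires exactly one vertical saddle connection in the collapsed class, and whether this can be arranged is governed by the relative twist of $C_2$ and $C_4$, a quantity invariant under simultaneous twisting. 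When $\cM=\coversodd$ the two cylinders are isometric with matching twists, so they acquire vertical saddle connections at the same time and the collapse pinches the closed curve formed by the two vertical saddle connections instead of producing a surface in $\cH(4)$; indeed no rank two degeneration of the $4$-dimensional manifold $\coversodd$ to $\cH(4)$ can exist, since $\prym$ is already $4$-dimensional and the boundary must have strictly smaller dimension. If your ``appropriate twist'' were always available, your argument would place $\prym$ in $\partial\cM$ unconditionally, force $\dim_\bC\cM=5$, and wrongly conclude $\cM=\prymodd$ in every case. The paper makes the twist the pivot rather than an afterthought: its Claim~6 shows that a nonzero relative twist permits the collapse and yields $\cM=\prymodd$, and Claim~7 extracts case (b) exactly from the hypothesis that the relative twist (and, after shearing the class, the height ratio) is forced to vanish.

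Two further gaps. In step (i) you leave the configuration ``$C_1$ and $C_3$ both free'' alive and claim the rest applies uniformly to it; but if $C_1$ were free you could stretch it alone and destroy any isometry with $C_3$, so this configuration must be \emph{excluded}, which the paper does by noting that three equivalence classes would make $p(\Twi_1)$, $p(\Twi_3)$, $p(h_2\Twi_2+h_4\Twi_4)$ span a $3$-dimensional isotropic subspace of $p(T^\bR_M\cM)$. You also never rule out $C_1$ or $C_3$ lying in the class of $\{C_2,C_4\}$, which homology does not forbid and which requires the cylinder-proportion argument of the paper's Claim~1. In step (iii), the involution you lift is the Prym involution, whose derivative is $-\Id$; if it were fixed-point free and swapped $C_2\leftrightarrow C_4$, the quotient would carry a \emph{quadratic} differential, not an Abelian differential in $\cH(2)$. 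The unramified cover in case (b) comes from a second involution $f$ with $f^*\omega=+\omega$, which can only be built after $C_2$ and $C_4$ are shown to be isometric---the very fact your collapse cannot deliver. The isometry of $C_1$ and $C_3$, which you also hang on the lifted involution, is proved in the paper directly and degeneration-free, by comparing cylinder proportions of simple cylinders in the slit tori $\overline{C}_1$ and $\overline{C}_3$ via \cite[Lemma 8.1]{AulicinoNguyenWright}.
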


\begin{proof}

We first notice that $c_2$ and $c_4$ are homologous, therefore $C_2$ and $C_4$ are $\cM$-parallel, let us denote by $\cC$ their equivalence class.
\bigskip

\noindent \underline{Claim 1:} Neither $C_1$ nor $C_3$ belongs to $\cC$.
\begin{proof}[Proof of the claim]
Assume that $C_1\in \cC$, then $C_3$ must be free since we have at least two equivalence classes. There exists a transverse simple cylinder $D$ contained in $C_1$ which can be made vertical by using $\{\left( \begin{smallmatrix} 1 & t \\ 0 & 1 \end{smallmatrix}\right), \ t \in \bR\}$.  By assumption, there must exists another vertical cylinder $D'$ in the equivalence class of $D$ that crosses $C_2$. But any vertical cylinder crossing $C_2$ must cross $C_3$, hence $P(D',\{C_3\}) >0$ while $P(D,\{C_3\})=0$, and we have a contradiction. The same arguments apply for $C_3$.
\end{proof}

\medskip

\noindent \underline{Claim 2:} $C_1$ and $C_3$ are $\cM$-parallel.
\begin{proof}[Proof of the claim]
Indeed, if this is not the case then both $C_1$ and $C_3$ are free by Claim 1. It follows that the projection of $\{\Twi_1,\Twi_3,h_2\Twi_2+h_4\Twi_4\}$ in absolute cohomology spans a Lagrangian subspace of $p(T^\bR_M\cM)$ of dimension three, which is impossible.
\end{proof}

\medskip

\noindent \underline{Claim 3:} $C_1$ and $C_3$ are isometric.
\begin{proof}[Proof of the claim]
Observe that the closures of $C_1$ and $C_3$ are two slit tori, which will be denoted by $T_1$ and $T_3$, respectively.  Let $D_1$ be any simple cylinder in $T_1$ that does not meet the slit. Since $C_3$ is $\cM$-parallel to $C_1$ by Claim 2, there must exist a cylinder $D_3$ that is $\cM$-parallel to $D_1$ that crosses $C_3$. Remark that $P(D_1,\cC)=0$, hence $P(D_3,\cC)=0$, which implies that $D_3$ is a simple cylinder contained in $T_3$. Moreover, from Proposition~\ref{CylinderPropProp} we have
$$
P(D_1,\{C_1,C_3\})=P(D_3,\{C_1,C_3\}) \Rightarrow P(D_1,C_1) = P(D_3,C_3).
$$
The claim then follows from \cite[Lemma 8.1]{AulicinoNguyenWright}.
\end{proof}

\medskip

Since $C_1$ and $C_3$ are isometric, we see that $M:=(X,\omega)$ admits an involution $\tau$ that exchanges $C_1$ and $C_3$, fixes $C_2$, $C_4$, and satisfies $\tau^*\omega=-\omega$ (that is the derivative of $\tau$ in local charts defined by the flat metric structure is given by $-\Id$). Observe that $\tau$  exchanges the singularities of $M$ and has four fixed points, two in $C_2$ and two in $C_4$. It is now a routine to check that $M$ is a double cover of a quadratic differential in $\cQ(4,-1^4)$. We can now conclude that $M \in \prymodd$.

\medskip

\noindent \underline{Claim 4:} $\cM \subseteq \prymodd$.
\begin{proof}[Proof of the claim]
Let $v$ be a vector in $V:=T^\bR_M \cM$. For $\eps \in \bR$ small enough, the deformation $M':=M+\eps v$ of $M$ also admits a cylinder decomposition in the horizontal direction with the same diagram.  The argument above implies that $M' \in \prymodd$. Thus, we have $T^{\bR}_M \cM \subseteq T^\bR_M \prymodd$, and $\cM \subseteq \prymodd$.
\end{proof}

\medskip
Recall that $\Twi_i$ is the vector in $H^1(X,\Sigma,\bR)$ tangent to the deformations of $M$ by twisting $C_i$ alone. Remark that deformations of $M$ by twisting and stretching simultaneously $C_1$ and $C_3$, or $C_2$ and $C_4$ remain in $\cM$. Since  $C_1$ and $C_2$ are isometric, twisting simultaneously $C_1$ and $C_3$  gives deformations along the line defined by the vector $\Twi_1+\Twi_3$.  On the other hand, twisting simultaneously $C_2$ and $C_4$ gives deformations  along the line defined by the vector $h_2\Twi_2+h_4\Twi_4$.

\medskip

\noindent\underline{Claim 5:} If $\Twi_2$ or $\Twi_4$ belongs to $T^\bR_{M}\cM$, then $\cM=\prymodd$.
\begin{proof}[Proof of the claim]
Set $ V:= T^ \bR_M\cM \subset H^1(M,\Sigma; \bR)$. By the observation above, we already have $h_2\Twi_2+h_4\Twi_4 \in V$. If $\Twi_2$ or $\Twi_4$ belongs to $V$, then both of them belong to $V$. But if $\Twi_2$ belongs to $T_M^\bR\cM$ then we can freely stretch and shear $C_2$ while keeping the rest of the surface unchanged to obtain other surfaces in $\cM$ (see \cite{WrightCylDef}, Lemma 2.3). It follows that we can collapse $C_2$ to degenerate a saddle connection in $C_2$ to a point and obtain a surface $N \in \overline{\cM}\cap\cH(4)$. From Proposition~\ref{prop:RankkImpRankkBd} and Theorem~\ref{NWANWThm}, we know that $\overline{\cM}\cap\cH(4)$ contains $\prym$. Thus $\dim_\bC\cM \geq  \dim_\bC\prym +1 =5$. On the other hand, we know that $\cM \subset \prymodd$, and $\dim_\bC\prymodd =5$. Therefore, we can conclude that $\dim_\bC\cM= 5$, and $\cM=\prymodd$.
\end{proof}

\medskip

Recall that, we use the notations $h(C_i), \wth(C_i), \twist(C_i)$ to denote the height, circumference, and twist of $C_i$.  We can normalize so that $h(C_2)=\wth(C_2)=1$, and $\twist(C_2)=0$.  Note that we also have $\wth(C_4)=1$. Set $h=h(C_4)$ and $x:=\twist(C_4)$.

\medskip

\noindent \underline{Claim 6:} If $x \not \equiv  0 \mod \bZ$, then $\cM=\prymodd$.

\begin{proof}[Proof of the claim]
Without loss of generality, let $x \in (0,1)$.  Collapse the cylinders $\{C_2, C_4\}$ to get a translation surface in $\cH(4)$.  By Proposition \ref{prop:RankkImpRankkBd}, the resulting translation surface lies in a rank two affine manifold, which must be $\prym$.  Hence, $\prym$ lies in the boundary of $\cM$.  By algebraicity, this implies that $\dim_{\bC}(\cM) = 5$, so Claim 4 implies that $\cM$ is a full dimensional subset of $\prymodd$. But it is well-known that $\prymodd$ is connected (see \cite[Theorem 1.2]{LanneauComponents}). Therefore, we must have $\cM=\prymodd$.
\end{proof}

\bigskip

\noindent \underline{Claim 7:} If $\cM \neq \prymodd$, then $C_2$ and $C_4$ are isometric, and $\cM \subset \coversodd$. 
\begin{proof}[Proof of the claim]

By Claim 6 we must have $\twist(C_4)=0$.  Let $\hat{C}_2$ and $\hat{C}_4$ denote the cylinders obtained by applying the matrix $\left( \begin{smallmatrix} 1 & 1 \\ 0 & 1  \end{smallmatrix} \right)$ to $C_2$ and $C_4$, respectively. We have $\twist(\hat{C}_2)=\twist(C_2)=0$, and $\twist(\hat{C}_4)=h$. By Claim 6, we must have $h \equiv 0 \mod \bZ$, which implies that $h \in \bN$, or equivalently $h(C_4)/h(C_1) \in \bN$. Since the roles of $C_2$ and $C_4$ can be exchanged, we can conclude that $h(C_1)=h(C_4)$, and it follows that $C_2$ and $C_4$ are isometric.

Observe that we then have an automorphism $f : M \rightarrow M$ of order two, that exchanges $C_1$ with $C_3$,  $C_2$ with $C_4$, and satisfies the following condition: the derivative of $f$ in local charts defined by the flat metric structure is given by $\Id$. Identify $M$ with a pair $(X,\omega)$, we have $f^*\omega=\omega$. Observe that $f$ has no fixed points on $X$.

Let $Y:=X/\langle f \rangle$, and $\pi: X\rightarrow Y$ be the natural  projection. Since $f$ has order two and no fixed points, $\pi$ is an unramified double cover, which implies that $Y$ is a surface of genus two. Since $f^*\omega=\omega$, there exists a holomorphic $1$-form $\eta$ on $Y$ such that $\omega =\pi^*\eta$. By definition $\omega$ has two zeros of order two, thus $\eta$ must have a single zero of order two, which means that $(Y,\eta)\in \cH(2)$ hence $M=(X,\omega) \in \coversodd$.

Now, for any vector $v \in T^\bR_M\cM$, and $\eps \in \bR$ small enough, the surface $M_\eps:=M+\eps v$ also belongs to $\cM$ and admits a cylinder decomposition in the horizontal direction with the same diagram as $M$. The previous claims imply that $M_\eps\in \coversodd$. Therefore we have $T^{\bR}_M\cM \subset T^\bR_M\coversodd$, and $\cM \subset \coversodd$.
\end{proof}

\bigskip

\noindent \underline{Claim 8:} $\coversodd$ is connected. If $\cM \varsubsetneq \prymodd$, then  $\cM=\coversodd$.
\begin{proof}[Proof of the claim]
By definition, each component of $\coversodd$ is a proper rank two affine submanifold of dimension four in $\prymodd$. From the  Lemmas~\ref{lm:Odd:4IOA}, \ref{lm:Odd:4IOB}, \ref{lm:Odd:4IIOC}, \ref{lm:Odd:4IIOA}, we know that this affine submanifold contains a surface horizontally periodic satisfying  Case 4.II.OB). Claim 3, and Claim 7 then imply that $C_1$ and $C_2$ are isometric to $C_3$ and $C_4$, respectively. Observe that if $C_1$ is isometric to $C_3$ and $C_2$ is isometric to $C_4$, then $M\in \coversodd$. Clearly the set of horizontally periodic surfaces satisfying Case 4.II.OB) with this additional condition is connected. Thus we can conclude that $\coversodd$ is connected.

Suppose that $\cM \varsubsetneq \prymodd$ is a rank two affine submanifold. Since $\prymodd$ is connected, we must have $\dim_\bC\cM < \dim_\bC\prymodd =5$. Since $\cM$ has rank two, we must have $\dim_\bC \cM \geq 4$, from which we conclude that $\dim_\bC \cM= 4$.

From Claim 7, we know that $\cM \subset \coversodd$. Since $\dim_\bC\cM=\dim_\bC\coversodd=4$, $\cM$ must be a component of $\coversodd$. But $\coversodd$ is connected, therefore we have $\cM=\coversodd$.
The claim is then proved
\end{proof}
The proof of the lemma is now complete.
\end{proof}

\bigskip


\appendix

\section{Dual Graphs and $4$-Cylinder Diagrams in $\cH(m,n)$, $m+n=4$}\label{sec:list:4cyl:diag}
In this section we give the complete list of $4$-cylinder diagrams for surfaces in genus three having two singularities.  To obtain this list our approach  is to use the dual graphs. In this situation, the dual graphs have exactly four vertices and six edges. We classify them by the valencies at their vertices (the total valency is $12$). Given an integral vector $(n_1,\dots,n_4)$ such that $n_1\leq \dots \leq n_4$, and $n_1+\dots +n_4=12$, we look for undirected graphs satisfying this condition on the valencies. For each of the graphs, we then look for  orientations of the edges such that no forbidden configuration occurs (see Section~\ref{sec:intro:DG}). Finally, we choose for each vertex a cyclic ordering on the set of incoming edges, and a cyclic ordering on the set of outgoing edges. It turns out this procedure can be carried out ``by hand'' for the case $\cH(m,n),\, m+n=4$, as all except one admissible  undirected graph provide us with a unique corresponding cylinder di
 agram (that is, there is only at most one way to chose the orientations of the edges such that there is no forbidden configuration). As a result, we found $11$ $4$-cylinder diagrams. The exercise to determine which component the diagram belongs to is left to the reader.

\begin{proposition}
 \label{prop:DG:complete}
 Let $M$ be a horizontally periodic surface with four horizontal cylinders in a stratum $\cH(m,n), \, m+n=4$. Then the cylinder diagram of $M$ is given in Figures~\ref{fig:DG:2226}, \ref{fig:DG:2235}, \ref{fig:DG:2244}, \ref{fig:DG:2334}. In all of these figures, the cylinder $C_i$ corresponds to the vertex $v_i$ of the dual graph for $i=1,\dots,4$.
\end{proposition}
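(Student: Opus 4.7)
The proof is a purely combinatorial enumeration of admissible complete dual graphs, following the framework of Section~\ref{sec:intro:DG}. Let $M \in \cH(m,n)$, $m+n=4$, be horizontally periodic with four cylinders. Since the total cone angle at the two singularities equals $2\pi(m+n+2)=12\pi$, the number of horizontal saddle connections equals $6$ (property~(2) of Section~\ref{sec:intro:DG}). Hence the dual graph $\cD$ has exactly $4$ vertices and $6$ edges, for a total valency of $12$, with each vertex of valency at least $2$ (property~(3)). The partitions of $12$ into $4$ parts each $\geq 2$ are
\[
(2,2,2,6),\quad (2,2,3,5),\quad (2,2,4,4),\quad (2,3,3,4),\quad (3,3,3,3).
\]
For each partition I would enumerate the (multi)graphs on four labeled vertices with that valency sequence, then, for each such underlying graph, look for orientations of the edges and cyclic orderings at the vertices that produce a valid complete dual graph (property~(4)).

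The forbidden configurations of property~(6) eliminate most candidates at the level of orientations. In particular: a loop at a vertex $v$ contributes $2$ to the valency of $v$, and such a loop is already forbidden by the first configuration on the left of Figure~\ref{fig:forbid:config} whenever the loop is isolated; more generally, if two edges between the same pair of vertices are both oriented the same way, or if the top of one cylinder exactly coincides with the bottom of another, the configuration is ruled out. Combined with property~(5) (the existence of a positive edge-weighting respecting Kirchhoff's law at each vertex), these constraints reduce the partition $(3,3,3,3)$ to graphs that do not yield admissible cylinder decompositions with two singularities, while each of the partitions $(2,2,2,6)$, $(2,2,3,5)$, $(2,2,4,4)$, $(2,3,3,4)$ yields a small, finite family of admissible directed multigraphs.

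Next, for each admissible directed multigraph, I would list the possible cyclic orderings on the sets of incoming and outgoing rays at each vertex, modulo the graph automorphisms that preserve the orientation and the labelings; each orbit corresponds to one cylinder diagram. The author's remark that ``all except one admissible undirected graph provide us with a unique corresponding cylinder diagram'' means that in every case except one the cyclic orderings are forced (either because a vertex has at most $2$ incoming or outgoing rays, or because the remaining degrees of freedom are absorbed by automorphisms); the sole exceptional graph must be singled out and its two (or more) inequivalent cyclic orderings enumerated explicitly. Checking the resulting list against Figures~\ref{fig:DG:2226}, \ref{fig:DG:2235}, \ref{fig:DG:2244}, \ref{fig:DG:2334} confirms the stated list of $11$ cylinder diagrams.

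The main obstacle is not conceptual but bookkeeping: one must be systematic in enumerating multigraphs with each valency sequence (taking care of graph isomorphisms, loops, and multi-edges), then systematic again in enumerating edge orientations compatible with property~(6), and finally careful not to double-count cyclic orderings that differ by an automorphism of the underlying complete dual graph. Since the total count is small ($11$), the enumeration can be carried out case-by-case by hand; the proof therefore reduces to a finite verification, which I would organize as one subsection per partition $(n_1,n_2,n_3,n_4)$, matching the division into Figures~\ref{fig:DG:2226}--\ref{fig:DG:2334}.
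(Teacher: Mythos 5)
Your strategy is the same as the paper's: six horizontal saddle connections force a dual graph with four vertices and six edges, one sorts by valency vectors, and one enumerates orientations and cyclic orderings subject to the constraints of Section~\ref{sec:intro:DG}. The one substantive gap is the valency vector $(3,3,3,3)$. You assert that properties (5) and (6) reduce it to graphs that ``do not yield admissible cylinder decompositions with two singularities,'' but you give no argument, and the number of singularities is not the relevant obstruction (it plays no role in the exclusion). The paper kills this case with a short metric argument that you need some version of: valency three forces every cylinder to be semi-simple, so each $C_i$ has a single saddle connection $\sigma_i$ on one side, of length equal to its circumference $\wth_i$; since $\sigma_i$ lies on the opposite side of some $C_j$ together with at least one further saddle connection (otherwise the bottom of one cylinder equals the top of the other and the two merge --- the rightmost forbidden configuration), one gets $\wth_i<\wth_j$ for every $i$, which is absurd. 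Note that Kirchhoff-positivity alone does not do the job here: there are orientations of graphs with valency vector $(3,3,3,3)$ (e.g.\ of $K_4$) admitting positive balanced weightings, and it is only the ``merging cylinders'' configuration, applied systematically as above, that closes the case.

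A second, smaller issue: you cite as forbidden the configuration of two parallel edges oriented the same way. This is not among the paper's forbidden configurations and is not a formal consequence of them (the listed rule only excludes the case where the top of one cylinder coincides with the \emph{entire} bottom of another consisting of one or two saddle connections). In the actual enumeration such double edges are excluded case by case, by the weighting condition or by checking that no orientation avoids a genuine forbidden configuration (this is exactly what the paper does for the two discarded graphs with valency vector $(2,3,3,4)$); you would need to do the same rather than invoke it as a general principle. Apart from these two points your plan coincides with the paper's proof, and the remaining work is the finite bookkeeping you describe, which the paper likewise carries out by hand and partly leaves to the reader.
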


\begin{figure}[htb]
\centering
\begin{minipage}[t]{0.3\linewidth}
\centering
\begin{tikzpicture}[scale=0.35, inner sep=0.2mm, vertex/.style={circle, draw=black, fill=blue!30, minimum size=1mm},>= stealth]
\node (center) at (0,0) [vertex] {\tiny $v_4$};
\node (above) at (0,3) [vertex] {\tiny $v_1$};
\node (left) at (-2.5,-1) [vertex] {\tiny $v_2$};
\node (right) at (2.5,-1) [vertex] {\tiny $v_3$};

\draw[->] (above) to[out=240, in=120] (center) ;
\draw[->] (center) to [out=60, in=-60] (above);
\draw[->] (center) to [out=180, in= 60] (left);
\draw[->] (left) to [out=0, in=240] (center);
\draw[->] (center) to [out=0, in=120] (right);
\draw[->] (right) to [out=180, in=-60] (center);

\draw (0,-3) node {\small DG.4.I};

\end{tikzpicture}
\end{minipage}
\begin{minipage}[t]{0.3\linewidth}
\centering
\begin{tikzpicture}[scale=0.4]
\draw (0,0)--(0,4)--(2,4)--(2,2)--(4,2)--(4,4)--(6,4)--(6,0)--(4,0)--(4,-2)--(2,-2)--(2,0)--cycle;
\foreach \x in {(0,0),(0,4),(2,4),(4,4),(6,4),(6,0),(4,0),(2,0)} \filldraw[fill=black] \x circle (3pt);
\foreach \x in {(0,2),(2,-2),(2,2),(4,-2),(4,2),(6,2)} \filldraw[fill=white] \x circle (3pt);

\draw(1,4) node[above] {\tiny $3$};
\draw(3,2) node[above] {\tiny $2$};
\draw(5,4) node[above] {\tiny $1$};
\draw(1,0) node[below] {\tiny $1$};
\draw(3,-2) node[below] {\tiny $2$};
\draw(5,0) node[below] {\tiny $3$};

\draw (5,3) node {\tiny $C_1$} (3,-1) node {\tiny $C_2$} (1,3) node {\tiny $C_3$} (3,1) node {\tiny $C_4$};
\end{tikzpicture}
\end{minipage}
\begin{minipage}[t]{0.3\linewidth}
\centering
\begin{tikzpicture}[scale=0.4]
\draw (0,0)--(0,4)--(2,4)--(2,2)--(4,2)--(4,4)--(6,4)--(6,0)--(4,0)--(4,-2)--(2,-2)--(2,0)--cycle;
\foreach \x in {(0,0),(0,4),(2,4),(4,4),(6,4),(6,0),(4,0),(2,0)} \filldraw[fill=black] \x circle (3pt);
\foreach \x in {(2,2),(4,2),(4,-2),(2,-2),(0,2),(6,2)} \filldraw[fill=white] \x circle (3pt);
\draw(1,4) node[above] {\tiny $1$};
\draw(3,2) node[above] {\tiny $2$};
\draw(5,4) node[above] {\tiny $3$};
\draw(1,0) node[below] {\tiny $1$};
\draw(3,-2) node[below] {\tiny $2$};
\draw(5,0) node[below] {\tiny $3$};

\draw (5,3) node {\tiny $C_3$} (3,-1) node {\tiny $C_2$} (1,3) node {\tiny $C_1$} (3,1) node {\tiny $C_4$};
\end{tikzpicture}
\end{minipage}
\caption{Admissible dual graphs for the valency vector $(2,2,2,6)$.}
\label{fig:DG:2226}
\end{figure}
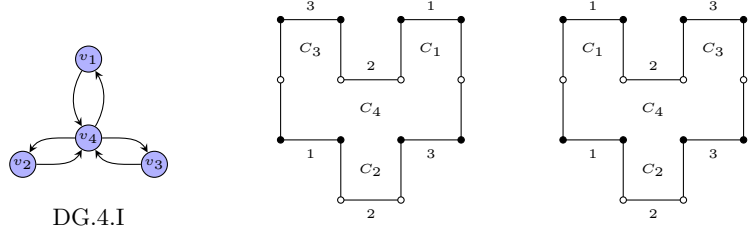

\begin{figure}[htb]
 \centering
 \begin{minipage}[t]{0.2\linewidth}
  \centering
  \begin{tikzpicture}[scale=0.35, inner sep=0.2mm, vertex/.style={circle, draw=black, fill=blue!30, minimum size=1mm},>= stealth]
   \node (center) at (0,0) [vertex] {\tiny $v_4$};
   \node (above) at (0,3) [vertex] {\tiny $v_2$};
   \node (below) at (0,-3) [vertex] {\tiny $v_3$};
   \node (right) at (2.5,0) [vertex] {\tiny $v_1$};

   \draw[->] (above) to[out=240, in=120] (center) ;
   \draw[->] (center) to[out=60, in=-60] (above) ;
   \draw[->] (center) to[out=240, in=120] (below) ;
   \draw[->] (below) to[out=60, in=-60] (center) ;
   \draw[->] (center) to (right);
   \draw[->] (right) to (below);

   \draw (1.5,-5) node {\small DG.4.II.a};
  \end{tikzpicture}
\end{minipage}
\begin{minipage}[t]{0.2\linewidth}
 \centering
 \begin{tikzpicture}[scale=0.35]
 \draw (0,6) -- (0,2) -- (4,2) -- (4,0) -- (6,0) -- (6,4) -- (8,6) -- (4,6) -- (2,4) -- (2,6) -- cycle;
 \foreach \x in {(0,6), (0,2), (2,6),(2,2),(4,6),(4,2),(4,0), (6,6), (6,2),(6,0), (8,6)} \filldraw[fill=black] \x circle (3pt);
 \foreach \x in {(0,4),(2,4),(6,4)} \filldraw[fill=white] \x circle (3pt);

 \draw (1,6) node[above] {\tiny $1$} (1,2) node[below] {\tiny $1$} (5,6) node[above] {\tiny $2$} (3,2) node[below] {\tiny $2$} (7,6) node[above] {\tiny $3$} (5,0) node[below] {\tiny $3$};

 \draw (1,5) node {\tiny $C_2$} (5,5) node {\tiny $C_3$} (3,3) node {\tiny $C_4$} (5,1) node {\tiny $C_1$};

 \end{tikzpicture}
 \end{minipage}
\begin{minipage}[t]{0.2\linewidth}
  \centering
  \begin{tikzpicture}[scale=0.35, inner sep=0.2mm, vertex/.style={circle, draw=black, fill=blue!30, minimum size=1mm},>= stealth]
   \node (above) at (0,2) [vertex] {\tiny $v_4$};
   \node (below) at (0,-2) [vertex] {\tiny $v_3$};
   \node (right) at (2,0) [vertex] {\tiny $v_2$};
   \node (left) at (-2,0) [vertex] {\tiny $v_1$};

   \draw[->] (above) ..controls +(1,2) and +(-1,2).. (above) ;
   \draw[->] (above) to (left);
   \draw[->] (above) to (right) ;
   \draw[->] (left) to (below) ;
   \draw[->] (right) to (below);
   \draw[->] (below) to (above);

   \draw (0,-4) node {\small DG.4.II.b};
  \end{tikzpicture}
\end{minipage}
\begin{minipage}[t]{0.2\linewidth}
\centering
\begin{tikzpicture}[scale=0.35]
\draw (-1,6) -- (0,4) -- (0,0) -- (6,0) -- (6,2) -- (4,2) -- (4,6) -- (2,6) -- (2,4) -- (1,6)  -- cycle;
 \foreach \x in {(-1,6), (0,2), (0,0),(1,6), (2,6), (2,0), (4,6),(4,2),(4,0), (6,2), (6,0)} \filldraw[fill=black] \x circle (3pt);
  \foreach \x in {(0,4),(2,4),(4,4)} \filldraw[fill=white] \x circle (3pt);

  \draw (0,6) node[above] {\tiny $1$} (1,0) node[below] {\tiny $1$} (3,6) node[above] {\tiny $2$} (3,0) node[below] {\tiny $2$} (5,2) node[above] {\tiny $3$} (5,0) node[below] {\tiny $3$};

  \draw (0.5,5) node {\tiny $C_1$} (3,5) node {\tiny $C_2$} (2,3) node {\tiny $C_3$} (2,1) node {\tiny $C_4$};

\end{tikzpicture}
\end{minipage}
\caption{Admissible dual graphs for the valency vector $(2,2,3,5)$.}
\label{fig:DG:2235}
\end{figure}

\begin{figure}[htb]
 \centering
 \begin{minipage}[t]{0.2\linewidth}
  \centering
  \begin{tikzpicture}[scale=0.35, inner sep=0.2mm, vertex/.style={circle, draw=black, fill=blue!30, minimum size=1mm},>= stealth]
   \node (above) at (0,3) [vertex] {\tiny $v_1$};
   \node (below) at (0,-3) [vertex] {\tiny $v_2$};
   \node (center1) at (0,1) [vertex] {\tiny $v_3$};
   \node (center2) at (0,-1) [vertex] {\tiny $v_4$};

   \draw[->] (above) to[out=240, in=120] (center1) ;
   \draw[->] (center1) to[out=60, in=-60] (above) ;

   \draw[->] (center1) to[out=240, in=120] (center2) ;
   \draw[->] (center2) to[out=60, in=-60] (center1) ;

   \draw[->] (center2) to[out=240, in=120] (below) ;
   \draw[->] (below) to[out=60, in=-60] (center2) ;

   \draw (0,-5) node {\small DG.4.III.a};
   \end{tikzpicture}
\end{minipage}
\begin{minipage}[t]{0.25\linewidth}
\centering
\begin{tikzpicture}[scale=0.35]
\draw  (0,8) -- (0,4) -- (2,4) -- (2,2) -- (4,2) -- (4,0) -- (6,0) -- (6,4) -- (4,4) -- (4,6) -- (2,6) -- (2,8) -- cycle;
\foreach \x in {(0,8),(0,4),(2,8),(2,4),(4,4),(4,0),(6,4),(6,0)} \filldraw[fill=black] \x circle (3pt);
\foreach \x in {(0,6),(2,6),(2,2),(4,6),(4,2),(6,2)} \filldraw[fill=white] \x circle (3pt);
\draw (1,8) node[above] {\tiny $3$} (1,4) node[below] {\tiny $3$} (3,6) node[above] {\tiny $2$} (3,2) node[below] {\tiny $2$} (5,4) node[above] {\tiny $1$} (5,0) node[below] {\tiny $1$};

\draw (1,7) node {\tiny $C_1$} (5,1) node {\tiny $C_2$} (3,5) node {\tiny $C_3$} (3,3) node {\tiny $C_4$};
\end{tikzpicture}
\end{minipage}
\begin{minipage}[t]{0.2\linewidth}
  \centering
  \begin{tikzpicture}[scale=0.35, inner sep=0.2mm, vertex/.style={circle, draw=black, fill=blue!30, minimum size=1mm},>= stealth]
   \node (above) at (0,3) [vertex] {\tiny $v_1$};
   \node (below) at (0,-3) [vertex] {\tiny $v_3$};
   \node (center1) at (0,1) [vertex] {\tiny $v_4$};
   \node (center2) at (0,-1) [vertex] {\tiny $v_2$};

   \draw[->] (above) to[out=240, in=120] (center1) ;
   \draw[->] (center1) to[out=60, in=-60] (above) ;

   \draw[->]  (center2) to (center1);
   \draw[->] (below) to (center2)  ;

   \draw[->] (below) ..controls +(-1,-2) and +(1,-2) .. (below) ;
   \draw[->] (center1) to[out=-30, in=30] (below) ;

   \draw (0,-6) node {\small DG.4.III.b};
   \end{tikzpicture}
\end{minipage}
\begin{minipage}[t]{0.25\linewidth}
\centering
 \begin{tikzpicture}[scale=0.35]
 \draw (-1,6) -- (0,4) -- (0,2) -- (2,2) -- (2,0) -- (6,0) -- (6,2) -- (4,2) -- (4,6) -- (2,6) -- (2,4) -- (1,6) -- cycle;
 \foreach \x in {(-1,6), (0,2), (1,6),(2,6), (2,2), (2,0), (4,6),(4,2),(4,0), (6,2),(6,0)} \filldraw[fill=black] \x circle (3pt);
  \foreach \x in {(0,4),(2,4),(4,4)} \filldraw[fill=white] \x circle (3pt);

  \draw (0,6) node[above] {\tiny $1$} (1,2) node[below] {\tiny $1$} (3,6) node[above] {\tiny $2$} (3,0) node[below] {\tiny $2$} (5,2) node[above] {\tiny $3$} (5,0) node[below] {\tiny $3$};

  \draw (0.5,5) node {\tiny $C_1$} (3,5) node {\tiny $C_2$} (2,3) node {\tiny $C_4$} (3,1) node {\tiny $C_3$};

\end{tikzpicture}
\end{minipage}
\begin{minipage}[t]{0.2\linewidth}
  \centering
  \begin{tikzpicture}[scale=0.35, inner sep=0.2mm, vertex/.style={circle, draw=black, fill=blue!30, minimum size=1mm},>= stealth]
   \node (above) at (0,2) [vertex] {\tiny $v_3$};
   \node (below) at (0,-2) [vertex] {\tiny $v_4$};
   \node (left) at (-2,0) [vertex] {\tiny $v_1$};
   \node (right) at (2,0) [vertex] {\tiny $v_2$};

   \draw[->] (above) to[out=240, in=120] (below) ;
   \draw[->] (below) to[out=60, in=-60] (above) ;

   \draw[->] (above) to (left) ;
   \draw[->] (right) to (above) ;

   \draw[->] (left) to (below) ;
   \draw[->] (below) to (right) ;

   \draw (0,-4) node {\small DG.4.III.c};
   \end{tikzpicture}
\end{minipage}
\begin{minipage}[t]{0.25\linewidth}
\centering
\begin{tikzpicture}[scale=0.35]
\draw (0,0)--(0,8)--(4,8)--(4,6)--(2,6)--(2,4)--(4,4)--(4,2)--(2,2)--(2,0)--cycle;

\foreach \x in {(0,0),(0,8),(2,8), (4,8),(4,2),(2,2),(2,0),(0,2)} \filldraw[fill=black] \x circle (3pt);
\foreach \x in {(4,6),(2,6),(2,4),(4,4),(0,4),(0,6)} \filldraw[fill=white] \x circle (3pt);

\draw(1,8) node[above] {\tiny 1};
\draw(3,4) node[above] {\tiny 3};
\draw(3,8) node[above] {\tiny 2};
\draw(1,0) node[below] {\tiny 1};
\draw(3,2) node[below] {\tiny 2};
\draw(3,6) node[below] {\tiny 3};

\draw (1,7) node {\tiny $C_3$} (1,5) node {\tiny $C_1$} (1,3) node {\tiny $C_4$} (1,1) node {\tiny $C_2$};

\end{tikzpicture}
\end{minipage}
\begin{minipage}[t]{0.2\linewidth}
  \centering
  \begin{tikzpicture}[scale=0.35, inner sep=0.2mm, vertex/.style={circle, draw=black, fill=blue!30, minimum size=1mm},>= stealth]
   \node (above) at (0,2) [vertex] {\tiny $v_3$};
   \node (below) at (0,-2) [vertex] {\tiny $v_4$};
   \node (left) at (-1,0) [vertex] {\tiny $v_1$};
   \node (right) at (1,0) [vertex] {\tiny $v_2$};

   \draw[->] (above) ..controls +(1,2) and +(-1,2) .. (above) ;
   \draw[->] (below) .. controls +(-1,-2) and +(1,-2).. (below) ;

   \draw[->] (above) to (left) ;
   \draw[->] (right) to (above) ;

   \draw[->] (left) to (below) ;
   \draw[->] (below) to (right) ;

   \draw (0,-5) node {\small DG.4.III.d};
   \end{tikzpicture}
\end{minipage}
\begin{minipage}[t]{0.25\linewidth}
\centering
\begin{tikzpicture}[scale=0.35]
\draw (0,0)--(0,8)--(4,8)--(4,6)--(2,6)--(2,4)--(4,4)--(4,2)--(2,2)--(2,0)--cycle;
\foreach \x in {(0,0),(0,8),(2,8), (4,8),(4,6),(2,6),(2,0), (0,6)} \filldraw[fill=black] \x circle (3pt);
\foreach \x in {(0,4),(0,2),(2,4),(2,2),(4,4),(4,2)} \filldraw[fill=white] \x circle (3pt);
\draw(1,8) node[above] {\tiny 1};
\draw(3,4) node[above] {\tiny 3};
\draw(3,8) node[above] {\tiny 2};
\draw(1,0) node[below] {\tiny 1};
\draw(3,2) node[below] {\tiny 3};
\draw(3,6) node[below] {\tiny 2};

\draw (1,7) node {\tiny $C_3$} (1,5) node {\tiny $C_1$} (1,3) node {\tiny $C_4$} (1,1) node {\tiny $C_2$};

\end{tikzpicture}
\end{minipage}

\caption{Admissible dual graphs for the valency vector $(2,2,4,4)$.}
\label{fig:DG:2244}
\end{figure}

\medskip

\begin{figure}[htb]
 \centering
 \begin{minipage}[t]{0.2\linewidth}
  \centering
  \begin{tikzpicture}[scale=0.35, inner sep=0.2mm, vertex/.style={circle, draw=black, fill=blue!30, minimum size=1mm},>= stealth]
   \node (above) at (0,3) [vertex] {\tiny $v_2$};
   \node (below) at (0,-3) [vertex] {\tiny $v_4$};
   \node (center) at (0,0) [vertex] {\tiny $v_1$};
   \node (left) at (-2,0) [vertex] {\tiny $v_3$};

   \draw[->] (above) to (center) ;
   \draw[->] (above) to (left);
   \draw[->] (center) to (below) ;
   \draw[->] (below) to[out=45, in=-45] (above) ;

   \draw[->] (left) to (below) ;
   \draw[->] (below) to[out=180,in=-90] (left);

   \draw (0,-5) node {\small DG.4.IV.a};
   \end{tikzpicture}
\end{minipage}
\begin{minipage}[t]{0.25\linewidth}
\centering
\begin{tikzpicture}[scale=0.35]
\draw (-1,6) -- (0,4) -- (0,0) -- (4,0) -- (4,2) -- (6,2) -- (6,6) -- (2,6) -- (2,4) -- (1,6)  -- cycle;
 \foreach \x in {(-1,6), (0,2), (0,0),(1,6), (2,6), (2,0), (4,6),(4,2),(4,0), (6,6), (6,2)} \filldraw[fill=black] \x circle (3pt);
  \foreach \x in {(0,4),(2,4),(6,4)} \filldraw[fill=white] \x circle (3pt);

  \draw (0,6) node[above] {\tiny $1$} (1,0) node[below] {\tiny $1$} (3,6) node[above] {\tiny $2$} (3,0) node[below] {\tiny $2$} (5,6) node[above] {\tiny $3$} (5,2) node[below] {\tiny $3$};

  \draw (0.5,5) node {\tiny $C_1$} (4,5) node {\tiny $C_3$} (3,3) node {\tiny $C_4$} (2,1) node {\tiny $C_2$};

\end{tikzpicture}
\end{minipage}
\begin{minipage}[t]{0.2\linewidth}
  \centering
  \begin{tikzpicture}[scale=0.35, inner sep=0.2mm, vertex/.style={circle, draw=black, fill=blue!30, minimum size=1mm},>= stealth]
   \node (above) at (0,3) [vertex] {\tiny $v_2$};
   \node (below) at (0,-3) [vertex] {\tiny $v_3$};
   \node (center) at (0,0) [vertex] {\tiny $v_4$};
   \node (left) at (-2,0) [vertex] {\tiny $v_1$ };

   \draw[->] (above) to[out=240, in=120] (center) ;
   \draw[->] (center) to[out=60,in=-60] (above);

   \draw[->] (center) to[out=240, in=120] (below) ;
   \draw[->] (below) to[out=60, in=-60] (center) ;

   \draw[->] (above) to (left);
   \draw[->] (left) to (below) ;

   \draw (-1,-5) node {\small DG.4.IV.b};
   \end{tikzpicture}
\end{minipage}
\begin{minipage}[t]{0.25\linewidth}
\centering
\begin{tikzpicture}[scale=0.35]
\draw (0,0)--(0,2)--(-2,2)--(-2,6)--(0,6)--(0,8)--(2,8)--(2,6)--(2,4)--(4,4)--(4,0)--cycle;
\foreach \x in {(0,0),(0,8),(2,8),(2,4),(4,4),(4,0),(-2,4),(2,0)} \filldraw[fill=black] \x circle (3pt);
\foreach \x in {(-2,6),(-2,2), (0,6),(0,2), (2,6), (4,2)} \filldraw[fill=white] \x circle (3pt);
\draw(-1,6) node[above] {\tiny $1$};
\draw(1,8) node[above] {\tiny $2$};
\draw(3,4) node[above] {\tiny $3$};
\draw(-1,2) node[below] {\tiny $1$};
\draw(1,0) node[below] {\tiny $2$};
\draw(3,0) node[below] {\tiny $3$};

\draw (1,7) node {\tiny $C_1$} (0,5) node {\tiny $C_3$} (1,3) node {\tiny $C_4$} (2,1) node {\tiny $C_2$};
\end{tikzpicture}
\end{minipage}
\begin{minipage}[t]{0.2\linewidth}
  \centering
  \begin{tikzpicture}[scale=0.35, inner sep=0.2mm, vertex/.style={circle, draw=black, fill=blue!30, minimum size=1mm},>= stealth]
   \node (above) at (0,3) [vertex] {\tiny $v_2$};
   \node (below) at (0,-3) [vertex] {\tiny $v_3$};
   \node (center) at (0,0) [vertex] {\tiny $v_1$};
   \node (right) at (2,0) [vertex] {\tiny $v_4$};

   \draw[->] (above) to (center) ;
   \draw[->] (above) to[out=225, in=135] (below) ;
   \draw[->] (right) to (above);
   \draw[->] (center) to (below);
   \draw[->] (below) to  (right) ;
   \draw[->] (right) ..controls +(2,-1) and +(2,1) .. (right) ;

   \draw (0,-5) node {\small DG.4.IV.c};
   \end{tikzpicture}
\end{minipage}
\begin{minipage}[t]{0.25\linewidth}
\centering
\begin{tikzpicture}[scale=0.35]
\draw (0,0)--(0,8)--(2,8)--(2,6)--(4,6)--(4,4)--(6,4)--(6,2)--(4,2)--(4,0)--cycle;
\foreach \x in {(0,0),(0,8),(2,8),(2,6),(4,6),(2,0),(0,6),(4,0)} \filldraw[fill=black] \x circle (3pt);

\foreach \x in {(4,4),(6,4),(6,2),(4,2),(0,2),(0,4)} \filldraw[fill=white] \x circle (3pt);

\draw(1,8) node[above] {\tiny 1};
\draw(3,6) node[above] {\tiny 2};
\draw(5,4) node[above] {\tiny 3};
\draw(1,0) node[below] {\tiny 1};
\draw(3,0) node[below] {\tiny 2};
\draw(5,2) node[below] {\tiny 3};

\draw (1,7) node {\tiny $C_1$} (2,5) node {\tiny $C_3$} (2,3) node {\tiny $C_4$} (2,1) node {\tiny $C_2$};

\end{tikzpicture}
\end{minipage}

\caption{Admissible dual graphs for the valency vector $(2,3,3,4)$.}
\label{fig:DG:2334}
\end{figure}

\medskip

\begin{proof}
 Let $C_1,\dots, C_4$ be the horizontal cylinders of $M$. Let $\cD$ denote the dual graph of the cylinder decomposition of $M$, the vertices of $\cD$ are $v_1,\dots,v_4$, where $v_i$  corresponds to $C_i$. Let $n_i$ be the valency of $v_i$. We always choose the numbering so that $n_1\leq n_2 \leq n_3 \leq n_4$. Recall that  we must have $n_1+n_2+n_3+n_4=12$, therefore $n_1 \leq 3 \leq n_4$.

 We first claim that $n_1=2$. Clearly $n_1 \not= 1$.  If $n_1=3$, then $n_1 = n_2 = n_3 = n_4 = 3$, which means that all of the cylinders $C_1,\dots,C_4$ are semi-simple. For any $i$, let $w_i$ be the  circumference of $C_i$. Since $C_i$ is semi-simple, without loss of generality assume that its bottom side consists of a single saddle connection $\s_i$. Since $\s_i$ must be contained in the top of a cylinder $C_j$, and clearly the top of $C_j$ must contain another saddle connection, we derive $w_i < w_j$. It follows that for all $i \in \{1,\dots,4\}$, there exists $j\in \{1,\dots,4\}$ such that $w_i <w_j$, and we get a contradiction.

 From the previous claim, we have
$$\vec{n}:=(n_1,\dots,n_4) \in \{(2,2,2,6), (2,2,3,5), (2,2,4,4), (2,3,3,4)\}.$$

\noindent {\bf Case $\vec{n}=(2,2,2,6)$}.  Since two simple cylinders cannot be adjacent, the three vertices with valency equal to two must be attached to $v_4$.  Thus, we have the dual graph DG.4.I, and the cylinder diagram of $M$ is given by one of the two diagrams in Figure~\ref{fig:DG:2226}.

\medskip

\noindent {\bf Case $\vec{n}=(2,2,3,5)$}. First suppose that each of $v_1$ and $v_2$  is attached to a single vertex. If both $v_1$ and $v_2$ are attached to the same vertex, which must be $v_4$, then we would have a forbidden configuration at $v_3$. If $v_1$ or $v_2$ is attached to $v_3$, then we also have a forbidden configuration. Thus, at least one of $v_1,v_2$, say $v_1$, is connected to both $v_3$ and $v_4$.

If $v_2$ is attached to a single vertex, then this vertex must be $v_4$, and in this case $v_4$ and $v_3$ must be connected by two edges, we then get the dual graph DG.4.II.a. If $v_2$ is also connected to both $v_3$ and $v_4$, then there must be an edge joining $v_3$ to $v_4$ and a loop at $v_4$, thus we get the dual graph DG.4.II.b.

\medskip

\noindent{\bf Case $\vec{n}=(2,2,4,4)$}. In the case that each of $v_1,v_2$ is attached (by two edges) to a single vertex, we can assume that $v_1$ is attached to $v_3$, and $v_2$ to $v_4$. It follows that $v_3$ and $v_4$ must be connected by two edges, and we get the dual graph DG.4.III.a.

If $v_1$ is connected to both $v_3$ and $v_4$, and $v_2$ is attached to only one vertex, say $v_4$, then there must be an edge between $v_3$ and $v_4$, and a loop at $v_3$, thus we get the dual graph DG.4.III.b.

If each of  $v_1,v_2$ are connected to both $v_3$ and $v_4$, then we have two cases: either there are two edges between $v_3$ and $v_4$, in which case we get the dual graph DG.4.III.c, or there is a loop at $v_3$ and a loop at $v_4$, which gives the dual graph DG.4.III.d.

\medskip

\noindent {\bf Case $\vec{n}=(2,3,3,4)$}. If $v_1$ is attached to a single vertex, this vertex must be $v_4$. If $v_4$ is connected to only one of $v_2,v_3$, then we would have the forbidden configuration at $v_2$ or $v_3$, thus there must be an edge between $v_4$ and $v_2$, and an edge between $v_4$ and $v_3$. It follows that $v_2$ and $v_3$ are connected by two edges, and there is a unique dual graph satisfying this case. But by case-by-case inspection, it turns out that for any choice of orientation for the edges, we always have a forbidden configuration, therefore this case is excluded.

Now assume that $v_1$ is connected to two other vertices.

\begin{itemize}
\item[$\bullet$] If $v_1$ is connected to $v_2$ and $v_3$, then we have two cases: either $v_2$ and $v_3$ are not connected, which means that they are each connected to $v_4$ by two edges, and we get the dual graph DG.4.IV.b.  Otherwise, $v_2$ and $v_3$ are connected by an edge, which implies that $v_4$ is connected to $v_2$ and $v_3$ by one edge, and there is a loop at $v_4$, which yields the dual graph DG.4.IV.c.

 \item[$\bullet$] If $v_1$ is connected to $v_4$ and a vertex with valency three, says $v_2$, then we also have two cases: either $v_2$ is connected to $v_4$ by one edge, which implies that $v_3$ is connected to $v_2$ by one edge and to $v_4$ by two edges, and we get the dual graph DG.4.IV.a, or $v_2$ is not connected to $v_4$, which implies that $v_3$ is connected to $v_2$ by two edges, to $v_4$ by one edge, and there is a loop at $v_4$. In the latter case, an inspection of the orientations of the edges shows that we always have a forbidden configuration.  Hence, there is no corresponding cylinder diagram for this case.
\end{itemize}

It is not difficult to get the complete list of cylinder diagrams from the  list of admissible undirected dual graphs. The details are left for the reader.
\end{proof}

\bibliography{fullbibliotex}{}

\end{document}